\definecolor{lightgrey}{rgb}{0.9,0.9,0.9}
\definecolor{mygray}{rgb}{0.6,0.6,0.6}
  \newcommand{\jmlrBlackBox}{\rule{1.5ex}{1.5ex}}
  \newcommand{\jmlrQED}{\hfill\jmlrBlackBox\par\bigskip}
\providecommand{\proofname}{Proof}
  \newenvironment{proof}%
  {%
   \par\noindent{\bfseries\upshape \proofname\ }%
  }%
  {\jmlrQED}
  \newcommand*{\theorembodyfont}[1]{%
    \renewcommand*{\@theorembodyfont}{#1}%
  }
  \newcommand*{\@theorembodyfont}{\normalfont\itshape}%
  \newcommand*{\theoremheaderfont}[1]{%
    \renewcommand*{\@theoremheaderfont}{#1}%
  }
  \newcommand*{\@theoremheaderfont}{\normalfont\bfseries }%
  \newcommand*{\theoremsep}[1]{%
    \renewcommand*{\@theoremsep}{#1}%
  }
  \newcommand*{\@theoremsep}{}%
  \newcommand*{\theorempostheader}[1]{%
    \renewcommand*{\@theorempostheader}{#1}%
  }
  \newcommand*{\@theorempostheader}{}%
  \let\jmlr@org@newtheorem\newtheorem
  \renewcommand*{\newtheorem}{\@ifstar\jmlr@snewtheorem\jmlr@newtheorem}
  \newcommand*{\jmlr@snewtheorem}[2]{%
    \cslet{jmlr@thm@#1@body@font}{\@theorembodyfont}%
    \cslet{jmlr@thm@#1@header@font}{\@theoremheaderfont}%
    \cslet{jmlr@thm@#1@sep}{\@theoremsep}%
    \cslet{jmlr@thm@#1@postheader}{\@theorempostheader}%
    \newenvironment{#1}%
    {%
      \trivlist
        \item
        [%
          \hskip\labelsep{\csuse{jmlr@thm@#1@header@font}#2%
            \csuse{jmlr@thm@#1@postheader}%
          }%
        ]%
        \mbox{}\csuse{jmlr@thm@#1@sep}%
        \csuse{jmlr@thm@#1@body@font}%
    }%
    {%
      \endtrivlist
    }%
  }
  \newcommand{\jmlr@newtheorem}[1]{%
    \cslet{jmlr@thm@#1@body@font}{\@theorembodyfont}%
    \cslet{jmlr@thm@#1@header@font}{\@theoremheaderfont}%
    \cslet{jmlr@thm@#1@sep}{\@theoremsep}%
    \cslet{jmlr@thm@#1@postheader}{\@theorempostheader}%
    \jmlr@org@newtheorem{#1}%
  }
  \renewcommand*{\@xthm}[2]{%
    \def\@jmlr@currentthm{#1}%
    \@begintheorem{#2}{\csname the#1\endcsname}%
    \ignorespaces
  }
  \def\@ythm#1#2[#3]{%
    \def\@jmlr@currentthm{#1}%
    \@opargbegintheorem{#2}{\csname the#1\endcsname}{#3}%
    \ignorespaces
  }
  \renewcommand*{\@begintheorem}[2]{%
    \ifdef{\@jmlr@currentthm}%
    {%
      \letcs{\jmlr@this@theoremheader}{jmlr@thm@\@jmlr@currentthm @header@font}%
      \letcs{\jmlr@this@theorembody}{jmlr@thm@\@jmlr@currentthm @body@font}%
      \letcs{\jmlr@this@theoremsep}{jmlr@thm@\@jmlr@currentthm @sep}%
      \letcs{\jmlr@this@theorempostheader}%
         {jmlr@thm@\@jmlr@currentthm @postheader}%
    }%
    {%
      \let\jmlr@this@theorembody\@theorembodyfont
      \let\jmlr@this@theoremheader\@theoremheaderfont
      \let\jmlr@this@theoremsep\@theoremsep
      \let\jmlr@this@theorempostheader\@theorempostheader
    }%
    \trivlist
      \item
       [%
        \hskip\labelsep{\jmlr@this@theoremheader #1\ #2%
           \jmlr@this@theorempostheader}%
       ]%
      \mbox{}\jmlr@this@theoremsep
      \jmlr@this@theorembody
  }
  \renewcommand*{\@opargbegintheorem}[3]{%
    \ifdef{\@jmlr@currentthm}%
    {%
      \letcs{\jmlr@this@theoremheader}{jmlr@thm@\@jmlr@currentthm @header@font}%
      \letcs{\jmlr@this@theorembody}{jmlr@thm@\@jmlr@currentthm @body@font}%
      \letcs{\jmlr@this@theoremsep}{jmlr@thm@\@jmlr@currentthm @sep}%
      \letcs{\jmlr@this@theorempostheader}%
         {jmlr@thm@\@jmlr@currentthm @postheader}%
    }%
    {%
      \let\jmlr@this@theorembody\@theorembodyfont
      \let\jmlr@this@theoremheader\@theoremheaderfont
      \let\jmlr@this@theoremsep\@theoremsep
      \let\jmlr@this@theorempostheader\@theorempostheader
    }%
    \trivlist
     \item[\hskip\labelsep{\jmlr@this@theoremheader #1\ #2\ (#3)%
       \jmlr@this@theorempostheader}]%
     \mbox{}\jmlr@this@theoremsep
     \jmlr@this@theorembody
  }
\newtheorem{theorem}{Theorem}
\newtheorem{lemma}[theorem]{Lemma}
\newtheorem{proposition}[theorem]{Proposition}
\newtheorem{remark}[theorem]{Remark}
\newtheorem{corollary}[theorem]{Corollary}
\newtheorem{definition}[theorem]{Definition}
\newtheorem{fact}[theorem]{Fact}
\crefname{equation}{}{}
\crefname{enumi}{Statement}{Statements}
\newtheorem{assumption}[theorem]{Assumption}
\title[Accelerated Methods for Riemannian Min-Max Ensuring Bounded Geometric Penalties]{Accelerated Methods for Riemannian Min-Max Optimization Ensuring Bounded Geometric Penalties}
\newcommand{\norm}[1]{\| #1 \|} 
\newcommand{\norml}[1]{\left\| #1 \right\|} 
\newcommand{\abs}[1]{\lvert #1 \rvert}
\newcommand*\circledaux[1]{\tikz[baseline=(char.base)]{
\node[shape=circle,draw,inner sep=0.8pt] (char) {#1};}}
\NewDocumentCommand{\circled}{m o }{%
\IfNoValueTF{#2}{\circledaux{#1}}{\stackrel{\circledaux{#1}}{#2}}%
}
\newcommand{\defi}{\stackrel{\mathrm{\scriptscriptstyle def}}{=}}
\renewcommand*\R{\mathbb{R}}
\let\epsilon\varepsilon
\DeclareMathOperator*{\argmax}{arg\,max}                
\DeclareMathOperator*{\argmin}{arg\,min}                
\renewcommand\paragraph{\@startsection{paragraph}{4}{\z@}%
                                {0ex \@plus0.5ex \@minus.2ex}%
                                {-1em}%
                                {\normalfont\normalsize\bfseries}}
\newif\ifTODO 
\newcommand{\innp}[1]{\langle #1 \rangle}
\newcommand{\bigo}[1]{O( #1 )}
\newcommand{\bigol}[1]{O\left( #1 \right)}
\newcommand\blfootnote[1]{%
\begingroup
\renewcommand\thefootnote{}\footnote{#1}%
\addtocounter{footnote}{-1}%
\endgroup
}
\algnewcommand{\lst}{\texttt{lst}}
\algnewcommand{\slst}{\texttt{slst}}
\algnewcommand{\SEND}{\textbf{send}}
\newsavebox{\algleft}
\newsavebox{\algright}
\newcounter{algorithmicH}
\let\oldalgorithmic\algorithmic
\renewcommand{\algorithmic}{%
  \stepcounter{algorithmicH}
  \oldalgorithmic}
\renewcommand{\theHALG@line}{ALG@line.\thealgorithmicH.\arabic{ALG@line}}
\begin{document}

\maketitle

\blfootnote{$^\ast$Equal contribution.}
\blfootnote{Most notations in this work have a link to their definitions, using \href{https://damaru2.github.io/general/notations_with_links/}{this code}. For example, if you click or tap on any instance of $\expon{x}(\cdot)$, you will jump to the place where it is defined as the exponential map of a Riemannian manifold.}

\begin{abstract}
In this work, we study optimization problems of the form $\min_x \max_y f(x, y)$, where $f(x, y)$ is defined on a product Riemannian manifold $\mathcal{M} \times \mathcal{N}$ and is $\mux$-strongly geodesically convex (g-convex) in $x$ and $\muy$-strongly g-concave in $y$, for $\mux, \muy \geq 0$. We design accelerated methods when $f$ is $(\Lx, \Ly, \Lxy)$-smooth and $\mathcal{M}$, $\mathcal{N}$ are Hadamard. To that aim we introduce new g-convex optimization results, of independent interest: we show global linear convergence for metric-projected Riemannian gradient descent and improve existing accelerated methods by reducing geometric constants. Additionally, we complete the analysis of two previous works applying to the Riemannian min-max case by removing an assumption about iterates staying in a pre-specified compact set.
\end{abstract}

\clearpage
\tableofcontents
\clearpage

\section{Introduction}\label{sec:introduction}
A wide array of recently developed machine learning methods can be phrased as min-max optimization problems. This has lead to a renewed interest in optimization methods for min-max algorithms \citep{gidel2019variational,mokhtari2020convergence,mokhtari2020unified,lin2020near,wang20improved}.
Applications include generative adversarial networks \citep{goodfellow2014generative}, and adversarial  as well as distributionally robust classifiers \citep{goodfellow2015explaining,duchi2018learning}, among others.
In this work, we study a class of min-max problems over Riemannian manifolds.

Riemannian optimization, the study of optimizing functions defined over Riemannian manifolds, is motivated by the following two reasons.
First, some constrained optimization problems can be expressed as unconstrained optimization problems over Riemannian manifolds.
And second, some non-convex Euclidean problems such as operator scaling \citep{allen-zhu2018operator}, can be rephrased as geodesically convex (g-convex) problems on Riemannian manifolds, which means global minima can be found efficiently despite nonconvexity.
Geodesic convexity is a generalization of convexity to Riemannian manifolds.
Some examples of machine learning tasks which can be phrased as g-convex, g-concave min-max problems are the robust matrix Karcher mean, constrained g-convex optimization on manifolds via the augmented Lagrangian, and more generally the distributionally robust version of any finite-sum, g-convex optimization problem, cf. \citep{zhang2022minimax,jordan2022first}.
Other related Riemannian min-max problems, which do not satisfy the g-convex, g-concave assumption include projection-robust optimal transport \citep{jiang2022riemannian} and geometry-aware robust PCA \citep{zhang2022minimax}.

Another motivation for studying g-convex settings is its potential to shed light on the non-g-convex case.  Indeed, in Euclidean optimization, a deep understanding of convex problems has led to optimal methods for approximating stationary points. In fact a variety of these algorithms run convex methods as subroutines \citep{jin2017escape,carmon2019convex,li2022restarted}.
 \begin{table}[ht!]
  \centering
  \setlength{\fboxsep}{1pt}
     \caption{Summary of the results in this work and comparisons with previous works. See \cref{sec:preliminaries} for the notation or click on it. $\zeta$ and $\delta$ are constants determined by the curvature and diameter of the domain, and are $1$ in the Euclidean case. We use $\kappa_{\lambda}$ for $1/(\lambda\mu)$, where $\lambda$ is a proximal parameter. In column \textbf{K}, H stands for Hadamard, R stands for Riemannian manifolds. Our contributions are \fcolorbox[HTML]{EFEFEF}{EFEFEF}{in gray.}}
     \label{table:comparisons:riemannian_min_max}
     \begin{tabular}{lccc}
     \toprule
     \textbf{Method}   & \textbf{Complexity} & \textbf{Notes} &\textbf{K}\\
     \midrule
     \midrule
     \multicolumn{4}{c}{\textsc{g-convex}} \\
     \midrule
         (\hyperlink{cite.martinez2022accelerated}{MP22}, \PRGD{})  & $\bigotilde{\zeta[D]\Lsmooth/\mu}$ & small diam. $D$ \& $\nabla \f(x^\ast)=0$ & H \\
     (\hyperlink{cite.martinez2022accelerated}{MP22}, Thm 4)  & $\bigotilde{\zetad\sqrt{\kappa_{\lambda}}}$ & accelerated, g-convex & H \\
     \rowcolor[HTML]{EFEFEF}
     \rule{0pt}{12px}\PRGD{}  & $\bigotilde{\zeta[R]\Lsmooth/\mu }$ & global, $R\defi \Lips(\f, \X)/\Lsmooth$ & H \\
     \rowcolor[HTML]{EFEFEF}
     \cref{alg:riemacon_sc_absolute_criterion}  & $\bigotilde{\sqrt{\zetad \kappa_{\lambda}} + \zetad}$ & accelerated, g-convex & H \\
     \midrule
     \multicolumn{4}{c}{\textsc{min-max}} \\
     \midrule
     (\hyperlink{cite.jordan2022first}{JLV22}, \RCEG{}-\SCSC{})  & $\bigotilde{\sqrt{\zetad/\deltad}\cdot \L/\mu+ 1/\deltad}$ & \cellcolor[HTML]{EFEFEF}  & R \\
   \vspace{1px}(\hyperlink{cite.zhang2022minimax}{ZZS22}, \RCEG{}-\CC{})  & $\bigol{\sqrt{\zetad/\deltad}\cdot \L\DExGr^2/\epsilon }$ & \multirow{-2}{*}{\cellcolor[HTML]{EFEFEF}\shortstack{ We remove strong \\ assumptions (see \cref{sec:extra_gradient})} } &  R\\
         \rule{0pt}{3ex}\cellcolor[HTML]{EFEFEF}{\normalfont\hyperref[alg:ramma]{\normalcolor RAMMA}}-\SCSC{} &\cellcolor[HTML]{EFEFEF} $\bigotilde{\zetad^{4.5}\sqrt{\frac{\Lx}{\mux}{+}\frac{\zetad \L\Lxy}{\mux\muy}{+}\frac{\Ly}{\muy}{+}\zetad^2}}$  &\cellcolor[HTML]{EFEFEF}  & \cellcolor[HTML]{EFEFEF}H \\
         \rule{0pt}{3ex}\cellcolor[HTML]{EFEFEF}{\normalfont\hyperref[alg:ramma]{\normalcolor RAMMA}}-\SCC{} &\cellcolor[HTML]{EFEFEF} $\bigotilde{\zetad^{5.5}\frac{\L\D}{\sqrt{\mux\epsilon}}}$  &\cellcolor[HTML]{EFEFEF}&  \cellcolor[HTML]{EFEFEF}H \\
    \vspace{1px}  \rule{0pt}{3ex}\cellcolor[HTML]{EFEFEF}{\normalfont\hyperref[alg:ramma]{\normalcolor RAMMA}}-\CC{} &\cellcolor[HTML]{EFEFEF} $\bigotilde{\zetad^{4.5}\sqrt{ \frac{\L\D^2}{\epsilon}}{+} \zetad^{5.5}\sqrt{\Lxy\L}\frac{\D^2}{\epsilon}}$ & \cellcolor[HTML]{EFEFEF}\multirow{-4.3}{*}{\shortstack{$(\Lx, \Ly, \Lxy)$-smooth case, \\ where $\L{}\defi{}\max\{\Lx, \Ly, \Lxy\}$}} & \cellcolor[HTML]{EFEFEF}H \\[3px]
       \rule{0pt}{3ex}\cellcolor[HTML]{EFEFEF}{\normalfont\hyperref[def:ramma-wc]{\normalcolor RAMMA-WC}} &\cellcolor[HTML]{EFEFEF} $\bigotildel{\zeta^{4}\frac{\L\Deltazero}{\epsilon^2}\sqrt{\zeta +\frac{\L}{\muy}}}$ & \cellcolor[HTML]{EFEFEF} $\varepsilon$-stationary point (\cref{def:min-stat})& \cellcolor[HTML]{EFEFEF}H\\[3px]
     \bottomrule
 \end{tabular}
 \end{table}
\paragraph{Main results.}
Previous works on Riemannian min-max optimization \citep{zhang2022minimax,jordan2022first} proposed and analyzed a Riemannian version of the extragradient method.  These important works have two major limitations:
\begin{itemize}
\item[(a)] They assume that the smoothness constants in $x$ and $y$ are similar, i.e., they consider the $(\L, \L, \L)$-smooth and $(\mu,\mu)$-strongly convex case.  However, in applications, often the smoothness constants in $x$ and $y$ are quite different, and this can be exploited to achieve faster algorithms.
\item[(b)] They rely on the assumption that the iterates of their algorithms stay in some compact set specified \emph{a priori} but without a mechanism to enforce such constraints\footnote{Note that this assumption is not the same as the iterates staying in \emph{some} compact set a posteriori.}.  This property is key to bound penalties in the convergence rates caused by the geometry, as these penalties grow with increasing distances between the iterates and a saddle point.
\end{itemize}
We address both these limitations, ensuring bounding geometric penalties.
Our main contribution is a Riemannian Accelerated Min-Max Algorithm \newtarget{def:acronym_riemannian_accelerated_min_max_algorithm}{(\RAMMA{})}, which works in the finer setting of $(\Lx, \Ly, \Lxy)$-smoothness and $(\mux, \muy)$-strong g-convexity, addressing limitation (a), and enforces the iterates to stay in a predefined compact set via projections, addressing limitation (b).
\RAMMA{} works by reducing the strongly g-convex, strongly g-concave (\SCSC{}) min-max problem to a sequence of strongly g-convex minimization problems.
Also, via reductions to the \SCSC{} case, \RAMMA{} can be used to solve g-convex, g-concave (\CC{}) and strongly g-convex, g-concave (\SCC{}) min-max problems, obtaining the accelerated rate on $\epsilon$ in the \SCC{} case for the first time on Riemannian manifolds.
Furthermore, using a variation of \RAMMA{}, which we refer to as \RAMMAWC{} (weakly-convex), we can extend our analysis to the non-convex, strongly g-convex (\NCSC{}) setting.

In order to implement subroutines used in \RAMMA{}, we prove linear convergence of Projected Riemannian Gradient Descent (\PRGD) for smooth and constrained strongly g-convex functions defined on Hadamard manifolds, which is a fundamental result and an important piece of our min-max algorithm.  The best previous analysis for \PRGD{} \citep{martinez2022accelerated} was limited to the case where the diameter of the domain is sufficiently small ($\zeta[\D] < 2$) and a point with $0$ gradient is inside of this set: we remove these assumptions.
Additionally, we improve the state of the art on accelerated inexact proximal point algorithms for g-convex Hadamard optimization. 
Our new method
improves the rates from $\bigotilde{\zetad /\sqrt{\lambda\mu}}$ to $\bigotilde{\sqrt{\zetad/(\lambda\mu)}+\zetad}$ for $\mu$-strongly g-convex problems, where $\lambda > 0$ is a proximal parameter and $\zeta \geq 1$ is the geometric penalty, cf. \cref{sec:preliminaries}.

To further address limitation (b), we show that, with the right choice of learning rates, the algorithms from \citep{zhang2022minimax,jordan2022first} do stay in a ball around the global saddle point, whose radius is two times the initial distance to the saddle.
\cref{table:comparisons:riemannian_min_max} provides a detailed comparison between our results and prior work.

We provide a general version of Sion's theorem which holds on Riemannian manifolds, and improves on prior  results \citep[Theorem 3.1]{zhang2022minimax} by ensuring the existence of a saddle point $f(\xast, \yast) = \max_{y\in\Y}\min_{x\in \X}f(x, y) = \min_{x\in \X}\max_{y\in\Y}f(x, y)$ under weak assumptions. In particular $\X$ and $\Y$ are not required to be compact in, for example, the strongly g-convex, strongly g-concave case.

\subsection{Related Work}\label{sec:related_work}

\paragraph{Euclidean min-max optimization.}
The extragradient (\newtarget{def:acronym_extra_gradient}{\EG{}}) algorithm is an optimal algorithm for solving min-max problems achieving optimal rates of $\bigo{\L/\mu}$ for $\L$-smooth and $\mu$-\SCSC{} functions.
The convergence of \EG{} was first introduced in \citet{korpelevich1976extragradient} under the assumption that the optimization was performed over compact sets.  It was recently proven in \citet{mokhtari2020convergence,mokhtari2020unified} that this compactness is not needed in either the \SCSC{} case or the \CC{} case.
Recently, some works \citep{lin2020near,wang20improved} have achieved accelerated rates for more fine-grained smoothness and strong-convexity assumptions, where the constants with respect to each variable can differ, among other things, cf. \cref{ass:fct}.
\citet{lin2020near} introduced an algorithm with accelerated rates of $\bigotilde{\sqrt{\frac{\L^2}{\mux\muy}}}$ for $(\mux,\muy)$-\SCSC{} and $\L$-smooth functions.
Later, \citep{wang20improved} proved accelerated rates of $\bigotilde{\sqrt{\frac{\Lx}{\mux}+\frac{L\Lxy}{\mux\muy}+\frac{\Ly}{\muy}}}$ for the more general case of $(\Lx,\Ly,\Lxy)$-smooth and $(\mux,\muy)$-\SCSC{} functions.
These accelerated methods reduce the solution of the \SCSC{} min-max problem to a sequence of better conditioned strongly convex minimization problems carried out by variants of accelerated proximal point algorithms.

\paragraph{Riemannian min-max optimization.}
There are several works studying algorithms for solving monotone variational inequalities on compact subsets of Hadamard manifolds, which encompass \CC{} min-max problems as a special case.
The algorithms presented in these works are variations of the inexact proximal point algorithm \citep{bento2021inexact,li2009monotone} and the \EG{} algorithm \citep{batista2018extragradienttype,ferreira2005singularities} and come with asymptotic convergence guarantees.
Two recent works \citep{zhang2022minimax,jordan2022first} based on a variation of the Euclidean \EG{} called Riemannian corrected extragradient (\newtarget{def:acronym_riemannian_corrected_extra_gradient}{\RCEG{}}) have shown convergence rates for smooth, unconstrained min-max problems on Riemannian manifolds of bounded sectional curvature that match the Euclidean \EG{} up to geometric constants for the \CC{} \citep{zhang2022minimax} and \SCSC{} \citep{jordan2022first} case.
The convergence guarantees of \RCEG{} assume that the iterates stay in some pre-specified, compact set without any mechanism to enforce this.
In contrast, the previously mentioned works \citep{bento2021inexact,li2009monotone,batista2018extragradienttype,ferreira2005singularities}, treat Riemannian min-max problems over compact sets and explicitly enforce that the iterates stay inside these.
\citet{jordan2022first} additionally analyze non-smooth min-max problems in the \CC{} and \SCSC{} case, as well as stochastic versions of both the smooth and non-smooth case.
\citet{han2022riemannian} introduce a second-order min-max algorithm for Riemannian manifolds aimed at minimizing the gradient norm of the $\f$.
\citet{huang2020gradient} studies min-max problems in a Euclidean-Riemannian product space that are non-convex and strongly g-concave.

\paragraph{Riemannian g-convex minimization.} Optimization of g-convex functions with rates of convergence has been studied more extensively than min-max problems. \citet{zhang2016first} provided several first-order deterministic and stochastic methods applying to smooth or non-smooth problems. A long line of works have tackled the question of acceleration on Riemannian manifolds, see \citep{martinez2022accelerated} for an overview. We improve over this work by reducing geometric penalties in the rates, among other things.
Furthermore, some works studied adaptive methods \citep{kasai2019riemannian}, as well as projection-free \citep{weber2017frank,weber2019nonconvex}, saddle-point-escaping \citep{criscitiello2019efficiently,sun2019escaping,zhou2019faster, criscitiello2022accelerated}, and variance-reduced methods \citep{sato2019riemannian,kasai2018riemannian,zhang2016fast}. Some lower bounds were provided by \citet{hamilton2021nogo,criscitiello2022negative}.
Analyses of methods for non-smooth problems that work with in-manifold constraints via projection oracles were provided in \citet{zhang2016first} and further studied in other works, such as \citet{wang2021no}. For the smooth case with in-manifold constraints, \citet{martinez2022accelerated} provided an analysis of \PRGD{} under restrictive assumptions: the diameter $\D$ of the feasible set is required to be small enough so that $\zeta[\D] < 2$ and the global solution is required to be inside of the set. To the best of our knowledge, there is no other method or analysis for the smooth setting using a metric-projection oracle. We remove both of these assumptions and show that \PRGD{} enjoys linear convergence rates for strongly g-convex and smooth problems with a projection oracle to an arbitrary g-convex constraint.

\subsection{Preliminaries: Definitions and Notations} \label{sec:preliminaries}
The following definitions in Riemannian geometry cover the concepts used in this work, cf. \citep{petersen2006riemannian, bacak2014convex}. A Riemannian manifold $(\MOnly,\mathfrak{g})$ is a real $C^{\infty}$ manifold $\MOnly$ equipped with a metric $\mathfrak{g}$, which is a smoothly varying inner product. For $x \in \MOnly$, denote by $\newtarget{def:tangent_space}{\Tansp{x}}\MOnly$ the tangent space of $\MOnly$ at $x$. For vectors $v,w  \in \Tansp{x}\MOnly$, we use $\innp{v,w}_x$ for the inner product of the metric, $\norm{v}_x \defi \sqrt{\innp{v,v}_x}$ for the corresponding norm, and we omit $x$ when it is clear from context. A geodesic of length $\ell$ is a curve $\gamma : [0,\ell] \to \MOnly$ of unit speed that is locally distance minimizing. 

A set $\XX$ is said to be g-convex if every two points are connected by a geodesic that remains in $\XX$.  The set $\XX$ is said to be uniquely geodesic if every two points are connected by one and only one geodesic. The exponential map $\newtarget{def:riemannian_exponential_map}{\expon{x}} : \Tansp{x}\MOnly\to \MOnly$ takes a point $x\in\MOnly$, and a vector $v\in \Tansp{x}\MOnly$ and returns the point $y$ we obtain from following the geodesic from $x$ in the direction $v$ for length $\norm{v}$, if this is possible. We denote its inverse by $\exponinv{x}(\cdot)$, which is well defined for uniquely geodesic manifolds, so we have $\expon{x}(v) = y$ and $\exponinv{x}(y) = v$.  Note $\dist(x, y) = \norm{\exponinv{x}(y)}$. We exclusively work in uniquely geodesic manifolds, as an open hemisphere. The manifold $\MOnly$ comes with a natural parallel transport of vectors between tangent spaces, that formally is defined from a way of identifying nearby tangent spaces, known as the Levi-Civita connection $\nabla$ \citep{levi1977absolute}. Throughout this work, we use $\newtarget{def:parallel_transport}{\Gamma{y}{x}}(v) \in \Tansp{x}\M$ to denote this parallel transport for a vector $v$ in $T_y M$ from $\Tansp{y}\M$ to $\Tansp{x}\M$ along the unique geodesic that connects $y$ to $x$.  We write $\Gamma{}{x}(v)$ if $y$ is clear from context. As for all of the related works in \cref{sec:related_work}, we assume that we can compute the $\expon{x}(\cdot)$, $\exponinv{x}(\cdot)$ and $\Gamma{}{x}(\cdot)$.
We use $\newtarget{def:injectivity_radius}{\inj}(x)$ to denote the injectivity radius at $x$, that is, the largest radius $r$ of a ball $B(0, r) \subseteq \Tansp{x}\MOnly$ for which $\expon{x}$ is a diffeomorphism.

We use $\newtarget{def:closed_ball}{\ball}(x, R)$ to denote a closed Riemannian ball with center $x$ and radius $R$, and denote by $\newtarget{def:distance}{\dist}(x,y)$ the distance between $x$ and $y$,
A map $\newtarget{def:projection_operator}{\proj}:\M \to \X$ is a metric-projection operator if it satisfies $\dist(x, \proj(x)) \leq \dist(x, z)$ for all $z \in \X$. For a uniquely geodesic g-convex set $\mathcal{Z}$ a point $z \in \mathcal{Z}$ and a closed Riemannian ball $\X \defi \ball(x, D) \subset \mathcal{Z} \subset\M$ we have $\proj(z)=z$ if $z \in \X$ and $\proj(z) = \expon{x}(D\exponinv{x}(z)/\norm{\exponinv{x}(z)})$ is a relatively cheap metric-projection operator, cf. \citep{martinez2022accelerated}.
As in the Euclidean space, a differentiable function is $\newtarget{def:L_smooth}{\L}$-smooth and $\mu$-strongly g-convex in a uniquely geodesic g-convex set $\XX$, if for any two points $x, y \in \XX$ we have, respectively:
    \[
        \f(y) \leq \f(x) + \innp{\nabla \f(x), \exponinv{x}(y)} + \frac{\L}{2}\dist^2(x,y) \text{ and }  \f(y) \geq \f(x) + \innp{\nabla \f(x), \exponinv{x}(y)} + \frac{\mu}{2}\dist^2(x,y).
    \]
    The latter property is equivalent to $$f(\expon{x}(t\cdot\exponinv{x}(x) + (1-t)\cdot\exponinv{x}(y))) \leq tf(x) + (1-t)f(y) - \frac{t(1-t)\mu}{2}\dist^2(x,y),$$ for $t\in[0,1]$ and also applies to non-differentiable functions. 
    The function is said to be g-convex if $\mu=0$ and  $(-\mu)$-weakly g-convex if $\mu<0$. Further, it is $\mu$-strongly g-concave if $-f$ is $\mu$-strongly convex. The function $f$ has $\bar{L}_x$-Lipschitz gradients in $\X$ if for all $x, y \in \X$ we have $\norm{\nabla f(x) -\Gamma{}{x} \nabla f(y)} \leq \bar{L}_x \dist(x, y)$. A function $f$ is $\newtarget{def:Lipschitz_constant}{\Lips}(f,\XX)$-Lipschitz in $\XX$ if $|f(x) - f(y)| \leq \Lips(f, \XX) \dist(x, y)$ for all $x, y \in \XX$, where $\XX$ is omitted if clear from context. 
    
    A function $f(x, y)$ is $(\mux, \muy)$-\newtarget{def:acronym_strongly_g_convex_strongly_g_concave}{\SCSC{}} in $\X\times\Y$ if it is $\mux$-strongly g-convex in $\X$ and $\muy$-strongly g-concave in $\Y$. If $\muy=0$ we say the function is $\mux$-\newtarget{def:acronym_strongly_g_convex_g_concave}{\SCC{}}, if it is $\mux=\muy=0$, then it is \newtarget{def:acronym_g_convex_g_concave}{\CC{}} and if it is not necessarily g-convex and $\muy$-strongly g-concave it is \newtarget{def:acronym_non_g_convex_strongly_g_concave}{\NCSC{}}
    For a function $f:\M\times\NN \to \R$ that is \CC{} in $\X\times\Y$, a point $(\hat{x}, \hat{y})\in\X\times\Y$ is an $\epsilon$-saddle point of $f$ in $\X \times \Y$ if $\max_{y\in\Y} f(\hat{x}, y) - \min_{x\in\X} f(x,\hat{y}) \leq \epsilon$, assuming the $\max$ and $\min$ exist. We define it to be an $\epsilon$-saddle point \textit{in distance} if $\dist^2(\hat{x}, \xast) + \dist^2(\hat{y}, \yast)  \leq \epsilon$, where $(\xast, \yast)$ is a $0$-saddle point in $\X\times\Y$ that satisfies $f(\xast, \yast) = \min_{x\in\X}\max_{y\in\Y} f(x, y) = \max_{y\in\Y}\min_{x\in\X} f(x, y)$, whose existence we can guarantee under mild assumptions, cf. \cref{thm:sion}. We do not specify the saddle point is taken with respect to $\X\times\Y$ when it is clear from context.

The sectional curvature of a manifold $\mathcal{M}$ at a point $x\in\mathcal{M}$ for a $2$-dimensional space $V\subset T_x\M$ is the Gauss curvature of $\expon{x}(V)$ at $x$. Hadamard manifolds are complete simply-connected Riemannian manifolds of non-positive sectional curvature, like the hyperbolic space or the space of $n\times n$ symmetric positive definite matrices with the metric $\innp{X, Y}_{A} \defi \operatorname{Tr}(A^{-1}XA^{-1}Y)$ where $X, Y$ are in the tangent space of $A$. They are uniquely geodesic and $\expon{x}(\cdot)$ is well defined on them for every $v\in \Tansp{x}\MOnly$. 
Given $R>0$, and a manifold of sectional curvature bounded in $[\kmin, \kmax]$, we define the geometric constants $\newtarget{def:zeta}{\zeta[R]} \defi R\sqrt{\abs{\kmin}}\coth(R\sqrt{\abs{\kmin}}) \geq 1$ if $\kmin < 0$  and $\zeta[R]\defi 1$ otherwise, and $\newtarget{def:delta}{\delta[R]} \defi R\sqrt{\kmax}\cot(R\sqrt{\kmax})\leq 1$ if $\kmax > 0$ and $\delta[R]\defi 1$ otherwise. For a set $\X$ of diameter $\D$, we use $\zeta[][\X] \defi \zeta[\D]$ or just $\newtarget{def:zeta_without_subindex}{\zetad}$ if $\X$ is clear from context. For the product $\X \times\Y$, we abuse the notation and use $\zeta \defi \max\{\zeta[][\X], \zeta[][\Y]\}$. Similarly for the notation $\delta[][\X]$ and $\newtarget{def:delta_without_subindex}{\deltad}$. These constants appear in Riemannian optimization analysis via the Riemannian cosine law \cref{lemma:cosine_law_riemannian} or other similar inequalities. 
The big-$O$ notation $\newtarget{def:big_o_tilde}{\bigotilde{\cdot}}$ omits $\log$ factors.

\subsection{Problem Setting} \label{sec:setting}

In this work, $\newtarget{def:M_manifold}{\M}$ and $\NN$ always represent two uniquely geodesic finite-dimensional Riemannian manifolds of sectional curvature bounded by $[\newtarget{def:minimum_sectional_curvature}{\kmin}, \newtarget{def:maximum_sectional_curvature}{\kmax}]$, and $\newtarget{def:geodesically_convex_feasible_region}{\X}\subset \M$, $\newtarget{def:geodesically_convex_feasible_region_Y}{\Y}\subset\NN$ are compact g-convex subsets for which we have access to metric-projection oracles.
We consider the following optimization problem
\begin{equation}\label{eq:constrained_saddle_point_problem}
  \min_{x\in \X}\max_{y\in \Y}\f(x,y),
\end{equation}
where $\newtarget{def:riemannian_function_f}{\f}:\M\times\NN \to\R$ denotes a function with a saddle point at $(\newtarget{def:x_part_of_optimum_saddle_point}{\xast},\newtarget{def:y_part_of_optimum_saddle_point}{\yast})\in \X\times \Y$, that satisfies $\nabla \f(\xast, \yast) = 0$.
Let $(\xk[0], \yk[0]) \in \X\times\Y$ be an initial point.
Define $\newtarget{def:diameter_of_geodesically_convex_feasible_region}{\D} \defi \max\{\operatorname{diam}(\X), \operatorname{diam}(\Y)\}$.
Our aim is to compute an $\newtarget{def:accuracy_epsilon}{\epsilon}$-saddle point of $\f$ over $\X\times\Y$, where $\f$ satisfies the following \cref{ass:fct}, with constants $\newtarget{def:strong_g_convexity_mu_x}{\mux}, \newtarget{def:strong_g_convexity_mu_y}{\muy}, \newtarget{def:smoothness_Lx}{\Lx}, \newtarget{def:smoothness_Ly}{\Ly}, \newtarget{def:smoothness_Lxy}{\Lxy}$.  We also assume without loss of generality that $\Lx = \Ly$, and $\muy \leq \mux$. Indeed, we can rescale the manifolds to obtain $\Lx = \Ly$ which keeps $\Lxy$, $\Lx/\mux$, $\Ly/\muy$, $\mux\muy$ constant, as well as geometric penalties depending on $\zetad$, see \cref{sec:rescaling_manifolds}. Also, if $\muy > \mux$, we can work with the function $h(x, y) = - \f(y, x)$.
We write $\newtarget{def:max_of_Lx_Ly_and_Lxy}{\L}\defi\max\{\Lx,\Ly,\Lxy\}$, $\kappa_x \defi \Lx / \mux$, and $\kappa_y \defi \Ly / \muy$.  We say a function is $(\bar{L}_x, \bar{L}_y, \bar{L}_{xy})$-smooth if it satisfies \cref{item:LxLy,item:Lxy} below.
\begin{assumption}\label{ass:fct}
    Let $\M$, $\NN$, $\X$, $\Y$ be as above, and let $g:\M\times \NN\rightarrow \mathbb{R}$ be differentiable. For any $(x, y) \in \X\times\Y$, it holds:
  \begin{enumerate}
      \item $g$ is $(\bar{\oldmu}_x, \bar{\oldmu}_y)$-\SCSC{} in $\X \times \Y$.
    \item $\nabla_x g(\cdot,y)$ is $\bar{L}_x$-Lipschitz in $\X$ and $\nabla_y g(x,\cdot)$ is $\bar{L}_y$-Lipschitz in $\Y$.\label{item:LxLy}
    \item $\nabla_y g(\cdot,y)$ and $\nabla_x g(x,\cdot)$ are $\bar{L}_{xy}$-Lipschitz in $\X$ and $\Y$, respectively. \label{item:Lxy}
  \end{enumerate}
\end{assumption}

\section{Generalized Riemannian Sion's Theorem}\label{sec:sions_thm}
We first generalize Sion's theorem \citep{sion1958general,zhang2022minimax} to Riemannian manifolds under mild assumptions, which in particular are satisfied if $f$ is \SCSC{}, ensuring the existence of a saddle point in this case.
For Riemannian manifolds, this theorem generalizes \citep{zhang2022minimax} which required the sets $\X$, $\Y$ to be compact.
\begin{definition}
    A function $f: \X\times \Y\rightarrow \mathbb{R}$ is called inf-sup-compact at $(\tilde{x},\tilde{y})\in \X\times \Y$ if the sublevel sets of $f(\cdot,\tilde{y})$  and the superlevel sets of $f(\tilde{x},\cdot)$ are compact. A function is quasi g-convex (resp. quasi g-concave) if their level sets are g-convex (resp. g-concave).
\end{definition}
\begin{theorem}[Generalized Sion's Theorem]\linktoproof{thm:sion}\label{thm:sion}
  Let $\M$ and $\NN$ be finite-dimensional Riemannian manifolds.
  Further, let $\X\subset \M$ and $\Y\subset \NN$ be g-convex and uniquely geodesic subsets.
  Let $f:\X\times \Y\rightarrow \mathbb{R}$ be a function such that $f(\cdot,y)$ is lower semicontinuous and quasi g-convex for all $y\in \Y$, $f(x,\cdot)$ is upper semicontinuous and quasi g-concave for all $x\in \X$ and that is inf-sup compact for some $(x_1,y_1)\in \X\times \Y$.
  Then we have
  \begin{equation*}
  \min_{x\in \X}\max_{y\in \Y}f(x,y)=  \max_{y\in \Y}\min_{x\in \X}f(x,y).
  \end{equation*}
\end{theorem}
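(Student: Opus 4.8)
The plan is to adapt the elementary (Komiya-type) proof of Sion's minimax theorem; the one genuinely new ingredient is a finite-intersection argument that lets the inf-sup-compactness hypothesis substitute for the compactness of $\X$ and $\Y$ assumed in \citep{zhang2022minimax}. First I would dispose of the easy direction: $\sup_{y\in\Y}\inf_{x\in\X}f(x,y)\le\inf_{x\in\X}\sup_{y\in\Y}f(x,y)$ holds for any function because $\inf_{x'}f(x',y)\le f(x,y)\le\sup_{y'}f(x,y')$ for all $x,y$. It then remains to prove the reverse inequality between the values, and afterwards to check attainment so that $\inf/\sup$ may be written as $\min/\max$.

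For the reverse inequality I argue by contradiction: assume $\sup_{y}\inf_{x}f(x,y)<c<\inf_{x}\sup_{y}f(x,y)$ for some $c$. For $y\in\Y$ put $A_y\defi\{x\in\X:f(x,y)\le c\}$, which is g-convex (quasi g-convexity of $f(\cdot,y)$) and closed (lower semicontinuity of $f(\cdot,y)$). Since $c<\inf_{x}\sup_{y}f$, no single $x$ lies in every $A_y$, so $\bigcap_{y\in\Y}A_y=\emptyset$. The set $A_{y_1}$ is a sublevel set of $f(\cdot,y_1)$, hence compact by inf-sup-compactness (and if it is empty we are already done), so the finite intersection property applied to the closed family $\{A_{y_1}\cap A_y\}_{y\in\Y}$ inside the compact set $A_{y_1}$ yields $y_1,\dots,y_n\in\Y$ with $\bigcap_{i=1}^{n}A_{y_i}=\emptyset$, i.e.\ $\inf_{x\in\X}\max_{1\le i\le n}f(x,y_i)\ge c$. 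Thus it suffices to prove the finite minimax inequality
\[
\inf_{x\in\X}\ \max_{1\le i\le n}f(x,y_i)\ \le\ \sup_{y\in\Y}\ \inf_{x\in\X}f(x,y),
\]
whose right-hand side is strictly less than $c$: contradiction.

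This finite inequality I would prove by induction on $n$, as in the Euclidean case: for $n=1$ it holds because $y_1\in\Y$, and the inductive step passes from $n$ to $n-1$ constraints by replacing $y_{n-1},y_n$ with a carefully chosen point on the geodesic between them, invoking quasi g-concavity of $f(x,\cdot)$ along that geodesic together with lower semicontinuity and g-convexity of the relevant sublevel sets in $\X$; since this uses only geodesics between two points and $\X,\Y$ are g-convex and uniquely geodesic, the classical argument transfers, and I would cite \citep{zhang2022minimax} for this combinatorial core. Once the values agree, call the common value $v$: the map $x\mapsto\sup_{y}f(x,y)$ is lower semicontinuous with sublevel sets inside the compact sublevel sets of $f(\cdot,y_1)$, so it attains its infimum at some $x^\ast$, and symmetrically $y\mapsto\inf_{x}f(x,y)$ attains its supremum at some $y^\ast$; then $f(x^\ast,y^\ast)\le\sup_{y}f(x^\ast,y)=v=\inf_{x}f(x,y^\ast)\le f(x^\ast,y^\ast)$, so $(x^\ast,y^\ast)$ is a saddle point and both iterated extrema are attained there.

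The hard part will be the finite minimax inequality, and specifically making sure that the classical proof uses no structure beyond one-dimensional geodesic convexity — no Helly- or KKM-type lemma and no fixed-point theorem on a compact convex body — because g-convex hulls of finitely many points on a manifold are in general neither compact nor closed. Komiya's proof has exactly this feature, which is why I route everything through single geodesics and the finite-intersection reduction rather than through compactness of the full domains; the remaining work is routine semicontinuity bookkeeping.
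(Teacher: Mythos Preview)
Your proposal is correct and follows essentially the same route as the paper: the easy inequality, then a contradiction argument that uses inf-sup-compactness of $f(\cdot,y_1)$ to extract finitely many $y_1,\dots,y_n$ (the paper phrases this via open covers, you via the finite intersection property---these are dual), followed by the Komiya-type geodesic induction on $\Y$ (the paper isolates this as a separate lemma and proves the two-point case explicitly), and finally attainment via the same semicontinuity-plus-compact-sublevel-sets argument.
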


\begin{corollary}
    For $\mux, \muy > 0$, a $(\mux, \muy)$-\SCSC{} function $f: \X \times \Y \to \R$ is inf-sup compact for any point in $\X\times \Y$. If we have an $f: \X \times \Y \to \R$ that is \CC{}, then if $\X$, $\Y$ are compact then $f(x, y) + \indicator{\X}(x) - \indicator{\Y}(y)$ is inf-sup compact for any point, where $\indicator{\mathcal{C}}$ denotes the indicator function of a set $\mathcal{C}$, which is $0$ if $x \in \mathcal{C}$ and it is $+\infty$ otherwise. Similarly, if the function is $(\mux, 0)$-\SCC{} and $\Y$ is compact then $f(x, y) - \indicator{\Y}(y)$ is inf-sup compact for any point.
\end{corollary}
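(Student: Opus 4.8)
The plan is to verify, for each of the three cases and an arbitrary base point $(\tilde x,\tilde y)\in\X\times\Y$, the two conditions in the definition of inf-sup-compactness: that a sublevel set of $x\mapsto f(x,\tilde y)$ (augmented by $\indicator{\X}$ when the statement says so) is compact, and that a superlevel set of $y\mapsto f(\tilde x,y)$ (augmented by $-\indicator{\Y}$ when stated) is compact. Since a compact set is closed, I will implicitly use that $f(\cdot,y)$ is lower semicontinuous and $f(x,\cdot)$ is upper semicontinuous — automatic because $f$ is differentiable in our setting (cf. \cref{ass:fct}) — and that $\X,\Y$ are closed subsets of complete manifolds (true in our applications, where $\M,\NN$ are Hadamard), so that closed and bounded subsets of $\X$ or $\Y$ are compact by Hopf--Rinow.

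The one substantive ingredient is a lemma I would state and prove first: if $g\colon\X\to\R$ is $\mu$-strongly g-convex with $\mu>0$ on a closed, uniquely geodesic, g-convex set, then every sublevel set $S_\alpha\defi\{x\in\X:g(x)\le\alpha\}$ is bounded, hence (being closed) compact. When $g$ is differentiable this is immediate: for any $x_0\in S_\alpha$ (if $S_\alpha=\emptyset$ there is nothing to prove), strong g-convexity at $x_0$ and Cauchy--Schwarz give $g(x)\ge g(x_0)-\norm{\nabla g(x_0)}\dist(x_0,x)+\tfrac{\mu}{2}\dist^2(x_0,x)$, so $g(x)\le\alpha$ forces $\dist(x_0,x)$ below the larger root of the resulting quadratic in $\dist(x_0,x)$, a bound independent of $x$. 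For the general (possibly non-differentiable) case I would argue by contradiction with the midpoint characterization of strong g-convexity from \cref{sec:preliminaries}: if $S_\alpha$ were unbounded, pick $x_n\in S_\alpha$ with $r_n\defi\dist(x_0,x_n)\to\infty$, fix $\rho<\inj(x_0)$ so that $\ball(x_0,\rho)$ is compact, let $y_n$ be the point of the geodesic from $x_0$ to $x_n$ at distance $\rho$ from $x_0$ (for $n$ with $r_n>\rho$), and apply $\mu$-strong g-convexity along that geodesic to get $g(y_n)\le(1-\rho/r_n)g(x_0)+(\rho/r_n)g(x_n)-\tfrac{\mu}{2}\tfrac{\rho}{r_n}\bigl(1-\tfrac{\rho}{r_n}\bigr)r_n^2$; since $g(x_0),g(x_n)\le\alpha$, the right-hand side tends to $-\infty$, contradicting that $g$ is bounded below on the compact set $\ball(x_0,\rho)\cap\X$ (by lower semicontinuity).

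With the lemma in hand the three cases are short. In the $(\mux,\muy)$-\SCSC{} case, $f(\cdot,\tilde y)$ is $\mux$-strongly g-convex, so its sublevel sets are compact by the lemma, and $-f(\tilde x,\cdot)$ is $\muy$-strongly g-convex, so the superlevel sets of $f(\tilde x,\cdot)$ (being the sublevel sets of $-f(\tilde x,\cdot)$) are compact; hence $f$ is inf-sup compact at every $(\tilde x,\tilde y)\in\X\times\Y$. In the \CC{} case with $\X,\Y$ compact, write $\hat f\defi f+\indicator{\X}-\indicator{\Y}$; for $\tilde y\in\Y$ the sublevel set of $\hat f(\cdot,\tilde y)=f(\cdot,\tilde y)+\indicator{\X}$ at level $\alpha$ is $\{x\in\X:f(x,\tilde y)\le\alpha\}$, a closed subset of the compact set $\X$, hence compact; symmetrically, the superlevel sets of $\hat f(\tilde x,\cdot)=f(\tilde x,\cdot)-\indicator{\Y}$ are closed subsets of the compact set $\Y$. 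In the $(\mux,0)$-\SCC{} case with $\Y$ compact, write $\hat f\defi f-\indicator{\Y}$; the sublevel sets of $\hat f(\cdot,\tilde y)=f(\cdot,\tilde y)$ are compact by the lemma applied to the $\mux$-strongly g-convex $f(\cdot,\tilde y)$, and the superlevel sets of $\hat f(\tilde x,\cdot)$ are closed subsets of the compact $\Y$, hence compact.

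I expect the only real obstacle to be the lemma: phrasing the non-differentiable argument cleanly and making sure "closed and bounded" genuinely upgrades to "compact", which needs completeness (available since the ambient manifolds are Hadamard). The indicator bookkeeping in the \CC{} and \SCC{} cases is routine; I would additionally remark in passing that the augmented functions $\hat f$ stay quasi g-convex in $x$, quasi g-concave in $y$, and appropriately semicontinuous (using that $\X,\Y$ are closed and g-convex), so that \cref{thm:sion} applies to $\hat f$ and, together with the compactness just established, yields an actual saddle point attaining $\min_{x\in\X}\max_{y\in\Y}f=\max_{y\in\Y}\min_{x\in\X}f$.
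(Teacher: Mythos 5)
Your proof is correct. The paper states this corollary without proof (it is presented as an immediate consequence of the preceding definition and Sion's theorem), so there is no paper argument to compare against; what you wrote supplies the missing argument, and it is the natural one. The substantive ingredient is exactly the lemma you isolate — that a $\mu$-strongly g-convex, $\mu>0$, function on a closed g-convex set of a complete manifold has bounded (hence compact) sublevel sets — and both of your arguments for it are sound: the quadratic-growth bound $g(x)\ge g(x_0)-\|\nabla g(x_0)\|\dist(x_0,x)+\tfrac{\mu}{2}\dist^2(x_0,x)$ in the differentiable case, and the geodesic-rescaling contradiction via the midpoint inequality in general. You are also right to make the closedness-of-$\X,\Y$ and completeness-of-$\M,\NN$ hypotheses explicit: the corollary as literally stated omits them, but they are needed for the SCSC case with non-compact $\X,\Y$ (a bounded sublevel set in a non-complete manifold need not be precompact), and they do hold in the paper's Hadamard setting. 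The indicator bookkeeping for the CC and SCC cases is routine and correctly handled. The only thing I would trim is the non-differentiable branch of your lemma: in the paper's setting $f$ satisfies \cref{ass:fct} and is differentiable, so the Cauchy–Schwarz/quadratic argument alone suffices; the general-case argument is correct but does carry an extra implicit assumption (lower semicontinuity of $g$, which strong g-convexity alone does not give) that you should state in the lemma's hypotheses if you keep it.
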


\section{Riemannian Corrected Extra-Gradient}\label{sec:extra_gradient}
Two previous works provide rates of convergence for Riemannian smooth min-max problems \citep{zhang2022minimax,jordan2022first} by using \RCEG{}, a Riemannian adaptation of the extragradient method, see \cref{alg:rceg}. Specifically, \citet[Theorem 4.1]{zhang2022minimax} consider \CC{} functions and \citet[Theorem 3.1]{jordan2022first} consider \SCSC{} functions. They work in the setting of \cref{sec:setting} with $\mux=\muy$, and $\L=\Lx=\Ly=\Lxy$, which is equivalent to regular gradient Lipschitzness with constant $\L$. Possible different condition numbers or different Lipschitz assumptions are not exploited by \RCEG{}, in contrast to our algorithm \RAMMA{}.

The stepsize $\etaEG$ of \RCEG{} depends on the geometric constants $\zeta[\DExGr]$ and $1/\delta[\DExGr]$ arising from the Riemannian cosine law \cref{lemma:cosine_law_riemannian}, which are larger the farther the iterates are from each other and from the global saddle. Thus, the current theory does not yield a complete algorithm unless we can guarantee that given a learning rate $\etaEG$ depending on the diameter $\DExGr$ of a set $\X$ specified a priori, the iterates of \RCEG{} stay in $\X$. \citet{zhang2022minimax} assumes the above occurs, and \citep{jordan2022first} also assumes this implicitly. Because learning rates depend on the diameter of this set, this assumption is not the same as the iterates staying in some compact set a posteriori.  

The reliance on this assumption is a recurring issue in Riemannian algorithms, and multiple other works also make this assumption in other contexts, as detailed by \citet{hosseini2020recent}.
Prior to our work, no complete algorithm was given and the precise convergence rate was unknown, since the geometric constants were not shown to be bounded by problem parameters. 

In the following \cref{lem:it-rceg}, we show that for $\DExGr$ depending on the initial distance to the saddle, for both the \CC{} and \SCSC{} cases, the iterates of \RCEG{} stay in the closed ball $\ball((\xast, \yast), \DExGr/2)$ around the saddle point, satisfying the assumption. We also note that not relying on this assumption is one of the main difficulties in the design of our algorithm \RAMMA{}.

\begin{proposition}[Riemannian Corrected Extra-Gradient]\linktoproof{lem:it-rceg}\label{lem:it-rceg}
    Let $\M$ and $\NN$ be uniquely geodesic Riemannian manifolds of sectional curvature bounded by $[\kmin,\kmax]$.
    Let $f: \M\times \NN\rightarrow \mathbb{R}$ be a function with a saddle point at $(x^{*},y^{*})$, and let $\newtarget{def:distance_to_saddle_extra_grad}{\Dcal}^2\defi \dist^2(x_0,x^{*})+\dist^2(y_{0},y^{*})$, and define $\newtarget{def:diameter_of_geodesically_convex_region_for_extra_grad}{\DExGr}$ as any upper bound to $4\Dcal$. Assume $\mathcal{B} \defi \ball((\xast, \yast), 2\Dcal)\subset \M\times\NN$. If $f$ is $(\L, \L, \L)$-smooth and $(\mu, \mu)$-\SCSC{} in $\mathcal{B}$, then for \cref{alg:rceg}, the primary iterates $(x_t,y_t)$ and the secondary iterates $(w_t,z_t)$ satisfy:
  \begin{equation*}
      \dist^2(x_t,x^{*})+\dist^2(y_{t},y^{*})\le  \Dcal^2 \quad\quad \text{ and } \quad \quad \dist^2(w_t,x^{*})+\dist^2(z_{t},y^{*})\le  4\Dcal^2,
 \end{equation*}
    and we obtain an $\epsilon$-saddle point in the $\mu$-\SCSC{} and the \CC{} cases in $T$ iterations for, respectively:
\begin{equation*}
      T=\bigotildel{\frac{\L}{\mu}\sqrt{\frac{\zeta[\DExGr]}{\delta[\DExGr]}}+\frac{1}{\delta[\DExGr]}}  \quad \text{ and }\quad\quad \quad T=\bigol{\frac{\L\Dcal^2}{\epsilon}\sqrt{\frac{\zeta[\DExGr]}{\delta[\DExGr]}}}.
\end{equation*}
\end{proposition}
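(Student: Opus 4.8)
The plan is to prove the two distance bounds by induction on $t$, and then feed the resulting a priori bound on the iterates back into the known convergence analyses of \RCEG{} from \citep{zhang2022minimax,jordan2022first} to extract the stated iteration complexities. The point is that once we know all iterates stay in $\mathcal{B} = \ball((\xast,\yast), 2\Dcal)$, the geometric constants appearing in those analyses can be taken to be $\zeta[\DExGr]$ and $\delta[\DExGr]$ (with $\DExGr$ an upper bound on $4\Dcal$), so that the smoothness and strong convexity hypotheses are available on the whole region where the iterates live, and the existing rates apply verbatim. So the real work is the invariant $\dist^2(x_t,x^\ast)+\dist^2(y_t,y^\ast)\le \Dcal^2$.

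First I would set up the induction. The base case $t=0$ is just the definition of $\Dcal$. For the inductive step, assume $\dist^2(x_t,x^\ast)+\dist^2(y_t,y^\ast)\le \Dcal^2$, i.e. $(x_t,y_t)\in\ball((\xast,\yast),\Dcal)$. The extragradient step first produces the secondary iterate $(w_t,z_t)$ by a gradient step from $(x_t,y_t)$; since $\nabla f(\xast,\yast)=0$ and $f$ is $\L$-smooth on $\mathcal{B}$, the gradient at $(x_t,y_t)$ has norm at most $\L\dist((x_t,y_t),(\xast,\yast))\le \L\Dcal$, and with the prescribed stepsize $\etaEG = \Theta(\delta[\DExGr]/\L)$ (the stepsize from \citep{zhang2022minimax,jordan2022first}) the displacement from $(x_t,y_t)$ to $(w_t,z_t)$ has length $O(\Dcal)$; being careful with constants this gives $\dist^2(w_t,x^\ast)+\dist^2(z_t,y^\ast)\le 4\Dcal^2$, i.e. $(w_t,z_t)\in\mathcal{B}$, which is exactly the second claimed bound and also certifies that evaluating $\nabla f$ at $(w_t,z_t)$ is legitimate. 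Then for the primary update $(x_{t+1},y_{t+1})$ I would invoke the one-step contraction/non-expansion estimate that is the heart of the \RCEG{} analysis: using the Riemannian cosine law \cref{lemma:cosine_law_riemannian} together with the corrected-extragradient identity and monotonicity of the saddle operator (which follows from $f$ being \SCSC{}, or \CC{}, on $\mathcal{B}$), one gets
\begin{equation*}
\dist^2(x_{t+1},x^\ast)+\dist^2(y_{t+1},y^\ast) \le \dist^2(x_t,x^\ast)+\dist^2(y_t,y^\ast) - (\text{nonnegative terms}),
\end{equation*}
so the left side is $\le \Dcal^2$, closing the induction. In the \SCSC{} case the nonnegative terms include a strongly-monotone contribution $2\etaEG\mu(\dist^2(x_{t+1},x^\ast)+\dist^2(y_{t+1},y^\ast))$, yielding geometric decrease and hence the linear rate; in the \CC{} case one telescopes the per-step inequality against the gap functional to get the $1/\epsilon$ rate.

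The main obstacle is the chicken-and-egg structure: the stepsize $\etaEG$ is chosen in terms of the geometric constants of the set $\X$ that is specified a priori, and the one-step estimate only holds while the iterates remain in that set, so I must be careful that the constants hidden in the inductive displacement bound (the factor turning $\etaEG\L\Dcal$-sized gradient steps into at most $\Dcal$-sized moves, and the cosine-law factors) are genuinely controlled by $\zeta[\DExGr],\delta[\DExGr]$ and not by anything depending on where the iterate currently is — otherwise the induction is circular. Concretely, the secondary iterate can be up to $2\Dcal$ away from the saddle even though the primary iterate is only $\Dcal$ away, which is why $\DExGr$ is chosen as an upper bound on $4\Dcal$ rather than $2\Dcal$: the smoothness/strong-convexity hypotheses and the cosine-law constants must all be valid on $\ball((\xast,\yast),2\Dcal)$, and the bookkeeping needs $\DExGr\ge 4\Dcal$ so that the learning rate $\etaEG(\DExGr)$ is small enough to keep every displacement within this ball. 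Once this is nailed down, plugging the invariant into \citep[Thm 3.1]{jordan2022first} gives $T=\bigotilde{\frac{\L}{\mu}\sqrt{\zeta[\DExGr]/\delta[\DExGr]}+1/\delta[\DExGr]}$ and into \citep[Thm 4.1]{zhang2022minimax} gives $T=\bigol{\frac{\L\Dcal^2}{\epsilon}\sqrt{\zeta[\DExGr]/\delta[\DExGr]}}$, as stated.
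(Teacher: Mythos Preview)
Your plan is correct and matches the paper's approach: induct on $t$, bound the secondary iterates using $\nabla f(\xast,\yast)=0$ plus smoothness, derive the one-step non-expansion (or contraction) for the primary iterates from the Riemannian cosine inequalities combined with the (strong) monotonicity structure, and then feed the resulting a priori bound into the analyses of \citep{zhang2022minimax,jordan2022first}. Two small corrections: the stepsize is $\etaEG = \Theta(\sqrt{\delta[\DExGr]/\zeta[\DExGr]}/\L)$ rather than $\Theta(\delta[\DExGr]/\L)$, and the chicken-and-egg issue you flag for the cosine law involving the not-yet-bounded $x_{t+1}$ is resolved in the paper via the tighter variant in \cref{remark:tighter_cosine_inequality}, where the constant depends only on $\dist(w_t,x^\ast)\le 2\Dcal$ (already established) rather than on the full triangle diameter.
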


\cref{lem:it-rceg} requires the manifolds to be uniquely geodesic, but does not impose any restrictions on their curvature bounds. Examples of such manifolds are Hadamard manifolds, or, when $\kmax>0$, open Riemannian balls $\openball(x, \hat{R})$, where $\hat{R} < \min\{\inj(x)/2, \pi/(2\sqrt{\kmax})\}$~\citep[Thm. IX.6.1]{chavel2006riemannian}.

\section{Riemannian Accelerated Algorithms for Minimization and Min-Max}\label{sec:accelerated_method}
The min-max algorithm \RAMMA{} we introduce in \cref{sec:riemannian-min-max} works by reducing the solution of min-max problem to solving a series of g-convex problems.
In \cref{sec:riemannian-g-c}, we introduce two g-convex optimization algorithms that are essential for \RAMMA{}.
\subsection{G-Convex Algorithms}
\label{sec:riemannian-g-c}
We first present a global linear convergence analysis for Projected Riemannian Gradient Descent (\PRGD{}) in Hadamard manifolds, an algorithm for constrained optimization of $\mu$-strongly g-convex and $\Lsmooth$-smooth functions that makes use of a metric-projection oracle and a gradient oracle. Several attempts have been made towards obtaining this result, but the best analysis \citep{martinez2022accelerated} only provides  linear convergence of \PRGD{} in a compact g-convex set with diameter $D$ small enough so that $\zeta[D]< 2$,   for a function whose global minimizer is inside of this set. Our analysis provides linear global convergence without any of these assumptions.
\begin{proposition}[Projected Riemannian Gradient Descent (PRGD)]\linktoproof{lem:prgd}\label{lem:prgd}
    \newtarget{def:acronym_projected_riemannian_gradient_descent}{Let} $f: \M\rightarrow \mathbb{R}$ be a $\mu$-strongly g-convex and $\Lsmooth$-smooth function in a g-convex compact subset $\X\subset \M$ of a Hadamard manifold $\M$. For an initial point $x_0\in\X$ and $R \defi \Lips(f, \X)/\Lsmooth$, after
$$
T \geq \min\left\{2\kappa\zeta[R]\log \left( \frac{f(x_0)-f(x^{*})}{\epsilon} \right), 1+ 2\kappa\zeta[R]\log \left(\frac{\Lsmooth\zeta[R]\dist^2(x_0,x^{*})}{2\epsilon} \right)\right\}
$$
steps of \PRGD{} with update rule $x_{t+1}\gets \proj[\X]\left( \exp_{x_t}\left(-\frac{1}{\Lsmooth}\nabla f(x_t)\right) \right)$, we have $f(x_T)-f(x^\ast) \leq \epsilon$, where $\kappa = \Lsmooth/\mu$.
\end{proposition}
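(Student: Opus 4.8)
The plan is to establish, for every step, the geometric contraction
\[
f(x_{t+1}) - f(\xast) \le \Big(1 - \tfrac{1}{\kappa\zeta[R]}\Big)\big(f(x_t) - f(\xast)\big)
\]
together with the ``one step from a warm distance'' bound $f(x_{t+1}) - f(\xast) \le \tfrac{\Lsmooth\zeta[R]}{2}\dist^2(x_t,\xast)$. The first estimate on $T$ then follows by iterating the contraction from $x_0$, and the second by applying the distance bound once at $t=0$ and iterating the contraction for $T-1$ more steps, which accounts for the additive constant. A key preliminary observation is that, since $x_0\in\X$ and each $x_{t+1}$ is a metric projection onto $\X$, every iterate lies in $\X$; hence $\mu$-strong g-convexity, $\Lsmooth$-smoothness and $\Lips(f,\X)$-Lipschitzness of $f$ are all available at each $x_t$, and in particular $\norm{\nabla f(x_t)}\le\Lips(f,\X)=\Lsmooth R$. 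Therefore the tangent vector $w\defi-\tfrac1{\Lsmooth}\nabla f(x_t)=\exp_{x_t}^{-1}(\tilde{x}_{t+1})$ defining the pre-projection step $\tilde{x}_{t+1}\defi\exp_{x_t}(-\tfrac1{\Lsmooth}\nabla f(x_t))$ has $\norm{w}\le R$; this is what makes $\zeta[R]$, rather than $\zeta[\D]$, the constant that governs the rate.

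Fix $t$, write $x\defi x_t$ and $p\defi\exp_{x}^{-1}(x_{t+1})$, and for $\theta\in[0,1]$ set $z_\theta\defi\exp_{x}(\theta\exp_{x}^{-1}(\xast))\in\X$ (by g-convexity). Since $f$ is only assumed well-behaved on $\X$ while $\tilde{x}_{t+1}$ may leave $\X$, the idea is to extract the decrease along the in-$\X$ geodesic from $x$ to $x_{t+1}$: $\Lsmooth$-smoothness gives $f(x_{t+1})\le f(x)+\innp{\nabla f(x),p}+\tfrac{\Lsmooth}{2}\norm{p}^2=f(x)+\tfrac{\Lsmooth}{2}\norm{p-w}^2-\tfrac{\Lsmooth}{2}\norm{w}^2$. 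Because $\M$ is Hadamard, $\exp_{x}^{-1}$ is $1$-Lipschitz, so $\norm{p-w}\le\dist(x_{t+1},\tilde{x}_{t+1})$; and as $x_{t+1}$ is the metric projection of $\tilde{x}_{t+1}$ onto $\X$ with $z_\theta\in\X$, the defining inequality of $\proj[\X]$ gives $\dist(x_{t+1},\tilde{x}_{t+1})\le\dist(z_\theta,\tilde{x}_{t+1})$. Applying the Riemannian cosine law \cref{lemma:cosine_law_riemannian} to the triangle $(x,z_\theta,\tilde{x}_{t+1})$ with the geometric factor attached to the side from $x$ to $\tilde{x}_{t+1}$ of length $\norm{w}\le R$ yields $\dist^2(z_\theta,\tilde{x}_{t+1})\le\zeta[R]\,\theta^2\dist^2(x,\xast)+\norm{w}^2-2\theta\innp{w,\exp_{x}^{-1}(\xast)}$. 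Substituting, the two $\norm{w}^2$ terms cancel, and using strong g-convexity as $\innp{\nabla f(x),\exp_{x}^{-1}(\xast)}\le f(\xast)-f(x)-\tfrac{\mu}{2}\dist^2(x,\xast)$ gives
\[
f(x_{t+1})-f(x)\le\theta\big(f(\xast)-f(x)\big)+\tfrac{\theta}{2}\big(\Lsmooth\zeta[R]\theta-\mu\big)\dist^2(x,\xast).
\]

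Choosing $\theta=\mu/(\Lsmooth\zeta[R])=1/(\kappa\zeta[R])\le1$ kills the quadratic term and gives the claimed contraction, while choosing $\theta=1$ (so $z_1=\xast$) drops the middle term and gives $f(x_{t+1})-f(\xast)\le\tfrac{\Lsmooth\zeta[R]}{2}\dist^2(x,\xast)$. Iterating the contraction from $x_0$ then reaches accuracy $\epsilon$ once $T$ exceeds a constant times $\kappa\zeta[R]\log((f(x_0)-f(\xast))/\epsilon)$, and applying the distance bound at $t=0$ and contracting for $T-1$ more steps reaches accuracy $\epsilon$ once $T-1$ exceeds a constant times $\kappa\zeta[R]\log(\Lsmooth\zeta[R]\dist^2(x_0,\xast)/(2\epsilon))$; the explicit constant $2$ in the statement leaves slack for the $(1-1/n)^n\le e^{-1}$ conversion. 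The steps I expect to require care are entirely geometric: reading off the decrease only from the geodesic that stays inside $\X$ (never evaluating $f$ or invoking its smoothness at $\tilde{x}_{t+1}$, which the projection may have pushed out of $\X$); routing the comparison through the in-$\X$ auxiliary point $z_\theta$ rather than through $\xast$ directly, so that the metric-projection inequality is usable; and attaching each comparison triangle's $\zeta$-factor to its step-length-($\le R$) side, so $\zeta[R]$ and not $\zeta[\D]$ appears. Notably, strong g-convexity is used only at the iterate $x_t$ and never at $\xast$, so the argument does not require $\nabla f(\xast)=0$ — precisely the restriction removed relative to the previous analysis.
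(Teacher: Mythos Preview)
Your proof is correct and is in fact slightly sharper than the paper's. The two arguments share the same geometric core — use the refined cosine inequality (\cref{remark:tighter_cosine_inequality}) so that the curvature constant is $\zeta[R]$ with $R=\norm{\nabla f(x_t)}/\Lsmooth$ rather than $\zeta[\D]$, and combine it with the defining inequality of the metric projection — but they are organized differently. The paper invokes the Warm-Start lemma (\cref{lem:dist_to_gap_by_Proj_RGD}) as a black box to obtain $f(x_{t+1})\le \min_{x\in\X}\{f(x)+\tfrac{\Lsmooth\zeta[R]}{2}\dist^2(x,x_t)\}$, then restricts to the geodesic toward $\xast$, applies g-convexity, and only afterwards uses strong g-convexity to replace $\dist^2$ by the gap; this two-step use of convexity yields contraction $1-\mu/(4\Lsmooth\zeta[R])$. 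You instead open up the argument: smoothness along the in-$\X$ geodesic $x_t\to x_{t+1}$, Hadamard non-expansiveness of $\exponinv{x_t}$, the projection inequality against the auxiliary point $z_\theta$, and the cosine law at $x_t$ with $p=\tilde x_{t+1}$ (so $\dist(p,x_t)=\norm{w}\le R$). Because you inject strong g-convexity directly into the inner-product term $\innp{\nabla f(x_t),\exponinv{x_t}(\xast)}$, you obtain the tighter contraction $1-\mu/(\Lsmooth\zeta[R])$, so the constant $2$ in the statement carries extra slack under your analysis. Your remark that $f$ is never evaluated at $\tilde x_{t+1}$ and that $\nabla f(\xast)=0$ is never used is exactly the point the paper emphasizes; your derivation makes both facts transparent.
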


We note that we show that at every iteration \PRGD{} reduces the gap by a factor of $1-\mu/(4L\zeta_{R_t})$, where $R_t \defi \norm{\nabla f(x_t)}/L \leq R$, which is a quantity that does not require knowing $\Lips(f,\XX)$.

We now present our results on g-convex accelerated optimization. For a $\mu$-strongly g-convex function over a g-convex closed subset $\X$ of a finite-dimensional Hadamard manifold $\M$, we improve over the state of the art \citep[Algorithm 1]{martinez2022accelerated} and improve over their result in two important directions. First, we obtain better convergence rates: $\bigotilde{\sqrt{\zetad\kappa_{\lambda}} + \zetad}$ as opposed to %
$\bigotilde{\zetad\sqrt{\kappa_{\lambda}}}$,
where $\kappa_{\lambda} = 1/(\lambda\bar{\oldmu})$ and $\lambda$ is the learning rate of the approximate implicit gradient descent step of the algorithm. A similar improvement is obtained for the g-convex case via reductions, see \cref{rem:g-convex_reduction}. The key idea for our improved algorithm is to work directly in the strongly g-convex case, as opposed to the g-convex case and make a careful choice of learning rates. Second, we require a less restrictive condition for the subroutine that \cref{alg:riemacon_sc_absolute_criterion} ($\riemacon$) uses, namely we only require to compute a minimizer of the subproblem in Line \ref{line:subroutine},
$$\min_{y\in\X}\{\f(y) + \frac{1}{2\lambda}\dist^2(\xk, y)\},$$
with \textit{absolute} accuracy, as opposed to \textit{relative} accuracy, i.e., accuracy proportional to the distance to the minimizer of the proximal problem, which is unknown in general. This second part is a crucial modification for the application to \RAMMA{}, our min-max algorithm, and it is inspired by the analysis of the Euclidean APPA algorithm \citep{frostig2015un-regularizing}.

\begin{theorem}[Strongly g-Convex Acceleration]\linktoproof{thm:riemaconsc}\label{thm:riemaconsc}
    Using the definitions and notation of \cref{alg:riemacon_sc_absolute_criterion} to optimize a function $f$ which is $\bar{\oldmu}$-strongly g-convex in $\XX$, we have $f(y_T) - f(x^\ast) \leq \epsilon$ after a number of iterations $T \geq 2\sqrt{\xi\max\{\frac{1}{\lambda \bar{\oldmu}}, 9\xi\}}\log_2\left(\frac{2\lambda^{-1}\dist^2(x_0, x^\ast)}{\epsilon}\right) = \bigotilde{\sqrt{\zetad\kappa_{\lambda}} + \zetad}$.
\end{theorem}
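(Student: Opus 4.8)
The plan is to adapt the estimate-sequence / potential-function analysis of accelerated inexact proximal-point methods (in the spirit of the Euclidean APPA reduction of \citet{frostig2015un-regularizing}) to the Hadamard setting, paying close attention to where the geometry inflates constants and to how the \emph{absolute} inner accuracy propagates. Throughout, \cref{alg:riemacon_sc_absolute_criterion} ($\riemacon$) maintains prox centers $\xk$, absolute-accuracy minimizers $\yk$ of the prox objective $\hk(y) = \f(y) + \frac{1}{2\lambda}\dist^2(\xk, y)$, a mirror sequence $z_k$, and a momentum point $\xk[k+1]$ obtained by geodesically averaging $\yk$ and $z_k$ with weights $\ak[k+1]/\Ak[k+1]$, where $\Ak = \sum_{i\le k}\ak[i]$; the mirror step updates $z_k$ along the parallel transport of the (approximate) gradient at $\yk[k+1]$.

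The first step is to convert the absolute inner accuracy into an approximate-stationarity statement. Since $\hk$ is $(\bar{\oldmu} + 1/\lambda)$-strongly g-convex in $\X$, the guarantee $\hk(\yk) - \min_{y\in\X}\hk(y) \le \epsilon_k$ yields $\dist^2(\yk, \ykast) \le 2\epsilon_k/(\bar{\oldmu} + 1/\lambda)$ and, via the optimality conditions of the prox subproblem, a bound on the discrepancy between $\exponinv{\yk}(\xk)$ and $\lambda\nabla\f(\yk)$ (an ``approximate prox'' relation). Feeding this into the strong g-convexity of $\f$ at $\yk$, namely $\f(u) \ge \f(\yk) + \innp{\nabla\f(\yk), \exponinv{\yk}(u)} + \frac{\bar{\oldmu}}{2}\dist^2(\yk,u)$ for $u\in\X$, and combining with the prox descent $\f(\yk) + \frac{1}{2\lambda}\dist^2(\xk,\yk)\le \f(\xk) + \epsilon_k$ (essentially $\hk(\ykast)\le\hk(\xk)$ up to $\epsilon_k$), produces the per-iteration ``linear-coupling'' inequalities relating $\f(\yk)$, $\f(\xk)$, and the inner product against an arbitrary comparator $u$, up to errors controlled by $\epsilon_k$.

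Next I would introduce the Lyapunov function $\Phi_k \defi \Ak(\f(\yk) - \f(\xast)) + \frac{1 + \sigma\Ak}{2\lambda}\dist^2(z_k, \xast)$, with $\sigma$ an effective strong-convexity parameter of order $\bar{\oldmu}/(1+\lambda\bar{\oldmu})$ up to the geometric factor, and establish $\Phi_{k+1} \le \Phi_k + (\text{errors of order } \ak[k+1]\epsilon_{k+1})$. The geometry enters precisely in this telescoping: on a Hadamard manifold the Riemannian law of cosines \cref{lemma:cosine_law_riemannian} replaces the Euclidean expansion of $\dist^2(z_{k+1}, \xast)$ in terms of $\dist^2(z_k, \xast)$ by an inequality carrying a multiplicative factor on the quadratic (step-size-squared) term, and the geodesic averaging step incurs a similar penalty; aggregating these into the single ``extra geometric penalty'' $\xi$, the usual accelerated balance between $\ak[k+1]^2$ and $\Ak[k+1](1 + \sigma\Ak[k+1])$ acquires the extra factor $\xi$, which is exactly what slows the growth of $\Ak$ by $\sqrt{\xi}$. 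Solving the resulting recursion in the two regimes — $1/(\lambda\bar{\oldmu}) \ge 9\xi$, where the strong-convexity term dominates and $\Ak$ grows with ratio $1 + \Theta\big(1/\sqrt{\xi/(\lambda\bar{\oldmu})}\big) = 1 + \Theta(1/\sqrt{\xi\kappa_{\lambda}})$, and its complement, where $\Ak$ grows with ratio $1 + \Theta(1/\xi)$ — produces the factor $\sqrt{\xi\max\{1/(\lambda\bar{\oldmu}), 9\xi\}}$ in the statement, and unwinding the definition of $\xi$, which is $\bigotilde{\zetad}$, gives the $\bigotilde{\sqrt{\zetad\kappa_{\lambda}} + \zetad}$ form.

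Finally, telescoping $\Phi_{k+1}\le\Phi_k + \text{err}_k$ gives $\f(y_T) - \f(\xast) \le \Phi_T/\Ak[T] \le \big(\Phi_0 + \sum_{k<T}\text{err}_k\big)/\Ak[T]$ with $\Phi_0 = \frac{1}{2\lambda}\dist^2(\xInit,\xast)$; choosing the inner accuracies $\epsilon_k$ to decay geometrically (fast enough that $\sum_k\text{err}_k = O(\dist^2(\xInit,\xast)/\lambda)$, which is exactly what the absolute-accuracy variant lets us prescribe) makes the numerator $O(\dist^2(\xInit,\xast)/\lambda)$, and since $\Ak[T] \ge \dist^2(\xInit,\xast)/(2\lambda\epsilon)$ once $T$ attains the stated threshold, we conclude $\f(y_T) - \f(\xast) \le \epsilon$. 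The main obstacle is the one-step potential inequality on the manifold: the points $z_k$, $z_{k+1}$, $\xast$ and the average of $\yk$ and $z_k$ sit at different base points, so the Euclidean algebra relating $\dist^2(z_{k+1},\xast)$, $\dist^2(z_k,\xast)$ and the linear term must be re-derived through parallel transports and the trigonometric comparison inequalities valid on non-positively curved spaces, while ensuring the accumulated distortion collapses into a single factor $\xi$ rather than compounding across the mirror and averaging steps. A second, and for the application to \RAMMA{} essential, difficulty is running everything under absolute inner accuracy: one cannot assume $\yk$ is proportionally close to $\ykast$, so every error term must be bounded purely via $\hk(\yk)-\min_{\X}\hk\le\epsilon_k$ and the strong convexity of $\hk$, and the $\epsilon_k$ schedule must be chosen so these errors telescope against the geometric growth of $\Ak$ — the key departure from \citep{martinez2022accelerated}.
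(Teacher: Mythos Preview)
Your high-level plan is in the right spirit, but it misses the technical device that actually produces the improved rate $\bigotilde{\sqrt{\zetad\kappa_{\lambda}}+\zetad}$ rather than the prior $\bigotilde{\zetad\sqrt{\kappa_{\lambda}}}$. The paper does not track $\dist^2(z_k,\xast)$ and does not invoke the cosine inequality at the mirror step; doing so places a $\zetad$ multiplicatively on the step-size-squared term at \emph{every} iteration, which is exactly what yields the old $\zetad\sqrt{\kappa_{\lambda}}$ rate. Instead, the dual iterate is carried as a \emph{vector in a tangent space} and the potential includes an explicit geometric penalty term:
\[
\Psi_k \;=\; \Ak\Big(f(\yk)-f(\xast)+\tfrac{\mu}{4}\norm{\zy-\xast}_{\yk}^2 + \tfrac{\mu(\xi-1)}{4}\norm{\zy}_{\yk}^2\Big).
\]
The crux is \cref{lemma:moving_lower_bounds}, built on the ``moving quadratics'' comparisons of \cref{lemma:moving_quadratics:inexact_approachment_recession} and \cref{corol:moving_quadratics:exact_approachment_recession}: the combination $\norm{z-\xast}^2+(\xi-1)\norm{z}^2$ can be translated from $\Tansp{\yk[k-1]}\M$ to $\Tansp{\xk}\M$ and then to $\Tansp{\yk}\M$ with \emph{no per-iteration geometric loss}, because the extra $(\xi-1)\norm{z}^2$ term absorbs the distortion. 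With that established, the one-step inequality $\Psi_k \le \Psi_{k-1} + 2(\kappa+1)\Ak\hat{\oldepsilon}$ is a purely Euclidean calculation in $\Tansp{\xk}\M$ using the approximate-subgradient bound of \cref{lemma:appa}; the only place $\xi$ enters the balance between $\ak$ and $\Ak$ is through the choice $\ak=\xi(\Ak-\Ak[k-1])$ and $c=1/(2\sqrt{\xi\kappa})$, which is what puts $\xi$ \emph{inside} the square root. Your sketch ``aggregate the cosine-law factors into a single $\xi$'' asserts this outcome but does not supply the mechanism, and the naive route you describe recovers only the weaker rate.

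Two further corrections. First, the algorithm uses a single fixed absolute accuracy $\hat{\oldepsilon}=\epsilon/(8\sqrt{\xi}\kappa^{3/2})$, not a geometrically decaying schedule: the accumulated error is $2(\kappa+1)\hat{\oldepsilon}\sum_{i\le T}\Ak[i]/\Ak[T]=O(\hat{\oldepsilon}\kappa^{3/2}\sqrt{\xi})$, and that is what dictates the value of $\hat{\oldepsilon}$. This matters downstream, since in \RAMMA{} the prox subproblems are themselves min-max problems with unknown optimal value, so the inner accuracy must be prescribable a priori. Second, there is no case split: the paper simply replaces $\bar{\oldmu}$ by $\mu\defi\min\{\bar{\oldmu},1/(9\xi\lambda)\}$ so that $\kappa=1/(\lambda\mu)\ge 9\xi$ always holds; this is needed to verify the hypothesis $r<1$ in \cref{lemma:moving_quadratics:inexact_approachment_recession} and is the origin of the $\max\{1/(\lambda\bar{\oldmu}),9\xi\}$ in the iteration bound.
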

We emphasize that each iteration of \cref{alg:riemacon_sc_absolute_criterion} ($\riemacon$) in Theorem~\ref{thm:riemaconsc} requires a solution to a subproblem, which we can implement with \PRGD{}, and \cref{alg:riemacon_sc_absolute_criterion} \textit{only} accesses $f$ through these subproblem solutions: no other function values or gradients are asked of $f$.  This will prove important later on. 
If for an $\Lsmooth$-smooth $\bar{\oldmu}$-strongly g-convex function we apply \cref{alg:riemacon_sc_absolute_criterion} with $\lambda = 1/L$ and \PRGD{} as subroutine, we obtain the following \cref{cor:smooth-riemaconsc}. We instantiate this corollary in the setting of \citet[Theorem 6]{martinez2022accelerated} but without assuming anything about the Hadamard manifold beyond bounded sectional curvature, and we show that in that case $\zeta_R = \bigo{1}$. We thus obtain a more general result and a better overall complexity, cf. \cref{corol:PRGD_is_free_if_global_optim_is_in_the_set}.
We note that in this setting, if $\kmax < 0$ and $\X$ is a ball, there is a lower bound for the condition number $\kappa \geq \tilde \Omega(\sqrt{\kmax/\kmin} \zeta)$, cf. \cref{remark:minimum_kappa}.

\begin{corollary}\linktoproof{cor:smooth-riemaconsc}\label{cor:smooth-riemaconsc}
  If in addition to the assumptions from \cref{thm:riemaconsc}, $f$ is also $\Lsmooth$-smooth in $\XX$, \cref{alg:riemacon_sc_absolute_criterion} with $\lambda=1/\Lsmooth$ 
    and \PRGD{} as subroutine, yields an $\epsilon$-minimizer after $\bigotilde{\zeta[R]\zetad^{3/2}  \sqrt{\kappa+\zetad}}$  gradient and metric-projection oracle calls, where $R\le (\Lips(f, \XX)/\Lsmooth+2D_{\XX})/\zetad$ and $D_{\XX}$ is the diameter of $\XX$.
\end{corollary}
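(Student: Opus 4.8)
The plan is to bound the overall cost as the product of the number of outer iterations of $\riemacon{}$ given by \cref{thm:riemaconsc} and the cost of solving each of its proximal subproblems with \PRGD{} via \cref{lem:prgd}. Choosing $\lambda = 1/\Lsmooth$ gives $\kappa_{\lambda} = 1/(\lambda\bar{\oldmu}) = \Lsmooth/\bar{\oldmu} = \kappa$, so \cref{thm:riemaconsc} produces an $\epsilon$-minimizer after $\bigotilde{\sqrt{\zetad\kappa} + \zetad}$ outer iterations, which since $\zetad \ge 1$ equals $\bigotilde{\zetad^{1/2}\sqrt{\kappa+\zetad}}$. It then suffices to show each outer iteration costs $\bigotilde{\zetad\,\zeta[R]}$ gradient and metric-projection oracle calls.

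Each outer iteration reduces (up to a constant number of times) to approximately minimizing, to \emph{absolute} accuracy over $\X$, a subproblem of the form $\hk(y) = f(y) + \tfrac{1}{2\lambda}\dist^2(x_k, y) = f(y) + \tfrac{\Lsmooth}{2}\dist^2(x_k, y)$. On a Hadamard manifold, Hessian comparison yields $\Id \preceq \Hess(\tfrac{1}{2}\dist^2(x_k,\cdot)) \preceq \zetad\,\Id$ over the g-convex set $\X$ of diameter $D_x$, where I use that the proximal centers $x_k$ stay in a region of diameter $O(D_x)$ containing $\X$, as ensured by the construction of \cref{alg:riemacon_sc_absolute_criterion}. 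Hence $\hk$ is $(\bar{\oldmu}+\Lsmooth)$-strongly g-convex, in particular $\Lsmooth$-strongly g-convex, and $\Lsmooth(1+\zetad) \le 2\Lsmooth\zetad$-smooth on $\X$, so its condition number is $\bigo{\zetad}$; moreover the gradient of $\tfrac{\Lsmooth}{2}\dist^2(x_k,\cdot)$ at any $y\in\X$ has norm $\Lsmooth\dist(x_k,y) = O(\Lsmooth D_x)$, so $\Lips(\hk,\X) \le \Lips(f,\X) + O(\Lsmooth D_x)$ and the \PRGD{} radius for $\hk$ (run with step size $1/(2\Lsmooth\zetad)$) is $R \defi \Lips(\hk,\X)/(2\Lsmooth\zetad) \le (\Lips(f)/\Lsmooth + D_x)/\zetad$. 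Plugging condition number $\bigo{\zetad}$ and radius $R$ into \cref{lem:prgd} shows \PRGD{} solves each subproblem in $\bigotilde{\zetad\,\zeta[R]}$ steps, each step using one gradient and one metric-projection call; warm-starting \PRGD{} at $x_k$ makes the initial suboptimality, and the target accuracy demanded by \cref{thm:riemaconsc}, polynomial in the problem parameters and $1/\epsilon$, so the associated logarithms are $\bigotilde{1}$ and absorbed — this is precisely why we only need the \emph{absolute}-accuracy subroutine of \cref{thm:riemaconsc}, which \PRGD{}'s linear convergence in function value certifies directly.

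Multiplying the two bounds gives $\bigotilde{\zetad^{1/2}\sqrt{\kappa+\zetad}}\cdot\bigotilde{\zetad\,\zeta[R]} = \bigotilde{\zeta[R]\,\zetad^{3/2}\sqrt{\kappa+\zetad}}$ gradient and metric-projection oracle calls with $R\le(\Lips(f)/\Lsmooth+D_x)/\zetad$, which is the claim. I expect the main obstacle to be the bookkeeping for the subproblem parameters: proving the upper Hessian bound $\Hess(\tfrac{1}{2}\dist^2(x_k,\cdot))\preceq\zetad\,\Id$ uniformly over $\X$ through comparison theorems, verifying that the proximal centers $x_k$ output by \cref{alg:riemacon_sc_absolute_criterion} remain in a set of diameter $O(D_x)$ so the smoothness and Lipschitz estimates carry the stated constants, and confirming that the subproblem accuracies requested inside \cref{thm:riemaconsc} together with the warm-start distances are at most polynomial in the parameters and $1/\epsilon$ so they cost only $\log$ factors.
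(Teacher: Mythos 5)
Your proposal is correct and follows essentially the same route as the paper: bound the number of outer $\riemacon$ iterations via \cref{thm:riemaconsc} with $\lambda = 1/L$, bound the per-iteration cost of solving the proximal subproblem $\hk(y) = f(y) + \tfrac{L}{2}\dist^2(x_k, y)$ with \PRGD{} via \cref{lem:prgd} using the Hessian bounds of \cref{fact:hessian_of_riemannian_squared_distance} to get condition number $O(\zetad)$ and the stated radius $R$, and multiply, absorbing logarithms. The one point you flag as a potential obstacle — that the prox centers $x_k$ stay within $O(D_x)$ of $\X$, which follows from the dual projection onto $\ball(0,D_x)$ in Line~\ref{line:dual_projecting_step} of \cref{alg:riemacon_sc_absolute_criterion} — is handled at the same level of explicitness in the paper's own proof, so there is no gap relative to the paper's standard.
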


\subsection{Min-Max Algorithms}
\label{sec:riemannian-min-max}

Before turning to general Riemannian min-max problems, we first consider a specific class of functions $f(x,y)$ for which the interaction between $x$ and $y$ is weak, meaning that $\Lxy$ is small relative to other function parameters, i.e., the gradient of $f(x,y)$ with respect to $x$ is only weakly dependent on $y$ and vice versa.
\begin{algorithm}
    \caption{Riemannian Alternating Best Response \RABR{}($f, (x_0, y_0), T \text{ or } \epsilon, \X\times\Y$)}
    \label{alg:crabr}
\begin{algorithmic}[1]
    \REQUIRE G-convex subsets $\X\subset \M$, $\Y\subset \NN$ of Hadamard manifolds $\M$ and $\NN$, initialization $(x_0,y_0)\in\X\times\Y$, function $f:\M\times\NN\rightarrow\mathbb{R}$ that is $(\mux, \muy)$-\SCSC{} and $(\Lx, \Ly, \Lxy)$-smooth, $T$ (if $\epsilon$ is given, compute $T$, see \cref{thm:crabr}). Define $\xi \defi 4\max \{\zeta[2D][\X],\zeta[2D][\Y]\} -3=\bigol{\zetad}$ 
    \State $T_x\gets90\xi \sqrt{\kappa_x} \log(512)$,\ $T_y\gets90\xi \sqrt{\kappa_y} \log(512) $%
    \vspace{0.1cm}
    \hrule
    \vspace{0.1cm}
    \FOR {$t = 0 \text{ \textbf{to} } T-1$}
    \State $x_{t+1}\gets\riemaconrel(f(\cdot,y_{t}),x_t, T_x, \X, \text{\PRGD{}})$
    \State $y_{t+1}\gets\riemaconrel(-f(x_{t+1},\cdot),y_t,T_y, \Y, \text{\PRGD{}})$
    \ENDFOR
    \ENSURE $(x_T,y_T)$
\end{algorithmic}
\end{algorithm}

If the interaction between $x$ and $y$ is weak enough, alternating between minimizing $x\mapsto f(x,y_{t})$ where $y_t$ is kept fixed and maximizing $y\mapsto f(x_{t+1},y)$ where $x_{t+1}$ is kept fixed is sufficient to converge to the saddle point.
The approach of computing the optimal value of $x$ for a fixed $y$, or vice versa, can be seen as the \emph{best response} of $x$ given a fixed $y$, hence the name.
In particular, for the case of $\Lxy=0$, $x$ and $y$ have no interaction and it suffices to independently compute the best response for $x$ and $y$ once to solve the min-max problem.
Our Riemannian Alternating Best Response (\newtarget{def:acronym_riemannian_alternating_best_response}{\RABR{}}) algorithm implements this approach by repeatedly applying approximate best responses using the algorithm $\riemaconrel$, cf. \cref{sec:convergence_of_ABR}.
\RABR{} applies only to a limited class of problems, but it will be used as a subroutine for \RAMMA{}.
\RABR{} is inspired by the Euclidean algorithm of \citet[Algorithm 1]{wang20improved}.

\begin{theorem}[Convergence of \RABR{}]\linktoproof{thm:crabr}\label{thm:crabr}
  Let $f$ satisfy \cref{ass:fct} with $\Lxy< \frac{1}{2}\sqrt{\mux\muy}$.
  Then \cref{alg:crabr} requires $T=\bigotilde{  \zeta[R]\zetad^{2}\sqrt{\kappa_x+\kappa_y}}$ calls to the gradient and projection oracles to ensure $\dist^2(x_T,x^{*})+\dist^2(y_T,y^{*})\le \epsilon$, where $R=\max \left\{ \Lips(f(\cdot,y), \X)/ \Lx,\Lips(f(x,\cdot), \Y)/\Ly \right\}/\zetad +\D/\zetad$.
\end{theorem}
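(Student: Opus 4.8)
Proof proposal for \cref{thm:crabr} (Convergence of \RABR{}).

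The plan is to adapt the Euclidean alternating-best-response analysis of \citet{wang20improved} to the Riemannian constrained setting. Write $\bar{x}(y)\defi\argmin_{x\in\X}f(x,y)$ and $\bar{y}(x)\defi\argmax_{y\in\Y}f(x,y)$ for the exact best responses; these are single-valued by $\mux$-strong g-convexity and $\muy$-strong g-concavity on the Hadamard manifolds, and the saddle point $(\xast,\yast)$ (whose existence follows from \cref{thm:sion}, since $f$ is \SCSC{} here) satisfies $\xast=\bar{x}(\yast)$ and $\yast=\bar{y}(\xast)$. The argument has three layers: (i) the \emph{exact} composed update $y_t\mapsto\bar{x}(y_t)\mapsto\bar{y}(\bar{x}(y_t))$ contracts the distance to $\yast$; (ii) since $\riemaconrel$ returns each best response to a small \emph{relative} error, the inexact update of \cref{alg:crabr} still contracts; (iii) the total cost is the product of the number of outer iterations, the number of $\riemaconrel$ iterations per call, and the cost of each proximal subproblem solved by \PRGD{}. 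Note that every iterate of \cref{alg:crabr} lies in $\X\times\Y$ automatically because $\riemaconrel$ runs \PRGD{}, which uses the metric-projection oracles; so no compactness of the iterate sequence is assumed beyond the prescribed diameter $\D$.

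The conceptual core is a Lipschitz bound for the best-response map that carries \emph{no} geometric penalty: for $y_1,y_2\in\Y$ I claim $\dist(\bar{x}(y_1),\bar{x}(y_2))\le\tfrac{\Lxy}{\mux}\dist(y_1,y_2)$, and symmetrically with $x$ and $y$ interchanged. To prove it, set $a=\bar{x}(y_1)$, $b=\bar{x}(y_2)$. First-order optimality on the g-convex set $\X$ gives $\innp{\nabla_xf(a,y_1),\exponinv{a}(b)}\ge0$ and $\innp{\nabla_xf(b,y_2),\exponinv{b}(a)}\ge0$, so $\mux$-strong g-convexity of $f(\cdot,y_1)$ and of $f(\cdot,y_2)$ yields $f(b,y_1)\ge f(a,y_1)+\tfrac{\mux}{2}\dist^2(a,b)$ and $f(a,y_2)\ge f(b,y_2)+\tfrac{\mux}{2}\dist^2(a,b)$. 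Adding, $g(b)-g(a)\ge\mux\dist^2(a,b)$ for $g\defi f(\cdot,y_1)-f(\cdot,y_2)$; on the other hand, integrating $\innp{\nabla g,\dot\gamma}$ along the geodesic $\gamma\subset\X$ from $a$ to $b$ and using $\|\nabla g(z)\|=\|\nabla_xf(z,y_1)-\nabla_xf(z,y_2)\|\le\Lxy\dist(y_1,y_2)$ (the third part of \cref{ass:fct}; the basepoint $z$ is fixed, so no parallel transport appears) gives $g(b)-g(a)\le\Lxy\dist(y_1,y_2)\dist(a,b)$. Combining and dividing by $\dist(a,b)$ proves the claim, and hence the composed \emph{exact} update contracts by the factor $\Lxy^2/(\mux\muy)<\tfrac14$, using $\Lxy<\tfrac12\sqrt{\mux\muy}$.

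Next I would absorb the inexactness. With $T_x=90\xi\sqrt{\kappa_x}\log512$ iterations, the call $\riemaconrel(f(\cdot,y_t),x_t,T_x,\X,\text{\PRGD{}})$ returns $x_{t+1}$ with $\dist(x_{t+1},\bar{x}(y_t))\le\rho_x\dist(x_t,\bar{x}(y_t))$ for a small absolute constant $\rho_x$ — this is where the accelerated linear convergence of the strongly-g-convex solver $\riemaconrel$, at rate $\bigotilde{\zetad\sqrt{\kappa_x}}$, enters, the $\log512$ fixing the target $\rho_x$ — and similarly for $y$. Using the triangle inequality together with $\dist(\xast,\bar{x}(y_t))=\dist(\bar{x}(\yast),\bar{x}(y_t))\le\tfrac{\Lxy}{\mux}\dist(\yast,y_t)$ (and its analogue for $y$), a direct computation with the weighted potential $\Phi_t\defi\sqrt{\mux}\,\dist(x_t,\xast)+\sqrt{\muy}\,\dist(y_t,\yast)$ shows $\Phi_{t+1}\le c\,\Phi_t$ for an absolute constant $c<1$: the mixing coefficients are bounded by $\Lxy/\sqrt{\mux\muy}<\tfrac12$ up to the $\rho$-factors, which are negligible. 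Since $\Phi_0\le2\sqrt{\L}\,\D$ and $\dist^2(x_t,\xast)+\dist^2(y_t,\yast)\le\Phi_t^2/\muy$, we reach the target accuracy $\epsilon$ after $T=O(\log(\kappa_y\D^2/\epsilon))=\bigotilde{1}$ outer iterations.

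Finally I would count oracle calls. Each of the $\bigotilde{1}$ outer iterations runs $T_x+T_y=\bigotilde{\xi(\sqrt{\kappa_x}+\sqrt{\kappa_y})}=\bigotilde{\zetad\sqrt{\kappa_x+\kappa_y}}$ iterations of $\riemaconrel$ (using $\xi=\bigol{\zetad}$), and each such iteration solves a subproblem $\min_{x\in\X}\{f(\cdot,y_t)+\tfrac{1}{2\lambda}\dist^2(\cdot,z)\}$ with $\lambda\sim1/\L$. This objective is $\Theta(\L)$-strongly g-convex (from the proximal term) but only $\Theta(\L\zetad)$-smooth, since the Hessian of $\tfrac12\dist^2(\cdot,z)$ is bounded above by $\zetad$ rather than by $1$ on $\X$; so its condition number is $\bigol{\zetad}$, and its Lipschitz constant divided by its smoothness yields radius parameter $R\le\max\{\Lips(f(\cdot,y))/\Lx,\Lips(f(x,\cdot))/\Ly\}/\zetad+\D/\zetad$, exactly as in the statement. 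Hence by \cref{lem:prgd} each subproblem costs $\bigotilde{\zetad\,\zeta[R]}$ gradient and projection calls, and multiplying the three factors gives $\bigotildel{\zeta[R]\zetad^2\sqrt{\kappa_x+\kappa_y}}$. I expect the main obstacle to be precisely this bookkeeping: propagating the relative-error guarantees cleanly through the three nested layers (\RABR{} calling $\riemaconrel$ calling \PRGD{}) while keeping every geometric constant at the right power. The conceptual heart — the penalty-free best-response contraction — is, by contrast, clean.
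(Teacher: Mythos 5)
Your proof is correct and has the same skeleton as the paper's: turn the $\riemaconrel$ guarantee into a per-step relative contraction $\dist(x_{t+1},\bar x(y_t))\le\tfrac{1}{16}\dist(x_t,\bar x(y_t))$, invoke the geometric-penalty-free $\Lxy/\mux$-Lipschitzness of the exact best-response maps, close a one-step Lyapunov inequality under the weak-interaction condition $\Lxy<\tfrac{1}{2}\sqrt{\mux\muy}$, and multiply the costs of the three nested levels. You diverge in two places, both cosmetic. First, you derive the best-response Lipschitz bound by adding the two strong-convexity function-value inequalities (after killing the first-order terms via optimality on $\X$) and integrating $\nabla\bigl(f(\cdot,y_1)-f(\cdot,y_2)\bigr)$ along the geodesic from $\bar x(y_1)$ to $\bar x(y_2)$; the paper's \cref{lem:lip} instead adds the two variational inequalities at the optimizers and applies the gradient-monotonicity form of strong g-convexity from \cref{lemma:strong_g_convexity_consequences}. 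Both are penalty-free and essentially equivalent. Second, you contract the linear potential $\Phi_t=\sqrt{\mux}\,\dist(x_t,\xast)+\sqrt{\muy}\,\dist(y_t,\yast)$, whereas the paper contracts the quadratic $\dist^2(x_t,\xast)+(\muy/\mux)\,\dist^2(y_t,\yast)$; both give an absolute contraction factor and both fold the $\mux/\muy$ imbalance into the $\bigotilde{1}$ outer-iteration count. One detail worth writing out, which you only gesture at, is that \cref{fact:riemaconrel} controls the function-value gap, so passing to the relative-distance guarantee costs an extra $2/\mux$ via strong g-convexity---this is exactly what the paper's computation, and the constant $512=2\cdot 16^2$, makes explicit.
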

Now we turn to explain the intuition about our \cref{alg:ramma} (\RAMMA{}) for the general $(\Lx, \Ly, \Lxy)$-smooth and $(\mux, \muy)$-\SCSC{} case. Defining $\newtarget{def:function_max_over_y}{\phi}(x)\defi \max_{y\in \Y}f(x,y)$, one can rephrase the problem $\min_{x\in \X}\max_{y\in \Y}f(x,y)$ as $\min_{x\in \X}\phi(x)$.
By \cref{lem:strong_cvxty_of_max}, if $f(\cdot, y)$ is $\mux$-strongly g-convex, then $\phi(x)$ is as well.
Hence, we can use \cref{alg:riemacon_sc_absolute_criterion} to solve this minimization problem.
This means that at each iteration, we need to solve the subroutine  $\min_{x\in \X}\{\phi(x)+1/(2\eta_x)\dist^2(\tilde{x},x)\}$ (Line \ref{line:subroutine} of \cref{alg:riemacon_sc_absolute_criterion}), which can be phrased as $\min_{x\in \X}\max_{y\in \Y}\{f(x,y)+1/(2\eta_x)\dist^2(\tilde{x},x)\}$.
By \cref{thm:sion} we can exchange the min and the max, resulting in $\max_{y\in \Y} \min_{x\in \X} \{f(x,y)+\frac{1}{2\eta_x}\dist^2(\tilde{x},x) \}$.
\begin{algorithm}
    \caption{Riemannian Accelerated Min-Max Algorithm RAMMA$(f, (x_0, y_0), \epsilon, \X\times \Y)$}
    \label{alg:ramma}
\begin{algorithmic}[1]
    \REQUIRE Sets $\X \subset \M$, $\Y \subset\NN$ that are g-convex in Hadamard manifolds $\M$ and $\NN$, $(\mux, \muy)$-strongly g-convex $(\Lx, \Ly, \Lxy)$-smooth function $f:\M\times\NN\rightarrow\mathbb{R}$, initial point $(x_0,y_0) \in \X\times\Y$, accuracy $\epsilon$. Define $\xi \defi 4\max\{\zeta[2D][\X], \zeta[2D][\Y]\} -3 = O(\zetad)$. For $T_i$, $\hat{\oldepsilon}_i$, see \cref{table:ramma-params}.
    \vspace{0.1cm}
    \hrule
    \vspace{0.1cm}
      \State $\eta_x\gets(9\xi\mux+\max\{\Lxy, \mux\})^{-1}$, $\eta_y\gets(9\xi\muy+\max\{\Lxy,\muy \})^{-1}$\label{line:loop1-start}
      \State $\lambda_y\gets(\max\{\Ly,\Lxy\}+9\xi\muy)^{-1}$
    \State $\hat{x} \gets  \riemacon(\phi(x)\defi \max_{y\in\Y} f(x, y), x_0, \TOne, \eta_x, \X,  \text{ Lines \ref{line:first_line_intermediate_loop}-\ref{line:last_line_intermediate_loop})}$ \label{line:riemacon_on_phi}
    \State $\hat{y} \gets  \riemacon(y\mapsto -f(\hat{x}, y), y_0, \Ttwo, \lambda_{y}, \Y, \text{\PRGD{}})$ \Comment{One-Gap-to-Dist}\label{line:getting_distance_to_yast_outer_loop}
    \State \return{} $\hat{x}, \hat{y}$ \label{line:loop1-end}
    \vspace{0.1cm}
    \hrule
    \vspace{0.1cm}
    \hspace{-1.4cm}Subroutine for Line \ref{line:riemacon_on_phi}: With accuracy $\hatepsilonone$, solve $\min_{x\in\X}\{\phi(x) + \frac{1}{2\eta_x} \dist^2(x_k, x)\}$ for some $x_k \in \X$.
    \State $\eta_y\gets(9\xi\muy+\max\{\Lxy,\muy \})^{-1}$, $\widehat{\lambda} \gets 1/(9\xi(\mux+\eta_x^{-1})+\Lx+\zetad\eta_x^{-1})$ \label{line:first_line_intermediate_loop}
    \State $\tilde{y}_k \gets \riemacon(\psi(y) \defi \max_{x\in\X}\{-f(x,y) - \frac{1}{2\eta_x}\dist^2(x_k, x)\}, y_0, \Tthree, \eta_y, \Y, \text{ Lines \ref{line:rabr_innermost_loop}-\ref{line:loop3-end})}$\label{line:riemacon_on_psi}%
    \State $\tilde{x}_k \gets  \riemacon(x\mapsto f(x, \tilde{y}_k)+\frac{1}{2\eta_x}\dist^2(x_k, x), x_0, \Tfour, \X, \widehat{\lambda}, \text{\PRGD{}})$ \Comment{One-Gap-to-Dist} \label{line:getting_distance_to_xkast_second_loop} 
    \State \return{} $\tilde{x}_k$ \label{line:loop2-end} \label{line:last_line_intermediate_loop}
    \vspace{0.1cm}
    \hrule
    \vspace{0.1cm}
    \hspace{-1.4cm}Subroutine for Line \ref{line:riemacon_on_psi}: With accuracy $\hatepsilonthree$, solve $\min_{y\in\Y}\{\psi(y) + \frac{1}{2\eta_y} \dist^2(y_\ell, y)\}$ for some $y_\ell\in\Y$.
    \State $\bar{x}_\ell, \bar{y}_\ell \gets \text{\RABR{}}(f(x, y) + \frac{1}{2\eta_x}\dist^2(x_k,x) - \frac{1}{2\eta_y} \dist^2(y_\ell, y), (x_0, y_0), \Tfive, \X\times\Y)$\label{line:rabr_innermost_loop}
    \State \return{} $\bar{y}_\ell$\label{line:loop3-end}
\end{algorithmic}
\end{algorithm}
Let $\psi(y) \defi-\max_{x\in\X}\{-f(x,y) - \frac{1}{2\eta_x}\dist^2(\tilde x, x)\}$, then by \cref{lem:strong_cvxty_of_max} we can interpret the latter min-max problem above as the $\muy$-strongly g-convex problem $\min_{y\in \Y}\psi(y)$. We note that an approximate solution to $\min_{y\in\Y}\psi(y)$ does not directly yield an approximate solution for the min-max problem, 
but we obtain the latter after some extra algorithmic steps, as we detail in \cref{proposition:from_one_opti_measure_to_another}.
We can solve this minimization problem via \cref{alg:riemacon_sc_absolute_criterion} again, which involves solving a proximal step $\min_{y\in \mathcal{Y}}\{\psi(y)+1/(2\eta_y)\dist^2(y,\tilde{y})\}$ at each iteration which can be phrased as the min-max problem $\min_{x\in \X}\max_{y\in \Y} \{ f(x,y)+1/(2\eta_x)\dist^2(\tilde{x},x) -1/(2\eta_y)\dist^2(y,\tilde{y}) \}$.
By choosing $\eta_{x}$ and $\eta_y$ such that $\Lxy\le (4\eta_x\eta_y)^{-1/2}$, we ensure that $x$ and $y$ have weak interaction for this last regularized min-max problem.
Hence, we reduce the original min-max problem to a series of min-max problems with weak interaction, which we can solve efficiently using \RABR{}.

The convergence guarantee of \cref{alg:riemacon_sc_absolute_criterion} holds for any proximal parameter $\lambda>0$.
However, when taking into account the computational cost of computing the proximal steps, the right choice of $\lambda$ becomes crucial in order to ensure a good overall complexity.
For \RAMMA{}, we exploit the specific structure of the min-max problem by choosing the proximal parameters small enough such that the inner proximal problem is strongly decoupled while being large enough so that the overall computational complexity achieves the accelerated rate.
Overall, the convergence rates of \RAMMA{} are the following.

\begin{theorem}[Convergence rates of \RAMMA{}]\linktoproof{thm:minmax_alg}\label{thm:minmax_alg}
    Consider a function $f:\M\times \NN\rightarrow \mathbb{R}$ as defined in \cref{sec:setting} with $\mux, \muy > 0$ and let $\M$ and $\NN$ be Hadamard manifolds. 
  Then, \cref{alg:ramma} obtains an $\epsilon$-saddle point after the following number of calls to the gradient and projection oracles:
  \begin{equation*}
    \bigotildel{\zetad^{4.5}\sqrt{\frac{\Lx}{\mux}+\frac{\Ly}{\muy} + \frac{\zetad \L \Lxy}{\mux\muy}+ \zetad^2}}.
  \end{equation*}
\end{theorem}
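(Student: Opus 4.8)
The plan is to peel \cref{alg:ramma} apart layer by layer: at each of its three nested levels I identify the g-convex (or weakly coupled min-max) subproblem being solved, check it is solved to a sufficient accuracy, bound its conditioning, read off an iteration count from a result proved earlier, and finally multiply the counts. Concretely, writing $\phi(x)\defi\max_{y\in\Y}f(x,y)$, \cref{lem:strong_cvxty_of_max} shows $\phi$ is $\mux$-strongly g-convex, so Line~\ref{line:riemacon_on_phi} is $\riemacon$ applied to $\min_x\phi(x)$, costing $\TOne=\bigotilde{\sqrt{\xi/(\eta_x\mux)}+\xi}$ iterations by \cref{thm:riemaconsc}. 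Each such iteration is a proximal step $\min_x\{\phi(x)+\tfrac{1}{2\eta_x}\dist^2(x_k,x)\}$, which by \cref{thm:sion} equals $\max_y\min_x\{f(x,y)+\tfrac{1}{2\eta_x}\dist^2(x_k,x)\}$ and hence, via \cref{lem:strong_cvxty_of_max} again, the $\muy$-strongly g-convex problem $\min_y\psi(y)$ of Line~\ref{line:riemacon_on_psi}, solved by a second call to $\riemacon$ in $\Tthree=\bigotilde{\sqrt{\xi/(\eta_y\muy)}+\xi}$ iterations. Each of those is itself a proximal step that unfolds to the regularized min-max problem $\min_x\max_y\{f(x,y)+\tfrac{1}{2\eta_x}\dist^2(x_k,x)-\tfrac{1}{2\eta_y}\dist^2(y_\ell,y)\}$ handled by \RABR{} in Line~\ref{line:rabr_innermost_loop}.

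The heart of the argument is the choice of the proximal parameters $\eta_x^{-1}=9\xi\mux+\max\{\Lxy,\mux\}$ and $\eta_y^{-1}=9\xi\muy+\max\{\Lxy,\muy\}$. They must be small enough that the innermost regularized min-max is weakly coupled in the sense of \cref{thm:crabr}, so that \RABR{} converges at rate $\bigotilde{\zeta[R]\zetad^2\sqrt{\kappa_x'+\kappa_y'}}$ with the regularized condition numbers $\kappa_x'=(\Lx+\eta_x^{-1})/(\mux+\eta_x^{-1})$ and $\kappa_y'$ analogous; and simultaneously large enough that $1/(\eta_x\mux)=O(\xi+\Lxy/\mux)$ and $1/(\eta_y\muy)=O(\xi+\Lxy/\muy)$, keeping $\TOne=\bigotilde{\xi+\sqrt{\xi\Lxy/\mux}}$ and $\Tthree=\bigotilde{\xi+\sqrt{\xi\Lxy/\muy}}$ at the accelerated order. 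I also need the two One-Gap-to-Dist conversions (Lines~\ref{line:getting_distance_to_yast_outer_loop} and \ref{line:getting_distance_to_xkast_second_loop}): \cref{proposition:from_one_opti_measure_to_another} turns the function-gap guarantee returned by an inner $\riemacon$ into the distance guarantee the enclosing layer needs, at the cost of one extra strongly g-convex minimization run with $\riemacon$ using \PRGD{}, whose cost is controlled by \cref{lem:prgd} and \cref{cor:smooth-riemaconsc}; \PRGD{} is moreover the base subroutine throughout, and because its iterates are projected into the compact g-convex feasible sets on which the Lipschitz constants are finite, all geometric constants $\zetad,\zeta[R]$ stay bounded by problem data rather than by a priori assumptions — the same difficulty we faced for \RCEG{} in \cref{lem:it-rceg}, now for the accelerated iterates. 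A separate but routine step is the accuracy bookkeeping: the tolerances $\hatepsilonone,\hatepsilonthree$ and budgets $\TOne,\dots,\Tfive$ of \cref{table:ramma-params} are chosen so the inexactness introduced at each level is absorbed by the next, which is possible precisely because $\riemacon$ uses \emph{absolute} accuracy and accesses its objective only through proximal solves (as emphasized after \cref{thm:riemaconsc}); this only costs $\log$ factors, hence the $\bigotilde{\cdot}$.

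Finally I assemble the bound as $\TOne\cdot(\Tthree\cdot\Tfive+\Tfour\cdot c)+\Ttwo\cdot c$, where $c$ is the per-call cost of a One-Gap-to-Dist step bounded via \cref{cor:smooth-riemaconsc}. Substituting the estimates above: the $\sqrt{\Lx/\mux}+\sqrt{\Ly/\muy}$ contribution comes from \RABR{} and \PRGD{} acting on the well-conditioned regularized subproblems, the cross term $\zetad\L\Lxy/(\mux\muy)$ from the product $\sqrt{\xi\Lxy/\mux}\cdot\sqrt{\xi\Lxy/\muy}$ of the two $\riemacon$ layers in the interaction-dominated regime, the additive $\zetad^2$ under the square root from the additive $\xi$-terms in $\TOne$ and $\Tthree$, and the prefactor $\zetad^{4.5}$ from collecting one $\sqrt{\zetad}$ or $\zetad$ factor at each nested layer ($\riemacon$ on $\phi$, $\riemacon$ on $\psi$, \RABR{}, and the \PRGD{}/gap-to-dist machinery). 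The main obstacle I anticipate is exactly this global balancing: the $\eta$'s and accuracies must be pinned down simultaneously so that every inner min-max is genuinely weakly coupled, every subproblem fed to \PRGD{} lives in a compact g-convex set with bounded Lipschitz constant (so that $\zeta[R]$ and $\zetad$ are controlled by problem data), and the accumulated powers of $\zetad$ do not exceed $4.5$ — reconciling these requirements against the target accelerated dependence on the condition numbers is the delicate core, while the per-layer bounds themselves are immediate from \cref{thm:riemaconsc,thm:crabr,lem:prgd,cor:smooth-riemaconsc}.
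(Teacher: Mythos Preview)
Your proposal is correct and follows the same three-layer decomposition as the paper: $\riemacon$ on $\phi$, then on $\psi$, then \RABR{}, with the One-Gap-to-Dist conversions of \cref{proposition:from_one_opti_measure_to_another} stitching the layers together and \PRGD{} as the base solver. The total-cost formula, the choice of $\eta_x,\eta_y$, and the role of projections in bounding $\zetad,\zeta[R]$ all match; the paper packages the per-layer bounds into three lemmas (for Lines~\ref{line:loop1-start}--\ref{line:loop1-end}, \ref{line:riemacon_on_psi}--\ref{line:getting_distance_to_xkast_second_loop}, and \ref{line:rabr_innermost_loop}--\ref{line:loop3-end}) that read off $T_1,\dots,T_5$ exactly as you sketch.

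The one place to tighten is the final assembly. Your heuristic that the cross term $\zetad\L\Lxy/(\mux\muy)$ arises from the product $\sqrt{\xi\Lxy/\mux}\cdot\sqrt{\xi\Lxy/\muy}$ of the two $\riemacon$ layers alone is not quite right: that product gives $\xi\Lxy/\sqrt{\mux\muy}$, which has the wrong $\L$-dependence. In the paper the $\L$ enters through $\Tfive=\bigotilde{\zetad^3\sqrt{\Lx\eta_x+\Ly\eta_y+\zetad}}$, and the cross term emerges from the full product $\TOne\Tthree\Tfive$ expanded as $\zetad^4\sqrt{\Lx/(\mux\muy\eta_y)+\Ly/(\mux\muy\eta_x)+\zetad/(\mux\muy\eta_x\eta_y)}$. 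The paper then does a short case split on whether $\Lxy$ lies below $\muy$, between $\muy$ and $\mux$, or above $\mux$ (recall $\muy\le\mux$ without loss of generality) to simplify each summand and arrive at the stated bound; you should replace the heuristic attribution with this case analysis, which is straightforward once the $T_i$ are in hand.
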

The central problem of achieving accelerated rates for \RAMMA{} is to show that the iterates stay in a prespecified compact set in order to bound the geometric constant $\zetad$. We design an algorithm that enforces constraints, as opposed to guaranteeing that the iterates of the algorithm naturally stay in a set, as we did for \RCEG{} in \cref{sec:extra_gradient}.

Recall that by definition of $\phi(x)$, the subproblem $\min_{x\in \X} \{\phi(x)+1/(2\eta_x)\dist^2(x_k, x)\}$ can be phrased as
    $$
    \min_{x\in \mathcal{X}}\max_{y\in \mathcal{Y}} \{f(x,y)+1/(2\eta_x)\dist^2(x,x_k)\}.
    $$
Let $(x_{k+1}^{*}, y^{*}(x_{k+1}^{*}))$  be the saddle point of this min-max problem (\cref{thm:sion}), and note that for $\xast$, the best response $y^\ast(\xast)$ for this regularized min-max problem is still $\yast$.
Intuitively, if we do not constrain the min-max problem and the subproblems to $\mathcal{X}\times \mathcal{Y}$, our iterates could get far from our initial point, and this would make us incur greater geometric penalties.
Denote by $\xastg$ and $\yastg$ the unconstrained optimizers.
Then, if we considered an unconstrained version of our algorithm, the point $\tilde{y}_k$ computed in Line \ref{line:riemacon_on_psi}, would be close to the best response $\hat{y}^\ast(\hat{x}_{k+1}^\ast)$  and for this point we can only guarantee that its distance to $\hat{y}^\ast$ is bounded as $\dist(\hat{y}^{*}(\hat{x}^{*}_{k+1}),\hat{y}^{*}(\xastg))\le(\Lxy/\muy)\dist(\xastg,\hat{x}^{*}_{k+1})$, by  using \cref{item:y_opt_lip} of \cref{lem:lip}.
This would preclude acceleration because of the added extra polynomial dependency of $\Lxy/\muy$ on the convergence rates via $\zetad$, which grows with the distances between the iterates and the minimizer. For this reason, we constrain the algorithm. In order to implement a constrained algorithm, we require a linearly convergent subroutine for strongly g-convex and \emph{constrained} problems, which did not previously exist. To this end, we use our \PRGD{} algorithm, cf. \cref{sec:riemannian-g-c}.

Another difficulty is that in order to apply the steps explained above, we require an accelerated algorithm for strongly g-convex optimization that bounds geometric penalties in our setting. Prior work relies on relative-accuracy proximal solutions which, for our setting, would rely on unknown quantities. Because of this reason, we designed \cref{alg:riemacon_sc_absolute_criterion} that accesses $f$ by solving proximal subproblems with absolute accuracy.

Lastly, there is a mismatch between the optimality criterion required for the proximal problems and the optimality criterion provided by the guarantees of the subroutines we used to solve them.
For example, Line \ref{line:first_line_intermediate_loop} requires computing an approximate minimizer $\tilde{x}_{k}$ of $\min_{x\in \mathcal{X}}\{\phi(x)+1/(2\eta_x)\dist^2(x_k,x)\}$, but after Line \ref{line:riemacon_on_psi} we only obtain an approximate minimizer $\tilde{y}_k$ of $\psi(y)$. In the next subsection, we present \cref{proposition:from_one_opti_measure_to_another} which allows to solve this problem.

\subsection{Converting Between Different Optimality Criteria}

We now give a brief overview of the different optimality criteria in g-convex and min-max problems, before we formalize how to guarantee one criterion from another, in  \cref{proposition:from_one_opti_measure_to_another}.
In $\bar{L}$-smooth and $\bar{\oldmu}$-strongly g-convex optimization there are, among others, two well-known measures of optimality: The primal gap $\bargap{x} \defi g(x) - g(x^\ast)$ and the squared distance to the solution $\dist^2(x, x^\ast)$, where $x^\ast \defi \argmin g(x)$ is the unique minimizer of $g$. By strong convexity, one can show that if a point $x$ has a small gap, then it is also close in distance to the solution, up to some  function parameters: $\dist^2(x, x^\ast) \leq \frac{2}{\bar{\oldmu}} (g(x)-g(x^\ast))$. In unconstrained optimization, a converse statement also holds, since $g(x)-g(x^\ast) \leq \frac{\Lsmooth}{2}\dist^2(x, x^\ast)$. Analogously to the Euclidean space, in optimization constrained to a g-convex closed set $\XX$ a similar result can be obtained after a relatively cheap algorithmic computation. For the point $x' \defi \proj[\XX](x-\frac{1}{\Lsmooth}\nabla f(x))$ defined as the result of one step of projected gradient descent, one can show that $\bargap{x'}\leq \frac{\zeta[R]\Lsmooth}{2}\dist^2(x, x^\ast)$, where now the gap is defined with respect to the constrained optimum $g(x)-\min_{x\in\X} g(x)$ and $R$ is defined as in \cref{lem:prgd}.

In the optimization of smooth and \SCSC{} functions $g:\X\times\Y \to \R$, where $\X, \Y$ are g-convex closed sets, we have other notions of optimality.
Also, define the functions $x^\ast(y) \defi \argmin_{x\in \X} g(x, y)$ and $y^\ast(x) \defi \argmax_{y\in\X}, g(x,y)$. We have the following notions of optimality:
\begin{itemize}
\item Duality gap $\newtarget{def:gap}{\gap[\bar{x}, \bar{y}]}  \defi \max_{y\in\Y} g(\bar{x}, y) - \min_{x\in\X} g(x, \bar{y}) = g(\bar{x}, y^{\ast}(\bar{x})) - g(x^{\ast}(\bar{y}), \bar{y})$;
\item Squared distance to the solution $\dist^2(\bar{x},x^\ast) + \dist^2(\bar{y}, y^\ast)$; 
\item Gap in each of the variables:
\[
    \newtarget{def:gap_x}{\gapx[\bar{x}]} \defi g(\bar{x}, y^\ast(\bar{x})) -g(x^\ast, y^\ast),\quad \text{ and }\quad \newtarget{def:gap_y}{\gapy[\bar{y}]} \defi  g(x^\ast, y^\ast)-g(y^\ast(\bar{y}), \bar{y}).
\]
\end{itemize}
We note that $\gap[\bar{x}, \bar{y}] = \gapx[\bar{x}] + \gapy[\bar{y}]$ and by optimality of the points involved, all gaps are non-negative. The following lemma provides how one can relate these measures. 

\begin{lemma}\linktoproof{proposition:from_one_opti_measure_to_another}\label{proposition:from_one_opti_measure_to_another}
    Let $\X\subset \M, \Y\subset\NN$ be closed g-convex subsets of the Hadamard manifolds $\M$, $\NN$, respectively. Let $g:\M\times\NN\to\R$ satisfy \cref{ass:fct} in $\X\times\Y$. The following holds:
    \begin{enumerate}
        \item\textbf{(Full Gap to One Gap)} $\gapx[\bar{x}]\leq \gap[\bar{x}, \bar{y}]$ and , $\gapy[\bar{y}] \leq \gap[\bar{x}, \bar{y}]$. \label{item:full_gap_to_individual_gap}
        \item\textbf{(One Gap to One Dist)} $\dist^2(\bar{x}, x^\ast) \leq \frac{2}{\bar{\oldmu}_x}\gapx[\bar{x}]$, and $\dist^2(\bar{y}, y^\ast) \leq \frac{2}{\bar{\oldmu}_y}\gapy[\bar{y}]$. \label{item:gap_to_dist}
        \item \textbf{(One Gap to Dist - One Variable Optimization)}\label{item:one_gap_to_dist} Suppose $\gapy[\bar{y}] \leq \epsilon$. If we compute an $\hat{\oldepsilon}$-minimizer $\bar{x}'$ of the problem $\min_{x\in\X} g(x,\bar{y})$, then
\[
            \dist^2(\bar{x}', x^\ast) + \dist^2(\bar{y}, y^\ast) \leq \frac{4\hat{\oldepsilon}}{\bar{\oldmu}_x}+ \frac{2\epsilon}{\bar{\oldmu}_y}\left( \frac{2\bar{L}_{xy}^2}{\bar{\oldmu}_x^2}+1\right).
\]

        \item \textbf{(Dist to Gap)}\label{item:dist_to_gap_lip} 
            If in addition, $x\mapsto g(x,\bar{y})$ is $\bar{L}_p^x$-Lipschitz in $\X$ and $y\mapsto g(\bar{x},y)$ is $\bar{L}_p^y$-Lipschitz in $\Y$, we have that
            \begin{equation*}
                \text{gap}(\bar{x},\bar{y})\le  \dist(y^{*},\bar{y})\left(\bar{L}_p^y+\bar{L}_p^x\frac{\bar{L}_{xy}}{\bar{\oldmu}_x}\right) +\dist(x^{*},\bar{x})\left(\bar{L}_p^x+\bar{L}_p^y\frac{\bar{L}_{xy}}{\bar{\oldmu}_y}\right).
            \end{equation*}
    \end{enumerate}
\end{lemma}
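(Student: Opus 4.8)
The plan is to prove the four items in order, observing that items~\cref{item:full_gap_to_individual_gap} and~\cref{item:gap_to_dist} are immediate from the definitions plus strong g-convexity, while items~\cref{item:one_gap_to_dist} and~\cref{item:dist_to_gap_lip} additionally use the Lipschitzness of the solution maps $y\mapsto x^\ast(\bar{y})$ and $x\mapsto y^\ast(\bar{x})$, which by \cref{lem:lip} are $\Lxy/\mux$- and $\Lxy/\muy$-Lipschitz, respectively. For item~\cref{item:full_gap_to_individual_gap} I would simply note that $\gap[\bar{x},\bar{y}] = \gapx[\bar{x}] + \gapy[\bar{y}]$ — this uses \cref{thm:sion}, which makes $g(\xast,\yast)$ simultaneously the min-max and the max-min value — and that both summands are nonnegative by optimality of $y^\ast(\bar{x})$, $x^\ast(\bar{y})$, $\xast$ and $\yast$; hence each is at most the sum. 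For item~\cref{item:gap_to_dist} I would use that $\xast$ minimizes $x\mapsto g(x,\yast)$ over the g-convex set $\X$, so the first-order optimality inequality $\innp{\nabla_x g(\xast,\yast),\exponinv{\xast}(\bar{x})}\ge 0$ combined with $\mux$-strong g-convexity yields $\tfrac{\mux}{2}\dist^2(\bar{x},\xast)\le g(\bar{x},\yast)-g(\xast,\yast)\le g(\bar{x},y^\ast(\bar{x}))-g(\xast,\yast)=\gapx[\bar{x}]$, where the middle step is $g(\bar{x},\yast)\le\max_{y\in\Y}g(\bar{x},y)$. The bound for $\bar{y}$ is symmetric.

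For item~\cref{item:one_gap_to_dist} I would first apply item~\cref{item:gap_to_dist} with the hypothesis $\gapy[\bar{y}]\le\epsilon$ to get $\dist^2(\bar{y},\yast)\le 2\epsilon/\muy$. For $\bar{x}'$, split $\dist(\bar{x}',\xast)\le \dist(\bar{x}',x^\ast(\bar{y}))+\dist(x^\ast(\bar{y}),\xast)$ by the triangle inequality. Since $\bar{x}'$ is an $\hat{\oldepsilon}$-minimizer of the $\mux$-strongly g-convex $g(\cdot,\bar{y})$ over $\X$, the same optimality-plus-strong-convexity argument as in item~\cref{item:gap_to_dist} gives $\dist^2(\bar{x}',x^\ast(\bar{y}))\le 2\hat{\oldepsilon}/\mux$; and since $\xast=x^\ast(\yast)$, the solution-map Lipschitz bound gives $\dist(x^\ast(\bar{y}),\xast)\le(\Lxy/\mux)\dist(\bar{y},\yast)$. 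Using $(a+b)^2\le 2a^2+2b^2$ and substituting $\dist^2(\bar{y},\yast)\le 2\epsilon/\muy$ yields $\dist^2(\bar{x}',\xast)\le \tfrac{4\hat{\oldepsilon}}{\mux}+\tfrac{4\Lxy^2\epsilon}{\mux^2\muy}$, and adding $\dist^2(\bar{y},\yast)\le 2\epsilon/\muy$ gives exactly the stated bound.

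For item~\cref{item:dist_to_gap_lip} the key is to split the duality gap through the evaluation point $(\bar{x},\bar{y})$: $\gap[\bar{x},\bar{y}]=\big(\max_{y\in\Y}g(\bar{x},y)-g(\bar{x},\bar{y})\big)+\big(g(\bar{x},\bar{y})-\min_{x\in\X}g(x,\bar{y})\big)$. The first parenthesis equals $g(\bar{x},y^\ast(\bar{x}))-g(\bar{x},\bar{y})\le \bar{L}_p^y\dist(y^\ast(\bar{x}),\bar{y})$ by $\bar{L}_p^y$-Lipschitzness of $g(\bar{x},\cdot)$, and the second equals $g(\bar{x},\bar{y})-g(x^\ast(\bar{y}),\bar{y})\le \bar{L}_p^x\dist(\bar{x},x^\ast(\bar{y}))$ by $\bar{L}_p^x$-Lipschitzness of $g(\cdot,\bar{y})$. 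Then, using $\yast=y^\ast(\xast)$, $\xast=x^\ast(\yast)$, the triangle inequality and the solution-map Lipschitz constants, $\dist(y^\ast(\bar{x}),\bar{y})\le(\Lxy/\muy)\dist(\bar{x},\xast)+\dist(\yast,\bar{y})$ and $\dist(\bar{x},x^\ast(\bar{y}))\le \dist(\bar{x},\xast)+(\Lxy/\mux)\dist(\yast,\bar{y})$. Substituting and collecting the coefficients of $\dist(\xast,\bar{x})$ and $\dist(\yast,\bar{y})$ gives $\gap[\bar{x},\bar{y}]\le \dist(\xast,\bar{x})\big(\bar{L}_p^x+\bar{L}_p^y\Lxy/\muy\big)+\dist(\yast,\bar{y})\big(\bar{L}_p^y+\bar{L}_p^x\Lxy/\mux\big)$, which is the claim.

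I expect the main obstacle to be organizational rather than technical, and concentrated in item~\cref{item:dist_to_gap_lip}: the decomposition must route through $(\bar{x},\bar{y})$ — rather than, say, $(\xast,\bar{y})$ or $(\bar{x},\yast)$ — so that only the per-slice Lipschitz hypotheses, which are assumed precisely at $g(\cdot,\bar{y})$ and $g(\bar{x},\cdot)$, are ever invoked, and so that the cross terms produced by the solution-map Lipschitz bounds land exactly on the coefficients $\bar{L}_p^x\Lxy/\muy$ and $\bar{L}_p^y\Lxy/\mux$ claimed. The secondary point, recurring in items~\cref{item:gap_to_dist}--\cref{item:one_gap_to_dist}, is that the strong-convexity-to-distance inequality must be used in its constrained form, which is valid because $\xast$ and $x^\ast(\bar{y})$ are minimizers over the g-convex set $\X$ so the first-order term has the right sign; everything else is a routine chain of triangle inequalities.
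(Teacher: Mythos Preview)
Your proposal is correct and follows essentially the same route as the paper's proof: the same decomposition $\gap[\bar{x},\bar{y}]=\gapx[\bar{x}]+\gapy[\bar{y}]$ for item~\ref{item:full_gap_to_individual_gap}, strong g-convexity for item~\ref{item:gap_to_dist}, the triangle inequality through $x^\ast(\bar{y})$ plus the $(\bar{L}_{xy}/\bar{\oldmu}_x)$-Lipschitzness of $x^\ast(\cdot)$ from \cref{lem:lip} for item~\ref{item:one_gap_to_dist}, and exactly your split through $(\bar{x},\bar{y})$ followed by per-slice Lipschitzness and the solution-map Lipschitz bounds for item~\ref{item:dist_to_gap_lip}. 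The only cosmetic difference is in item~\ref{item:gap_to_dist}: the paper applies strong g-convexity directly to $\phi(x)=\max_{y\in\Y}g(x,y)$ (which is $\bar{\oldmu}_x$-strongly g-convex by \cref{lem:strong_cvxty_of_max}) at its minimizer $x^\ast$, whereas you apply it to the slice $g(\cdot,y^\ast)$ and then bound $g(\bar{x},y^\ast)\le g(\bar{x},y^\ast(\bar{x}))$; both reach the same inequality.
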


Above, we showed that we can essentially guarantee any optimality criterion with another, by only increasing the accuracy by low polynomial factors depending on the problem parameters, which translates into logarithmic factors when applied to a method like our \cref{alg:ramma}. The most expensive reduction consists of going from having a low $\gapx[x]$ or $\gapy[y]$ to bounding the other measures \cref{proposition:from_one_opti_measure_to_another}.\ref{item:one_gap_to_dist}, for which we require running an accelerated method on one variable.  
This is done in Lines \ref{line:getting_distance_to_yast_outer_loop} and \ref{line:getting_distance_to_xkast_second_loop} of \cref{alg:ramma}.
The complexity of the main routine in \cref{alg:ramma} still dominates these extra algorithmic steps. 

\subsection[The NCSC, CC and SCC Cases]{The \NCSC{}, \CC{} and \SCC{} Cases}
By means of regularization, we can reduce the \CC{} and \SCC{} cases to the \SCSC{} case and use \cref{alg:ramma} to solve such problems. Interestingly, we require regularization in both variables even if the function is strongly g-convex with respect to one of them, because regularizing in both variables guarantees $\dist((x_0, y_0), (\hat{x}_{\varepsilon}^\ast, \hat{y}_{\varepsilon}^\ast)) \leq \dist((x_0, y_0), (\xast, \yast))$ and this is a crucial property in our analysis to reduce geometric penalties, see \cref{rem:SP-ass}.
We used $(\hat{x}_{\varepsilon}^\ast, \hat{y}_{\varepsilon}^\ast)$ to denote the global saddle point of the regularized problem.

\begin{corollary}[\SCC{} or \CC{} to \SCSC{}]\linktoproof{cor:red-scc}\label{cor:red-scc}
    Let $f:\M\times \NN\rightarrow \mathbb{R}$ be a function as defined in \cref{sec:setting} and let $\M$ and $\NN$ be Hadamard manifolds. Via regularization, \cref{alg:ramma} obtains an $\epsilon$-saddle point of $f$ after the following number of calls to the gradient and metric-projected oracles, in the $(\mux,0)$-\SCSC{} case:
  \begin{equation*}
    \bigotildel{\zetad^{9/2}\sqrt{\zetad^2\frac{\L}{\mux}+ \frac{\Ly\D^2}{\epsilon} + \frac{\zetad \D^2\Lxy\L}{\epsilon\mux} }} =\bigotildel{\zetad^{11/2}\frac{LD}{\sqrt{\mux}\epsilon}}.
   \end{equation*}
    Similarly, if the function is $(0,0)$-\SCSC{}, i.e., it is \CC{}, via regularization, \cref{alg:ramma} takes
  \begin{equation*}
      \bigotildel{\zetad^{9/2}\sqrt{\frac{\Lx\D^2}{\epsilon}+\frac{\Ly\D^2}{\epsilon}+\frac{\zetad \Lxy\D^2}{\epsilon}\left(\frac{\L\D^2}{\epsilon}+\zetad \right)}} = \bigotildel{\sqrt{ \frac{\zetad^{9}\L\D^2}{\epsilon}}+\frac{\D^2\sqrt{\zetad^{11}\Lxy\L}}{\epsilon}}.
   \end{equation*}
\end{corollary}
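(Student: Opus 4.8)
\textbf{Proof plan for \cref{cor:red-scc}.} The plan is to reduce both cases to the \SCSC{} setting of \cref{thm:minmax_alg} by adding a quadratic regularizer \emph{in both variables}, with a regularization parameter $\nu>0$ that will be taken of order $\epsilon/\D^2$. Concretely, set
\[
    f_\nu(x,y) \defi f(x,y) + \tfrac{\nu}{2}\dist^2(x_0,x) - \tfrac{\nu}{2}\dist^2(y_0,y).
\]
On a Hadamard manifold $z\mapsto \tfrac12\dist^2(p,z)$ is $1$-strongly g-convex and has $\zetad$-Lipschitz gradient on a set of diameter $\D$ (this is where the geometric constant enters the smoothness bookkeeping). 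Hence $f_\nu$ satisfies \cref{ass:fct} on $\X\times\Y$ with strong-convexity constants $(\mux+\nu,\nu)$ in the \SCC{} case and $(\nu,\nu)$ in the \CC{} case, with smoothness constants $\Lx+O(\nu\zetad)$, $\Ly+O(\nu\zetad)$ and unchanged $\Lxy$; after the rescaling of \cref{sec:rescaling_manifolds} we may again assume the two smoothness constants are equal, and since $\nu=\Theta(\epsilon/\D^2)$ is small they stay within a constant factor of $\Lx,\Ly$. Thus $f_\nu$ is a legitimate input for \RAMMA{}.

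Two facts then need to be checked. First (\emph{accuracy transfer}): because both regularizers are bounded by $\tfrac{\nu}{2}\D^2$ on $\X\times\Y$, one gets $\gap_f(\bar x,\bar y)\le \gap_{f_\nu}(\bar x,\bar y)+\nu\D^2$ (and analogously for the distance criterion), so choosing $\nu=\Theta(\epsilon/\D^2)$ and asking \RAMMA{} for an $\epsilon/2$-saddle of $f_\nu$ yields an $\epsilon$-saddle of $f$. Second (\emph{geometry control}): $f_\nu$ is \SCSC{}, so by \cref{thm:sion} it has a unique global saddle $(\hat{x}^\ast_\epsilon,\hat{y}^\ast_\epsilon)$ with vanishing gradient, and using the saddle inequalities of $f_\nu$ at $(x^\ast,\hat{y}^\ast_\epsilon)$ and at $(\hat{x}^\ast_\epsilon,y^\ast)$ together with $f(\hat{x}^\ast_\epsilon,y^\ast)\ge f(x^\ast,y^\ast)\ge f(x^\ast,\hat{y}^\ast_\epsilon)$ and the fact that the \emph{same} coefficient $\nu$ multiplies both regularizers, one obtains
\[
    \dist^2(x_0,\hat{x}^\ast_\epsilon)+\dist^2(y_0,\hat{y}^\ast_\epsilon)\ \le\ \dist^2(x_0,x^\ast)+\dist^2(y_0,y^\ast)\ \le\ 2\D^2 ,
\]
i.e. the property advertised in \cref{rem:SP-ast}; regularizing in $y$ only would not give it. This is the main obstacle: without it the regularized saddle could sit $\Theta(\Lxy/\muy)$ away and the geometric constants in \cref{thm:minmax_alg} applied to $f_\nu$ would degrade. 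With it, the relevant ball around $(x_0,y_0)$ has radius $O(\D)$, so all geometric quantities remain $\Theta(\zetad)$; if needed, one enlarges $\X,\Y$ by a constant factor so that $(\hat{x}^\ast_\epsilon,\hat{y}^\ast_\epsilon)$ is feasible, which only inflates $\zetad$ by a constant (absorbed in the stated powers) and cannot decrease the gap over the original $\X\times\Y$.

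Finally, apply \cref{thm:minmax_alg} to $f_\nu$ with the above constants and $\nu=\Theta(\epsilon/\D^2)$. In the $(\mux,0)$-\SCC{} case this substitutes $\muy\rightsquigarrow\nu$ and $\mux\rightsquigarrow\mux$, turning $\tfrac{\Ly}{\muy}$ into $\tfrac{\Ly\D^2}{\epsilon}$, turning $\tfrac{\zetad\L\Lxy}{\mux\muy}$ into $\tfrac{\zetad\L\Lxy\D^2}{\mux\epsilon}$, and bounding $\tfrac{\Lx}{\mux}+\zetad^2\le 2\zetad^2\tfrac{\L}{\mux}$ (using $\Lx=\L$ and $\L/\mux\ge 1$), which gives the first displayed bound; in the \CC{} case both $\mux,\muy\rightsquigarrow\nu$, so $\mux\muy\rightsquigarrow\epsilon^2/\D^4$ and the $\Lxy$ term becomes first order in $\D^2/\epsilon$, giving the second bound. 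The simplified right-hand sides follow from $\Lx,\Ly,\Lxy\le\L$, $\mux\le\L$, and the convention that the problem is trivial once $\epsilon\gtrsim\L\D^2$, keeping the dominant terms ($\tfrac{\zetad\L\Lxy\D^2}{\mux\epsilon}$ in the \SCC{} case and $\tfrac{\zetad\Lxy\L\D^4}{\epsilon^2}$ in the \CC{} case).
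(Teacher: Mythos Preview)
Your proposal is correct and takes essentially the same approach as the paper: regularize both variables with parameter $\nu=\Theta(\epsilon/\D^2)$, transfer accuracy via the $\nu\D^2$ bound on the regularizers, control geometry by the saddle-inequality computation (which is exactly \cref{lem:dist-bound} in the paper), and substitute the resulting constants into \cref{thm:minmax_alg}. Your aside about possibly enlarging $\X,\Y$ is unnecessary under the standing assumption $\nabla f(\xast,\yast)=0$, since then $(\xast,\yast)$ is already a global saddle and the inequality $f(\hat{x}^\ast_\epsilon,\yast)\ge f(\xast,\yast)\ge f(\xast,\hat{y}^\ast_\epsilon)$ holds without any feasibility requirement on $(\hat{x}^\ast_\epsilon,\hat{y}^\ast_\epsilon)$; also, the label you want is \cref{rem:SP-ass}.
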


We note that similarly to the Euclidean case, if $\Lxy = 0$ we naturally recover the accelerated convergence of optimizing one problem on each variable separately, up to geometric constants and log factors. If $\Lxy=\Lx=\Ly$, we obtain that, up to geometric penalties and log factors, the convergence rates result in the product of the accelerated rates for each problem individually. 

Further, using \RAMMAWC, a modification of \RAMMA{} described in \cref{def:ramma-wc} we can find an $\varepsilon$-stationary point of $f$.
\begin{theorem} \linktoproof{thm:ramma-wc}\label{thm:ramma-wc}
  Let $\X \subset \M, \Y \subset \NN$ be Hadamard manifolds.
  Let $f: \mathcal{X} \times \mathcal{Y} \rightarrow \mathbb{R}$ be $\mu_y$-SC in $y$ and let $f$ be $(\Lx, \Ly, \Lxy)$-smooth.
    Then the output of \RAMMAWC{}  is an $\varepsilon$-stationary point (see \cref{def:min-stat}) of $f$  with probability at least 2/3 after $\bigotildel{\zeta^{4}\frac{\L\Deltazero}{\epsilon^2}\sqrt{\zeta +\frac{\L}{\muy}}}$ calls to the gradient and projection oracle, where $\newtarget{def:initial_gap_of_phi}{\Deltazero} \defi \phi(x_0)-\min_{x\in \mathcal{X}}\phi(x)$, $\phi(x)\defi \max_{y\in\Y} f(x, y)$.
\end{theorem}

\section{Conclusion and Future Work}
\label{sec:concl-future-work}
This paper contributes to the understanding of Riemannian optimization in multiple directions.
We proposed several new and improved methods for g-convex optimization, as well as for min-max Riemannian optimization.
None of our algorithmic results assume that the iterates will stay in some pre-specified bounded set, and consequently we can ensure we bound geometric penalties.
An interesting future direction of research is whether our algorithms can avoid extra logarithmic factors or enjoy lower geometric penalties.
Finding lower bounds for the min-max Riemannian case that feature additional hardness caused by the geometry is of interest.
An important open question, even for the Euclidean setting, is achieving rates for the general $(\Lx,\Ly,\Lxy)$-smooth and $(\mux,\muy)$-\SCSC{} case matching lower bounds.
\clearpage

\acks{
This research was partially funded by the Research Campus Modal funded by the German Federal Ministry of Education and Research (fund numbers 05M14ZAM,05M20ZBM) as well as the Deutsche Forschungsgemeinschaft (DFG) through the DFG Cluster of Excellence MATH$^+$ (EXC-2046/1, project ID 390685689).
}

\printbibliography[heading=bibintoc] 

\clearpage

\appendix

\section{Geometric Auxiliary Results}

In this appendix, we use the following abuse of notation. Given points $x, y, z\in \M$, we write $y$ to mean $\exponinv{x}(y)$, if it is clear from context. For example, for $v\in \Tansp{x}\M$ we have $\innp{v, y - x} = -\innp{v, x - y} = \innp{v, \exponinv{x}(y)- \exponinv{x}(x)} = \innp{v, \exponinv{x}(y)}$; $\norm{v-y} = \norm{v-\exponinv{x}(y)}$; $\norm{z-y}_x = \norm{\exponinv{x}(z)-\exponinv{x}(y)}$; and $\norm{y-x}_x =\norm{\exponinv{x}(y)} =\dist(x, y)$. 

In this section, we provide already established useful geometric results that will be used in our proofs in the sequel. 

\begin{lemma}[Riemannian Cosine-Law Inequalities]\label{lemma:cosine_law_riemannian}
    For the vertices $x, y, p \in \M$ of a uniquely geodesic triangle of diameter $D$, we have
    \[
        \innp{\exponinv{x}(y), \exponinv{x}(p)} \geq \frac{\delta[D]}{2} \dist^2(x,y) + \frac{1}{2}\dist^2(p, x) - \frac{1}{2}\dist^2(p, y).
    \]
    and
    \[
        \innp{\exponinv{x}(y), \exponinv{x}(p)} \leq \frac{\zeta[D]}{2} \dist^2(x,y) + \frac{1}{2}\dist^2(p, x) - \frac{1}{2}\dist^2(p, y)
    \]
\end{lemma}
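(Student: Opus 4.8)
The plan is to reduce the statement to the angle at $x$ and then invoke the comparison (Toponogov-type) versions of the law of cosines for manifolds with sectional curvature bounded in $[\kmin,\kmax]$. First I would write
$\innp{\exponinv{x}(y),\exponinv{x}(p)} = \dist(x,y)\,\dist(x,p)\cos\alpha$, where $\alpha\in[0,\pi]$ is the angle of the uniquely geodesic triangle $xyp$ at the vertex $x$; this is immediate from the definition of the metric together with $\norm{\exponinv{x}(y)}=\dist(x,y)$ and $\norm{\exponinv{x}(p)}=\dist(x,p)$. Write $b\defi\dist(x,y)$, $c\defi\dist(x,p)$ and $a\defi\dist(y,p)$, so that $a,b,c\le D$.

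For the upper bound I would use that the sectional curvature of $\M$ is bounded below by $\kmin$. Comparing the hinge at $x$ with the corresponding hinge in the model space of constant curvature $\kmin$ and applying the hyperbolic law of cosines together with elementary estimates on $\sinh$ and $\cosh$ (this is the standard Riemannian law of cosines, see e.g. \citep{zhang2016first}) one obtains $a^2 \le c\sqrt{|\kmin|}\coth(c\sqrt{|\kmin|})\,b^2 + c^2 - 2bc\cos\alpha$ when $\kmin<0$, and $a^2 \le b^2+c^2-2bc\cos\alpha$ when $\kmin\ge 0$; in both cases the coefficient of $b^2$ is exactly $\zeta[c]$. Since $t\mapsto t\sqrt{|\kmin|}\coth(t\sqrt{|\kmin|})$ is nondecreasing and $c\le D$, we have $\zeta[c]\le\zeta[D]$, hence $2bc\cos\alpha \le \zeta[D]\,b^2 + c^2 - a^2$. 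Dividing by $2$ and substituting back $a,b,c$ yields the claimed upper bound.

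For the lower bound the argument is symmetric but uses the curvature upper bound $\kmax$: comparing the same hinge with the model space of constant curvature $\kmax$ and applying the spherical law of cosines (when $\kmax>0$) gives $a^2 \ge \delta[c]\,b^2 + c^2 - 2bc\cos\alpha$, with $\delta[c]=c\sqrt{\kmax}\cot(c\sqrt{\kmax})$ when $\kmax>0$ and $\delta[c]=1$ otherwise; since $t\mapsto t\sqrt{\kmax}\cot(t\sqrt{\kmax})$ is nonincreasing on the relevant range and $c\le D$, we get $\delta[c]\ge\delta[D]$, so $2bc\cos\alpha \ge \delta[D]\,b^2 + c^2 - a^2$, and rearranging and substituting gives the lower bound. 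In the Hadamard case $\kmax\le 0$, so $\delta[D]=1$ and this reduces to the classical CAT($0$) inequality expressing thinness of geodesic triangles.

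The only genuine content here is the pair of hinge comparison inequalities $a^2\le\zeta[c]\,b^2+c^2-2bc\cos\alpha$ and $a^2\ge\delta[c]\,b^2+c^2-2bc\cos\alpha$, which are established in the literature; for this lemma I would simply cite them. The main thing requiring care is the monotonicity of $t\coth t$ (nondecreasing) and $t\cot t$ (nonincreasing) together with the observation that all three side lengths are bounded by the triangle's diameter $D$, which is what lets one replace the side-dependent constants $\zeta[c],\delta[c]$ by the diameter constants $\zeta[D],\delta[D]$ without reversing either inequality.
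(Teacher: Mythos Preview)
Your proposal is correct and matches the paper's treatment: the paper does not give its own argument but simply cites \cite{martinez2022accelerated} for a proof, and notes in the following remark that the tighter constant $\zeta[\dist(p,x)]$ appears in \cite{zhang2016first}. Your sketch via hinge comparison plus monotonicity of $t\coth t$ and $t\cot t$ is exactly the standard derivation behind those cited results, so you have effectively unpacked the citation rather than taken a different route.
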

See \citep{martinez2022accelerated} for a proof.

\begin{remark}\label{remark:tighter_cosine_inequality}
    Actually, in spaces with lower bounded sectional curvature, if we substitute the constants $\zeta[D]$ in the previous \cref{lemma:cosine_law_riemannian} by the tighter constant and $\zeta[\dist(p,x)]$, the result also holds. See \citep{zhang2016first}.
\end{remark}

The following lemmas allow us to bound functions defined in some tangent space by other functions defined in another tangent space. See \citet{kim2022accelerated} for a proof.

\begin{lemma}\label{lemma:moving_quadratics:inexact_approachment_recession}
    Let $x, y, p \in \M$ be the vertices of a uniquely geodesic triangle $\mathcal{T}$ of diameter $D$, and let $z^x \in \Tansp{x}\M$, $z^y \defi \Gamma{x}{y}(z^x) + \exponinv{y}(x)$, such that $y = \expon{x}(rz^x)$ for some $r\in[0,1)$. If we take vectors $a^y\in \Tansp{y}\M$, $a^x \defi \Gamma{y}{x}(a^y) \in \Tansp{x}\M$, then we have the following, for all $\xi \geq \zeta[D]$:
\begin{align*}
 \begin{aligned}
     \norm{&z^y+a^y-\exponinv{y}(p)}_y^2 + (\xi-1)\norm{z^y+a^y}_y^2 \\
     &\leq \norm{z^x+a^x-\exponinv{x}(p)}_x^2 + (\xi-1)\norm{z^x+a^x}_x^2 + \frac{\xi-\delta[D]}{2}\left(\frac{r}{1-r}\right)\norm{a^x}_x^2.
 \end{aligned}
\end{align*}
\end{lemma}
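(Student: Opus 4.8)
The plan is to carry every term appearing on the left of the claimed inequality into the single tangent space $\Tansp{x}\M$ by parallel transport, reduce to the boundary case $\xi=\zeta[D]$, and close the resulting scalar inequality using the Riemannian cosine law (\cref{lemma:cosine_law_riemannian}).

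First I would transport. Since parallel transport is a linear isometry, $\Gamma{x}{y}(\exponinv{x}(y))=-\exponinv{y}(x)$, and $y=\expon{x}(rz^x)$ forces $\exponinv{x}(y)=rz^x$, we get $z^y+a^y=\Gamma{x}{y}(z^x+a^x)+\exponinv{y}(x)$, hence $\Gamma{y}{x}(z^y+a^y)=(z^x+a^x)-\exponinv{x}(y)=(1-r)z^x+a^x$. Applying $\norm{\Gamma{y}{x}(\cdot)}_x=\norm{\cdot}_y$, the left-hand side equals $\norm{(1-r)z^x+a^x-P}_x^2+(\xi-1)\norm{(1-r)z^x+a^x}_x^2$, where $P\defi\Gamma{y}{x}(\exponinv{y}(p))\in\Tansp{x}\M$ satisfies $\norm{P}_x=\dist(y,p)$. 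In the Euclidean model $P=\exponinv{x}(p)-\exponinv{x}(y)$, and this expression equals the right-hand side minus $\tfrac{\xi-1}{2}\tfrac{r}{1-r}\norm{a^x}_x^2$; thus the content of the lemma is that the transport defect $\rho\defi\exponinv{x}(y)+P-\exponinv{x}(p)$ is small enough to be absorbed by the stated error. It also suffices to treat $\xi=\zeta[D]$: writing $g(\xi)$ for (right-hand side) minus (left-hand side) one finds $g'(\xi)=\norm{z^x+a^x}_x^2+\tfrac12\tfrac{r}{1-r}\norm{a^x}_x^2-\norm{(1-r)z^x+a^x}_x^2=r\big((2-r)\norm{z^x}_x^2+2\innp{z^x,a^x}_x+\tfrac{1}{2(1-r)}\norm{a^x}_x^2\big)\ge0$, since the quadratic form in the bracket has negative discriminant.

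Next I would eliminate the distances to $p$. Expanding both sides and applying \cref{lemma:cosine_law_riemannian} (and, where the tighter constant helps, \cref{remark:tighter_cosine_inequality}) to the geodesic triangle with vertices $x,y,p$ — to the pair $\exponinv{x}(y),\exponinv{x}(p)$ at $x$, to $\exponinv{y}(x),\exponinv{y}(p)$ at $y$ (the isometry identifies $\innp{\exponinv{x}(y),P}_x$ with $-\innp{\exponinv{y}(x),\exponinv{y}(p)}_y$), and to the transported pairing $\innp{P,\exponinv{x}(p)}_x$ — a direct computation shows that every term proportional to $\dist^2(x,p)$ or to $\dist^2(y,p)$ cancels. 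What remains is a combination of $\dist^2(x,y)=r^2\norm{z^x}_x^2$, the bilinear term $\innp{z^x,a^x}_x$, and the three quantities $\innp{z^x,\rho}_x$, $\innp{a^x,\rho}_x$, $\norm{\rho}_x^2$, the first and last of which — together with $\innp{\exponinv{x}(p),\rho}_x$ — are of order $(\zeta[D]-\delta[D])\,\dist^2(x,y)$ rather than scaling with $\dist(x,p)$; in addition the $\xi$-weighted squared norms contribute a favorable $-\xi r(2-r)\norm{z^x}_x^2$.

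Finally, decomposing $a^x=\alpha\,\exponinv{x}(y)/\dist(x,y)+a^\perp$ into its parts parallel and orthogonal to the geodesic from $x$ to $y$, substituting the cosine-law estimates, and absorbing the one genuinely bilinear term (the pairing of $a^\perp$ with the orthogonal part of $\rho$) by Young's inequality with a parameter proportional to $\tfrac{r}{1-r}$, the inequality $g(\zeta[D])\ge0$ reduces to the positive semidefiniteness of a $3\times3$ quadratic form in $(\norm{z^x}_x,\alpha,\norm{a^\perp}_x)$, which is checked through its leading principal minors. I expect the crux to be exactly this last step: it requires a quantitative comparison — a Rauch-type Jacobi-field estimate, or equivalently the sharpened cosine inequalities of \cref{remark:tighter_cosine_inequality} — that controls the cosine-law excess and the defect $\rho$ simultaneously with precisely the constants that make the form positive semidefinite, which is what forces the slack $\xi\ge\zeta[D]$ and produces the $\tfrac{r}{1-r}$ blow-up as $r\to1$.
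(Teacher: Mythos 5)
Your first phase is sound. The transport identity $\Gamma{y}{x}(z^y+a^y)=(1-r)z^x+a^x$, the substitution $P\defi\Gamma{y}{x}(\exponinv{y}(p))$ with $\norm{P}_x=\dist(y,p)$, and the reduction to the boundary case $\xi=\zeta[D]$ are all correct; your formula for $g'(\xi)$ and the discriminant argument showing $g'(\xi)\ge0$ check out. (The parenthetical that in flat space the transported left side \emph{equals} the right side minus the error term is not quite right, since $(\xi-1)\norm{(1-r)z^x+a^x}^2\ne(\xi-1)\norm{z^x+a^x}^2$ in general; it is a strict inequality, but that does not affect the monotonicity reduction.)

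The gap is in the second phase and it is substantive. After applying \cref{lemma:cosine_law_riemannian} to the pairs $(\exponinv{x}(y),\exponinv{x}(p))$ at $x$ and $(\exponinv{y}(x),\exponinv{y}(p))$ at $y$, the $\dist^2(x,p)$ and $\dist^2(y,p)$ terms do cancel, and what remains to be shown nonnegative has the schematic form
\[
2\zeta[D]\,r\,\innp{z^x,a^x}_x-2\innp{a^x,\exponinv{x}(p)-P}_x+\tfrac{\zeta[D]-\delta[D]}{2}\tfrac{r}{1-r}\norm{a^x}_x^2.
\]
Since $a^x$ is arbitrary, closing this requires control of the \emph{full vector} $\exponinv{x}(p)-P$, i.e.\ of your defect $\rho$. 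But \cref{lemma:cosine_law_riemannian} — and its sharpened form in \cref{remark:tighter_cosine_inequality} — only bound inner products between logarithm vectors at a common vertex of a geodesic triangle. They constrain $\innp{\exponinv{x}(y),\exponinv{x}(p)}$ and $\innp{\exponinv{y}(x),\exponinv{y}(p)}$, hence the component of $\rho$ \emph{along} $z^x$, but they say nothing about $\innp{\exponinv{x}(p),P}$ nor about the component of $\rho$ orthogonal to the $x$--$y$ geodesic, which is exactly what pairs against $a^\perp$. Your assertion that $\innp{a^x,\rho}$, $\norm{\rho}^2$ and $\innp{\exponinv{x}(p),\rho}$ are all of order $(\zeta[D]-\delta[D])\dist^2(x,y)$ therefore does not follow from the tools you cite, and the claimed $3\times3$ quadratic form cannot even be assembled from them. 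In particular, \cref{remark:tighter_cosine_inequality} is not ``equivalently'' a Jacobi-field estimate: it only tightens the constant in the same triangle pairing.

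What actually closes the argument is a genuine Rauch/Jacobi-field comparison between the differential of the exponential map and parallel transport along the geodesic from $y$ to $p$, giving a \emph{direction-uniform} bound on the transport defect with exactly the constants $\zeta[D]$ and $\delta[D]$. That is the substantive ingredient in the proof the paper defers to \citet{kim2022accelerated}; the paper itself does not reprove this lemma. Your proposal correctly identifies the structure — transport to one tangent space, reduce to $\xi=\zeta[D]$, isolate $\rho$ — but it stops exactly at the hard step and substitutes a tool (the cosine law) that cannot reach the orthogonal component of $\rho$. Until that comparison estimate is supplied and shown to produce the $\tfrac{r}{1-r}$ blow-up with the stated constants, the proof is incomplete.
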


\begin{corollary}\label{corol:moving_quadratics:exact_approachment_recession}
    Let $x, y, p \in \M$ be the vertices of a uniquely geodesic triangle of diameter $D$, and let $z^x \in \Tansp{x}\M$, $z^y \defi \Gamma{x}{y}(z^x) + \exponinv{y}(x)$, such that $y = \expon{x}(rz^x)$ for some $r\in[0,1]$. Then, the following holds
    \[
         \norm{z^y-\exponinv{y}(p)}^2 + (\zeta[D]-1)\norm{z^y}^2 \leq \norm{z^x-\exponinv{x}(p)}^2 + (\zeta[D]-1)\norm{z^x}^2.
    \]
\end{corollary}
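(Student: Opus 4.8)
The plan is to obtain the corollary as an immediate specialization of \cref{lemma:moving_quadratics:inexact_approachment_recession}, together with a short separate treatment of the endpoint $r=1$ that the lemma excludes. First I would apply \cref{lemma:moving_quadratics:inexact_approachment_recession} with the trivial choice $a^y = 0$, so that $a^x = \Gamma{y}{x}(a^y) = 0$, and with $\xi = \zeta[D]$, which is admissible since the lemma only requires $\xi \geq \zeta[D]$. For every $r\in[0,1)$ the error term $\frac{\xi-\delta[D]}{2}\bigl(\frac{r}{1-r}\bigr)\norm{a^x}_x^2$ vanishes because $a^x = 0$, and what remains is exactly
\[
  \norm{z^y-\exponinv{y}(p)}^2 + (\zeta[D]-1)\norm{z^y}^2 \leq \norm{z^x-\exponinv{x}(p)}^2 + (\zeta[D]-1)\norm{z^x}^2 ,
\]
so the claim holds for all $r\in[0,1)$ with no further computation.

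It then remains to cover the boundary case $r=1$. Here I would first record the geometric identity that when $r=1$ we have $y = \expon{x}(z^x)$, i.e.\ $z^x = \exponinv{x}(y)$ and hence $\norm{z^x}_x = \dist(x,y)$; moreover the velocity at $y$ of the minimizing geodesic from $x$ to $y$ is $\Gamma{x}{y}(\exponinv{x}(y)) = -\exponinv{y}(x)$, so that $z^y = \Gamma{x}{y}(z^x) + \exponinv{y}(x) = 0$. Substituting $z^y = 0$ and $z^x = \exponinv{x}(y)$ and expanding $\norm{\exponinv{x}(y)-\exponinv{x}(p)}^2 = \dist^2(x,y) + \dist^2(x,p) - 2\innp{\exponinv{x}(y),\exponinv{x}(p)}$, the desired inequality becomes precisely
\[
  \innp{\exponinv{x}(y),\exponinv{x}(p)} \leq \tfrac{\zeta[D]}{2}\dist^2(x,y) + \tfrac12\dist^2(p,x) - \tfrac12\dist^2(p,y),
\]
which is the second (upper) Riemannian cosine-law inequality of \cref{lemma:cosine_law_riemannian}. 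Alternatively, since $y$, $z^y$ and $\exponinv{y}(p)$ all depend continuously on $r$ within the uniquely geodesic region, both sides of the corollary are continuous in $r$ on $[0,1]$, and the case $r=1$ also follows by letting $r\to 1^-$ in the inequality already established for $r\in[0,1)$.

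The only mild obstacle is this endpoint $r=1$, where the $\frac{r}{1-r}$ factor in the parent lemma is singular (so the lemma is stated only for $r<1$); but as just indicated, it is closed either by the direct cosine-law computation above or by a one-line continuity argument. Everything else is a verbatim specialization of \cref{lemma:moving_quadratics:inexact_approachment_recession}.
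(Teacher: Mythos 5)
Your proof is correct and essentially mirrors the paper's intended argument: specialize Lemma~\ref{lemma:moving_quadratics:inexact_approachment_recession} with $a^y=0$ and $\xi=\zeta[D]$ so that the $\frac{r}{1-r}$ error term drops out, and then handle $r=1$ by taking the limit $r\to 1^-$ (which is exactly the one-line remark the paper attaches to the corollary). The additional direct computation you give for $r=1$ — noting $z^y=0$ and reducing the claim to the upper Riemannian cosine-law inequality of Lemma~\ref{lemma:cosine_law_riemannian} — is a nice, valid alternative but not strictly needed.
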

The case $r=1$ above is obtained by taking the limit $r\to 1$.

\begin{lemma}\label{fact:hessian_of_riemannian_squared_distance}
    Let $\MOnly$ be a Riemannian manifold of sectional curvature bounded by $[\kmin, \kmax]$ that contains a uniquely g-convex set $\X\subset \MOnly$ of diameter $\D<\infty$. Then, given $x,y\in\X$ we have the following for the function $\Phi_x:\M \to\R$, $y \mapsto \frac{1}{2}\dist^2(x, y)$:
\[
    \nabla \Phi_x(y) = -\exponinv{y}(x)\text{\quad \quad and \quad \quad} \delta[\D]\norm{v}^2 \leq \Hess  \Phi_x(y)[v, v] \leq \zeta[\D] \norm{v}^2.
\] 
These bounds are tight for spaces of constant sectional curvature. Consequently, $\Phi_x$ is $\delta[\D]$-strongly g-convex and $\zeta[\D]$-smooth in $\X$. 
\end{lemma}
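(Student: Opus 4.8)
The plan is to derive the gradient from the first variation of arc length and the Hessian bounds from the classical Jacobi-field (Hessian) comparison estimates for the Riemannian distance function, and then to track how the curvature-dependent constants depend on the diameter. Fix $x\in\X$. The statement is immediate at $y=x$, where $\Phi_x$ attains its minimum $0$, so $\nabla\Phi_x(x)=0=-\exponinv{x}(x)$ and $\Hess\Phi_x(x)=\Id$, which lies between $\delta[\D]\le 1$ and $\zeta[\D]\ge 1$. So assume $y\neq x$ and set $r\defi\dist(x,y)=\norm{\exponinv{y}(x)}$; since $\X$ is uniquely geodesic, $r^2$, and hence $\Phi_x$, is smooth on a neighbourhood of $y$ in $\X$, and $\nabla r(y)=-\exponinv{y}(x)/r$ is the unit radial direction pointing away from $x$. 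Writing $\Phi_x=\tfrac12 r^2$ gives $\nabla\Phi_x(y)=r\,\nabla r(y)=-\exponinv{y}(x)$, the claimed gradient formula; equivalently this is the first variation $\tfrac{d}{dt}\big|_{0}\Phi_x(c(t))=-\innp{\exponinv{y}(x),v}$ for any curve $c$ with $c(0)=y$, $\dot c(0)=v$.

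For the Hessian, use $\Hess\Phi_x=\nabla r\otimes\nabla r+r\,\Hess r$, and note $\nabla r$ lies in the kernel of $\Hess r$ since $\norm{\nabla r}\equiv1$. Decompose a test vector $v\in\Tansp{y}\M$ as $v=v^{\top}+v^{\perp}$ with $v^{\top}$ parallel to $\nabla r(y)$ and $v^{\perp}$ orthogonal to it. Then $(\nabla r\otimes\nabla r)[v,v]=\norm{v^{\top}}^2$, there is no radial/orthogonal cross term, and $\Hess\Phi_x(y)[v,v]=\norm{v^{\top}}^2+r\,\Hess r(y)[v^{\perp},v^{\perp}]$. The Hessian comparison theorem (see, e.g., \citet{petersen2006riemannian}), proved via Jacobi fields along the minimizing geodesic from $y$ to $x$, bounds $\Hess r(y)[v^{\perp},v^{\perp}]$ below by the model cotangent for curvature $\kmax$ and above by the model cotangent for curvature $\kmin$, so that $r\,\Hess r(y)[v^{\perp},v^{\perp}]$ lies in the interval $[\,\delta[r],\,\zeta[r]\,]\,\norm{v^{\perp}}^2$, using that $r\sqrt{\kmax}\cot(r\sqrt{\kmax})=\delta[r]$ when $\kmax>0$, $r\sqrt{\abs{\kmin}}\coth(r\sqrt{\abs{\kmin}})=\zeta[r]$ when $\kmin<0$, and both equal $1$ in the flat cases. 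Since $\delta[r]\le 1\le\zeta[r]$ always, combining the radial and orthogonal contributions gives $\delta[r]\norm{v}^2\le\norm{v^{\top}}^2+\delta[r]\norm{v^{\perp}}^2\le\Hess\Phi_x(y)[v,v]\le\norm{v^{\top}}^2+\zeta[r]\norm{v^{\perp}}^2\le\zeta[r]\norm{v}^2$.

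Because $t\mapsto t\sqrt{\abs{\kmin}}\coth(t\sqrt{\abs{\kmin}})$ is nondecreasing and $t\mapsto t\sqrt{\kmax}\cot(t\sqrt{\kmax})$ is nonincreasing on the admissible range, and $r\le\D$, we get $\delta[\D]\le\delta[r]$ and $\zeta[r]\le\zeta[\D]$, hence $\delta[\D]\norm{v}^2\le\Hess\Phi_x(y)[v,v]\le\zeta[\D]\norm{v}^2$. In a space of constant curvature the Jacobi-field estimates are equalities, so choosing $v=v^{\perp}$ with $\norm{v}=1$ and $r=\D$ attains each bound, proving tightness. Finally, the strong g-convexity and smoothness claim follows by integrating these Hessian bounds along geodesics: for any geodesic $\sigma$ in $\X$, $\tfrac{d^2}{dt^2}\Phi_x(\sigma(t))=\Hess\Phi_x(\sigma(t))[\dot\sigma,\dot\sigma]\in[\delta[\D],\zeta[\D]]\norm{\dot\sigma}^2$, which is exactly the differential characterization of $\delta[\D]$-strong g-convexity and $\zeta[\D]$-smoothness recalled in \cref{sec:preliminaries}.

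I expect the main obstacle to be stating the Hessian comparison step with the correct inequality directions — the lower curvature bound $\kmin$ controlling the \emph{upper} bound on $\Hess r$ and the upper curvature bound $\kmax$ controlling the \emph{lower} bound — and being careful that the distance function is genuinely smooth where the Jacobi-field argument is applied (guaranteed by the uniquely-geodesic hypothesis, away from $y=x$, with $\D\sqrt{\kmax}$ small enough that the model cotangent is positive). The remaining pieces — the radial/orthogonal split, matching the model cotangents with $\zeta$ and $\delta$, the monotonicity in $\D$, and the passage to convexity/smoothness — are routine.
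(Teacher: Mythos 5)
The paper does not actually include a proof of this lemma: it is stated as a background geometric fact in the appendix (the internal label even reads \texttt{fact:...}), alongside other standard geometry results cited from the literature, so there is no paper proof to compare against. Your argument is correct and is the standard derivation: the gradient formula from $\Phi_x=\tfrac12 r^2$ and $\nabla r=-\exponinv{y}(x)/r$; the decomposition $\Hess\Phi_x=\nabla r\otimes\nabla r+r\Hess r$ with $\nabla r$ in the kernel of $\Hess r$; the Hessian comparison theorem giving $\sqrt{\kmax}\cot(r\sqrt{\kmax})\le\Hess r(y)[v^\perp,v^\perp]/\norm{v^\perp}^2\le\sqrt{\abs{\kmin}}\coth(r\sqrt{\abs{\kmin}})$; recognizing $r$ times these as $\delta[r]$ and $\zeta[r]$; and the monotonicity of $r\mapsto\delta[r]$ (nonincreasing) and $r\mapsto\zeta[r]$ (nondecreasing) to pass to the diameter $\D$. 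You correctly flagged, and correctly resolved, the usual sign pitfall that the \emph{lower} curvature bound controls the \emph{upper} Hessian bound and vice versa, and you correctly note that the radial term contributes $\norm{v^\top}^2$ which is absorbed into either bound because $\delta\le1\le\zeta$. The remark about $\D\sqrt{\kmax}<\pi/2$ so that $\delta[\D]>0$ is the right caveat for the strong-convexity conclusion to be nonvacuous; it is implicitly satisfied in all the paper's applications (Hadamard manifolds, where $\delta=1$).
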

See \citet{kim2022accelerated} for a proof, for instance.  

\section[Rescaling the Metric to Obtain \texorpdfstring{$L_x = L_y$}{Lx = Ly}]{Rescaling the metric to obtain $\Lx = \Ly$}\label{sec:rescaling_manifolds}

    If given $(\M, \metric)$ we rescale the metric $\metric$ by a factor $c^2 \in \R_{>0}$, we obtain that any distance $D$ is scaled by $c$. That is, if we consider $(\M, \tilde{\metric})$, where $\tilde{\metric} = c^2 \metric$, then if we denote the distance induced by $\tilde{\metric}$ by $\dist_{\tilde{\metric}}(\cdot)$, we have $d_{\tilde{\metric}}(x, y) = c \dist(x, y)$ for all $x,y\in\M$. And similarly, if the bounds on the sectional curvature of $(\M, \metric)$ are $[\kmin, \kmax]$, we obtain that the bounds on the sectional curvature for $(\M, \tilde{\metric})$ are $[\kmint, \kmaxt] = \frac{1}{c^2}[\kmin, \kmax]$. Given a geodesically convex set $\X$, the geometric constants $\zeta[][\X]$ and $\delta[][\X]$ remain invariant under this tranformation. Indeed, let $\X$ be a set of diameter $D$ measured with $\dist(\cdot)$ and $\tilde{D}$ measured with $d_{\tilde{\metric}}(\cdot)$. Then, if $\kmin < 0$ we have $\zeta[][\X] = D\sqrt{\abs{\kmin}}\coth(D\sqrt{\abs{\kmin}}) = \tilde{D}\sqrt{\abs{\kmint}}\coth(\tilde{D}\sqrt{\abs{\kmint}})$. For $\kmin > 0$ it is $\zeta[][\X]=1$ in both cases. Similarly, we obtain the result for $\delta[][\X]$. For a function $f:\M\to\R$, we have that $L$-smoothness and $\mu$-strong convexity under $\metric$ transforms into $\tilde{L}$-smoothness and $\tilde{\oldmu}$-strong convexity under $\tilde{\metric}$, for $\tilde{L} = L/c^2$ and $\tilde{\oldmu} = \mu/c^2$ since by definition:
    \[
        \f(y) \leq \f(x) + \innp{\nabla \f(x), \exponinv{x}(y)} + \frac{\L}{2}\dist^2(x,y)  = \f(x) + \innp{\nabla \f(x), \exponinv{x}(y)} + \frac{\L}{2c^2}\dist_{\metrict}(x,y)^2,
    \] 
    and analogously for $\mu$-strong convexity. In particular, the condition number $\tilde{L}/\tilde{\oldmu} = L/\mu$ remains constant, and for any two points $x, y\in\M$ we have $\tilde{L} d_{\metrict}(x, y)^2 = L \dist^2(x, y)$. Now, if we have a function $f:\M\times\NN\to\R$ defined as in \cref{sec:setting}, and we rescale the metric of $\M$ by $c_1^2\defi(\Lx/\Ly)^{1/2}$ and rescale the metric of $\NN$ by  $c_2^2\defi(\Ly/\Lx)^{1/2}$, then we have $\tilde{L}_x = \sqrt{\Lx\Ly} = \tilde{L}_y$ and $\tilde{L}_x/\tilde{\mu}_x = \Lx/\mux$ as well as $\tilde{L}_y/\tilde{\mu}_y = \Ly/\muy$, and $\tilde{\mu}_x\tilde{\mu}_y = \mux(\Lx/\Ly)^{-1/2}\muy(\Ly/\Lx)^{-1/2} = \mux\muy$. Finally $\tilde{L}_{xy} = \Lxy$, since
\begin{align*}
\begin{aligned}
    \norm{\nabla_x f(x, y)- \nabla_x f(x, y')}_{x, \metrict} &= \frac{1}{c_1}\norm{\nabla_x f(x, y)- \nabla_x f(x, y')}_{x}  \\
        &\leq \frac{1}{c_1}\Lxy \dist(y, y') = \frac{1}{c_1c_2}\Lxy d_{\metrict}(y, y')  \\
        &= \Lxy d_{\metrict}(y, y').
\end{aligned}
\end{align*}
So indeed one can assume without loss of generality that for one such function $f$, we have $\Lx = \Ly$.

\section[Proofs of Generalized Riemannian Sion's Theorem ]{Proofs of Generalized Riemannian Sion's Theorem}

\begin{proof}\linkofproof{thm:sion}
  Let $\X_{\sup}=\{ x\in \X\ \vert\ \sup_{y\in \Y}f(x,y)\le \alpha\}$ and $\X_{\cap}={\bigcap_{y\in \Y} \{ x\in \X\ \vert\ f(x,y)\le \alpha\}}$.
Note that $\X_{\cap}=\{x\in \X\ \vert\ f(x,y)\le \alpha, \forall y\in \Y\}$ and hence $\X_{\cap}=\X_{\sup}$ by the definition of the supremum.
  We have that $\X_{\cap}\subset \{x\in \X\ \vert\ f(x,y_1)\le \alpha\}$.
  For every $y\in \Y$, $\{x\in \X \ \vert\  f(x,y)\le \alpha\}$ is closed because $f(\cdot,y)$ is lower semicontinuous for all $y \in \Y$ and by the inf-compactness of $f(\cdot,y_1)$, we have $\{x\in \X \ \vert\  f(x,y_1)\le \alpha\}$ is compact.
  It follows that $\X_{\cap}$ and $\X_{\sup}$ are also compact.
  Note that $\X_{\sup}$ is a sublevel set of $\varphi(x)=\sup_{y\in \Y}f(x,y)$.
  Define $M_1= \{ x\in \X\ \vert\  \varphi(x)\le \varphi(x_1)\}$, which is compact because the sublevel sets of $\varphi$ are of the form of $\X_{\sup}$, and it is not empty because $x_1 \in M_1$.
  We can write $\inf_{x\in \X}\varphi(x)=\inf_{x\in M_1}\varphi(x)$ and since $\varphi$ is lower semicontinuous and $M_1$ is compact and non-empty, we have $\min_{x\in \X}\varphi(x)=\min_{x\in \X}\sup_{y\in Y}f(x,y)$.
  One can analogously, show that $\max_{y\in \Y}\inf_{x\in \X}f(x,y)$ exists.
  
  The inequality $\max_{y\in \Y}\inf_{x\in \X}f(x,y)\le \min_{x\in \X}\sup_{y\in Y}f(x,y) $ holds.
  In the following, we show that the reverse inequality also holds.
  Let $\alpha<\min_{x\in \X}\sup_{y\in \Y}f(x,y)$ and $\phi_y(\alpha)\defi \{ x\in \X \ \vert \  f(x,y) \le \alpha\}$.
By definition of $\phi_{y_1}$ and inf-compactness, $\phi_{y_1}(\alpha)$ is g-convex and compact.
We have that the collection of complements
\begin{equation*}
 \phi^C\defi \left\{ \phi^C_y(\alpha)=\{ x\in \X \ \vert\  f(x,y)> \alpha \}  \right\}_{y\in \Y}
\end{equation*}
is an open cover of $\X$.
Assume for the sake of contraction that $\phi^C$ is not an open cover of $\X$.
In this case there exists an $x_0\in \X$ such that $f(x_0,y)\le \alpha$ for all $y\in \Y$.
For this $x_0$, it holds in particular that $\sup_{y\in \Y}f(x_0,y)\le \alpha$.
This contradicts the definition of $\alpha$, since
\begin{equation*}
  \alpha < \min_x\sup_yf(x,y)\le\sup_{y\in \Y}f(x_0,y)\le \alpha
\end{equation*}
cannot hold.

Since $\phi^C$ covers $\X$, it also covers $\phi_{y_1}(\alpha)\subset \X$.
The set $\phi_{y_1}(\alpha)$ is compact, so it has a finite cover $\left\{ \phi_{y_i}^C(\alpha) \right\}_{i=2,\ldots,m}$, and thus $\left\{ \phi_{y_i}^C(\alpha) \right\}_{i=1,\ldots,m}$ is a finite cover for $\X$.
    We have found a set $\Y_1=\{y_1,\ldots,y_m\}$ such that for all $x\in \X$, there exists $\tilde{y}\in \Y_1$, with $x\in \phi_{\tilde{y}}^{C}(\alpha)$.
This implies $\alpha<\max_{\tilde{y}\in \Y_1}f(x,\tilde{y})$ for all $x\in \X$.
Let $\varphi_1(x)=\max_{\tilde{y}\in \Y_1}f(x,\tilde{y})$; then $\widetilde{M}_1=\{x\in \X\ \vert\ \varphi_1(x)\le \varphi_1(x_1)\}$ is compact and non-empty.
    It follows that $\inf_{x \in \X}\varphi_1(x)=\inf_{x\in \widetilde{M}_1}\varphi_1(x)$ and hence $\min_{x \in \X}\varphi_1(x)=\min_{x \in \X}\max_{\tilde{y}\in \Y_1} f(x, y)$ exists.
 Since  $\alpha<\max_{\tilde{y}\in \Y_1}f(x,\tilde{y})$ for all $x\in \X$, it holds in particular for $\alpha< \min_{x\in \X}\max_{\tilde{y}\in \Y_1} f(x,\tilde{y})$.
    By \cref{lem:y0}, there exists a $y_0\in \Y$ with $\alpha< \min_{x\in \X}f(x,y_0)\le \sup_{y\in \Y}\min_{x\in \X}f(x,y)$.
Consider a monotonic increasing sequence $\alpha_k\rightarrow \min_{x\in \X}\sup_{y\in \Y}f(x,y)$, then since $\alpha_k< \sup_{y\in \Y}\min_{x\in \X}f(x,y)$, we have shown the reverse inequality
\begin{equation*}
  \min_{x\in \X}\sup_{y\in \Y}f(x,y)\le  \max_{y\in \Y}\inf_{x\in \X}f(x,y).
\end{equation*}
We have shown that $\min_{x\in \X}\sup_{y\in Y}f(x,y)$ and $\max_{y\in \Y}\inf_{x\in \X}f(x,y)$ exist, i.e., there exists $x_0=\argmin_{x \in \X}\sup_{y\in \Y}f(x,y)$ and $y_0=\argmax_{y\in \Y}\inf_{x\in \X}f(x,y)$ and we can write
\begin{equation*}
  f(x_0,\tilde{y})\le \sup_{y\in \Y}f(x_0,y)=\inf_{x\in \X}f(x,y_0)\le f(\tilde{x},y_0),\quad  \forall(\tilde{x},\tilde{y}) \in \X\times \Y.
\end{equation*}
Setting $\tilde{x}\gets x_{0}$ and $\tilde{y}\gets y_0$ we have that
\begin{equation*}
 f(x_0,y_0)= \min_{x\in \X}f(x,y_0)=\max_{y\in \Y}f(x_0,y).
\end{equation*}
It follows that
\begin{equation*}
  \min_{x\in \X}\max_{y\in \Y}f(x,y)=  \max_{y\in \Y}\min_{x\in \X}f(x,y),
\end{equation*}
which concludes the proof.
\end{proof}

The previous proof was inspired by ideas from three different generalizations of Sion's theorem, namely from \citet{hartung1982extension,komiya1988elementary,zhang2022minimax}. The following lemma, that we used in the proof of \cref{thm:sion} above, appeared in \citep{zhang2022minimax}.
We add the lemma with a proof for completeness.
\begin{lemma}\label{lem:y0}
Let $(\M, d_{\M})$ and $(\NN, d_{\NN})$ be finite-dimensional, unique geodesic metric spaces. Suppose $\X \subseteq \M$, $\Y \subseteq \NN$ are geodesically convex sets.
Let $f: \X \times \Y \rightarrow \mathbb{R}$ be a function such that $f(\cdot, y)$ is geodesically-quasi-convex and lower semi-continuous and $f(x, \cdot)$ is geodesically-quasi-concave and upper semi-continuous.
Then for any finite $k$ points $y_1, \ldots, y_k \in \Y$ and any real number $\alpha<\min _{x \in X} \max _{i \in [k]} f\left(x, y_i\right)$, there exists $y_0 \in \Y$ s.t. $\alpha<\min _{x \in X} f\left(x, y_0\right)$.
\end{lemma}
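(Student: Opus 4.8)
The plan is to prove \cref{lem:y0} by induction on the number $k$ of points, isolating an auxiliary \emph{two-point claim} as the substantive ingredient: if $u,v\in\Y$ and $\alpha<\min_{x\in\X}\max\{f(x,u),f(x,v)\}$, then some point $y_0$ on the geodesic from $u$ to $v$ (which lies in $\Y$ by g-convexity) satisfies $\alpha<\min_{x\in\X}f(x,y_0)$. The base case $k=1$ is immediate, taking $y_0:=y_1$, since the hypothesis then reads $\alpha<\min_{x\in\X}f(x,y_1)$; and the case $k=2$ is exactly the two-point claim. Throughout I will use that the relevant sublevel sets $\{x\in\X:f(x,y)\le\alpha\}$ are compact — which is what the inf-compactness available in the setting of \cref{thm:sion} provides — so that the minima in the statement are attained and pointwise strict inequalities upgrade to strict inequalities between minima.

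For the inductive step, assume the statement for $k-1$ points, take $y_1,\dots,y_k\in\Y$ with $\alpha<\min_{x\in\X}\max_{i\in[k]}f(x,y_i)$, and set $A:=\bigcap_{i=1}^{k-2}\{x\in\X:f(x,y_i)\le\alpha\}$ (so $A=\X$ when $k=2$). Each set in this intersection is g-convex because $f(\cdot,y_i)$ is quasi g-convex, and closed because $f(\cdot,y_i)$ is lower semicontinuous; hence $A$ is compact and g-convex. If $A=\emptyset$ then $\alpha<\min_{x\in\X}\max_{i\le k-2}f(x,y_i)$ and we are done by the inductive hypothesis (after repeating a point to have $k-1$ of them); so assume $A\neq\emptyset$. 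For every $x\in A$ we have $\max_{i\le k-2}f(x,y_i)\le\alpha$ while $\max_{i\in[k]}f(x,y_i)>\alpha$, which forces $\max\{f(x,y_{k-1}),f(x,y_k)\}>\alpha$; since $A$ is compact and $\max\{f(\cdot,y_{k-1}),f(\cdot,y_k)\}$ is lower semicontinuous this upgrades to $\alpha<\min_{x\in A}\max\{f(x,y_{k-1}),f(x,y_k)\}$. Applying the two-point claim with $A$ in the role of $\X$ and the geodesic segment $[y_{k-1},y_k]\subseteq\Y$ yields a point $y_0'\in\Y$ with $f(x,y_0')>\alpha$ for all $x\in A$. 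Then for any $x\in\X$: either $x\notin A$, so $\max_{i\le k-2}f(x,y_i)>\alpha$, or $x\in A$, so $f(x,y_0')>\alpha$; in both cases $\max\{f(x,y_1),\dots,f(x,y_{k-2}),f(x,y_0')\}>\alpha$, hence $\alpha<\min_{x\in\X}\max\{f(x,y_1),\dots,f(x,y_{k-2}),f(x,y_0')\}$. The inductive hypothesis applied to the $k-1$ points $y_1,\dots,y_{k-2},y_0'$ now produces the required $y_0\in\Y$.

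It remains to prove the two-point claim, which I expect to be the main obstacle. Let $\gamma:[0,1]\to\Y$ be the unique geodesic with $\gamma(0)=u$, $\gamma(1)=v$, and set $A_t:=\{x\in\X:f(x,\gamma(t))\le\alpha\}$; each $A_t$ is closed and g-convex — hence connected — and compact. Suppose toward a contradiction that every $A_t$ is nonempty. From $\alpha<\min_x\max\{f(x,u),f(x,v)\}$ the sets $A_0$ and $A_1$ are disjoint, and quasi g-concavity of $f(x,\cdot)$ gives $f(x,\gamma(t))\ge\min\{f(x,u),f(x,v)\}$, so $A_t\subseteq A_0\cup A_1$ for every $t$. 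A connected set contained in the union of the two disjoint closed sets $A_0$, $A_1$ must lie entirely in one of them, so each nonempty $A_t$ is contained in exactly one of $A_0,A_1$; setting $S_j:=\{t\in[0,1]:A_t\subseteq A_j\}=\{t\in[0,1]:A_t\cap A_j\neq\emptyset\}$ we get $S_0\cup S_1=[0,1]$, $S_0\cap S_1=\emptyset$, $0\in S_0$, $1\in S_1$. The crux is to show $S_0$ and $S_1$ are closed: given $t_n\to t$ with $t_n\in S_0$, choose $x_n\in A_{t_n}\subseteq A_0$, pass to a subsequence with $x_n\to x\in A_0$ by compactness of $A_0$, and deduce $x\in A_t$ from the one-sided continuity of $f$ along $\gamma$ together with the quasi g-concavity of $f(x,\cdot)$ and the membership $x\in A_0$; then $t\in S_0$. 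Closedness of both $S_0$ and $S_1$ contradicts the connectedness of $[0,1]$, so some $A_t$ is empty, i.e. $\alpha<\min_{x\in\X}f(x,\gamma(t))$, and $y_0:=\gamma(t)$ is the desired point. Carefully extracting $x\in A_t$ from the separate semicontinuity and quasi-convexity/concavity hypotheses in this last closedness argument is where the real work lies; the remaining steps are routine.
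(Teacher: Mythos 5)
Your decomposition matches the paper's: induction on $k$, with the two‑point case carrying the load, and the two‑point case handled by showing that the sets $S_0,S_1\subseteq[0,1]$ form a nontrivial partition into closed sets, contradicting connectedness. The set‑up through the partition argument (showing $A_t\subseteq A_0\cup A_1$, each nonempty $A_t$ connected hence landing entirely in one side) is exactly the paper's. Two remarks about the surrounding structure: (i) your inductive step, via intersecting with $A=\bigcap_{i\le k-2}\phi_{y_i}(\alpha)$ and invoking the two‑point claim over $A$, is more explicit than the paper's (which just says ``by induction'') and looks fine under your added compactness hypothesis; (ii) the paper's two‑point proof does \emph{not} use compactness of $\X$ or of sublevel sets at all, so your appeal to compactness of $A_0$ is a genuine strengthening of the hypotheses rather than an unwinding of what the lemma gives you.

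The real problem is precisely the step you flag as ``where the real work lies,'' and it does not close. You take $t_n\to t$ with $t_n\in S_0$, choose $x_n\in A_{t_n}\subseteq A_0$, pass to a subsequential limit $x\in A_0$, and then assert that ``one‑sided continuity along $\gamma$, quasi g‑concavity, and $x\in A_0$'' give $x\in A_t$, i.e.\ $f(x,\gamma(t))\le\alpha$. None of the available tools produce an \emph{upper} bound on $f(x,\gamma(t))$ here. Lower semicontinuity of $f(\cdot,\gamma(t))$ gives $f(x,\gamma(t))\le\liminf_n f(x_n,\gamma(t))$, but you have no control on $f(x_n,\gamma(t))$, only on $f(x_n,\gamma(t_n))$; upper semicontinuity of $f(x,\cdot)$ gives $\limsup_n f(x,\gamma(t_n))\le f(x,\gamma(t))$, which is the wrong direction; and quasi g‑concavity of $f(x_n,\cdot)$ or $f(x,\cdot)$ only yields lower bounds at interior points of the segment, never the upper bound $\le\alpha$ you need. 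The diagonal limit $f(x_n,\gamma(t_n))$ simply does not control $f(x,\gamma(t))$ when the semicontinuities point in opposite directions in the two variables.

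The paper's proof evades this with a level‑slack trick that your outline omits. One fixes an intermediate level $\beta$ with $\alpha<\beta<\min_{x\in\X}\max\{f(x,u),f(x,v)\}$ and partitions $[0,1]$ by containment in the \emph{$\beta$‑sublevel} sets $\phi_{y_1}(\beta)$, $\phi_{y_2}(\beta)$ (not the $\alpha$‑level ones). Closedness of $I_1=\{t:\phi_{\gamma(t)}(\alpha)\subseteq\phi_{y_1}(\beta)\}$ then needs only upper semicontinuity in $y$ and no limit of $x_n$'s: fix any $x\in\phi_{\gamma(t)}(\alpha)$, use $\limsup_k f(x,\gamma(t_k))\le f(x,\gamma(t))\le\alpha<\beta$ to get $f(x,\gamma(t_l))<\beta$ for some large $l$, hence $x\in\phi_{\gamma(t_l)}(\beta)\subseteq\phi_{y_1}(\beta)$. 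The gap between $\alpha$ and $\beta$ is what converts the one‑sided semicontinuity into the needed containment; without it, the closedness step fails, and that is exactly where your proposal is stuck. If you introduce the auxiliary $\beta$ and redefine your $S_j$ accordingly, the rest of your outline goes through (and the compactness assumption becomes unnecessary).
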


\begin{proof}
We prove the lemma for two points, and then the general lemma holds by induction. Suppose it does not hold, so assume that for such an $\alpha$, we have $\min _{x \in \X} f(x, y) \leq \alpha$ for any $y \in \Y$.
As a consequence, there is at least a constant $\beta$ such that
\begin{equation}\label{eq:cont}
\min _{x \in \X} f(x, y) \leq \alpha<\beta<\min _{x \in \X} \max \left\{f\left(x, y_1\right), f\left(x, y_2\right)\right\}.
\end{equation}
Consider the geodesic $\gamma:\left[0, d\left(y_1, y_2\right)\right] \rightarrow \Y$ connecting $y_1$ and $y_2$.
For any $t \in [0,d\left(y_1, y_2\right)]$ and corresponding $z=\gamma(t)$ on the geodesic, the level sets $\phi_z(\alpha), \phi_z(\beta)$ are nonempty due to \cref{eq:cont} and closed due to lower semi-continuity of $f$ regarding the first variable.
Since $f$ is geodesic quasi-concave in the second variable, we obtain
\begin{equation*}
f(x, z) \geq \min \left\{f\left(x, y_1\right), f\left(x, y_2\right)\right\}, \quad \forall x \in \X.
\end{equation*}
This is equivalent to say $\phi_z(\alpha) \subseteq \phi_z(\beta) \subseteq \phi_{y_1}(\beta) \cup \phi_{y_2}(\beta)$.
We then argue the intersection $\phi_{y_1}(\beta) \cap \phi_{y_2}(\beta)$ should be empty.
Otherwise, there exists $x \in \X$ such that $\max \left\{f\left(x, y_1\right), f\left(x, y_2\right)\right\} \leq \beta$, contradicting \cref{eq:cont}.
Next, by quasi-convexity, since level set $\phi_z(\beta)$ is geodesic convex for any $z$, it is also connected.
Consider the three facts:
\begin{itemize}
  \item $\phi_z(\alpha) \subseteq \phi_{y_1}(\beta) \cup \phi_{y_2}(\beta)$
  \item $\phi_{y_1}(\beta) \cap \phi_{y_2}(\beta)$ is empty
        \item $\phi_z(\alpha), \phi_{y_1}(\beta)$ and $\phi_{y_2}(\beta)$ are closed (due to lower semi-continuity), connected and convex
\end{itemize}
We claim that either $\phi_z(\alpha) \subseteq \phi_{y_1}(\alpha)$ or $\phi_z(\alpha) \subseteq \phi_{y_2}(\alpha)$ holds for any point $z$ on the geodesic $\gamma$.
Suppose not, then we can always find two points $w_1 \in \phi_{y_1}(\beta), w_2 \in \phi_{y_2}(\beta)$ such that $w_1, w_2 \in$ $\phi_z(\alpha)$.
Since $\phi_z(\alpha)$ is convex, then there is a geodesic $\gamma:[0,1] \rightarrow \X$ in $\phi_z(\alpha)$ connecting $w_1, w_2$.
Therefore $\gamma$ also lies in $\phi_z(\alpha) \subseteq \phi_{y_1}(\beta) \cup \phi_{y_2}(\beta)$.
Because $\phi_{y_1}(\beta) \cap \phi_{y_2}(\beta)$ is empty, $\gamma^{-1}$ induces a partition on $[0,1]$ as $J_1 \cap J_2=\varnothing$ and $J_1 \cup J_2=[0,1]$ where $\gamma\left(J_1\right) \subseteq \phi_{y_1}(\beta), \gamma\left(J_2\right) \subseteq \phi_{y_2}(\beta)$.
Therefore at least one of $J_1, J_2$ is not closed.
Since $\gamma$ is a continuous map, at least one of $\phi_{y_2}(\beta)$ or $\phi_{y_2}(\beta)$ is also not closed, contradicting known conditions.
Since either $\phi_z(\alpha) \subseteq \phi_{y_1}(\alpha)$ or $\phi_z(\alpha) \subseteq \phi_{y_2}(\alpha)$, the two sets below
\begin{align*}
I_1&\defi\left\{t \in[0,1] \mid \phi_{\gamma(t)}(\alpha) \subseteq \phi_{y_1}(\beta)\right\}, \\
I_2&\defi\left\{t \in[0,1] \mid \phi_{\gamma(t)}(\alpha) \subseteq \phi_{y_2}(\beta)\right\}
\end{align*}
form a partition of the interval $[0,1]$.
We prove $I_1$ is closed and nonempty. The latter is obvious since at least $\gamma^{-1}\left(y_1\right) \in I_1$. Now we turn to prove closedness. Let $t_k$ be an infinite sequence in $I_1$ with a limit point of $t$. We consider any $x \in \phi_{\gamma(t)}(\alpha)$. The upper semi-continuity of $f(x, \cdot)$ implies
\begin{equation*}
\limsup _{k \rightarrow \infty} f\left(x, \gamma\left(t_k\right)\right) \leq f(x, \gamma(t)) \leq \alpha<\beta.
\end{equation*}
Therefore, there exists a large enough integer $l$ such that $f\left(x, \gamma\left(t_l\right)\right)<\beta$.
This implies $x \in$ $\phi_{\gamma\left(t_l\right)}(\beta) \subseteq \phi_{y_1}(\beta)$. Therefore for any $x \in \phi_{\gamma(t)}(\alpha)$, $x \in \phi_{y_1}(\beta)$ also holds.
This is equivalent to $\phi_{\gamma(t)}(\alpha) \subseteq \phi_{y_1}(\beta)$.
Hence by the definition of level set, we know the $t \in I_1$ and $I_1$ is then closed.
By a similar argument, $I_2$ is also closed and nonempty. This contradicts the definition of partition and hence proves the lemma.
\end{proof}

\section[Proofs for Riemannian Corrected Extra-Gradient]{Proofs of Riemannian Corrected Extra-Gradient}
\begin{proof}\linkofproof{lem:it-rceg}
    We show that if $\dist(x_{t}, x^\ast)^2 + \dist(y_{t}, y^\ast)^2 \leq \Dcal^2$ then $\dist(w_{t}, x^\ast)^2 + \dist(z_{t}, y^\ast)^2 \leq 4\Dcal^2$ and $\dist(x_{t+1}, x^\ast)^2 + \dist(y_{t+1}, y^\ast)^2 \leq \Dcal^2$. Then, these two latter properties are satisfied for all $t \geq 0$, since $\dist(x_{0}, x^\ast)^2 + \dist(y_{0}, y^\ast)^2 \leq \Dcal^2$. Recall our notation $\zetad \defi \zeta[D]$ and $\deltad \defi \delta[D]$.

    We start by showing that in both the \CC{} and the \SCSC{} cases, the secondary iterates $(w_t,z_t)$ are not far from the saddle point:
\begin{align*}
  \dist^2(w_t,x^{*})+\dist^2(z_t,y^{*})&\le 2 [\dist^2(w_t,x_t)+\dist^2(x_t,x^{*})+\dist^2(z_t,y_t)+\dist^2(y_t,y^{*})]\\
                               &\circled{1}[\le](2+4\etaEG^2L^2)(\dist^2(x_t,x^{*})+\dist^2(y_t,y^{*})) \\
                               &\circled{2}[\le] (2+\frac{\deltad}{\zetad})(\dist^2(x_t,x^{*})+\dist^2(y_t,y^{*}))\\
  &\circled{3}[\le] 4\Dcal^{2}.
\end{align*}
Here, $\circled{1}$ holds since by definition of $w_t$ we have $\dist^2(w_t,x_t)=\norm{\etaEG\nabla_xf(x_t,y_t)}^2\le 2\etaEG^2L^2(\dist^2(x_t,x^{*})+\dist^2(y_t,y^{*}))$ and similarly $ \dist^2(z_t, y_t)\le 2\etaEG^2L^2(\dist^2(x_t,x^{*})+\dist^2(y_t,y^{*}))$.
Further $\circled{2}$ holds because  $\etaEG\le \sqrt{\frac{1}{4L^2\zetad}}$.
    And $\circled{3}$ holds since we have $\frac{\deltad}{\zetad}\le 1$ and by our hypothesis on $x_t, y_t$. We bounded $3\Dcal \leq 4 \Dcal$ for convenience. 
    Now, since $f$ is $\mu$-\SCSC{}, we have that
  \begin{align}\label{eq:bound}
  \begin{aligned}
    f(w_t,y^{*}) -f(x^*,z_t)&= f(w_t,y^*)-f(w_t,z_t)+f(w_t,z_t) -f(x^*,z_t)\\
                            &\circled{1}[\le]+ \langle\nabla_yf(w_t,z_t),\exponinv{z_t}(y^*)\rangle -\frac{\mu}{2}\dist^2(w_t,x^{*})\\
    &\quad-  \langle\nabla_xf(w_t,z_t),\exponinv{w_t}(x^*)\rangle  -\frac{\mu}{2}\dist^2(z_t,y^{*})\\
                            &= \frac{1}{\etaEG}\langle-\etaEG\nabla_xf(w_t,z_t) \pm \exponinv{w_t}(x_t),\exponinv{w_t}(x^*)\rangle -\frac{\mu}{2}\dist^2(w_t,x^{*})\\
    &\quad + \frac{1}{\etaEG}\langle\etaEG\nabla_yf(w_t,z_t)\pm \exponinv{z_t}(y_t),\exponinv{z_t}(y^*)\rangle -\frac{\mu}{2}\dist^2(z_t,y^{*})\\
      &\circled{2}[\le] \frac{1}{\etaEG}\langle \exponinv{w_t}(x_{t+1}),\exponinv{w_t}(x^*)\rangle -\frac{1}{\etaEG}\langle\exponinv{w_t}(x_t),\exponinv{w_t}(x^*)\rangle -\frac{\mu}{2}\dist^2(w_t,x^{*})\\
                            &\quad + \frac{1}{\etaEG}\langle \exponinv{z_t}(y_{t+1}),\exponinv{z_t}(y^*)\rangle -\frac{1}{\etaEG}\langle\exponinv{z_t}(y_{t}),\exponinv{z_t}(y^*)\rangle   -\frac{\mu}{2}\dist^2(z_t,y^{*}),
  \end{aligned}
  \end{align}
    where we used the $\mu$-\SCSC{} property in $\circled{1}$ and the definition of the iterates $x_{t+1}$, $y_{t+1}$ in $\circled{2}$. 
    We now use the Riemannian cosine inequalities  to obtain the inequalities below, cf. \cref{lemma:cosine_law_riemannian}, \citep[Lemma 1]{zhang2016first}. The first two inequalities use the fact that the diameters of the geodesic triangles with vertices $w_t, x_t, x^\ast$ and $z_t, y_t, y^\ast$, respectively, are upper bounded by $4\Dcal \leq D$, because all of those points are in $\mathcal{B} \defi \ball((x^\ast, y^\ast), 2\Dcal)$ and this ball is geodesically convex and uniquely convex and hence it contains the triangles. If $\kmin \geq 0$, we have that $\zeta[c] = 1$ for any $c > 0$, so we can just use \cref{lemma:cosine_law_riemannian} to obtain the last two inequalities. If $\kmin < 0$, we use the more fine-grained inequality \citep[Lemma 1]{zhang2016first}. This lemma establishes the cosine inequalities with constants $\zeta[\dist(w_t, x^\ast)]$ and $\zeta[\dist(z_t, y^\ast)]$, respectively. The inequalities also hold for greater values of these constants, so we use $\zetad$ because we already established that $\dist(w_t, x^\ast), \dist(z_t, y^\ast) \leq 2\Dcal \leq 4\Dcal$.
\begin{equation}\label{eq:geometric-bounds}
  \begin{aligned}
  -2\langle\exponinv{w_t}(x_t),\exponinv{w_t}(x^*)\rangle &\le - \deltad\dist^2(w_t,x_t) - \dist^2(w_t,x^*) + \dist^2(x_t,x^*)\\
  -2\langle\exponinv{z_t}(y_t),\exponinv{z_t}(y^*)\rangle& \le - \deltad\dist^2(z_t,y_t) - \dist^2(z_t,y^*) + \dist^2(y_t,y^*)\\
  2 \langle\exponinv{w_t}(x_{t+1}),\exponinv{w_t}(x^*)\rangle& \le \zetad \dist^2(w_t,x_{t+1}) + \dist^2(w_t,x^*)- \dist^2(x_{t+1},x^*)\\
  2 \langle\exponinv{z_t}(y_{t+1}),\exponinv{z_t}(y^*)\rangle& \le \zetad \dist^2(z_t,y_{t+1}) + \dist^2(z_t,y^*)- \dist^2(y_{t+1},y^*).
  \end{aligned}
\end{equation}
We can further bound the following term using the update rules and gradient Lipschitzness,
\begin{equation}
  \begin{aligned}\label{eq:x-progress}
  \dist^2(w_t,x_{t+1}) &= \norm{\exponinv{w_t}(x_{t+1})}^2 = \norm{ \exponinv{w_t}(x_t) -\etaEG\nabla_x f(w_t,z_t)}^2\\
                   &= \norm{\etaEG\Gamma{x_t}{w_t}\nabla_xf(x_t,y_t)-\etaEG\nabla_xf(w_t,z_t)}^2 \\
                   &= 2\eta^2\left(\norm{\Gamma{x_t}{w_t}\nabla_x f(x_t,y_t)-\nabla_x f(w_t,y_t)}^2 + \norm{\nabla_x f(w_t,y_t)-\nabla_xf(w_t,z_t)}^2 \right) \\
                   &\le 2\etaEG^2L^2(\dist^2(w_t,x_t)+\dist^2(z_t,y_t)).
  \end{aligned}
\end{equation}
Analogously, we obtain
\begin{equation}\label{eq:y-progress}
 \dist^2(z_t,y_{t+1})\le 2\etaEG^2L^2 (\dist^2(w_t,x_t)+\dist^2(z_t,y_t)).
\end{equation}
    Using the triangle inequality and $(a+b)^2 \leq 2a^2 + 2b^2$, we have
\begin{equation}\label{eq:triang}
  \begin{aligned}
    -\frac{\mu}{2}\dist^2(w_t,x^{*})&\le \frac{\mu}{2}\dist^{2}(w_t,x_t) - \frac{\mu}{4}\dist^2(x_t,x^{*}),\\
     -\frac{\mu}{2}\dist^2(z_t,y^{*})&\le \frac{\mu}{2}\dist^{2}(z_t,y_t) - \frac{\mu}{4}\dist^2(y_t,y^{*}).
  \end{aligned}
\end{equation}
So finally, we now bound \cref{eq:bound} using \cref{eq:geometric-bounds} in combination with \cref{eq:x-progress,eq:y-progress,eq:triang}. We will use the following inequality to study the \CC{} and \SCSC{} cases separately.
    \begin{align}\label{eq:lemma_bounding_gap_for_EG}
    \begin{aligned}
  0&\le   f(w_t,y^*) -f(x^*, z_t)\\
  &\le \frac{1}{2\etaEG} [(4\zetad \etaEG^2L^2- \deltad +\mu\etaEG )\dist^2(w_t,x_t) + (1-\frac{\mu\etaEG}{2})\dist^2(x_t,x^*)-\dist^{2}(x_{t+1},x^*)]\\
                     &\quad + \frac{1}{2\etaEG} [(4\zetad \etaEG^2L^2 - \deltad + \mu\etaEG)\dist^2(z_t,y_t) + (1-\frac{\mu\etaEG}{2})\dist^2(y_t,y^*)-\dist^{2}(y_{t+1},y^*)].
    \end{aligned}
\end{align}

\paragraph{Case \CC{}} We have $\mu=0$.
    It follows that for $\etaEG\le \sqrt{\frac{\deltad}{4\zetad \L^{2}}}$, we have by \cref{eq:lemma_bounding_gap_for_EG} that
\begin{equation*}
  \dist^{2}(x_{t+1},x^*)+ \dist^{2}(y_{t+1},y^*) \le \dist^2(y_t,y^*)+\dist^2(x_t,x^*) \leq \Dcal^2.
\end{equation*}
\paragraph{Case \SCSC{}} We have $\mu>0$.
It follows that for $\etaEG\le \min \left\{ \sqrt{\frac{\deltad}{8\L^2 \zetad}},\frac{\deltad}{2\mu} \right\}$, we have by \cref{eq:lemma_bounding_gap_for_EG} that
    \begin{equation*}
  \dist^{2}(x_{t+1},x^*)+ \dist^{2}(y_{t+1},y^*)\le \left( 1-\frac{\mu\etaEG}{2} \right) (\dist^2(y_t,y^*)+\dist^2(x_t,x^*))\le  \dist^2(y_t,y^*)+\dist^2(x_t,x^*) \leq \Dcal^2.
\end{equation*}

    Now to conclude the first statement, recall $\Dcal \defi \dist((x_0, y_0), (\xast, \yast))$ and $\DExGr \geq 4\Dcal$ by definition. Since we just showed that the iterates do not go farther than $2\Dcal \leq \DExGr/2$ to $(\xast,\yast)$, then they stay in the closed ball $\mathcal{B} \defi \ball((\xast, \yast), \DExGr/2)$, whose diameter is $\DExGr$. Therefore, our choice of $\etaEG$ in the pseudocode in \cref{alg:rceg} is a valid one. Note that the knowledge of this set $\mathcal{B}$ is not needed for the algorithm.

    For the second statement, we note that the proofs of the convergence rates stated in the proposition were provided by \citet{jordan2022first} for the \SCSC{} case and by \citet{zhang2022minimax} for the \CC{} case under the following additional assumption: when $\etaEG$ is chosen with respect to some geometric constants $\delta[\DExGr]$ and $\zeta[\DExGr]$, then the iterates do not leave a set of diameter $\DExGr$ that contains $(\xast, \yast)$. We showed that the iterates satisfy this assumption for our choice of learning rate $\etaEG$ and therefore the convergence follows.
    Note that \citet{jordan2022first} proves for the \SCSC{} case that $(x_T,y_T)$ is an $\epsilon'$-saddle point in distance.
    By \cref{item:dist_to_gap_lip} of \cref{proposition:from_one_opti_measure_to_another}, we have that $\gap[x_T,y_T]\le [ \dist(x_T,x^{*})+\dist(y_T,y^{*})] LD\left(1+\L/\mu\right)$.
    This holds, as by assumption $\nabla f(x^{*},y^{*})=0$ and hence $\Lips(f(x,y))\le LD$.
    Hence, we have that $\gap[x_T,y_T]\le 2 LD\left(1+\frac{\L}{\mu}\right)\sqrt{\epsilon'}$.
    Thus after
    \begin{equation*}
     T=\bigotildel{\frac{\L}{\mu}\sqrt{\frac{\zeta[\DExGr]}{\delta[\DExGr]}+\frac{1}{\delta[\DExGr]}}}.
    \end{equation*}
    iterations of \RCEG{}, we obtain a $\epsilon$-saddle point for the \SCSC{} case.

\end{proof}

\begin{algorithm}
    \caption{Riemannian Corrected Extragradient ({\sc rceg})}
    \label{alg:rceg}
\begin{algorithmic}[1]
    \REQUIRE Initialization $(x_0,y_0)$, $f:\M\times\NN\rightarrow\mathbb{R}$, manifolds $\M$ and $\NN$, g-strong convexity constant $\mu$ (for \SCSC{}), smoothness $\L$, bound $\DExGr \geq 4\Dcal = 4(\dist^2(x_0, \xast) + \dist^2(y_0, \yast))^{1/2}$.
    \vspace{0.1cm}
    \hrule
    \vspace{0.1cm}
    \State For \SCSC{}, choose  $\newtarget{def:learning_rate_eta_for_RCEG}{\etaEG}\gets \min \left\{\sqrt{\frac{\delta[\DExGr]}{8 \L^2\zeta[\DExGr]}},\frac{\delta[\DExGr]}{2\mu}\right\}$, for \CC{}, choose $\etaEG\gets \sqrt{\frac{\delta[\DExGr]}{4 \L^2\zeta[\DExGr]}}$
    \State $(w_0,z_0)\gets \left(\exp_{x_0}(-\etaEG\nabla_xf(x_0,y_0)),\exp_{y_0}(\etaEG\nabla_yf(x_0,y_0))\right)$
    \State  $\bar{z}_{0}\gets z_0$, $\bar{w}_{0}\gets w_{0}$
    \State $(x_{1},y_{1})\gets \left(  \exp_{w_0}(-\etaEG\nabla_xf(w_0,z_0)+\exponinv{w_0}(x_0)), \exp_{z_0}(\etaEG\nabla_yf(w_0,z_0)+\exponinv{z_0}(y_0))\right)$
    \FOR {$t = 1 \text{ \textbf{to} } T-1$}
    \State $(w_t,z_t)\gets \left(\exp_{x_t}(-\etaEG\nabla_xf(x_t,y_t)),\exp_{y_t}(\etaEG\nabla_yf(x_t,y_t))\right)$
    \State $(x_{t+1},y_{t+1})\gets \left(  \exp_{w_t}(-\etaEG\nabla_xf(w_t,z_t)+\exponinv{w_t}(x_t)), \exp_{z_t}(\etaEG\nabla_yf(w_t,z_t)+\exponinv{z_t}(y_t))\right)$
    \IF {$\mu=0$}\Comment{Geodesic averaging for \CC{}}
    \State $\left(\bar{w}_{t}, \bar{z}_{t}\right)\gets\left(\exp_{\bar{w}_{t-1}}\left(t^{-1} \log _{\bar{w}_{t-1}}\left(w_{t}\right)\right), \exp_{\bar{z}_{t-1}}\left(t^{-1} \log _{\bar{z}_{t-1}}\left(z_{t}\right)\right)  \right)$
    \ENDIF
    \ENDFOR
    \ENSURE $(x_T,y_T)$ {\bf if} $\mu>0$, {\bf else} $(\bar{w}_{T-1}, \bar{z}_{T-1})$
\end{algorithmic}
\end{algorithm}

\section[Proofs of Convergence Rates for our Accelerated Algorithms]{Proofs of Convergence Rates for our Accelerated Algorithms}

\subsection{Acceleration for G-Convex Functions}
{
\renewcommand\mu{\newlink{def:strong_g_convexity_riemacon_abs}{\oldmu}}

\begin{algorithm}[ht!]
    \caption{$\riemacon(f, x_0, T \text{ or } \epsilon, \lambda, \X,\texttt{subroutine})$. Absolute accuracy criterion.}
    \label{alg:riemacon_sc_absolute_criterion}
\begin{algorithmic}[1] 
    \REQUIRE Finite-dimensional Hadamard manifold $\M$ of bounded sectional curvature, feasible g-convex compact set $\X$ of diameter $D_{\XX}$. Initial point $\newtarget{def:initial_point}{\xInit} \in\X\subset\M$. Function $\f:\M\to\R$ that is $\bar{\oldmu}$-strongly g-convex in $\X$. Parameter $\lambda > 0$.  Final iteration $T$ or accuracy $\epsilon$. If $\epsilon$ is provided, compute the corresponding $T$ and vice versa, cf. \cref{thm:riemaconsc}, \texttt{subroutine} to solve the proximal problems of Lines \ref{line:proximal_prob_initialization_riemacon} and \ref{line:proximal_prob_in_main_loop_riemacon}.
    \vspace{0.1cm}
    \hrule
    \vspace{0.1cm}
    \State $\newtarget{def:extra_geom_penalty_xi}{\xi} \gets 4\zeta[2D_{\XX}] -3$ \Comment{$\xi$ is $\bigo{\zeta[D_{\XX}]}$}
    \State $\newtarget{def:strong_g_convexity_riemacon_abs}{\mu} \gets \min\{\bar{\oldmu}, 1/(9\xi\lambda)\}$
    \State $\kappa \gets 1/(\lambda\mu)$
    \State $\hat{\oldepsilon} \gets \epsilon \cdot (8\sqrt{\xi}\kappa^{3/2})^{-1}$
    \State $\yk[0] \gets \hat{\oldepsilon}$-minimizer of the proximal problem $\min_{y\in\X}\{\f(y) + \frac{1}{2\lambda}\dist^2(\xInit, y)\}$\label{line:proximal_prob_initialization_riemacon}
    
    \State $\zybar[0][0] \gets \zy[0][0] \gets 0 \in \Tansp{\yk[0]}\M$
    \State $\Ak[0] \gets 1$ 
    \vspace{0.1cm}
    \hrule
    \vspace{0.1cm}
    \FOR {$k = 1 \textbf{ to } T$}
        \State $\newtarget{def:integral_of_steps}{\Ak[k]} \gets (1+1/(2\sqrt{\xi\kappa}))^k$
        \State $\newtarget{def:discrete_step}{\ak[k]} \gets \xi(\Ak-\Ak[k-1])$
        \State $\newtarget{def:iterate_x}{\xk} \gets \expon{\yk[k-1]}(\frac{\ak}{\Ak[k-1] + \ak} \zybar[k-1][k-1] + \frac{\Ak[k-1]}{\Ak[k-1] + \ak}\yk[k-1])=  \expon{\yk[k-1]}(\frac{\ak}{\Ak[k-1] + \ak}\zybar[k-1][k-1]) $\Comment{Coupling}  
        \State $\newtarget{def:iterate_y}{\yk} \gets$ $\hat{\oldepsilon}$-minimizer of problem $\min_{y\in\X}\{\f(y) + \frac{1}{2\lambda}\dist^2(\xk, y)\}$ \label{line:subroutine}\Comment{Approximate implicit \RGD{}}\label{line:proximal_prob_in_main_loop_riemacon}
        \State $\newtarget{def:v_k_sup_x}{\vkx[k]} \gets -\exponinv{\xk}(\yk)/\lambda$  \Comment{Approximate subgradient}
        \State $\newtarget{def:iterate_z_x_from_z_y_bar}{\zxbar[k-1]} \gets \exponinv{\xk}(\expon{\yk[k-1]}(\zybar[k-1][k-1]))$ 
        \State $\newtarget{def:iterate_z_x}{\zx[k]} \gets \Ak[k][-1](\Ak[k-1]\zxbar[k-1] + \frac{\ak}{\xi}(-\lambda - \frac{2}{\mu})\vkx)$ \Comment{Mirror Descent step} \label{line:MD_step}
        \State $\newtarget{def:iterate_z_y}{\zy} \gets \Gamma{\xk}{\yk}(\zx) + \exponinv{\yk}(\xk)$ \Comment{Moving the dual point to $\Tansp{\yk}\M$}
        \State $\newtarget{def:iterate_z_y_bar}{\zybar} \gets \Pii{\bar{B}(0, \D_{\XX})}(\zy) \in \Tansp{\yk}\M$ \Comment{Easy projection done so the dual point is not very far} \label{line:dual_projecting_step}
    \ENDFOR
    \State \textbf{return} $\yk[T]$.
\end{algorithmic}
\end{algorithm}

We use $\newtarget{def:Riemacon_absolute_accuracy_SC}{\riemacon}$ to refer to \cref{alg:riemacon_sc_absolute_criterion} for $\mu$-strongly g-convex minimization. We write $\riemacon(f, x_0, T, \X, \texttt{subroutine})$ to specify the output of the algorithm initialized at $x_0$ for optimizing the function $f$ constrained to $\X$, run for $T$ steps and making use of the subroutine \texttt{subroutine}. The subroutine solves a proximal problem approximately, effectively implementing an approximate implicit Riemanian Gradient Descent (\newtarget{def:acronym_riemannian_gradient_descent}{\RGD{}}) step. In the pseudocode, we use the notation $\newtarget{def:euclidean_projection}{\Pii{C}}(p)$ to refer to the Euclidean projection of a point $p$ onto a closed convex set $C$.

\begin{proof}\linkofproof{thm:riemaconsc}
    We note that it is $\xi \defi 4\zeta[2D_{\XX}] -3 \leq 8\zeta[D_{\XX}] -3 =O(\zeta[D_{\XX}])$. Let $\kappa \defi \frac{1}{\lambda \mu}$ and let $c \defi \frac{1}{2\sqrt{\xi\kappa}}$, so that $\Ak = (1+c)^k$ for all $k \geq 1$, and $\ak = \xi((1+c)^k-(1+c)^{k-1}) = \xi c (1+c)^{k-1}$ for all $k \geq 1$.  We also note that in order to satisfy $\sqrt{\xi/\kappa} \leq 1/3$, to be used later, we only use $\mu$-strong g-convexity of $f$, instead of $\bar{\oldmu}$-strong convexity, where $\mu \defi \min\{\bar{\oldmu}, 1/(9\xi\lambda)\}$. We want to show the following is almost a Lyapunov function for our problem
    \[
       \Psi_k \defi \Ak\left(f(\yk) -f(\xast) + \frac{\mu}{4} \norm{\zy-\xast}^2_{\yk} + \frac{\mu(\xi-1)}{4}\norm{\zy}^2_{\yk} \right),
    \] 
    in the sense that we can show 
    \begin{equation}\label{eq:recurrence_almost_lyapunov}
    \Psi_k \leq \Psi_{k-1} + 2(\kappa+1) A_k\hat{\oldepsilon}.
    \end{equation}
    If we show \cref{eq:recurrence_almost_lyapunov}, then we can conclude the theorem since for $T \geq 2\sqrt{\xi\kappa}\log_2(\frac{2\lambda^{-1}\dist^2(\xk[0], \xast)}{\epsilon})$, we would have
\begin{align*}
\begin{aligned}
    f(y_T) - f(x^\ast) &\leq \frac{\Psi_T}{\Ak[T]}\leq \frac{\Psi_0}{\Ak[T]} + 2\hat{\oldepsilon}(\kappa + 1) \frac{\sum_{i=1}^{T} \Ak[i]}{\Ak[T]} \leq  \frac{\Psi_0}{\Ak[T]} + 2\hat{\oldepsilon}(\kappa + 1) \frac{\Ak[T+1]}{c\Ak[T]} \\
    &\circled{1}[\leq] \frac{\Psi_0}{2^{Tc}} + 4\hat{\oldepsilon}\kappa^{3/2}\sqrt{\xi} \circled{2}[\leq] \frac{\epsilon}{2} + \frac{\epsilon}{2} = \epsilon.
\end{aligned}
\end{align*}
    where in $\circled{1}$ we used $c= 1/(2\sqrt{\xi\kappa}) \leq 1/6$ (by $\sqrt{\xi/\kappa} \leq 1/3$ and $\xi \geq 1$) and so $(1+c)^{1/c} > 2$. We also bounded $1 + c \leq 2$. For $\circled{2}$, we used $\hat{\oldepsilon} \defi \epsilon / (8\sqrt{\kappa^3\xi})$ and $\Psi_0 \leq f(\yk[0])-f(\xast) \leq \lambda^{-1}\dist^2(\xk[0], \xast)$ due to \cref{lem:dist_to_gap_by_Proj_RGD} and the definition of $\yk[0]$. In short,  we find an $\epsilon$-minimizer for $T = \bigo{\sqrt{\zetad \kappa} \log(\frac{\lambda^{-1} \dist^2(\xk[0], \xast)}{\epsilon})}$.

    We now focus on proving \cref{eq:recurrence_almost_lyapunov}. We can assume without loss of generality that $\xk=0$. We work in the tangent space of $\xk$ all of the time except when applying \cref{lemma:moving_lower_bounds} that moves lower bounds, so according to our notation, points $\xast$, $\yk[k-1]$, and $\yk$ should be interpreted as $\exponinv{\xk}(\xast)$, $\exponinv{\xk}(\yk[k-1])$ and $\exponinv{\xk}(\yk)$, respectively. We note that our choice of dual point $\zx$ comes from optimizing the regularized lower bound that we have at iteration $k$:
\begin{align}\label{eq:definitoin_of_zx_k}
 \begin{aligned}
     \zx &= \argmin_{x\in T_{\xk}\M}\left\{\left( \frac{\Ak[k-1]\mu}{4}\right)\norm{\zx[k-1]-x}_{\xk}^2 + \frac{\ak}{\xi}\frac{\mu}{4}\norml{\left(1+\frac{2}{\mu\lambda}\right)\yk-\frac{2}{\mu\lambda}\xk-x}^2_{\xk} \right\} \\
        &\circled{1}[=] \frac{\Ak[k-1] \zx[k-1] + \frac{\ak}{\xi}\left[\left(1+\frac{2}{\mu\lambda}\right)\yk - \frac{2}{\lambda\mu}\xk\right]}{\Ak[k-1] + \ak /\xi} \circled{2}[=] \frac{\Ak[k-1] \zx[k-1] - \frac{\ak}{\xi}\left(\lambda+\frac{2}{\mu}\right)\vkx}{\Ak[k-1]  + \ak/ \xi}.
   \end{aligned}
\end{align}
    The equality $\circled{1}$ can be obtained by just taking a derivative and checking when we have global optimality. Equality $\circled{2}$ uses $\xk = 0$ and $\lambda\vkx =\xk-\yk = -\yk$. The definition of $\zx$ as the $\argmin$ above is derived from minimizing a convex combination of the previously computed regularized lower bound, a quadratic with minimizer at $\zx[k-1]$, plus the new bound which we obtain by \cref{lemma:appa}. Indeed, one can check that the second summand is a quadratic that has the same minimizer as the right hand side in \cref{lemma:appa}.
    In order to show \cref{eq:recurrence_almost_lyapunov}, by \cref{lemma:moving_lower_bounds} it is enough to show
\begin{align}\label{eq:moving_geometric_penalties_in_analysis}
 \begin{aligned}
     \Ak&\left(f(\yk) -f(\xast) + \frac{\mu}{4} \norm{\zx-\xast}^2_{\xk} + \frac{\mu(\xi-1)}{4}\norm{\zx}^2_{\xk} - 2(\kappa+1) \hat{\oldepsilon} \right) \\
       &\leq \Ak[k-1]\left(f(\yk[k-1]) -f(\xast) + \frac{\mu}{4} \norm{\zx[k-1]-\xast}^2_{\xk} + \frac{\mu(\xi-1)}{4}\norm{\zx[k-1]}^2_{\xk} \right).
   \end{aligned}
\end{align}
    The following identities involving our parameters will be useful in the sequel
\begin{align}
    \Ak &= \Ak[k-1] + \ak/\xi\label{eq:Ak_def} \\ 
     \Ak[k-1](\xk-\yk[k-1]) &= -\ak(\xk - \zx[k-1]) \text{ (equiv. to) }\yk[k-1] = -\frac{\ak}{\Ak[k-1]}\zx[k-1]\label{eq:yk_1_and_zk_1} \\ 
     \yk &= -\lambda \vkx  \label{eq:yk_and_vkx}
\end{align}
    We regroup the terms in \cref{eq:moving_geometric_penalties_in_analysis} with evaluations of $f$ to the left hand side to yield $\Ak[k-1](f(\yk)-f(\yk[k-1])) + (\ak/\xi) \cdot (f(\yk)-f(\xast))$ and then we apply \cref{lemma:appa} twice to show that it is enough to prove:
\begin{align*}
 \begin{aligned}
     &\Ak[k-1] \innp{\vkx, \yk-\yk[k-1]} - \Ak[k-1]\frac{\mu}{4}\norm{\yk[k-1]-\yk}^2  + \frac{\ak}{\xi}\innp{\vkx, \yk - \xast} - \frac{\ak}{\xi}\frac{\mu}{4} \norm{\xast - \yk}^2 \\
     &\leq  \frac{\mu}{4} \left[ \Ak[k-1](\norm{\zx[k-1] -\xast}^2 + (\xi -1) \norm{\zx[k-1]}^2) - \Ak\left( \norm{\zx-\xast}^2 + (\xi-1)\norm{\zx}^2\right)\right]
   \end{aligned}
\end{align*}
Note that the errors with respect to $\hat{\oldepsilon}$ cancel each other. Now, we will just check that the terms involving $\innp{\xast, \cdot}$ and $\norm{\xast}^2$ cancel each other, given our choice of $\zx$. Indeed, for $\norm{\xast}^2$ we have the following weights on each side:
\[
    -\frac{\ak}{\xi} \frac{\mu}{4} = \frac{\mu}{4} (\Ak[k-1] - \Ak),
\] 
    which holds by \eqref{eq:Ak_def}. Then, on each side of the inequality we have the following that holds by our choice of $\zx$, and \cref{eq:yk_and_vkx}
\[
    \innp{\xast, -\frac{\ak}{\xi}\vkx + \frac{\ak}{\xi}\frac{\mu}{2} \cdot (-\lambda) \vkx } = \innp{\xast, \frac{\mu}{4}\Ak[k-1] \cdot(-2\zx[k-1]) - \frac{\mu}{4}\Ak \cdot(- 2\zx)}.
\] 
We remove those terms involving $\xast$, and we use the properties \eqref{eq:yk_1_and_zk_1}  and \eqref{eq:yk_and_vkx} and the definition of $\zx$ so the only variables left are $\ak$, $\Ak[k-1]$, $\Ak$, $\vkx$ and $\zx[k-1]$:
\begin{align*}
 \begin{aligned}
     &-\lambda\Ak[k-1]\norm{\vkx}^2 + \ak\innp{\vkx, \zx[k-1]} - \frac{\mu}{4}\Ak[k-1]\lambda^2\norm{\vkx}^2  - \frac{\mu}{4}\frac{\ak[k][2]}{\Ak[k-1]}\norm{\zx[k-1]}^2 +2\innp{\vkx, \zx[k-1]}\frac{\lambda\mu\ak}{4} \\ 
     &-\frac{\ak}{\xi}\lambda\norm{\vkx}^2 - \frac{\ak}{\xi}\frac{\mu}{4} \lambda^2\norm{\vkx}^2 \\
     &\leq  \frac{\mu}{4} \Ak[k-1]\xi\norm{\zx[k-1]}^2  - \frac{\mu}{4}\frac{\xi}{\Ak}\left( \Ak[k-1][2]\norm{\zx[k-1]}^2 + \frac{\ak[k][2]}{\xi^2}(\lambda + \frac{2}{\mu})^2\norm{\vkx}^2 -2 \innp{\vkx, \zx[k-1]}\Ak[k-1]\frac{\ak}{\xi}(\lambda + \frac{2}{\mu}) \right)
   \end{aligned}
\end{align*}
The strategy now is to complete squares to make appear a factor proportional to $-\norm{a\vkx + b\zx[k-1]}^2$ on the left hand side, and show that we can prove the inequality without that term, where $a, b\in\R$. We pick $a$ so that $-a^2\norm{\vkx}^2$ is precisely the term involving $\norm{\vkx}^2$ that we have above, if we move all of those to the left hand side. In other words, after completing squares we will just need to prove that the resulting factor multiplying $\norm{\zx[k-1]}^2$ is non-positive. Let's first regroup all the coefficients  with respect to $\norm{\vkx}^2$, $\norm{\zx[k-1]}^2$ and $\innp{\vkx,\zx[k-1]}$ and place them on the left hand side:
\begin{align}\label{eq:aux:in_the_middle_of_lyapunov_proof}
 \begin{aligned}
     &\innp{\vkx, \zx[k-1]}\cdot \ak \left(1+\frac{\lambda\mu}{2} \right) \left(1-\frac{\Ak[k-1]}{\Ak}\right) + \norm{\vkx}^2\cdot\left(-\lambda \Ak- \frac{\mu}{4}\lambda^2\Ak + \frac{\mu}{4} \frac{\ak[k][2]/\xi}{\Ak} \left(\lambda+\frac{2}{\mu}\right)^2\right) \\
     &+\norm{\zx[k-1]}^2 \left(-\frac{\mu}{4}\frac{\ak[k][2]}{\Ak[k-1]} - \frac{\mu\xi}{4} \Ak[k-1] + \frac{\mu\xi }{4}\frac{\Ak[k-1][2]}{\Ak} \right) \leq 0.
   \end{aligned}
\end{align}
So now we pick $-a^2$ as the resulting factor multiplying $\norm{\vkx}^2$, i.e., $-a^2 \defi -\lambda\Ak( 1+\frac{\mu\lambda}{4}) + \frac{1}{4\mu} \frac{\ak[k][2]/\xi}{\Ak} (\lambda\mu+2)^2$ and therefore we have 
\[
    b^2 = \frac{(2ab)^2}{4a^2} =  \ak[k][2] \left(1+\frac{\lambda\mu}{2} \right)^2 \left(1-\frac{\Ak[k-1]}{\Ak}\right)^2 \frac{1}{4 a^2}. 
\] 
For this computation to be valid, we need to show our choice for $-a^2$ is non-positive. We recall it holds that $\Ak = (1+c)^k$, $\Ak[k-1] = (1+c)^{k-1}$, $\ak = \xi c(1+c)^{k-1}$ and use $\kappa = \frac{1}{\mu\lambda}$, and $c = \frac{1}{2\sqrt{\xi\kappa}}$. So $-a^2\leq 0 $ if and only if $\circled{1}$ below holds:
\begin{align*}
\begin{aligned}
  \xi \left( \frac{1 + 4\kappa + 4\kappa^2}{1 + 4\kappa}\right) = \frac{\xi}{4\lambda\mu} \left( \frac{(\lambda\mu + 2)^2}{(1+\frac{\lambda\mu}{4})}\right) \circled{1}[\leq] \frac{\Ak[k][2]\xi^2}{\ak[k][2]}  = \frac{(1+c)^2}{c^2} = 1 + \frac{2}{c} + \frac{1}{c^2} = 1 + 4\sqrt{\xi\kappa} + 4\xi\kappa.
\end{aligned}
\end{align*}
And this inequality is clearly satisfied, since we assumed $\kappa \geq 1$. Indeed, drop the two first summands on the right hand side and multiply by $1+4\kappa$. 

So now we can just add $ 2ab\innp{\vkx, \zx[k-1]} + a^2\norm{\vkx}^2 + b^2 \norm{\zx[k-1]}^2 = \norm{\vkx -\zx[k-1]}^2 \geq 0$ to the left hand side of \eqref{eq:aux:in_the_middle_of_lyapunov_proof} and show the resulting inequality, in order to prove the result. So it is enough to prove the resulting factor multiplying $\norm{\zx[k-1]}^2$ is non-positive:
\begin{align*}
\begin{aligned}
    &\ak[k][2] \left(1+\frac{\lambda\mu}{2} \right)^2 \left(1-\frac{\Ak[k-1]}{\Ak}\right)^2 \left(\lambda\Ak(4+\mu\lambda) - \frac{1}{\mu} \frac{\ak[k][2]/\xi}{\Ak} (\lambda\mu+2)^2 \right)^{-1}  \\
    &-\frac{\mu\ak[k][2]}{4\Ak[k-1]}- \frac{\mu\xi\Ak[k-1]}{4}  + \frac{\mu\xi \Ak[k-1][2]}{4\Ak} \leq 0.
\end{aligned}
\end{align*}
Now, this inequality holds by substituting the values of $\Ak$, $\Ak[k-1]$, $\ak$, using $ \kappa = (\lambda\mu)^{-1} $ and $c = \frac{1}{2\sqrt{\xi\kappa}}$ and doing some simple computations. Indeed, by substituting, combining the last two summands, dividing by $\mu(1+c)^{k-2}/4$, reducing the $4+\mu\lambda$ to $2+\mu\lambda$ and simplifying terms, we obtain it is enough to prove
\begin{align*}
\begin{aligned}
    \frac{c^4(1+c)^{k-2}\xi^2(2+\lambda\mu)}{\lambda\mu(1+c)^{k} - (1+c)^{k-2}c^2\xi (\lambda\mu+2)}   -c^2(1+c)\xi^2- c\xi \leq 0.
\end{aligned}
\end{align*}
From here, the inequality follows by operating out and comparing terms. If we just substitute the value of $c^2 =1/(2\xi\kappa)$ in the instances where there is $c^4$ or $c^2$, then $\kappa$ disappears from the equation and one can compare terms to reach the result, by using $c\in (0, 1)$, $\xi > 1$.
\end{proof}

We now prove the auxiliary lemmas that were used to prove \cref{thm:riemaconsc}.

\begin{lemma}[Approx. st. g-convexity by approx. subgradient]\label{lemma:appa}
    Let $\yk$ be an $\hat{\oldepsilon}$ minimizer of $\newtarget{def:proximal_objective}{\hk}(x) \defi \min_{x\in\X} \{ f(x) + \frac{1}{2\lambda} \dist^2(\xk,x) \}$, and let $\vkx \defi -\lambda^{-1}\exponinv{\xk}(\yk)$. Then, for all $x\in\X$, we have
\[
    f(x) \geq \f(\yk) + \innp{\vkx, x-\yk}_{\xk} + \frac{\mu}{4}\norm{x-\yk}_{\xk}^2 - \left(\frac{2}{\lambda\mu}+2 \right)\hat{\oldepsilon}.
\] 
\end{lemma}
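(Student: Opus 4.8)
The plan is to compare the approximate proximal point $\yk$ against the \emph{exact} minimizer of the proximal objective and then trade the resulting exact inequality for the claimed approximate one, pushing the mismatch into the $\hat{\oldepsilon}$-budget. Write $h_k(x)=f(x)+\tfrac1{2\lambda}\dist^2(\xk,x)$ and let $\ykast\defi\argmin_{x\in\X}h_k(x)$, which exists and is unique by compactness of $\X$ and strong g-convexity. First I would record that $h_k$ is $(\mu+1/\lambda)$-strongly g-convex in $\X$: $f$ is $\mu$-strongly g-convex by hypothesis, $y\mapsto\tfrac1{2\lambda}\dist^2(\xk,y)$ is $\tfrac1\lambda$-strongly g-convex by \cref{fact:hessian_of_riemannian_squared_distance} (the relevant curvature constant equals $1$ on a Hadamard manifold), and strong g-convexity constants add. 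Applying the (non-differentiable) secant characterization of strong g-convexity of $h_k$ along the geodesic from $\ykast$ to $x$, together with $h_k(\ykast)=\min_{\X}h_k$, gives for all $x\in\X$
\[ h_k(x)\ge h_k(\ykast)+\tfrac{\mu+1/\lambda}{2}\dist^2(\ykast,x), \]
and, taking $x=\yk$ and using $h_k(\yk)\le h_k(\ykast)+\hat{\oldepsilon}$, the distance bound $\dist^2(\yk,\ykast)\le\tfrac{2\lambda\hat{\oldepsilon}}{1+\lambda\mu}$.

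Next I would unfold the displayed inequality into a statement about $f$ and replace $f(\ykast)$ by $f(\yk)$. From the definition of $h_k$ the displayed inequality reads $f(x)\ge f(\ykast)+\tfrac1{2\lambda}\dist^2(\xk,\ykast)-\tfrac1{2\lambda}\dist^2(\xk,x)+\tfrac{\mu+1/\lambda}{2}\dist^2(\ykast,x)$, while $h_k(\yk)\le h_k(\ykast)+\hat{\oldepsilon}$ gives $f(\ykast)\ge f(\yk)-\hat{\oldepsilon}+\tfrac1{2\lambda}\dist^2(\xk,\yk)-\tfrac1{2\lambda}\dist^2(\xk,\ykast)$; adding these, the two $\dist^2(\xk,\ykast)$ terms cancel and
\[ f(x)\ge f(\yk)-\hat{\oldepsilon}+\tfrac1{2\lambda}\dist^2(\xk,\yk)-\tfrac1{2\lambda}\dist^2(\xk,x)+\tfrac{\mu+1/\lambda}{2}\dist^2(\ykast,x). \]
On the other side, expanding in $\Tansp{\xk}\M$ with $\vkx=-\tfrac1\lambda\exponinv{\xk}(\yk)$ one gets $\innp{\vkx,x-\yk}_{\xk}+\tfrac\mu4\norm{x-\yk}_{\xk}^2=-\tfrac1\lambda\innp{\exponinv{\xk}(\yk),\exponinv{\xk}(x)}_{\xk}+\tfrac1\lambda\dist^2(\xk,\yk)+\tfrac\mu4\norm{x-\yk}_{\xk}^2$, and the inner-product term recombines with the $\dist^2(\xk,\cdot)$ contributions from both sides into exactly $\tfrac1{2\lambda}\norm{x-\yk}_{\xk}^2$. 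Thus, after cancelling the common terms and using $\norm{x-\yk}_{\xk}\le\dist(x,\yk)$ (the Hadamard case of the Riemannian cosine inequality \cref{lemma:cosine_law_riemannian}), the whole claim reduces to the purely metric inequality
\[ \Bigl(\tfrac1{2\lambda}+\tfrac\mu2\Bigr)\dist^2(\ykast,x)-\Bigl(\tfrac1{2\lambda}+\tfrac\mu4\Bigr)\dist^2(x,\yk)\ge-\Bigl(\tfrac2{\lambda\mu}+1\Bigr)\hat{\oldepsilon}. \]

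Finally I would close this last inequality by a Young-type step. By the triangle inequality, $\dist^2(x,\yk)\le(1+t)\dist^2(x,\ykast)+(1+\tfrac1t)\dist^2(\ykast,\yk)$ for every $t>0$; choosing $t=\tfrac{\mu\lambda}{2+\mu\lambda}$ makes the coefficient of $\dist^2(x,\ykast)$ vanish, so with $a=\tfrac1{2\lambda}+\tfrac\mu2$, $b=\tfrac1{2\lambda}+\tfrac\mu4$ the left-hand side is at least $-\tfrac{ab}{a-b}\dist^2(\ykast,\yk)$; substituting $\dist^2(\ykast,\yk)\le\tfrac{2\lambda\hat{\oldepsilon}}{1+\lambda\mu}$ and simplifying with $a-b=\mu/4$ yields $\tfrac{ab}{a-b}\cdot\tfrac{2\lambda}{1+\lambda\mu}=\tfrac2{\lambda\mu}+1$ exactly, which is precisely the bound needed. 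The only genuinely delicate point is matching the \emph{stated} error $(\tfrac2{\lambda\mu}+2)\hat{\oldepsilon}$: this forces one to use the full $(\mu+1/\lambda)$-strong convexity (not merely $\mu$) when bounding $\dist^2(\yk,\ykast)$, and the specific interpolation parameter $t$ above; with cruder choices one only gets an error of order $\hat{\oldepsilon}/(\lambda\mu)^2$. Everything else is bookkeeping in $\Tansp{\xk}\M$ and elementary algebra.
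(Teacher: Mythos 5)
Your proof is correct and follows essentially the same strategy as the paper's: compare $\yk$ to the exact proximal minimizer $\ykast$, use the $(\mu+1/\lambda)$-strong g-convexity of $\hk$ both to expand $f(x)$ and to get $\dist^2(\yk,\ykast)\le 2\lambda\hat{\oldepsilon}/(1+\lambda\mu)$, then close with a Young-type tradeoff. The only (immaterial) difference is bookkeeping: the paper polarizes in $\Tansp{\xk}\M$ and applies Young's inequality to $\innp{x-\yk,\,\yk-\ykast}_{\xk}$, whereas you keep $\dist^2(\ykast,x)$ on the metric side and use the equivalent weighted triangle inequality, in both cases invoking the Hadamard bound $\norm{\cdot}_{\xk}\le\dist(\cdot,\cdot)$ and landing on exactly $\bigl(\tfrac{2}{\lambda\mu}+2\bigr)\hat{\oldepsilon}$.
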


\begin{proof}
     Let $\newtarget{def:optimal_proximal_operator}{\ykast} \defi \argmin_{x\in\X} \hk(x)$. The function $\hk$ is $(\frac{1}{\lambda} + \mu)$-strongly g-convex because by \cref{fact:hessian_of_riemannian_squared_distance} the function $\frac{1}{2}\dist^2(\xk, x)$ is $1$-strongly g-convex in a Hadamard manifold. This strong convexity and optimality of the point $\ykast$ yield $\circled{1}$ below. Besides, we have
\begin{align}
 \begin{aligned}
     \f(x) &\circled{1}[\geq] \left( \f(\ykast) + \frac{1}{2\lambda}\dist^2(\xk, \ykast)\right) - \frac{1}{2\lambda}\dist^2(\xk, x) + \left(\frac{1}{2\lambda} + \frac{\mu}{2}\right) \dist^2(\ykast, x) \\
     &\circled{2}[\geq] \left(\f(\yk) + \frac{1}{2\lambda}\norm{\xk- \yk}_{\xk}^2 - \hat{\oldepsilon} \right) - \frac{1}{2\lambda}\norm{\xk - x}_{\xk}^2 + \left(\frac{1}{2\lambda} + \frac{\mu}{2}\right)\norm{\ykast - x}_{x_k}^2 \\
     &= \f(\yk) + \innp{\vkx, x-\yk}_{\xk} + \frac{\mu}{2}\norm{x-\yk}_{\xk}^2 + \left(\frac{1}{\lambda}+\mu\right)( \innp{x-\yk, \yk-\ykast}_{\xk}+ \frac{1}{2}\norm{\ykast - \yk}_{x_k}^2 )  -\hat{\oldepsilon}\\
     &\circled{3}[\geq] \f(\yk) + \innp{\vkx, x-\yk}_{\xk} + \frac{\mu}{4}\norm{x-\yk}_{\xk}^2 - \left(\frac{1}{\lambda\mu}+\frac{1}{2} \right)\left(\frac{1}{\lambda}+\mu\right)\norm{\ykast - \yk}_{x_k}^2 -\hat{\oldepsilon}\\
     &\circled{4}[\geq] \f(\yk) + \innp{\vkx, x-\yk}_{\xk} + \frac{\mu}{4}\norm{x-\yk}_{\xk}^2 - \left(\frac{2}{\lambda\mu}+2 \right)\hat{\oldepsilon}.\\
   \end{aligned}
\end{align}
    where in $\circled{2}$ we used the $\hat{\oldepsilon}$-optimality of $\yk$ for $\hk(\cdot)$ and we used for the last summand that in a Hadamard manifold we have $\dist(x, y) \geq \norm{x-y}_{z}$ for any three points $x, y, z$.

    In $\circled{3}$, we used Young's inequality: 
    \[
        \innp{x-\yk, \left(\frac{1}{\lambda}+ \mu\right)(\yk - \ykast)}_{\xk} \geq -\frac{\mu}{4}\norm{x-\yk}^2_{\xk} - \frac{1}{\mu}\left(\frac{1}{\lambda} + \mu\right)^2 \norm{\yk - \ykast}^2,
    \] 
    and grouped some terms. Finally, in $\circled{4}$ we used $-\norm{\ykast - \yk}_{x_k}^2 \geq -\dist^2(\ykast, \yk)$ and then $(\frac{1}{\lambda} + \mu)$-strong convexity of $\hk$ along with $\hat{\oldepsilon}$-optimality of $\yk$: $(\frac{1}{\lambda} + \mu) \norm{\ykast-\yk}_{\xk}^2 \leq \hk(\yk)-\hk(\ykast) \leq \epsilon$.
\end{proof}

\begin{lemma}[Translating potentials with no geometric penalty]\label{lemma:moving_lower_bounds}
    Using the notation in \cref{alg:riemacon_sc_absolute_criterion}, we have  
\begin{align*}
 \begin{aligned}
     \Ak[k-1]&(\norm{\zx[k-1]-\xast}_{\xk}^2 + (\xi-1)\norm{\zx[k-1]}_{\xk}^2) - \Ak(\norm{\zx-\xast}_{\xk}^2  +(\xi-1)\norm{\zx}_{\xk}^2 ) \\
     &\leq \Ak[k-1](\norm{\zy[k-1][k-1]-\xast}_{\yk[k-1]}^2 + (\xi-1) \norm{\zy[k-1][k-1]}_{\yk[k-1]}) \\
     &\quad - \Ak(\norm{\zy-\xast}_{\yk}^2 +(\xi-1)\norm{\zy[k-1][k-1]}_{\yk[k-1]}^2-\norm{\yk-\zy}_{\yk}^2).
   \end{aligned}
\end{align*}
    
\end{lemma}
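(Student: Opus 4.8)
\emph{Plan.} The plan is to split the claimed inequality into a part carrying the factor $\Ak[k-1]$ and a part carrying the factor $\Ak$, and to bound each by transporting a quadratic form between tangent spaces using \cref{corol:moving_quadratics:exact_approachment_recession} and \cref{lemma:moving_quadratics:inexact_approachment_recession}. (Throughout, $\zxbar[k-1]=\exponinv{\xk}(\expon{\yk[k-1]}(\zybar[k-1][k-1]))$ as in \cref{alg:riemacon_sc_absolute_criterion}, and the last factor in the second summand on the right is read as $\norm{\zy[k-1][k-1]}_{\yk[k-1]}^2$.) Abbreviating $Q_{k-1}^{x}\defi\norm{\zxbar[k-1]-\xast}_{\xk}^2+(\xi-1)\norm{\zxbar[k-1]}_{\xk}^2$, $Q_{k-1}^{y}\defi\norm{\zy[k-1][k-1]-\xast}_{\yk[k-1]}^2+(\xi-1)\norm{\zy[k-1][k-1]}_{\yk[k-1]}^2$, $Q_{k}^{x}\defi\norm{\zx-\xast}_{\xk}^2+(\xi-1)\norm{\zx}_{\xk}^2$ and $\widetilde Q_{k}^{y}\defi\norm{\zy-\xast}_{\yk}^2+(\xi-1)\norm{\zy[k-1][k-1]}_{\yk[k-1]}^2-\norm{\yk-\zy}_{\yk}^2$, the statement is precisely $\Ak[k-1](Q_{k-1}^{x}-Q_{k-1}^{y})\le \Ak(Q_{k}^{x}-\widetilde Q_{k}^{y})$, so it is enough to prove
\[
    Q_{k-1}^{x}\le Q_{k-1}^{y}\qquad\text{and}\qquad \widetilde Q_{k}^{y}\le Q_{k}^{x},
\]
because then the left side is $\le 0\le$ the right side.

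\emph{The $\Ak[k-1]$-part.} The coupling step puts $\xk$ on the geodesic segment from $\yk[k-1]$ to $w\defi\expon{\yk[k-1]}(\zybar[k-1][k-1])$, at parameter $r=\ak/(\Ak[k-1]+\ak)\in[0,1)$, with $\zxbar[k-1]=\exponinv{\xk}(w)$. Colinearity of $\yk[k-1],\xk,w$ gives $\zxbar[k-1]=\Gamma{\yk[k-1]}{\xk}(\zybar[k-1][k-1])+\exponinv{\xk}(\yk[k-1])$, which is exactly the hypothesis of \cref{corol:moving_quadratics:exact_approachment_recession} with $(x,y,p)=(\yk[k-1],\xk,\xast)$, yielding
\[
    \norm{\zxbar[k-1]-\xast}_{\xk}^2+(\zeta[D]-1)\norm{\zxbar[k-1]}_{\xk}^2\le\norm{\zybar[k-1][k-1]-\xast}_{\yk[k-1]}^2+(\zeta[D]-1)\norm{\zybar[k-1][k-1]}_{\yk[k-1]}^2,
\]
where $D$ is the diameter of $\{\yk[k-1],\xk,\xast\}$. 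Since $\yk[k-1],\xast\in\X$ and $\dist(\xk,\yk[k-1])=r\norm{\zybar[k-1][k-1]}_{\yk[k-1]}\le D_x$, the triangle inequality gives $D\le 2D_x$, hence $\xi=4\zeta[2D_x]-3\ge\zeta[2D_x]\ge\zeta[D]$. Also $\norm{\zxbar[k-1]}_{\xk}=\dist(\xk,w)\le\dist(\yk[k-1],w)=\norm{\zybar[k-1][k-1]}_{\yk[k-1]}$ because $\xk$ lies on $[\yk[k-1],w]$, so adding $(\xi-\zeta[D])\norm{\zxbar[k-1]}_{\xk}^2\le(\xi-\zeta[D])\norm{\zybar[k-1][k-1]}_{\yk[k-1]}^2$ upgrades the constant from $\zeta[D]$ to $\xi$. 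Finally, the Euclidean projection onto $\ball(0,D_x)$ is non-expansive and $\dist(\yk[k-1],\xast)\le D_x$, so replacing $\zybar[k-1][k-1]$ by the unprojected $\zy[k-1][k-1]$ on the right can only increase it; this gives $Q_{k-1}^{x}\le Q_{k-1}^{y}$.

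\emph{The $\Ak$-part.} Here the transport is from $\Tansp{\xk}\M$ to $\Tansp{\yk}\M$, where curvature genuinely costs something because $\yk$, the approximate proximal point, need not lie along the Mirror-Descent direction. Using $\exponinv{\xk}(\yk)=-\lambda\vkx$, write $\zx=z^{\xk}+a^{\xk}$ with $z^{\xk}\defi-\Ak[k][-1]\tfrac{\ak}{\xi}(\lambda+\tfrac{2}{\mu})\vkx$ (the $\vkx$-part of Line~\ref{line:MD_step}) and $a^{\xk}\defi\Ak[k][-1]\Ak[k-1]\zxbar[k-1]$; then $\yk=\expon{\xk}(r'z^{\xk})$ with $r'=(2\sqrt{\xi\kappa}+1)/(1+2\kappa)$, which a short computation from $\Ak=(1+c)^k$, $\ak=\xi((1+c)^k-(1+c)^{k-1})$, $c=1/(2\sqrt{\xi\kappa})$, $\kappa=1/(\lambda\mu)$ confirms lies in $[0,1)$ and is bounded away from $1$ (since $\xi/\kappa\le 1/9$). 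Since $\zy=\Gamma{\xk}{\yk}(\zx)+\exponinv{\yk}(\xk)$ is exactly the vector built from $(z^{\xk},a^{\xk})$ as \cref{lemma:moving_quadratics:inexact_approachment_recession} prescribes, applying that lemma with $(x,y,p)=(\xk,\yk,\xast)$ (again with $D\le 2D_x$, so $\xi\ge\zeta[D]$) gives
\[
    \norm{\zy-\xast}_{\yk}^2+(\xi-1)\norm{\zy}_{\yk}^2\le\norm{\zx-\xast}_{\xk}^2+(\xi-1)\norm{\zx}_{\xk}^2+\tfrac{\xi-\delta[D]}{2}\Bigl(\tfrac{r'}{1-r'}\Bigr)\norm{a^{\xk}}_{\xk}^2.
\]
What remains is to absorb the penalty $\tfrac{\xi-\delta[D]}{2}\bigl(\tfrac{r'}{1-r'}\bigr)\bigl(\tfrac{\Ak[k-1]}{\Ak}\bigr)^2\norm{\zxbar[k-1]}_{\xk}^2$ into the non-positive slack in $\widetilde Q_{k}^{y}$, namely $-\norm{\yk-\zy}_{\yk}^2$ together with $(\xi-1)\bigl(\norm{\zy}_{\yk}^2-\norm{\zy[k-1][k-1]}_{\yk[k-1]}^2\bigr)$, using $\norm{\zxbar[k-1]}_{\xk}\le\norm{\zybar[k-1][k-1]}_{\yk[k-1]}$ from the previous part and the identities $\Ak=\Ak[k-1]+\ak/\xi$, $\ak=\xi c(1+c)^{k-1}$ and the choice $\mu=\min\{\bar{\oldmu},1/(9\xi\lambda)\}$, which forces $r'/(1-r')$, $\ak/\Ak$ and $\xi/\kappa$ all to be small.

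\emph{Main obstacle.} The crux is exactly this last absorption step. Moving the \emph{previous} dual quadratic into $\Tansp{\xk}\M$ is free on a Hadamard manifold (moving ``toward'' a point only shrinks a quadratic, which is what \cref{corol:moving_quadratics:exact_approachment_recession} captures), but moving the \emph{updated} dual point $\zx$ into $\Tansp{\yk}\M$ incurs a real geometric penalty proportional to $\norm{\zxbar[k-1]}_{\xk}^2$; it is only because the step schedule $\Ak=(1+1/(2\sqrt{\xi\kappa}))^{k}$ and the regularization $\mu$ were tuned to keep $r'/(1-r')$ and $\xi/\kappa$ small that this penalty is dominated by the slack already built into the statement — this is the mechanism keeping the curvature constant $\xi=O(\zetad)$ entering the final rate only polynomially.
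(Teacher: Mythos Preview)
Your split into two separate inequalities, $Q_{k-1}^{x}\le Q_{k-1}^{y}$ and $\widetilde Q_{k}^{y}\le Q_{k}^{x}$, is the source of the gap. The second inequality is not true in general, and the ``absorption'' you sketch does not rescue it. From \cref{lemma:moving_quadratics:inexact_approachment_recession} you correctly obtain
\[
\norm{\zy-\xast}_{\yk}^2+(\xi-1)\norm{\zy}_{\yk}^2\le Q_k^x+\tfrac{\xi-1}{2}\Bigl(\tfrac{r'}{1-r'}\Bigr)\Bigl(\tfrac{\Ak[k-1]}{\Ak}\Bigr)^{2}\norm{\zxbar[k-1]}_{\xk}^2,
\]
but the penalty on the right is proportional to $\norm{\zxbar[k-1]}^2$, a quantity that has nothing to do with $\norm{\zy}_{\yk}^2$ or $\norm{\zy[k-1][k-1]}_{\yk[k-1]}^2$ (think of the case where $\vkx$ happens to be small and $\zxbar[k-1]$ large). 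The slack you point to, $-\norm{\yk-\zy}^2_{\yk}=-\norm{\zy}_{\yk}^2$ and $(\xi-1)(\norm{\zy}_{\yk}^2-\norm{\zy[k-1][k-1]}_{\yk[k-1]}^2)$, does not control this penalty; indeed the second expression can have either sign.

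What the paper does --- and what your split throws away --- is to keep a \emph{quantitative} leftover from the $\Ak[k-1]$-part. Because $\xk$ sits on the geodesic $[\yk[k-1],w]$ at the exact ratio $\ak/(\Ak[k-1]+\ak)$, one has $\norm{\zybar[k-1][k-1]}_{\yk[k-1]}=\tfrac{\Ak[k-1]+\ak}{\Ak[k-1]}\norm{\zxbar[k-1]}_{\xk}$, so the upgrade from $\zeta[2D_x]$ to $\xi$ leaves an explicit surplus $\tfrac{3(\xi-1)}{2}\bigl((\tfrac{\Ak[k-1]+\ak}{\Ak[k-1]})^2-1\bigr)\norm{\zxbar[k-1]}_{\xk}^2$ on the $\Ak[k-1]$-side. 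This surplus, multiplied by $\Ak[k-1]$, is then shown to dominate the penalty, multiplied by $\Ak$; the final step of the proof is precisely the elementary check that
\[
\Ak\cdot\tfrac{\xi-1}{2}\tfrac{r'}{1-r'}\tfrac{\Ak[k-1][2]}{\Ak[k][2]}\ \le\ \Ak[k-1]\cdot\tfrac{3(\xi-1)}{2}\Bigl(\bigl(\tfrac{\Ak[k-1]+\ak}{\Ak[k-1]}\bigr)^2-1\Bigr),
\]
which reduces to an algebraic inequality in $c=1/(2\sqrt{\xi\kappa})$. In other words, the two transports are coupled through the common quantity $\norm{\zxbar[k-1]}^2$; treating them independently loses exactly the cancellation that makes the lemma true. (Your part (a) and your verification that $r'<1$ are fine; but in (a) you should retain the surplus rather than discard it via the crude inequality $\norm{\zxbar[k-1]}\le\norm{\zybar[k-1][k-1]}$, and the statement's last line should be read with $(\xi-1)\norm{\zy}_{\yk}^2$ in place of $(\xi-1)\norm{\zy[k-1][k-1]}_{\yk[k-1]}^2-\norm{\yk-\zy}_{\yk}^2$, consistent with how the lemma is used in the proof of \cref{thm:riemaconsc}.)
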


\begin{proof}
    Firstly, by the projection step in Line \ref{line:dual_projecting_step}, we have 
    \begin{equation}\label{eq:aux_translating_w_o_penalty_1}
        \norm{\zy[k-1][k-1] - \xast}_{\yk}^2 \geq \norm{\zybar[k-1][k-1] - \xast}_{\yk}^2 \quad \quad  \text{ and } \quad \quad  (\xi-1)\norm{\zy[k-1][k-1]}_{\yk}^2 \geq (\xi-1)\norm{\zybar[k-1][k-1]}_{\yk}^2 
    \end{equation}
since the operation is a simple Euclidean projection onto the closed ball $\ball(0,\D)$ in $\Tansp{\yk}\M$. Now, the following holds
\begin{align}\label{eq:aux_translating_w_o_penalty_2}
 \begin{aligned}
     \norm{\zybar[k-1][k-1]&-\xast}_{\yk[k-1]}^2 + (\xi-1)\norm{\zybar[k-1][k-1]}_{\yk[k-1]}^2 \circled{1}[\geq] 
     \norm{\zx[k-1]-\xast}_{\xk}^2  + (\zeta[2\D]-1)\norm{\zx[k-1]}_{\xk}^2 + (\xi-\zeta[2\D])\norm{\zybar[k-1][k-1]}_{\yk[k-1]}^2 \\
     &\circled{2}[\geq] \norm{\zx[k-1]-\xast}_{\xk}^2  + (\xi-1)\norm{\zx[k-1]}_{\xk}^2 + (\xi-\zeta[2\D])\left(\left( \frac{\Ak[k-1]+\ak}{\Ak[k-1]}\right)^2-1\right)\norm{\zx[k-1]}_{\xk}^2 \\
     &\circled{3}[\geq] \norm{\zx[k-1]-\xast}_{\xk}^2  + (\xi-1)\norm{\zx[k-1]}_{\xk}^2 + \frac{3(\xi-1)}{2}\left(\left( \frac{\Ak[k-1]+\ak}{\Ak[k-1]}\right)^2-1\right)\norm{\zx[k-1]}_{\xk}^2, 
 \end{aligned}
\end{align}

    where $\circled{1}$ is due to \cref{corol:moving_quadratics:exact_approachment_recession}, with $y\gets\xk$ and $x\gets\yk[k-1]$ and to $\dist(\xk, p) \leq \dist(\xk, \yk[k-1]) + \dist(\yk[k-1], p) \leq \norm{\zy[k-1][k-1]}_{\yk[k-1]} + \D \leq 2D$ for any $p \in \M$. Inequality $\circled{2}$ uses the definition of $\xk$. In $\circled{3}$, we used the definition of $\xi = 4\zeta[2\D] - 3$ that implies $\xi-\zeta[2\D] \geq \frac{3}{4}(\xi-1)$. Now, we use \cref{lemma:moving_quadratics:inexact_approachment_recession} with $y\gets \yk$, $x\gets \xk$ $z^x = -\vkx\cdot \frac{\ak}{\xi}(\lambda+\frac{4}{\mu})/\Ak[k]$, $a^x \gets \zx[k-1](\Ak[k-1]/\Ak)$, so that $z^x+a^x = \zx$ and $z^y + a^y = \zy$ and 
    \begin{equation*}\label{ineq:r_is_less_than_1}
        r=\frac{\norm{\exponinv{\xk}(\yk)}}{\norm{z^x}}  = \frac{\lambda\norm{\vkx}}{\norm{\vkx}\cdot\frac{\ak}{\xi}(\lambda+\frac{4}{\mu})\Ak[k][-1]} = \frac{\xi\Ak}{\ak(1+\frac{4}{\lambda\mu})} \circled{1}[\leq] \sqrt{\frac{\xi}{\kappa}} \circled{2}[\leq] \frac{1}{3} < 1. 
    \end{equation*}
    We will now explain why $\circled{1}$ holds. But first note that by the previous inequality, by the choice of parameters and the fact that $r<1$, the assumptions in \cref{lemma:moving_quadratics:inexact_approachment_recession} are satisfied. Also, note that $\circled{2}$ holds by the assumption on $\lambda$. We have $\circled{1}$ above if and only if the following holds
\begin{equation}\label{eq:restriction_1_on_ak}
    \xi\Ak[k-1] \leq \sqrt{\frac{\xi}{\kappa}}\ak\left(-\sqrt{\frac{\kappa}{\xi}}+ 1+4\kappa \right),
\end{equation}
and this is implied by $\xi \leq \sqrt{\frac{\xi}{\kappa}}c\xi\cdot3\kappa$, by substituting the values of $\Ak[k-1]$ and $\ak$ and using $3\kappa \leq 4\kappa - \sqrt{\kappa/\xi} \leq 4\kappa - \sqrt{\kappa/\xi}+1$, which comes from our assumption $\kappa \geq 9\xi \geq \xi$. Consequently, a sufficient condition is $c \geq 1/(3\sqrt{\xi\kappa})$, which is satisfied by our choice $c= 1/(2\sqrt{\xi\kappa})$.

The result in \cref{lemma:moving_quadratics:inexact_approachment_recession}, applied as above results in 
\begin{align}\label{eq:aux_translating_w_o_penalty_3}
 \begin{aligned}
     \norm{\zx-\xast}_{\xk}^2 + (\xi-1)\norm{\zx}_{\xk}^2 + \frac{\xi-1}{2}\left(\frac{r}{1-r}\right)\frac{\Ak[k-1][2]}{\Ak[k][2]}\norm{\zx[k-1]}^2 \geq \norm{\zy-\xast}_{\yk}^2 + (\xi-1)\norm{\zy}_{\yk}^2.
 \end{aligned}
\end{align}
Combining \cref{eq:aux_translating_w_o_penalty_2} multiplied by $\Ak[k-1]$ with \cref{eq:aux_translating_w_o_penalty_3} multiplied by $\Ak$, we obtain that in order to conclude, it suffices to show
\[
\Ak \frac{\xi-1}{2} \frac{r}{1-r} \frac{\Ak[k-1][2]}{\Ak[k][2]} - \Ak[k-1] \frac{3(\xi-1)}{2} \left( \left(\frac{\ak+\Ak[k-1]}{\Ak[k-1]}\right)^2-1 \right) \leq 0.
\] 
We substitute the value of $r$ and after simplifying we obtain that we want to show
\[
    \xi \Ak[k-1][3] - 3(\ak[k][2] + 2\ak\Ak[k-1])\left( \ak \left(1+\frac{4}{\lambda\mu}\right) - \xi \Ak\right) \leq 0
\] 
After substituting the value of $\Ak$, $\Ak[k-1]$, $\ak$, operating out and dividing by $\xi(1+c)^{3k-3}$, we obtain that the previous inequality is equivalent to
\[
    1 + 3\xi^2(1+c)c^2 + 6\xi c  + 6\xi c^2  \leq 3\xi^2 (1+4\kappa) c^3 +6\xi c^2(1+4\kappa) = \frac{3\xi c}{2\kappa}+ 6\xi c + \frac{3}{\kappa} + 12
\] 
where in the last equality we used $c^2 = 1/(4\xi\kappa)$. This inequality holds. After simplifying the terms $6\xi c$ we get that the left hand side is $\leq 1 + \frac{3(1+c)}{2\kappa} + \frac{3}{\kappa} \leq 7$ where we used the value of $c = 1/(2\sqrt{\xi\kappa}) \leq 1$ and $\kappa \geq 1$.

\end{proof}

\begin{proof}\linkofproof{cor:smooth-riemaconsc}
    By \cref{thm:riemaconsc}, it suffices to run \cref{alg:riemacon_sc_absolute_criterion} for
  \begin{equation*}
      T'' \geq 2\sqrt{\frac{\xi}{\lambda\bar{\oldmu}} + 9\xi^2}\log_2\left(\frac{2\lambda^{-1}\dist^2(x_0, x^\ast)}{\epsilon}\right)
  \end{equation*}
    iterations in order to obtain an $\epsilon$-minimizer. We use $\lambda=1/\L$, and recall $\xi=\bigotilde{\zetad}$. Step $k$ of \cref{alg:riemacon_sc_absolute_criterion} requires computing a $\hat{\oldepsilon}$-minimizer of $\min_{x \in \XX}\hk(x)$, where $\hk(x)\defi f(x)+\frac{1}{2\lambda}\dist^2(x_k,x)$ and
  \begin{equation*}
      \hat{\oldepsilon}=\frac{\epsilon}{8 \sqrt{\xi\left(\frac{\L}{\min\{\mu, \L/(9\xi) \}}\right)^3}}. 
\end{equation*}
    We implement the subroutine with \PRGD{} with learning rate $\frac{1}{L'}$, where $L' \defi L(1+\zeta)$ is a bound on the smoothness of $h$, cf. \cref{fact:hessian_of_riemannian_squared_distance}. We require the following number of steps
\[
    T'\ge 1+ 2\frac{L'}{\mu'}\zeta[R]\log \left(\frac{L'\zeta[R]D_{\XX}^2}{2\hat{\oldepsilon}} \right),
\] 
    where $R=\Lips(h, \XX)/L'$. We can bound
\[
    R \leq \frac{\max_{x\in\XX} \norm{\nabla f(x)} + \L \dist(x, x_k)}{\L(1+\zetad)} \circled{1}[\leq]  \frac{\max_{x \in \XX} \{ \norm{\nabla f(x)}/\L \}+2D_{\XX}}{\zetad},
\] 
    where in $\circled{1}$ we used that for all $x\in\XX$, it is 
    \[
        \dist(x, \xk) \leq \dist(\yk[k-1], \xk) + \dist(x, \yk[k-1]) \circled{2}[<] \dist(\yk[k-1], \zybar[k-1][k-1]) + D_{\XX} \circled{3}[\leq] 2D_{\XX}.
    \] 
    where $\circled{2}$ holds by definition of $\xk$ and the fact $\yk[k-1], x \in \XX$, while $\circled{3}$ is due to the projection defininig $\zybar[k-1][k-1]$.

The condition number of $h$ is bounded by
\begin{equation*}
 \frac{L'}{\mu'}\le \lambda \L+\zetad \le 1 +\zetad.
\end{equation*}
So we have
\begin{equation*}
    T'\ge 1+2\zeta[R](1+\zetad)\log \left(4\frac{\L(1+\zetad)\zeta[R]D_{\XX}^2\sqrt{(\frac{\L}{\mu}+9\xi)^3\xi}}{\epsilon} \right),
\end{equation*}
    The complete complexity of $\riemacon{}$ is
\begin{equation*}
 T=T'T''=\bigotilde{\zeta[R]\zetad^{\frac{3}{2}}  \sqrt{\kappa+\zetad}}.
\end{equation*}
\end{proof}

\begin{corollary}\label{corol:PRGD_is_free_if_global_optim_is_in_the_set}
    Under the assumptions from \cref{cor:smooth-riemaconsc}, if a global minimizer $\xast \in \argmin_{\M} f(x)$ is in $\XX$, so that $\nabla f(\xast) = 0$, then \cref{alg:riemacon_sc_absolute_criterion} with $\lambda=1/\Lsmooth$ 
    and \PRGD{} as subroutine, as in \cref{cor:smooth-riemaconsc} yields an $\epsilon$-minimizer after $\bigotilde{\zetad^{3/2}  \sqrt{\kappa+\zetad}}$  gradient and metric-projection oracle calls.
\end{corollary}

\begin{proof}
    By the assumption on $\xast$ and the smoothness assumption, we have that $\Lips(f, \XX) \leq L D_{\XX}$ and since $\zeta \geq D_{\XX}\sqrt{\abs{\kmin}}$, we obtain $R \leq (\Lips(f, \XX)/\Lsmooth+2D_{\XX})/\zetad = \bigo{\frac{1}{\sqrt{\abs{\kmin}}}}$ and thus $\zeta_R = \bigo{1}$. We obtain the result by applying \cref{cor:smooth-riemaconsc}. Note that we can also use \PRGD{} with $\riemaconrel$ and \cref{cor:riemaconrel} and similarly it is $\zeta_R = \bigo{1}$.
\end{proof}

Note that \citet[Theorem 6]{martinez2022accelerated} had to assume a mild condition on the Hadamard manifold and obtained overall query complexity $\bigotilde{\zeta^2\sqrt{\kappa}}$ whereas we obtain lower complexity in the general Hadamard case with bounded sectional curvature. Note that to compare to \citet[Theorem 6]{martinez2022accelerated} we would run our algorithm for $\X$ a ball of center $x_0$ and radius an upper bound on $\dist(x_0, x^\ast)$ and in such a case the implementation of the metric-projection oracle consist of computing a direct and an inverse exponential and simple operations, cf. \cref{sec:preliminaries}, so it does not increase the order of our computational complexity. We note that \citep{martinez2022accelerated} provided another instantiation of their algorithm for g-convex minimization but under the assumption of having access to a projection oracle that is not a metric-projected oracle.

\begin{remark}\label{rem:g-convex_reduction}
    We can obtain the accelerated result for the g-convex case with reduced geometric penalties via a reduction to the $\mu$-strongly g-convex case. Assume the existence of a global minimizer $\xast$ and let $\D/2 \geq \dist(\xInit, \xast)$. Given an $\epsilon > 0$, we optimize the regularized function $f_{\epsilon}(x) \defi f + \frac{\epsilon}{\D^2}\dist(\xInit, x)^2$. Denote by $x^\ast_{\epsilon}$ to the minimizer of $f_{\epsilon}$. We have $\dist(\xInit, x^\ast_{\epsilon}) \leq \dist(\xInit, \xast) \leq \D/2$ (see \citep[Lemma 10]{martinez2022accelerated}). 
    We run \cref{alg:riemacon_sc_absolute_criterion} on $f_{\epsilon}$ on a ball $\ball(x_0, \D/2)$. We have that $f_{\epsilon}$ is strongly g-convex with constant $\frac{2\epsilon}{\D^2}$, cf. \cref{fact:hessian_of_riemannian_squared_distance}. 
    Hence, the algorithm finds an $\epsilon/2$ minimizer $x_{T'}$ of $f_{\epsilon}$ after $T'=\bigotilde{\zetad +  \sqrt{\zetad \D^2/(\lambda\epsilon)}}$ iterations. By definition, it is $\dist(\xInit, \xast) \leq \D/2$ so the regularization at $\xast$ is $\frac{\epsilon}{\D^2}\dist(x_0, \xast)^2 \leq \frac{\epsilon}{4}$ and thus $x_{T'}$ is an $\epsilon$-minimizer of $\f$:
    \[
        \f(x_{T'}) \leq f_{\epsilon}(x_{T'}) \leq f_{\epsilon}(\xast) + \frac{\epsilon}{4} \leq \f(\xast) + \epsilon.
    \] 
\end{remark}

} 

\subsection{Convergence of Projected Gradient Descent}

\begin{proof}\linkofproof{lem:prgd}
    Below, we prove that for any point $x_t\in\X$, \PRGD{} yields
\begin{equation}\label{eq:result_for_PRGD}
    f(x_{t+1})-f(x^{*})\le (f(x_t)-f(x^{*})) \left(1-\frac{\mu}{4L\zeta[R_t]}\right),
\end{equation}
    where $R_t \defi \norm{\nabla f(x_t)}/\L$. Recall our notation $\Lips(f, \X)$ for denoting the Lipschitz constant of $f$ in $\X$. Given \eqref{eq:result_for_PRGD} above, and defining $R \defi \Lips(f, \X)/\L$, we have by applying \eqref{eq:result_for_PRGD} $T$ times from $x_0$, that the following holds
    \begin{equation*}\label{eq:aux_prgd}
        f(x_T) - f(\xast) \leq \min\left\{(f(x_0)-f(x^{*}))\left(1-\frac{\mu}{4L\zeta[R]}\right)^{T}, \frac{\L\zeta[R]}{2}\dist^2(x_0, \xast)\left(1-\frac{\mu}{4L\zeta[R]}\right)^{T-1} \right\},
    \end{equation*}
    since by \cref{lem:dist_to_gap_by_Proj_RGD} we have $f(x_1) - f(\xast) \leq \frac{\L\zeta[R]}{2} \dist^2(x_0, \xast)$. The result follows by bounding the right hand side of the expression above by $\epsilon$ and reorganizing.

    We now prove \eqref{eq:result_for_PRGD}. The following holds:
\begin{align*}
 \begin{aligned}
     \f(x_{t+1}) &\circled{1}[\leq] \min_{x\in\X} \left\{\f(x) + \frac{\L\zeta[R_t]}{2}\dist^2(x, x_t) \right\} \\
     & \circled{2}[\leq] \min_{\alpha \in [0,1]} \left\{\alpha \f(x^\ast) + (1-\alpha) \f(x_t) + \frac{\L\zeta[R_t]\alpha^2}{2}\dist^2(x^\ast, x_t) \right\} \\
     & \circled{3}[\leq] \min_{\alpha \in [0,1]} \left\{\f(x_t) - \alpha\left(1-\alpha\frac{\L\zeta[R_t]}{\mu}\right)\left(\f(x_t)-\f(x^\ast)\right) \right\} \\
     &\circled{4}[=] \f(x_t) - \frac{\mu}{4\L\zeta[R_t]}(\f(x_t)-\f(x^\ast)).
 \end{aligned}
\end{align*}
    Above, we used \cref{lem:dist_to_gap_by_Proj_RGD} to conclude $\circled{1}$, and $\circled{2}$ results from restricting the minimum to the geodesic segment between $x^\ast$ and $x_t$ so that ${x = \expon{x_t}(\alpha x^\ast + (1-\alpha)x_t)}$. We also use g-convexity of $f$. In $\circled{3}$, we used strong convexity of $f$ to bound $\frac{\mu}{2} \dist^2(x^\ast, x_t) \leq \f(x_t)-\f(x^\ast)$. Finally, in $\circled{4}$ we substituted $\alpha$ by the value that minimizes the expression, which is $\mu/(2\L\zeta[R_t])$. The result in \eqref{eq:result_for_PRGD} follows by subtracting $\f(x^\ast)$ to the inequality above. The final statement is a direct consequence of \eqref{eq:result_for_PRGD} and the definition of $R$, along with $f(x_1) - f(x^\ast) \leq \frac{\L \zeta[R]}{2}\dist^2(x_0, x^\ast)$ which holds due to \cref{lem:dist_to_gap_by_Proj_RGD}.

\end{proof}

\begin{remark}\label{remark:minimum_kappa}
    Let $\MOnly$ be a Hadamard manifold with curvatures in the interval $[\kmin, \kmax]$ where $\kmax < 0$.
\citet[Theorem 2]{criscitiello2022negative} provide the query complexity lower bound $\tilde \Omega(\sqrt{\frac{\kmax}{\kmin}} \zeta)$ for minimizing a smooth strongly g-convex function in a ball.   On the other hand, \citet[Prop 17]{martinez2022accelerated} provide an $\bigotilde{\kappa}$ gradient query complexity upper bound in this setting.  From this, we conclude that $\kappa \geq \tilde \Omega(\sqrt{\frac{\kmax}{\kmin}} \zeta)$.  This fact is also shown by \citet[Prop 28]{criscitiello2022negative}, albeit in a very different way.
\end{remark}

\begin{lemma}[Dist to Gap and Warm Start]\label{lem:dist_to_gap_by_Proj_RGD}
    Let $\M$ be a Hadamard manifold $\X\subseteq\M$ be a uniquely g-convex set of diameter $\D$, $\bar{x} \in \X$, and $g:\M\to\R$ a g-convex and $\L$-smooth function in $\X$ with a minimizer at $x^\ast \in \argmin_{x\in\X} g(x)$. Assume access to a metric-projection operator $\proj$ on $\X$ and let $x'\defi \proj(\expon{\bar{x}}(-\frac{1}{\L}\nabla g(\bar{x})))$, and $R\defi \dist(x', \bar{x}) =\norm{\nabla g (\bar{x})}/\L$. The following holds for all $p \in \X$:
    \[
        g(x')-g(p) \leq \frac{\zeta[R] \L}{2}\dist^2(\bar{x}, p).
    \]
    In particular, we have
    \[
         g(x')-g(x^\ast) \leq \frac{\zeta[R] \L}{2}\dist^2(\bar{x}, x^\ast).
    \]
\end{lemma}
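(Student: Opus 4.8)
The plan is to run the standard one-step descent analysis of projected gradient descent, being careful about \emph{which} geometric comparison inequality is invoked and in which orientation, so that the curvature factor comes out as $\zeta[R]$ rather than $\zeta[\D]$. Write $y \defi \expon{\bar{x}}(-\tfrac{1}{\L}\nabla g(\bar{x}))$ for the un-projected gradient step (well defined since $\M$ is Hadamard), so that $\exponinv{\bar{x}}(y) = -\tfrac{1}{\L}\nabla g(\bar{x})$ and $\dist(\bar{x},y) = \norm{\nabla g(\bar{x})}/\L = R$; fix an arbitrary $p \in \X$ (at the very end we specialize $p = x^\ast$, which lies in $\X$ by hypothesis).

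First I would apply $\L$-smoothness of $g$ to the pair $\bar{x}, x' \in \X$ and substitute $\nabla g(\bar{x}) = -\L\,\exponinv{\bar{x}}(y)$, obtaining $g(x') \le g(\bar{x}) - \L\innp{\exponinv{\bar{x}}(y), \exponinv{\bar{x}}(x')} + \tfrac{\L}{2}\dist^2(\bar{x},x')$. Then I would use the lower-bound Riemannian cosine inequality of \cref{lemma:cosine_law_riemannian} on the geodesic triangle with vertices $\bar{x}, y, x'$; since $\M$ is Hadamard its $\delta$-constant equals $1$, and the inequality is shaped precisely so that the term $\tfrac{\L}{2}\dist^2(\bar{x},x')$ cancels, leaving $g(x') \le g(\bar{x}) - \tfrac{\L}{2}R^2 + \tfrac{\L}{2}\dist^2(x',y)$. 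Since $\proj$ is a metric-projection operator and $p \in \X$, we have $\dist(x',y) = \dist(\proj(y),y) \le \dist(p,y)$, hence $g(x') \le g(\bar{x}) - \tfrac{\L}{2}R^2 + \tfrac{\L}{2}\dist^2(p,y)$.

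Next I would control $\dist^2(p,y)$ via the \emph{sharpened} upper-bound cosine inequality (\cref{remark:tighter_cosine_inequality}) applied to the triangle with apex $\bar{x}$ and remaining vertices $p$ and $y$, oriented so that the tight constant is $\zeta[\dist(\bar{x},y)] = \zeta[R]$ and multiplies $\dist^2(\bar{x},p)$. After rearranging and again replacing $\exponinv{\bar{x}}(y)$ by $-\tfrac{1}{\L}\nabla g(\bar{x})$, this reads $\tfrac{\L}{2}\dist^2(p,y) \le \tfrac{\zeta[R]\L}{2}\dist^2(\bar{x},p) + \tfrac{\L}{2}R^2 + \innp{\nabla g(\bar{x}), \exponinv{\bar{x}}(p)}$. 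Substituting into the previous bound, the two $\tfrac{\L}{2}R^2$ terms cancel, giving $g(x') \le g(\bar{x}) + \tfrac{\zeta[R]\L}{2}\dist^2(\bar{x},p) + \innp{\nabla g(\bar{x}), \exponinv{\bar{x}}(p)}$; finally g-convexity of $g$ on $\X$ yields $\innp{\nabla g(\bar{x}), \exponinv{\bar{x}}(p)} \le g(p) - g(\bar{x})$, so $g(x') - g(p) \le \tfrac{\zeta[R]\L}{2}\dist^2(\bar{x},p)$. Taking $p = x^\ast$ gives the second displayed inequality.

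The step that needs the most care is the second application of the cosine inequality: one must take $\bar{x}$ as the apex and choose the orientation that, via \cref{remark:tighter_cosine_inequality}, places the gradient-step length $R$ inside the $\zeta$-constant and pairs it with $\dist^2(\bar{x},p)$ — naively using \cref{lemma:cosine_law_riemannian} with the triangle's diameter would only yield $\zeta[\D]$ and would moreover leave a spurious $\tfrac12(\zeta-1)R^2$ term. The cancellation of the two $\tfrac{\L}{2}R^2$ contributions — one coming from smoothness together with the Hadamard lower-bound cosine inequality, the other from the sharpened upper-bound cosine inequality — is exactly what makes the final constant come out as the clean $\zeta[R]\L/2$.
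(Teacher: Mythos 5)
Your proof is correct. The paper does not include an inline proof of this lemma — after the statement it only writes ``See \citep[Lemma 18 (Warm start)]{martinez2022accelerated} for a proof.'' — so there is no in-paper argument to compare against, but your derivation is the standard warm-start calculation and checks out at every step. The two places that genuinely require care are both handled: the lower-bound cosine inequality on the triangle $(\bar{x},y,x')$ (whose $\delta$-constant equals $1$ on a Hadamard manifold) produces exactly $-\tfrac{\L}{2}\dist^2(\bar{x},x')$ and so cancels the quadratic from smoothness; and the sharpened upper-bound cosine inequality of \cref{remark:tighter_cosine_inequality} is legitimately applied with the two non-apex vertices of the triangle $(\bar{x},y,p)$ assigned so that $\zeta[R]$ — rather than $\zeta[\D]$ or the $\zeta$ of the triangle diameter — appears and multiplies $\dist^2(\bar{x},p)$; this reorientation is allowed because the Zhang–Sra estimate behind that remark lets one choose which of the two sides adjacent to the apex supplies the argument of $\zeta$ and which is scaled by it. With that choice the residual $\tfrac{\L}{2}R^2$ term offsets the one produced by the first cosine-law application, yielding the clean coefficient $\zeta[R]\L/2$, and taking $p=x^\ast$ gives the second display.
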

See \citep[Lemma 18 (Warm start)]{martinez2022accelerated} for a proof.

\subsection{Convergence of Riemannian Alternating Best Response}\label{sec:convergence_of_ABR}

We use $\newtarget{def:Riemacon_relative_error_SC}{\riemaconrel}$ to refer to the accelerated algorithm for $\mu$-strongly convex functions presented in \citep[Theorem 4]{martinez2022accelerated}, and $\riemaconrel(f, x_0, T, \X, \texttt{subroutine})$ to specify the output of the algorithm initialized at $x_0$ for optimizing the function $f$ constrained to $\X$, run for $T$ steps and making use of the subroutine \texttt{subroutine}.

\begin{fact}[Convergence of $\riemaconrel$]\label{fact:riemaconrel}
Let $\M$ be a finite-dimensional Hadamard manifold of bounded sectional curvature,
    and consider $f: \X\subset \M \rightarrow \mathbb{R} $, a g-convex function in a compact g-convex set $\X$ of diameter $D_{\XX}$, $\lambda>0$, and $x^{*}\in \argmin_{x\in \X}f(x)$. Define $\xiOnly \defi 4\zeta[2\D]-3$.
    If $f$ is \(\mu\)-strongly g-convex then, running $\riemaconrel$ as defined in \citep[Theorem 4]{martinez2022accelerated} for $T=(90\xiOnly/\sqrt{\mu\lambda})\log (\mu \dist^2(x_0,x^{*})/\epsilon)$ iterations, returns a point $y_T$ that satisfies $f(y_T)-f(x^{*})\le \epsilon$.
\end{fact}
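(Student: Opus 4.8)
The plan is to obtain the claimed iteration count directly from the convergence guarantee of \citep[Theorem 4]{martinez2022accelerated}, which analyzes precisely the accelerated inexact proximal point scheme $\riemaconrel$ for strongly g-convex objectives on Hadamard manifolds. That theorem shows that, with proximal parameter $\lambda$, the iterates of $\riemaconrel$ obey $f(y_k) - f(x^{*}) \le \Psi_k/A_k$, where $\Psi_k$ is a Lyapunov function that decreases up to controlled subproblem errors, $A_k \ge (1 + c/(\xiOnly\sqrt{\kappa_\lambda}))^k$ for an absolute constant $0<c<1$ with $\kappa_\lambda \defi 1/(\mu\lambda)$, and the initial value is bounded by $\Psi_0 = O(\kappa_\lambda\,\mu\,\dist^2(x_0,x^{*}))$; the last bound holds because $y_0$ is produced by a single (approximate) proximal step from $x_0$, so that $\dist(y_0,x^{*})\le\dist(x_0,x^{*})$ and $f(y_0)-f(x^{*})\le\lambda^{-1}\dist^2(x_0,x^{*})$, cf. the warm-start estimate in \cref{lem:dist_to_gap_by_Proj_RGD}. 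The first step is therefore to restate this inequality with the explicit constants given in the cited theorem.

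The remaining work is elementary arithmetic. Since $\kappa_\lambda\ge 1$ and $\xiOnly\ge 1$, the quantity $u \defi c/(\xiOnly\sqrt{\kappa_\lambda})$ lies in $(0,1]$, so $\log(1+u)\ge u/2$ and $A_k\ge \exp(ku/2)$. Hence $f(y_T)-f(x^{*})\le \Psi_0\exp(-Tu/2)\le\epsilon$ as soon as $T\ge (2/u)\log(\Psi_0/\epsilon) = (2\xiOnly\sqrt{\kappa_\lambda}/c)\log(\Psi_0/\epsilon)$. Plugging in $\Psi_0 = O(\kappa_\lambda\mu\dist^2(x_0,x^{*}))$ splits the logarithm as $\log(\mu\dist^2(x_0,x^{*})/\epsilon) + O(\log\kappa_\lambda)$; the additive $O(\log\kappa_\lambda) = \bigotilde{1}$ term is dominated, in any nontrivial instance, by a constant multiple of $\log(\mu\dist^2(x_0,x^{*})/\epsilon)$, and can thus be absorbed into the leading multiplicative constant. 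Using $1/\sqrt{\mu\lambda} = \sqrt{\kappa_\lambda}$ to rewrite the bound and checking that $90$ is a generous enough choice for that constant then yields $T = (90\xiOnly/\sqrt{\mu\lambda})\log(\mu\dist^2(x_0,x^{*})/\epsilon)$, as claimed.

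The only real obstacle is bookkeeping: aligning the numerical constant $90$ and the exact argument $\mu\dist^2(x_0,x^{*})/\epsilon$ of the logarithm with the normalization actually used in \citep[Theorem 4]{martinez2022accelerated}, and in particular verifying that the initial Lyapunov value is controlled by $\mu\dist^2(x_0,x^{*})$ up to polylogarithmic factors rather than by $f(x_0)-f(x^{*})$ (which is not even assumed finite-controlled here, as $f$ is not assumed smooth). Conceptually there is nothing new: the statement is an off-the-shelf consequence of the accelerated proximal point analysis of \citep{martinez2022accelerated} combined with \cref{lem:dist_to_gap_by_Proj_RGD}.
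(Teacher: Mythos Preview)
The paper does not actually prove this statement: immediately after stating it, the authors write ``See \citep[Theorem 4]{martinez2022accelerated} for a proof.'' It is treated as an off-the-shelf fact imported from prior work, so your reconstruction is doing strictly more than the paper does.

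That said, your sketch has a loose end worth flagging. You write $\Psi_0 = O(\kappa_\lambda\,\mu\,\dist^2(x_0,x^{*}))$ and then claim the resulting additive $O(\log\kappa_\lambda)$ in the iteration count ``is dominated, in any nontrivial instance, by a constant multiple of $\log(\mu\dist^2(x_0,x^{*})/\epsilon)$.'' This is not true in general: $\kappa_\lambda = 1/(\mu\lambda)$ and $\mu\dist^2(x_0,x^{*})/\epsilon$ are independent parameters, so one cannot absorb $\log\kappa_\lambda$ into the other logarithm by a universal constant. The cited theorem in \citep{martinez2022accelerated} actually uses a \emph{relative} accuracy criterion for the inner proximal subproblems (this is precisely the distinction the present paper draws between $\riemaconrel$ and $\riemacon$), and with that criterion the Lyapunov bound comes out directly as $\Psi_0 \le C\,\mu\,\dist^2(x_0,x^{*})$ for an absolute constant $C$, with no stray $\kappa_\lambda$ factor. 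So the clean logarithm in the statement is a feature of the relative-error analysis, not something you can recover by the absorption argument you propose. If you want to reconstruct the bound rather than cite it, you need to invoke the relative-accuracy inner criterion explicitly.
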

See \citep[Theorem 4]{martinez2022accelerated} for a proof.
\begin{corollary}\linktoproof{cor:riemaconrel}\label{cor:riemaconrel}
    If $f$ as defined in \cref{fact:riemaconrel} is in addition  $\L$-smooth, then $\riemaconrel$ with $\lambda=1/\L$ and \PRGD{} as subroutine, yields an $\epsilon$-minimizer after
   $\bigotilde{\zeta[R]\zetad^2 \sqrt{\kappa}}$ gradient and metric-projection oracle calls, where
    $R\le (\Lips(f, \XX)/\L+2D_{\XX})/\zetad$.
\end{corollary}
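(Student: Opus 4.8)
The plan is to bound separately the number of outer iterations of $\riemaconrel$ and the cost of the proximal subproblems it solves, and then multiply. First I would invoke \cref{fact:riemaconrel} with $\lambda = 1/\L$: since $\xiOnly = 4\zeta[2\D]-3 = O(\zetad)$ and $1/\sqrt{\mu\lambda} = \sqrt{\L/\mu} = \sqrt{\kappa}$, this yields that $\riemaconrel$ returns an $\epsilon$-minimizer after $T_{\mathrm{out}} = \bigotilde{\zetad\sqrt{\kappa}}$ outer iterations. Each such iteration only accesses $f$ by solving, to a fixed \emph{relative} accuracy (accuracy proportional to the squared distance from the current iterate $\xk$ to the proximal minimizer), the subproblem $\min_{y\in\X} h_k(y)$ with $h_k(y) \defi f(y) + \frac{1}{2\lambda}\dist^2(\xk, y)$.

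Next I would analyze the per-subproblem cost of \PRGD{}. By \cref{fact:hessian_of_riemannian_squared_distance}, on a Hadamard manifold ($\kmax\le 0$ forces $\deltad = 1$) the map $y\mapsto \tfrac12\dist^2(\xk,y)$ is $1$-strongly g-convex and $\zetad$-smooth on $\X$; hence with $\lambda = 1/\L$ the objective $h_k$ is $(\mu + \L)$-strongly g-convex and $\L(1+\zetad)$-smooth, so its condition number is $\kappa_h \le 1 + \zetad = O(\zetad)$. Moreover $\nabla_y \tfrac{1}{2\lambda}\dist^2(\xk,y) = -\lambda^{-1}\exponinv{y}(\xk)$ has norm at most $\D/\lambda = \L\D$, so $\Lips(h_k,\X) \le \Lips(f,\X) + \L\D$, and therefore the quantity $R_h \defi \Lips(h_k,\X)/(\L(1+\zetad))$ playing the role of $R$ in \cref{lem:prgd} when \PRGD{} is run on $h_k$ satisfies $R_h \le (\Lips(f,\X)/\L + \D)/(1+\zetad) \le (\Lips(f)/\L+\D)/\zetad = R$, whence $\zeta[R_h] \le \zeta[R]$ by monotonicity of $\zeta[\cdot]$. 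By \cref{lem:prgd}, \PRGD{} converges linearly on $h_k$ at rate $1 - \Theta(1/(\kappa_h\zeta[R_h]))$; combining its one-step warm-start guarantee \cref{lem:dist_to_gap_by_Proj_RGD} (which bounds the gap after the first step by $O(\zetad\L\,\dist^2(\xk,\cdot))$) with the $(\mu+\L)$-strong g-convexity of $h_k$ (which converts the unknown proximal gap into a squared-distance bound), driving the relative accuracy below the required fixed fraction costs only $T_{\mathrm{in}} = \bigotilde{\kappa_h\zeta[R_h]} = \bigotilde{\zetad\zeta[R]}$ gradient and metric-projection oracle calls, the relative-accuracy requirement contributing only a logarithmic factor absorbed into $\bigotilde{\cdot}$.

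Multiplying the two bounds gives a total of $T_{\mathrm{out}}\cdot T_{\mathrm{in}} = \bigotilde{\zetad\sqrt{\kappa}}\cdot\bigotilde{\zetad\zeta[R]} = \bigotilde{\zeta[R]\zetad^2\sqrt{\kappa}}$ oracle calls, which is the claimed bound. The only step that is more than routine bookkeeping is the treatment of the relative-accuracy criterion of $\riemaconrel$: one must verify that reducing the proximal suboptimality below a fixed multiple of $\dist^2(\xk,\cdot)$ inflates the \PRGD{} iteration count only by a logarithmic term, which is precisely what the linear rate of \PRGD{} together with the warm start of \cref{lem:dist_to_gap_by_Proj_RGD} provide (since the latter controls the initial gap by $O(\zetad\L\,\dist^2)$, so strong convexity turns a constant number of extra linear-convergence "doublings" into the desired relative accuracy).
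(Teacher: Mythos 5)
Your proposal is correct and takes essentially the same route as the paper's proof: it bounds the outer iteration count of $\riemaconrel$ via \cref{fact:riemaconrel} as $\bigotilde{\zetad\sqrt{\kappa}}$, computes the condition number $O(\zetad)$ and the constant $R\le(\Lips(f)/\L+\D)/\zetad$ of the prox objective $h_k$ so that \cref{lem:prgd} gives $\bigotilde{\zetad\zeta[R]}$ inner iterations, and observes that the relative-accuracy target $\sigma_k\propto \dist^2(x_k,x_k^\ast)/(k+1)^2$ cancels against the $\dist^2(x_k,x_k^\ast)$ in the warm-start bound so that only a $\log\bigl(\zetad\zeta[R](k+1)^2\bigr)$ overhead remains, absorbed into $\bigotilde{\cdot}$ since $k\le T''$. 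One small imprecision: that overhead is logarithmic in $k$ (hence in $T''$, $\zetad$, $\zeta[R]$), not a \emph{constant} number of extra linear-convergence doublings as your closing sentence suggests, though this does not affect the claimed rate.
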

\begin{proof}\linkofproof{cor:riemaconrel}
  By \cref{fact:riemaconrel}, it suffices to run $\riemaconrel$ for
  \begin{equation}\label{eq:riemaconrel_outer}
T''= \frac{90\xiOnly}{\sqrt{\mu\lambda}}\log \left( \frac{\mu \dist^2(x_0,x^{*})}{\epsilon}\right)
  \end{equation}
  iterations in order to obtain an $\epsilon$-minimizer. Note $\xiOnly=\bigotilde{\zetad}$.
  We use $\lambda=1/\L$.
  Note that $\riemaconrel$ uses a series of restarts, so in the following, $k$ refers to the $k$-th iteration in one of the calls to \citep[Algorithm 1]{martinez2022accelerated}.
  This detail can be ignored in the following, as we bound $k$ uniformly by $T''$ in \cref{eq:riemaconrel_inner}.
    Step $k$ of $\riemaconrel$ requires computing a $\sigma_k$-minimizer of $\min_{x \in \X}\hk(x)$, where $\hk(x)\defi f(x)+\frac{1}{2\lambda}\dist^2(x_k,x)$ and
  $\sigma_k\defi\dist^2(x_k,x^{*}_k)/(78\lambda(k+1)^2) $.
    We solve the prox problem using \PRGD{} with learning rate $\frac{1}{L'}$, where $L' \defi L(1+\zeta)$ is a bound on the smoothness of $h$, cf. \cref{fact:hessian_of_riemannian_squared_distance}.
  By \cref{lem:prgd}, this requires
\[
    T'\ge 1+ 2\frac{L'}{\mu'}\zeta[R]\log \left(\frac{L'\zeta[R]D_{\XX}^2}{2\sigma_k} \right)
  \]
  iterations, where $L'$ and $\mu'$ are smoothness and strong g-convexity constants of $h$ respectively and $R=\norm{\nabla h(x_0)}/L'$. We can bound
\[
    R \leq \frac{\max_{x\in\X} \norm{\nabla f(x)} + \L \dist(x, x_k)}{\L(1+\zetad)} \leq  \frac{\max_{x \in \X} \norm{\nabla f(x)}/\L+2D_{\XX}}{\zetad},
\]
    where the last inequality uses $\dist(\XX, x_k) \leq 2D_{\XX}$, cf. \citep{martinez2022accelerated}.
We have that $\kappa'$, the condition number of $h$, is bounded by $\kappa'\le\frac{L'}{\mu'}\le \lambda \L+\zetad \le 1 +\zetad$.
By the definition of $\kappa'$ and bounding $k\le T''$, we obtain
\begin{equation}\label{eq:riemaconrel_inner}
    T'\ge1+4\zeta[R]\zetad\log (78\zetad\zeta[R](T''+1)^2)\ge 1+4\zeta[R]\zetad\log (78\zetad\zeta[R](k+1)^2).
\end{equation}
All in all $\riemaconrel$ requires
\begin{equation*}
 T=T'T''=\bigotilde{\zeta[R]\zetad^{2}  \sqrt{\kappa}}
\end{equation*}
calls to the gradient and metric projection oracle respectively.
\end{proof}
\begin{proof}\linkofproof{thm:crabr}
We begin by connecting the inexactness of the iterates $(x_t,y_t)$ from \cref{alg:crabr} to the number of $\riemaconrel$ iterations,
  \begin{equation}\label{eq:riemacon-x}
    \begin{aligned}[t]
        \dist^2(x_{t+1},x^{*}(y_t))&\circled{1}[\le] \frac{2}{\mux} [f(x_{t+1},y_t)-f(x^{*}(y_t),y_t)]\\
        &\circled{2}[\le] 2d^{2}(x_t,x^{*}(y_t)) \exp\left( \frac{-T_x}{90\xi \sqrt{\kappa_{x}}} \right).
   \end{aligned}
  \end{equation}
    We used strong g-convexity of $x\mapsto f(x, y_t)$ in $\circled{1}$, and $\circled{2}$ follows from running $\riemaconrel$ on $f(\cdot,y_t)$ for $T_x$ iterations starting from $x_t$ and ending up with $x_{t+1}$.
  Noting that since $-f(x_{t+1},\cdot)$ is strongly g-convex, we can repeat the arguments for $y$,
  \begin{equation} \label{eq:riemacon-y}
    \begin{aligned}[t]
    \dist^2(y_{t+1},y^{*}(x_{t+1}))& \leq  \frac{2}{\muy} [f(x_{t+1},y^{*}(x_{t+1}))-f(x_{t+1},y_{t+1})]\\
    &\le 2 \dist^{2}(y_t,y^{*}(x_{t+1})) \exp \left(  \frac{-T_y}{90\xi \sqrt{\kappa_{y}}} \right).
    \end{aligned}
  \end{equation}
  Note that we use $\riemaconrel$ instead of our $\riemacon$ in \cref{alg:crabr}, since the absolute error criterion of $\riemacon$ creates an unwanted dependence between the required precision of the proximal problems and the precision of the original problem.
Choosing $T_x=90\xi \sqrt{\kappa_x} \log(512)$ and $T_y=90\xi \sqrt{\kappa_y} \log(512) $, it follows from \cref{eq:riemacon-x,eq:riemacon-y} that,
  \begin{align}
    \dist^2(x_{t+1},x^{*}(y_t))&\le \frac{1}{256}\dist^{2}(x_t,x^{*}(y_t))\label{eq:xdescent}\\
    \dist^2(y_{t+1},y^{*}(x_{t+1}))&\le \frac{1}{256}\dist^{2}(y_t,y^{*}(x_{t+1}))
  \end{align}
 Further,
  \begin{align} \label{eq:xbound}
    \begin{aligned}[t]
        \dist(x_{t+1},x^{*})&\circled{1}[\le] \dist(x_{t+1},x^{*}(y_t))+ \dist(x^{*}(y_t),x^{*})\\
                    &\circled{2}[\le] \frac{1}{16}\dist(x_t,x^{*})+\frac{17}{16}\dist(x^{*}(y_t),x^{*})\\
                    &\circled{3}[\le] \frac{1}{16}\dist(x_t,x^{*})+\frac{17\Lxy}{16\mux}\dist(y_t,y^{*}).
    \end{aligned}
  \end{align}
    We used the triangle inequality in $\circled{1}$, and $\circled{2}$ follows from \cref{eq:xdescent} and the triangular inequality again. Finally, $\circled{3}$ uses \cref{lem:lip}, noting that $x^{*}=x^{*}(y^{*})$.
    For $y$, we follow the same argument and then use \cref{eq:xbound} in $\circled{1}$ below
   \begin{align}\label{eq:ybound}
     \begin{aligned}[t]
    \dist(y_{t+1},y^{*})&\le \dist(y_{t+1},y^{*}(x_{t+1}))+ \dist(y^{*}(x_{t+1}),y^{*})\\
                    &\le \frac{1}{16}\dist(y_t,y^{*})+\frac{17}{16}\dist(y^{*}(x_{t+1}),y^{*})\\
                    &\circled{1}[\le] \frac{1}{16}\dist(y_t,y^{*})+\frac{17\Lxy}{16\muy}\left(\frac{1}{16}\dist(x_t,x^{*})+\frac{17\Lxy}{16\mux}\dist(y_t,y^{*}) \right)\\
     &\le \left( \frac{1}{16}+ \frac{17^2\Lxy^2}{16^2\mux\muy} \right)\dist(y_t,y^{*}) +\frac{17\Lxy}{16^2\muy}\dist(x_t,x^{*}).
     \end{aligned}
   \end{align}
   Now we define $C\defi\muy/\mux$ and obtain
   \begin{align*}
       \dist^2(x_{t+1},x^{*})+Cd^2(y_{t+1},y^{*})&\circled{1}[\le]  2\left(\frac{1}{16^2}+ C \left( \frac{17\Lxy}{16^2\muy}   \right)^{2}  \right)   \dist^2(x_{t},x^{*})\\
                                           &\quad +2\left( \frac{C}{C} \left( \frac{17\Lxy}{16\mux} \right)^2  +C \left(  \frac{1}{16^2}+ \left(\frac{17^2\Lxy^2}{16^2\mux\muy}   \right)^2\right) \right)\dist^{2}(y_t,y^{*})\\
                                           &\circled{2}[\le] 2 \dist^2(x_{t},x^{*}) \left( \frac{1}{16^2} + \frac{17^2}{16^2}\frac{1}{4} \right) + 2C \dist^{2}(y_t,y^{*})  \left( \frac{17^2}{16^2}\frac{1}{4} + \left( \frac{1}{16^2}+ \frac{1}{4^2}\frac{17^2}{16^2} \right) \right)\\
                                           & \le  \frac{3}{5}( \dist^2(x_{t},x^{*})  + Cd^{2}(y_t,y^{*})).
   \end{align*}
    where $\circled{1}$ follows from $(a+b)^2 \leq 2a^2 + 2b^2$, \cref{eq:xbound,eq:ybound}.
    Inequality $\circled{2}$ is obtained by using $\Lxy<\frac{1}{2}\sqrt{\mux\muy}$, which holds by assumption, and by the definition of $C$.
By expanding the previous inequality, it follows
\begin{equation*}
  \dist^2(x_T,x^{*}) + C \dist^2(y_T,y^{*})  \le \left(\frac{3}{5}  \right)^{T} \left(\dist^2(x_0,x^{*}) +C \dist^2(y_0,y^{*})   \right).
\end{equation*}
    Now we study two cases. If $C\ge 1$, we have $\circled{1}$ below
\begin{equation*}
    \dist^2(x_T,x^{*}) + \dist^2(y_T,y^{*})  \circled{1}[\le] \left(\frac{3}{5}  \right)^{T}\frac{\muy}{\mux}\left(\dist^{2}(x_0,x^{*}) + \dist^2(y_0,y^{*})   \right)\circled{2}[\le] \left(\frac{3}{5}  \right)^{T}\frac{\Lx}{\mux}\left(\dist^2(x_0,x^{*}) + \dist^2(y_0,y^{*})   \right),
\end{equation*}
    where $\circled{2}$ is due to $\muy\le \Ly$ and $\Lx=\Ly$. Recall that we assumed the latter without loss of generality. Similarly, if $C\in (0,1)$ we have, for $C\le 1$,
\begin{equation*}
 \dist^2(x_T,x^{*}) + \dist^2(y_T,y^{*}) \le\left( \frac{3}{5}  \right)^{T}  \frac{\mux}{\muy}\left(\dist^2(x_0,x^{*}) + \dist^2(y_0,y^{*})   \right)  \le\left( \frac{3}{5}  \right)^{T}  \frac{\Ly}{\muy}\left(\dist^2(x_0,x^{*}) + \dist^2(y_0,y^{*})   \right) .
\end{equation*}
Thus, for all $C>0$, we obtain
\begin{equation}\label{eq:rabr-dist}
   \dist^2(x_T,x^{*}) + \dist^2(y_T,y^{*}) \le\left( \frac{3}{5}  \right)^{T} \max\{\kappa_x,\kappa_y\} \left(\dist^2(x_0,x^{*}) + \dist^2(y_0,y^{*})   \right).
\end{equation}
Hence, we require 
\begin{equation*}
    T'= \mathcal{O} \left(\log \left( \frac{(\dist^2(x_0,x^{*}) + \dist^{2}(y_0,y^{*}))( \kappa_x+\kappa_y)}{\epsilon} \right)  \right),
\end{equation*}
iterations of \cref{alg:crabr} to ensure $\dist^2(x_T,x^{*}) + \dist^2(y_T,y^{*})\le\epsilon$.
By \cref{cor:riemaconrel}, each $\riemaconrel$ call requires  $T_x'=\bigotilde{\zeta[R]\zetad\sqrt{\kappa_x}}$ and $T_y'=\bigotilde{\zeta[R]\zetad\sqrt{\kappa_y}}$ calls to the gradient and metric projection  oracle respectively.
In total, \RABR{} requires
\begin{equation*}
T' (T_xT_x' +T_yT_y' )=\bigotilde{ \zeta[R]\zeta[]^{2} \sqrt{\kappa_x+\kappa_y}}
\end{equation*}
calls to the gradient and metric projection oracle.
\end{proof}

\subsection[Convergence Analysis of RAMMA]{Convergence analysis of \RAMMA{}}

We first prove this important proposition, that describes how to go from one measure of convergence to another, possibly after performing some optimization steps.

\begin{proof}\linkofproof{proposition:from_one_opti_measure_to_another}
\cref{item:full_gap_to_individual_gap} follows from the non-negativity of the gaps and $\gapx[\bar{x}] + \gapy[\bar{y}]= \gap[\bar{x}, \bar{y}]$. \cref{item:gap_to_dist} follows from the strong convexity of $\phi_x$ and $\phi_y$. 

    We now prove \cref{item:one_gap_to_dist}. By strong concavity of $\phi_y$ we have $\dist^2(\bar{y}, y^\ast) \leq \frac{2}{\bar{\oldmu}_y}\gapy[y] \leq \frac{2\epsilon}{\bar{\oldmu}_y}$. The optimizer of $g(\cdot, \bar{y})$ is $x^\ast(\bar{y})$, so by $\bar{\oldmu}_x$-strong g-convexity of this function and optimality of $\bar{x}'$, we have $\dist^2(\bar{x}', x^\ast(\bar{y})) \leq \frac{2\hat{\oldepsilon}}{\bar{\oldmu}_x}$. Thus, we have
\begin{align}
\begin{aligned}
    \dist^2(\bar{x}', x^\ast) + \dist^2(\bar{y}, y^\ast) &\circled{1}[\leq] 2\dist(\bar{x}', x^\ast(\bar{y}))^2 + 2\dist(x^\ast(\bar{y}), x^\ast)^2 + \dist^2(\bar{y}, y^\ast) \\
        &\circled{2}[\leq] \frac{4\hat{\oldepsilon}}{\bar{\oldmu}_x^2}+ \left( \frac{2\bar{L}_{xy}^2}{\bar{\oldmu}_x}+1\right)\dist^2(\bar{y}, y^\ast) \leq \frac{4\hat{\oldepsilon}}{\bar{\oldmu}_x}+ \frac{2\epsilon}{\bar{\oldmu}_y}\left( \frac{2\bar{L}_{xy}^2}{\bar{\oldmu}_x^2}+1\right).
\end{aligned}
\end{align}
    We used the triangular inequality and Young's in $\circled{1}$ and for the second summand of $\circled{2}$ we used the $(\bar{L}_{xy}/\bar{\oldmu}_x)$-Lipschitzness of $x^\ast(\cdot)$, due to \cref{lem:lip}.

Under the assumption of \cref{item:dist_to_gap_lip}, we have that
\begin{align*}
  \text{gap}(\bar{x},\bar{y})&= g(\bar{x},y^{*}(\bar{x}))-g(\bar{x},\bar{y}) +g(\bar{x},\bar{y})-g(x^{*}(\bar{y}),\bar{y})\\
                             &\le \bar{L}_p^y\dist(y^{*}(\bar{x}),\bar{y}) + \bar{L}_p^x\dist(x^{*}(\bar{y}),\bar{x})\\
                             &\le  \bar{L}_p^y (\dist(y^{*}(\bar{x}),y^{*})+\dist(y^{*},\bar{y})) +\bar{L}_p^x(\dist(x^{*}(\bar{y}),x^{*})+\dist(x^{*},\bar{x}))\\
                             &\circled{1}[\le]  \bar{L}_p^y \left(\frac{\bar{L}_{xy}}{\bar{\oldmu}_y}\dist(x^{*},\bar{x})+\dist(y^{*},\bar{y})\right) +\bar{L}_p^x\left(\frac{\bar{L}_{xy}}{\bar{\oldmu}_x}\dist(\bar{y},y^{*})+\dist(x^{*},\bar{x})\right)\\
  &=  \dist(y^{*},\bar{y})\left(\bar{L}_p^y+\bar{L}_p^x\frac{\bar{L}_{xy}}{\bar{\oldmu}_x}\right) +\dist(x^{*},\bar{x})\left(\bar{L}_p^x+\bar{L}_p^y\frac{\bar{L}_{xy}}{\bar{\oldmu}_y}\right).
\end{align*}
  We used \cref{lem:lip} in $\circled{1}$ above.
\end{proof}

Before we go on to prove \cref{thm:minmax_alg}, we briefly discuss a technical detail.
\begin{remark}[Saddle point assumption]\label{rem:SP-ass}
In \cref{sec:setting} we assume for the sake of clarity that $f$ admits a saddle point $(\xast,\yast)\in \mathcal{X}\times \mathcal{Y}$ satisfying $\nabla f(\xast,\yast)=0$.
However, it is not necessary to assume that the saddle point has zero gradient and a slightly weaker assumption suffices to show our convergence result.
From now on, let $(\xast,\yast)\in \X\times \Y$ be a saddle point in $\X\times \Y$ which does not necessarily have zero gradient, and let $(\newtarget{def:x_global_saddle}{\xastg}, \newtarget{def:y_global_saddle}{\yastg}) \in \M \times\NN$, be a global saddle point such that $\nabla \f(\xastg, \yastg) = 0$.
Then, it suffices to assume that $\dist^2(x_0,\xastg)+\dist^2(y_0,\yastg)\le D^2$ .
This allows global saddle points to lie outside $\mathcal{X}\times \mathcal{Y}$.
We also note that in fact, our algorithms can work without any assumption on $\dist^2(x_0,\xastg)+\dist^2(y_0,\yastg)$ by using an upper bound of this distance in our algorithmic parameters.
\end{remark}

\begin{proof}\linkofproof{thm:minmax_alg}
The total number of gradient and metric-projection oracle calls of \cref{alg:ramma} can be calculated as follows
\begin{equation*}
 \TOne(\Tthree\Tfive+\Tfour)+\Ttwo,
\end{equation*}
where $\TOne$ to $\Tfive$ refer to the complexity of the different routines, which are provided in \cref{lem:loop1,lem:loop2,lem:loop3}.
    We provide an overview of the required $\varepsilon_i$, $\hat{\varepsilon}_i$, $T_i$ in \cref{table:ramma-params}.
\begin{table}[]
  \caption{Overview of precision and iteration parameters for
\RAMMA{}}
  \label{table:ramma-params}
\begin{tabular}{@{}ll@{}}
\toprule
 Number of iterations & Required precisions  \\ \midrule
    $\newtarget{def:T_1}{\TOne}=\bigotilde{\sqrt{ \frac{\zetad}{\mux\eta_x}}}$&$\newtarget{def:epsilon_1}{\epsilonone}= \frac{\epsilon\mux}{4C}\left( \frac{2\Lxy^2}{\muy^2}+1 \right)^{-1},\quad   \newtarget{def:hat_epsilon_1}{\hatepsilonone}=\frac{\epsilonone(\eta_x\mux)^{\frac{3}{2}}}{4 \sqrt{\xi}}$  \\
    $\newtarget{def:T_2}{\Ttwo}=\bigotilde{\zetad^{\frac{5}{2}}\sqrt{\zetad +\frac{\Lxy+\Ly}{\muy}}}$&$\newtarget{def:epsilon_2}{\epsilontwo}= \frac{\muy\epsilon}{8C}$ \\
    $\newtarget{def:T_3}{\Tthree}=\bigotildel{\sqrt{\frac{\zetad}{\muy\eta_y}}}$&$\newtarget{def:epsilon_3}{\epsilonthree}=\frac{\muy\epsilonone^2(\mux\eta_x)^3}{64C_k^2\xi \left( \frac{2\Lxy^2}{\mux+\eta\_x^{-1}}+1 \right)}, \quad \hat{\epsilon}_3=\newtarget{def:hat_epsilon_3}{\epsilonthree}(\eta_y\muy)^{-3/2}/(8 \sqrt{\xi})$ \\
    $\newtarget{def:T_4}{\Tfour}=\bigotilde{\zetad^{\frac{5}{2}}\sqrt{\kappa_x+\zetad}}$&$\newtarget{def:epsilon_4}{\epsilonfour}=\frac{(\mux+\eta_x^{-1})(\mux\eta_x)^3\epsilonone^2}{128C_k^2\xi}$ \\
    $\newtarget{def:T_5}{\Tfive}=\bigotilde{\zetad^{3} \sqrt{\Lx\eta_x+\Ly\eta_y+\zetad}}$&$\newtarget{def:epsilon_5}{\epsilonfive}=\frac{\epsilonthree^2(\muy\eta_y)}{32\xi C_{\ell}^2}$  \\\bottomrule
\end{tabular}
\end{table}
We have that
\begin{equation*}
\TOne\Tthree\Tfive=\bigotildel{\zetad^{4}\sqrt{\frac{\Lx}{\mux\muy\eta_y}+\frac{\Ly}{\mux\muy\eta_x}+\frac{\zetad}{\mux\muy\eta_x\eta_y}}}.
\end{equation*}
Recall that we assumed without loss of generality that $\muy \leq \mux$. We analyze some cases now. If $\mux\le \Lxy$, we have that $\eta_x^{-1}=\Lxy+9\xi\mux$, $\eta_y^{-1}=\Lxy+9\xi\muy$ and
\begin{equation*}
\bigol{  \frac{\Lx}{\mux\muy\eta_y}+\frac{\Ly}{\mux\muy\eta_x}+\frac{\zetad}{\mux\muy\eta_x\eta_y}}=\bigol{\frac{\zetad^2 L\Lxy}{\mux\muy}+\zetad^3}.
\end{equation*}
If $\Lxy\le \mux,\muy$, we have that $\eta_x^{-1}=(1+9\xi)\mux$, $\eta_y^{-1}=(1+9\xi)\muy$ and
\begin{equation*}
   \bigol{\frac{\Lx}{\mux\muy\eta_y}+\frac{\Ly}{\mux\muy\eta_x}+\frac{\zetad}{\mux\muy\eta_x\eta_y}}= \bigol{\zetad(\kappa_x+\kappa_y)+\zetad^3}.
\end{equation*}
If $\muy\le \Lxy\le\mux$, we have $\eta_x^{-1}=(1+9\xi)\mux$,$\eta_y^{-1}=\Lxy+9\xi\muy$ and it is
\begin{equation*}
  \bigol{\frac{\Lx}{\mux\muy\eta_y}+\frac{\Ly}{\mux\muy\eta_x}+\frac{\zetad}{\mux\muy\eta_x\eta_y}}=\bigol{ \zetad(\kappa_x+\kappa_y)+ \frac{L\Lxy}{\mux\muy} + \frac{\zetad^2\Lxy}{\min \{\mux,\muy\}} +\zetad^3}.
\end{equation*}
All in all the worst case complexity is
\begin{equation*}
  \bigol{\frac{\Lx}{\mux\muy\eta_y}+\frac{\Ly}{\mux\muy\eta_x}+\frac{\zetad}{\mux\muy\eta_x\eta_y}}= \bigol{ \frac{\zetad^2 L\Lxy}{\mux\muy}+  \zetad(\kappa_x+\kappa_y) + \zetad^3}.
\end{equation*}
Hence
\begin{equation}\label{eq:t1t3t5}
\TOne\Tthree\Tfive=\bigotilde{\zetad^{9/2}\sqrt{\frac{\zetad L\Lxy}{\mux\muy}+ \kappa_x+\kappa_y + \zetad^2}}.
\end{equation}
Further, we have
\begin{equation*}
 \TOne\Tfour = \bigotildel{\zetad^3 \sqrt{\frac{\kappa_x+\zetad}{\mux\eta_x}}}.
\end{equation*}
It holds that
\begin{equation*}
\bigol{  \frac{\kappa_x+\zetad}{\mux\eta_x}}=\bigol{ \zetad^2+\kappa_x\zetad+ \frac{\Lx\Lxy}{\mux^2}+\frac{\zetad \Lxy}{\mux}},
\end{equation*}
and hence
\begin{equation}\label{eq:t1t4}
 \TOne\Tfour = \bigotilde{\zetad^3 \sqrt{ \zetad^2+\kappa_x\zetad+ \frac{\Lx\Lxy}{\mux^2}+\frac{\zetad \Lxy}{\mux}}}.
\end{equation}
Finally
\begin{equation}\label{eq:t2}
\Ttwo=\bigotilde{\zetad^{\frac{5}{2}}\sqrt{\zetad +\frac{\Lxy+\Ly}{\muy}}}.
\end{equation}
Using \cref{eq:t1t3t5,eq:t1t4,eq:t2}, we conclude that
\begin{equation*}
 T = \bigotilde{\zetad^{9/2}\sqrt{\frac{\zetad L\Lxy}{\mux\muy}+ \kappa_x+\kappa_y + \zetad^2}},
\end{equation*}
 where we used $\muy\le \mux$, which we recall that was assumed to hold without loss of generality. We note that the dependence on $\epsilon$ is $\log^3(\epsilon^{-1})$ and the $\log$ contains a polylog expression on $\D$ and the smoothness and strong convexity constants of $f$.
\end{proof}

\begin{lemma}[Guarantees of Lines \ref{line:loop1-start}-\ref{line:loop1-end}]\label{lem:loop1}
Running Lines \ref{line:loop1-start}-\ref{line:loop1-end} of \cref{alg:ramma} with $\TOne=\bigotilde{\sqrt{ \frac{\zetad}{\mux\eta_x}}}$ and $\Ttwo=\bigotilde{\zetad^{\frac{5}{2}}\sqrt{\zetad +\frac{\Lxy+\Ly}{\muy}}}$ ensures that $\text{gap}(\hat{x},\hat{y})\le \epsilon$.
\end{lemma}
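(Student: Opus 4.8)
The plan is to trace the two optimization calls in Lines \ref{line:riemacon_on_phi} and \ref{line:getting_distance_to_yast_outer_loop} of \cref{alg:ramma} through the convergence guarantees of $\riemacon$ and then glue their outputs together using \cref{proposition:from_one_opti_measure_to_another}. First, by \cref{lem:strong_cvxty_of_max} the function $\phi(x)=\max_{y\in\Y}f(x,y)$ is $\mux$-strongly g-convex, so Line \ref{line:riemacon_on_phi} is an application of \cref{alg:riemacon_sc_absolute_criterion} to a $\mux$-strongly g-convex function with proximal parameter $\eta_x$ and target accuracy $\epsilonone$; the proximal subproblems $\min_{x\in\X}\{\phi(x)+\frac{1}{2\eta_x}\dist^2(x_k,x)\}$ are solved to accuracy $\hatepsilonone$ by the subroutine in Lines \ref{line:first_line_intermediate_loop}--\ref{line:last_line_intermediate_loop} (\cref{lem:loop2}). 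Because $\eta_x=(9\xi\mux+\max\{\Lxy,\mux\})^{-1}$ forces the internal constant $\mu$ of \cref{alg:riemacon_sc_absolute_criterion} to equal $\mux$ and gives $1/(\eta_x\mux)=9\xi+\max\{\Lxy,\mux\}/\mux\ge 9\xi$, \cref{thm:riemaconsc} applied with $\lambda=\eta_x$, $\bar\mu=\mux$ shows that $\TOne=\bigotilde{\sqrt{\xi/(\eta_x\mux)}}=\bigotilde{\sqrt{\zetad/(\mux\eta_x)}}$ iterations suffice (the additive $\bigotilde{\zetad}$ term is absorbed since $1/(\eta_x\mux)\ge 9\xi\ge\zetad$), after which $\hat x$ satisfies $\gapx[\hat x]=\phi(\hat x)-\phi(\xast)\le\epsilonone$.

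Second, Line \ref{line:getting_distance_to_yast_outer_loop} applies $\riemacon$ directly to $y\mapsto -f(\hat x,y)$, which is $\muy$-strongly g-convex and $\Ly$-smooth, with proximal parameter $\lambda_y=(\max\{\Ly,\Lxy\}+9\xi\muy)^{-1}$ and \PRGD{} as subroutine. Using $\nabla f(\xast,\yast)=0$ and $\operatorname{diam}(\X\times\Y)\le D$ to bound $\Lips(f(\hat x,\cdot),\Y)\le\|\nabla_yf(\xast,\yast)\|+\Lxy\dist(\hat x,\xast)+\Ly\dist(\cdot,\yast)=O(\L D)$, hence $\zeta[R]=\bigotilde{\zetad}$, \cref{cor:smooth-riemaconsc} (equivalently \cref{thm:riemaconsc} combined with the \PRGD{} bound of \cref{lem:prgd}, noting that the choice of $\lambda_y$ makes the effective conditioning $O((\Lxy+\Ly)/\muy+\zetad)$) shows that $\Ttwo=\bigotilde{\zetad^{5/2}\sqrt{\zetad+(\Lxy+\Ly)/\muy}}$ gradient and projection oracle calls produce an $\epsilontwo$-minimizer $\hat y$ of $-f(\hat x,\cdot)$, i.e.\ $\max_{y\in\Y}f(\hat x,y)-f(\hat x,\hat y)\le\epsilontwo$.

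Third, we combine the two outputs. Interchanging the roles of the variables (legitimate since $\Lxy$ is the common cross-Lipschitz constant of \cref{ass:fct}), the pair $(\hat x,\hat y)$ is exactly in the setting of \cref{proposition:from_one_opti_measure_to_another}.\ref{item:one_gap_to_dist}: $\gapx[\hat x]\le\epsilonone$ is the known one-variable gap and $\hat y$ is the $\epsilontwo$-minimizer of the remaining one-variable problem $\min_{y\in\Y}\{-f(\hat x,y)\}$. This yields
\[
\dist^2(\hat x,\xast)+\dist^2(\hat y,\yast)\le\frac{4\epsilontwo}{\mux}+\frac{2\epsilonone}{\muy}\Bigl(\frac{2\Lxy^2}{\mux^2}+1\Bigr).
\]
Finally, \cref{proposition:from_one_opti_measure_to_another}.\ref{item:dist_to_gap_lip}, using again $\Lips(f(\cdot,\hat y),\X),\Lips(f(\hat x,\cdot),\Y)=O(\L D)$, converts this into $\gap[\hat x,\hat y]\le O\bigl(\L D(1+\Lxy/\muy)\bigr)\bigl(\dist(\hat x,\xast)+\dist(\hat y,\yast)\bigr)$; substituting the values of $\epsilonone$ and $\epsilontwo$ from \cref{table:ramma-params} makes the right-hand side at most $\epsilon$, and since $\epsilonone,\epsilontwo$ are only polynomially small in the problem data their effect on $\TOne,\Ttwo$ is a $\log$ factor absorbed into the $\bigotilde{\cdot}$. (By \cref{rem:SP-ass} the hypothesis $\nabla f(\xast,\yast)=0$ can be weakened to $\dist^2(x_0,\xastg)+\dist^2(y_0,\yastg)\le D^2$ throughout.)

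The step I expect to be the main obstacle is the bookkeeping of the nested accuracy requirements: the ``one gap to one dist'' reduction inflates the target error by a factor $\Lxy^2/\mux^2$ and the subsequent ``dist to gap'' reduction by another factor $\L D(1+\Lxy/\muy)$, so $\epsilonone$ and $\epsilontwo$ must be chosen small enough to cancel both of these while keeping the logarithmic dependence on $1/\epsilon$ (and on $D$ and the smoothness/strong-convexity constants) from enlarging the stated counts $\TOne,\Ttwo$ beyond $\bigotilde{\cdot}$; a secondary point is checking that the precise choices of $\eta_x$ and $\lambda_y$ pin the internal regularity constants of \cref{alg:riemacon_sc_absolute_criterion} exactly as needed for those counts to come out in the claimed form.
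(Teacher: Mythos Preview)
Your approach is essentially the paper's: apply \cref{thm:riemaconsc} to $\phi$ for $\TOne$, apply \cref{cor:smooth-riemaconsc} to $-f(\hat x,\cdot)$ for $\Ttwo$, then chain \cref{proposition:from_one_opti_measure_to_another}.\ref{item:one_gap_to_dist} and \ref{item:dist_to_gap_lip} to obtain the duality gap. The complexity bookkeeping for $\TOne$ and $\Ttwo$ is correct.

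There is one slip in the third step. You correctly note that the roles of $x$ and $y$ must be interchanged when invoking \cref{proposition:from_one_opti_measure_to_another}.\ref{item:one_gap_to_dist} (since here the known one-variable gap is $\gapx[\hat x]$ rather than $\gapy[\bar y]$), but then you do not actually swap the strong-convexity constants in the conclusion. After swapping, the bound reads
\[
\dist^2(\hat x,\xast)+\dist^2(\hat y,\yast)\;\le\;\frac{4\epsilontwo}{\muy}+\frac{2\epsilonone}{\mux}\Bigl(\frac{2\Lxy^2}{\muy^2}+1\Bigr),
\]
not with $\mux,\muy$ exchanged as you wrote. This is exactly the inequality the paper records (and what the values of $\epsilonone,\epsilontwo$ in \cref{table:ramma-params} are designed to collapse to $\epsilon$). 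With your displayed formula the substitution from \cref{table:ramma-params} would not yield the claimed bound. The fix is purely notational and does not affect the complexity claims, since $\epsilonone,\epsilontwo$ remain polynomial in the data either way and only enter through logarithms.
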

\begin{proof}
We show the lemma by first finding sufficient error criteria $\epsilonone$ and $\epsilontwo$ for obtaining $\text{gap}(\hat{x},\hat{y})\le \epsilon$ and then we compute the number of iterations $\TOne$ and $\Ttwo$  required to achieve these error criteria.
\paragraph{Error criterion}
Let $f_x\defi f(\cdot,y)$ and $f_y=f(x,\cdot)$, then using \cref{item:dist_to_gap_lip} of \cref{proposition:from_one_opti_measure_to_another}, we have that
\begin{equation}\label{eq:gap-loop1}
  \begin{aligned}
    \text{gap}(\hat{x},\hat{y})&\le \dist(\hat{x},x^{*}) (\Lips(f_x)+ \frac{\Lxy}{\muy}\Lips(f_y)) + \dist(\hat{y},y^{*}) (\Lips(f_y)+ \frac{\Lxy}{\mux}\Lips(f_x))\\
    &\le C(\dist(\hat{x},x^{*}) +\dist(\hat{y},y^{*}))
  \end{aligned}
\end{equation}
where $\Lips(f_x)$ and $\Lips(f_y)$ denote the Lipschitz constant of $f_x$ and $f_y$ respectively and
\begin{equation*}
  C=\max\{\Lips(f_x)+ \frac{\Lxy}{\muy}\Lips(f_y), \Lips(f_y)+ \frac{\Lxy}{\mux}\Lips(f_x)\}.
\end{equation*}
Recall that by assumption, we have that $\nabla_xf(\xastg,\yastg)=\nabla_yf(\xastg,\yastg)=0$.
We leverage this fact in order to bound the Lipschitz constants.
We have for some $x\in \mathcal{X}$
\begin{equation}\label{eq:grad_norm_y}
\begin{aligned}
\Lips(f_y)= \max_{y\in \Y}  \norm{\nabla_yf_y(y)}&=\max_{y\in  \Y}\norm{\nabla_yf(x,y)\pm \nabla_yf(\xastg,y )-\Gamma{\yastg}{y}\nabla_yf(\xastg,\yastg)}\\
  &\le \max_{y\in \mathcal{Y}}\Ly \dist(\yastg,y)+ \Lxy \dist(x,\xastg)\le D(\Ly+\Lxy)
\end{aligned}
\end{equation}
and similarly
\begin{equation}\label{eq:grad_norm_x}
\Lips(f_y)= \max_{x\in \X}  \norm{\nabla_xf_x(x)}\le D( \Lx+\Lxy).
\end{equation}
    Since $\hat{x}$ is an $\epsilonone$-minimizer of the problem $\min_{x\in \X}\phi(x)$ and $\hat{y}$ is an \(\epsilontwo\)-minimizer of the problem $\min_{y\in \Y}-f(\hat{x},y)$,  \cref{item:one_gap_to_dist} of \cref{proposition:from_one_opti_measure_to_another} implies that
  \begin{equation}\label{eq:dist-loop1}
  \dist^2(\hat{x},x^{*})+ \dist^2(\hat{y},y^{*})\le \frac{4\epsilontwo}{\muy}+\frac{2\epsilonone}{\mux} \left( \frac{2\Lxy^2}{\muy^2}+1 \right).
\end{equation}
Using \cref{eq:dist-loop1}, we obtain
\begin{equation*}
   \text{gap}(\hat{x},\hat{y})\le C\left( \frac{4\epsilontwo}{\muy}+\frac{2\epsilonone}{\mux} \left( \frac{2\Lxy^2}{\muy^2}+1 \right)\right).
 \end{equation*}
It suffices to choose
\begin{equation}\label{eq:eps12}
  \epsilonone\le \frac{\epsilon\mux}{4C}\left( \frac{2\Lxy^2}{\muy^2}+1 \right)^{-1}, \quad \epsilontwo\le \frac{\muy\epsilon}{8C},
\end{equation}
in order to ensure that $\text{gap}(\hat{x},\hat{y})\le \epsilon$.
\paragraph{Complexity}
By \cref{thm:riemaconsc}, computing $\hat{x}$ with $\riemacon$ takes, by the choice of the corresponding $\epsilonone$, computed in \cref{eq:eps12}:
\begin{equation*}
 \TOne= \bigotildel{\sqrt{ \frac{\zetad}{\mux\eta_x}}}.
\end{equation*}
Using \cref{thm:riemaconsc} and by definition of $\lambda_y$, we have that running $\riemacon$ in Line \ref{line:getting_distance_to_yast_outer_loop} requires
\begin{equation*}
 \Ttwo''=\bigotildel{\sqrt{\zetad\frac{\Lxy+\Ly}{\muy} + \zetad^2}}
\end{equation*}
iterations.
Further, computing a $\sigma$-minimizer of $\min_{y \in \Y} \hat{F}_y(y)$, where $\hat{F}_y(y)\defi -f(\hat{x},y)+\frac{1}{2\lambda_y}\dist^2(y,\bar{y})$ using \PRGD{} costs $ \Ttwo'=\bigotilde{\tilde{\kappa}\zetad_{R}}$, where
\begin{equation*}
  \tilde{\kappa}\defi\frac{\Ly}{\muy+\lambda_y^{-1}} +\frac{\zetad/\lambda_y}{\muy+\lambda_y^{-1}}\le 1+\zetad
\end{equation*}
is the condition number of $\hat{F}_y$.
Note we used $\lambda_y^{-1} = (\max \{\Lxy,\Ly\} + 9\xi\mu_y) \geq \Ly$.
We now show that $R\le 2D$ in order to bound $\zetad_{R}\le \zeta[2D] = \bigo{\zetad}$.
We bound $R\le \Lips(\hat{F}_y)/(\Ly+\lambda_y^{-1})$, where $\Lips(\hat{F}_y)$ is the Lipschitz constant of $\hat{F}_y(y)$ for $y\in \Y$ and $\bar{y}\in \Y$.
Note that $\nabla_yf(\xastg,\yastg)=0$ by assumption.
Hence, for all $x \in \X$
\begin{align*}
 \Lips(\hat{F}_y)&\le\max_{y\in \Y}\norm{\nabla_y\hat{F}(y)}\\
                           &=\max_{y\in \Y}  \norm{-\nabla_yf(\hat{x},y)-\lambda_y^{-1}\log_y(\bar{y})+\Gamma{\yastg}{y}\nabla_yf(\xastg,\yastg)}\\
                      &\le \max_{y\in \Y} \norm{-\nabla_yf(\hat{x},y)\pm\nabla_yf(\xastg,y)+\Gamma{\yastg}{y}\nabla_yf(\xastg,\yastg)} + \max_{y\in \Y}\lambda_y^{-1}\dist(y,\bar{y})\\
                &\le( \Ly+ \Lxy+\lambda_{y}^{-1})\D.
\end{align*}
And thus it holds that $R\le 2D$.
Therefore, the total complexity of computing $\hat{y}$ is
\begin{equation*}
  \Ttwo=\Ttwo'\Ttwo'' = \bigotildel{\zetad^{\frac{5}{2}}\sqrt{\zetad +\frac{\Lxy+\Ly}{\muy}}}.
\end{equation*}
\end{proof}

\begin{lemma}[Guarantees of Lines \ref{line:riemacon_on_psi}-\ref{line:getting_distance_to_xkast_second_loop}]\label{lem:loop2}
  Running Lines \ref{line:riemacon_on_psi}-\ref{line:getting_distance_to_xkast_second_loop} of \cref{alg:ramma} with  $\Tthree=\bigotildel{\sqrt{\frac{\zetad}{\muy\eta_y}}}$ and $\Tfour=\bigotilde{\zetad^{\frac{5}{2}}\sqrt{\kappa_x+\zetad}}$ ensures that $\text{gap}_k(\tilde{x}_k)\le \hatepsilonone$.
\end{lemma}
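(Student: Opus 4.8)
Here is how I would attack \cref{lem:loop2}.

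The plan is to read Lines \ref{line:riemacon_on_psi}--\ref{line:getting_distance_to_xkast_second_loop} as an approximate solution of the regularized min-max problem $\min_{x\in\X}\max_{y\in\Y}G_k(x,y)$ with $G_k(x,y)\defi f(x,y)+\frac{1}{2\eta_x}\dist^2(x_k,x)$, and then to move between optimality criteria using \cref{proposition:from_one_opti_measure_to_another}. First I would record the problem constants: by \cref{fact:hessian_of_riemannian_squared_distance} the term $\frac{1}{2\eta_x}\dist^2(x_k,\cdot)$ is $\eta_x^{-1}$-strongly g-convex and $\zetad\eta_x^{-1}$-smooth on $\X$, so $G_k$ satisfies \cref{ass:fct} with $\bar{\mu}_x=\mux+\eta_x^{-1}$, $\bar{\mu}_y=\muy$, $\bar{L}_x=\Lx+\zetad\eta_x^{-1}$, $\bar{L}_y=\Ly$, $\bar{L}_{xy}=\Lxy$; in particular \cref{thm:sion} furnishes a saddle point $(x_{k+1}^\ast,y^\ast(x_{k+1}^\ast))$, and by definition of $\phi$ one has $\text{gap}_k(\tilde x_k)=\gapx[\tilde x_k]$ (for $G_k$) $=\phi(\tilde x_k)+\frac{1}{2\eta_x}\dist^2(x_k,\tilde x_k)-\min_{x\in\X}\{\phi(x)+\frac{1}{2\eta_x}\dist^2(x_k,x)\}$, i.e.\ exactly the quantity that the subroutine for Line \ref{line:riemacon_on_phi} must reduce below $\hatepsilonone$.

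Next I would treat the $y$-step. Observe that the objective passed to $\riemacon$ in Line \ref{line:riemacon_on_psi} is $\psi(y)=\max_{x\in\X}\{-f(x,y)-\frac{1}{2\eta_x}\dist^2(x_k,x)\}=-\min_{x\in\X}G_k(x,y)$, which is $\muy$-strongly g-convex by \cref{lem:strong_cvxty_of_max}, has minimizer $y^\ast(x_{k+1}^\ast)$, and satisfies $\psi(y)-\min_{\Y}\psi=\gapy[y]$ for $G_k$. Hence running $\riemacon$ on $\psi$ with proximal parameter $\eta_y$ for $\Tthree$ iterations returns $\tilde y_k$ with $\gapy[\tilde y_k]\le\epsilonthree$; here the inner proximal problems $\min_{y}\{\psi(y)+\frac{1}{2\eta_y}\dist^2(y_\ell,y)\}$ are solved to accuracy $\hatepsilonthree$ by \RABR{} (Lines \ref{line:rabr_innermost_loop}--\ref{line:loop3-end}, analyzed in \cref{lem:loop3}), and $\Tthree=\bigotildel{\sqrt{\zetad/(\muy\eta_y)}}$ follows from \cref{thm:riemaconsc} using that $\eta_y^{-1}\ge 9\xi\muy$ (so $\max\{1/(\eta_y\muy),9\xi\}=1/(\eta_y\muy)$ and the additive $\zetad$ is absorbed) and that $\log(1/\epsilonthree)$ is polylogarithmic in the data. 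Then $\tilde x_k$ is an $\epsilonfour$-minimizer of the $\bar{\mu}_x$-strongly g-convex, $\bar{L}_x$-smooth function $x\mapsto G_k(x,\tilde y_k)$, produced by $\riemacon$ with $\widehat\lambda$ (within an $\bigo{\xi}$ factor of $1/\bar{L}_x$) and \PRGD{} as subroutine; \cref{cor:smooth-riemaconsc} then gives $\Tfour=\bigotilde{\zetad^{5/2}\sqrt{\kappa_x+\zetad}}$ after substituting $\bar{\mu}_x,\bar{L}_x,\widehat\lambda$, bounding $\bar{L}_x/\bar{\mu}_x\le\kappa_x+\zetad$, using $\xi=\bigo{\zetad}$, and checking $R\le 2\D$ from $\norm{\nabla G_k(x_0,\tilde y_k)}\le \bar{L}_x\D$ (which uses $\nabla f(\xastg,\yastg)=0$, cf.\ \cref{rem:SP-ass}).

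To combine the two estimates I would invoke \cref{proposition:from_one_opti_measure_to_another}. \cref{item:one_gap_to_dist} applied to $g=G_k$, $\bar y=\tilde y_k$, $\bar x'=\tilde x_k$ gives
\[
  \dist^2(\tilde x_k,x_{k+1}^\ast)+\dist^2(\tilde y_k,y^\ast)\ \le\ \frac{4\epsilonfour}{\bar{\mu}_x}+\frac{2\epsilonthree}{\muy}\Bigl(\frac{2\Lxy^2}{\bar{\mu}_x^{2}}+1\Bigr),
\]
and \cref{item:dist_to_gap_lip}, together with \cref{item:full_gap_to_individual_gap}, gives $\text{gap}_k(\tilde x_k)\le\gap[\tilde x_k,\tilde y_k]\le C_k\bigl(\dist(\tilde x_k,x_{k+1}^\ast)+\dist(\tilde y_k,y^\ast)\bigr)$, where $C_k$ collects the Lipschitz constants $\bar{L}_p^x,\bar{L}_p^y$ of $x\mapsto G_k(x,\tilde y_k)$ on $\X$ and $y\mapsto G_k(\tilde x_k,y)$ on $\Y$; these are at most $\D(\Lx+\Lxy+\eta_x^{-1})$ and $\D(\Ly+\Lxy)$ respectively, using $\nabla f(\xastg,\yastg)=0$ and $\norm{\nabla\tfrac{1}{2\eta_x}\dist^2(x_k,\cdot)}\le \D/\eta_x$. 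Plugging in the values of $\epsilonthree$ and $\epsilonfour$ from \cref{table:ramma-params} (picked precisely so the right-hand side above is at most $\epsilonone^2(\mux\eta_x)^3/(16C_k^2\xi)$) and using $\dist(\tilde x_k,x_{k+1}^\ast)+\dist(\tilde y_k,y^\ast)\le\sqrt{2}\,(\dist^2(\tilde x_k,x_{k+1}^\ast)+\dist^2(\tilde y_k,y^\ast))^{1/2}$ yields $\text{gap}_k(\tilde x_k)\le\epsilonone(\mux\eta_x)^{3/2}/(4\sqrt{\xi})=\hatepsilonone$, as claimed.

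The main obstacle is the bookkeeping of the geometric constant $\zetad$ and the regularization parameter $\eta_x$ across this chain of reductions: one must ensure the strong-convexity constant $\bar{\mu}_x=\mux+\eta_x^{-1}$ is large enough that the ``One Gap to Dist'' step on the $x$-variable (Line \ref{line:getting_distance_to_xkast_second_loop}) costs only $\bigotilde{\zetad^{5/2}\sqrt{\kappa_x+\zetad}}$ rather than something scaling with $\eta_x^{-1}/\mux$ --- which is exactly why a linearly convergent \emph{constrained} strongly g-convex subroutine, namely \PRGD{} (\cref{lem:prgd}), is needed here --- while simultaneously the interplay of $\epsilonthree$, $\epsilonfour$, the data-dependent constant $C_k$ and $\hatepsilonone$ closes without a polynomial blow-up. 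The secondary technical point is keeping every $\zeta[R]$ factor arising in the \PRGD{} calls bounded by $\bigo{\zetad}$ by verifying $R\le 2\D$ each time.
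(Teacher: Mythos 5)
Your proposal follows the same route as the paper's proof: view the step as producing approximate optimizers for the regularized min-max objective $G_k(x,y)=f(x,y)+\tfrac{1}{2\eta_x}\dist^2(x_k,x)$, identify $\tilde y_k$ as an $\epsilonthree$-minimizer of $\psi$ (equivalently, $\gapy[\tilde y_k]\le\epsilonthree$) and $\tilde x_k$ as an $\epsilonfour$-minimizer in $x$, apply \cref{proposition:from_one_opti_measure_to_another} (items 1, 3, 4) with the same Lipschitz-constant bounds $\Lips(G_x)\le D(\Lx+\Lxy+\eta_x^{-1})$, $\Lips(G_y)\le D(\Ly+\Lxy)$ to convert to $\text{gap}_k(\tilde x_k)$, and read off $\Tthree$ from \cref{thm:riemaconsc} with $\lambda=\eta_y$ and $\Tfour$ from \cref{cor:smooth-riemaconsc} after bounding the composite condition number by $\kappa_x+\zetad$ and $R\le 2\D$. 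This matches the paper's argument essentially line for line, including the choices of $\epsilonthree,\epsilonfour$ from \cref{table:ramma-params}, so the proposal is correct and not a different approach.
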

\begin{proof}
We show the lemma by first finding sufficient error criteria $\epsilonthree$ and $\epsilonfour$ for obtaining  $\text{gap}_k(\tilde{x}_k)\le \hatepsilonone$ and then computing the number of iterations $\Tthree$ an $\Tfour$  required to achieve these error criteria.
\paragraph{Error criterion}
  Let $G_x(x)\defi f(x,y)+\frac{1}{2\eta_x}\dist^2(x_k,x)$ and $G_y(y)\defi f(x,y)+\frac{1}{2\eta_x}\dist^2(x_k,x)$, then we have
\begin{align}\label{eq:aux:gap_bounding}
 \begin{aligned}
   \text{gap}_k(\tilde{x}_k)&\circled{1}[\le]\text{gap}_k(\tilde{x}_k,\tilde{y}_{k})\\
                            &\circled{2}[\le] \dist(\hat{x},x^{*}_k) \left(  \Lips(G_x)+\frac{\Lxy}{\mux}\Lips(G_y)\right)+ \dist(\hat{y},y^{*}_k) \left( \Lips(G_x)\frac{\Lxy}{\muy}+\Lips(G_{y}) \right)\\
                            & \circled{3}[\le] C_k (\dist(\hat{x},x^{*}_k)+\dist(\hat{y},y^{*}_k)) \\
                            &\circled{4}[\le] C_k\sqrt{2(\dist^2(\hat{x},x^{*}_k)+\dist^2(\hat{y},y^{*}_k))}.
   \end{aligned}
 \end{align}
    Here $\circled{1}$ and $\circled{2}$ hold by \cref{item:full_gap_to_individual_gap,item:dist_to_gap_lip} in \cref{proposition:from_one_opti_measure_to_another}, respectively, and $\circled{3} $ holds with
 \begin{equation*}
  C_k=\max \left\{ \Lips(G_x)+\frac{\Lxy}{\mux}\Lips(G_y),\Lips(G_x)\frac{\Lxy}{\muy}+\Lips(G_{y}),  \right\}.
\end{equation*}
Finally, $\circled{4}$ follows from $a+b\le \sqrt{2(a^2+b^2)}$.
 We bound the Lipschitz constant of $G_x$ by bounding the following, for all $x\in \X$
 \begin{align*}
   \begin{aligned}
       \norm{\nabla_x F(x,y)} =  \norm{\nabla_x f(x,y) \pm \nabla_x f(x,\hat{y}^\ast) -\frac{1}{\eta_x}\log_x(x_k)-\Gamma{\xastg}{x}\nabla_{x}f(\xastg,\yastg)}\le D(\Lx+\Lxy+\eta_x^{-1}),
   \end{aligned}
\end{align*}
    and thus $\Lips(G_x)\le D(\Lx+\Lxy+\eta_x^{-1})$. Similarly, we obtain $\Lips(G_y)\le D(\Ly+\Lxy)$. Since $\tilde{y}_k$ is an $\epsilonthree$-minimizer of the problem $\min_{y\in \Y}\psi(y)$ and $\tilde{x}_k$ is an $\epsilonfour$-minimizer of the problem $\min_{x\in \X}\{ f(x,\tilde{y}_k) +\frac{1}{2\eta_x}\dist^2(x_k,x)\}$, \cref{item:one_gap_to_dist} in \cref{proposition:from_one_opti_measure_to_another} implies that
  \begin{equation}\label{eq:loop2-dist}
 \dist^{2}(\tilde{x}_k,x_k^{*})+\dist^2(\tilde{y}_k,y_k^{*})\le \frac{4\epsilonfour}{\mux+\eta_x^{-1}}+\frac{2\epsilonthree}{\muy}\left( \frac{2\Lxy^2}{(\mux+\eta_x^{-1})^2}+1 \right).
  \end{equation}
    Then, using \cref{eq:loop2-dist} and \cref{eq:aux:gap_bounding}, we have that
 \begin{equation*}
 \text{gap}_k(\tilde{x}_k)\le  \sqrt{2}C_k\sqrt{\frac{4\epsilonfour}{\mux+\eta_x^{-1}}+\frac{2\epsilonthree}{\muy}\left( \frac{2\Lxy^2}{(\mux+\eta_x^{-1})^2}+1 \right)}.
 \end{equation*}
 We have that $\hatepsilonone=\frac{\epsilonone(\eta_x\mux)^{\frac{3}{2}}}{4 \sqrt{\xi}}$
 Hence, choosing
 \begin{equation}\label{eq:eps34}
  \epsilonthree=\frac{\muy\epsilonone^2(\mux\eta_x)^3}{64C_k^2\xi \left( \frac{2\Lxy^2}{\mux+\eta_x^{-1}}+1 \right)},\quad \epsilonfour  = \frac{(\mux+\eta_x^{-1})(\mux\eta_x)^3\epsilonone^2}{128C_k^2\xi}.
 \end{equation}
 suffices to satisfy $\text{gap}_k(\tilde{x}_k)\le \hatepsilonone$.
\paragraph{Complexity}
    By \cref{thm:riemaconsc}, and using $\epsilonthree$ computed in \cref{eq:eps34}, we have that computing $\tilde{y}_k$ takes
 \begin{equation*}
  \Tthree=\bigotildel{\sqrt{\frac{\zetad}{\muy\eta_y}}}
 \end{equation*}
 iterations.
 Further, by \cref{cor:smooth-riemaconsc}, we have
 \begin{equation*}
  \Tfour = \bigotilde{\zetad_{R}\zetad^{\frac{3}{2}}\sqrt{\tilde{\kappa}_x+\zetad}},
 \end{equation*}
 where $\tilde{\kappa}_x$ is the condition number of $G_x(x)$.
 Further, let $\hat{F}_x(x)\defi f(x,\hat{y})+\frac{1}{2\eta_x}\dist^2(x_k,x)+\frac{1}{2\lambda_x}\dist^2(x,\bar{x})$, where $\lambda_x=(\Lx+\zetad\eta_x^{-1}+9\xi\mux)$.
  We bound the Lipschitz constant $\Lips(\hat{F}_x)$ for all $x\in \X$ by bounding
  \begin{equation}
    \begin{aligned}
        \norm{\nabla\hat{F}_x(x)} &\leq\norm{\nabla_xf(x,\hat{y}) -\eta_x^{-1}\log_x(x_k)-\lambda_x^{-1}\log_x(\bar{x})-\Gamma{\xastg}{x}\nabla_xf(\xastg,\yastg)}\\
      & \le D(\Lx+\Lxy+\eta_x^{-1}+ \lambda_x^{-1}).
    \end{aligned}
  \end{equation}
Thus $\Lips(\hat{F}_x)\le D(\Lx+\Lxy+\eta_x^{-1}\lambda_x^{-1})$.
  Hence,
  \begin{equation*}
   R  = \frac{\Lips(\hat{F}_x)}{\Lx+\zetad\eta_x^{-1}+\zetad\lambda_x^{-1}}\le \frac{\dist	(\Lx+\Lxy+\eta_x^{-1}+\lambda_x^{-1})}{\Lx+\zetad\eta_x^{-1}+\zetad\lambda_x^{-1}}\le 2D.
  \end{equation*}
  And so $\zeta[R] \leq \zeta[2D] = \bigo{\zetad}$. Further, the condition number of $G_x(x)$ can be bounded by
  \begin{equation*}
  \tilde{\kappa}_x= \frac{\Lx}{\mux+\eta_x^{-1}+\lambda_x^{-1}} + \frac{\zetad(\eta_x^{-1}+\lambda_x^{-1})}{\mux+\eta_x^{-1}+\lambda_x^{-1}}\le 1+  \zetad .
\end{equation*}
Finally, we obtain
 \begin{equation*}
  \Tfour = \bigotilde{\zetad^{5/2}\sqrt{\kappa_x+\zetad}}.
 \end{equation*}
\end{proof}

\begin{lemma}[Guarantees of Lines \ref{line:rabr_innermost_loop}-\ref{line:loop3-end}]\label{lem:loop3}
    Let $\operatorname{gap}_{\ell}$ refers to the gap of the problem $\min \max\{ f(x, y) + \frac{1}{2\eta_x}\dist^2(x_k, x)-\frac{1}{2\eta_y} \dist^2(y_\ell, y) \}$. Running Lines \ref{line:rabr_innermost_loop}-\ref{line:loop3-end} of \cref{alg:ramma} with $\Tfive=\bigotilde{\zetad^{3} \sqrt{\Lx\eta_x+\Ly\eta_y+\zetad} }$ ensures that $\gap[\bar{y}_{\ell}][\ell]\leq  \hatepsilonthree$.
\end{lemma}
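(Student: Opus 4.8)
The plan is to recognize Line~\ref{line:rabr_innermost_loop} as an instance of \cref{thm:crabr} applied to the regularized objective
$$g_\ell(x,y) \defi f(x,y) + \frac{1}{2\eta_x}\dist^2(x_k, x) - \frac{1}{2\eta_y}\dist^2(y_\ell, y),$$
and then to turn the distance guarantee that \RABR{} outputs into the bound $\gap[\bar{y}_\ell][\ell]\le\hatepsilonthree$, where $\gap[\cdot][\ell]$ denotes the duality gap of the min-max problem $\min_x\max_y g_\ell(x,y)$ and $\gap[\bar{y}_\ell][\ell]$ its $y$-part.

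First I would record that $g_\ell$ satisfies \cref{ass:fct} on $\X\times\Y$ with shifted parameters. Since $\M,\NN$ are Hadamard, \cref{fact:hessian_of_riemannian_squared_distance} shows that $z\mapsto\frac12\dist^2(p,z)$ is $1$-strongly g-convex and $\zetad$-smooth; hence $g_\ell(\cdot,y)$ is $(\mux+\eta_x^{-1})$-strongly g-convex, $-g_\ell(x,\cdot)$ is $(\muy+\eta_y^{-1})$-strongly g-convex, $g_\ell$ is $(\Lx+\zetad\eta_x^{-1})$- and $(\Ly+\zetad\eta_y^{-1})$-smooth in the two blocks, and its cross-Lipschitz constant stays $\Lxy$ because the two quadratic regularizers do not couple $x$ and $y$. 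In particular $g_\ell$ is \SCSC{}, so by \cref{thm:sion} it has a saddle point $(x_\ell^\ast,y_\ell^\ast)\in\X\times\Y$ and $\gap[\cdot][\ell]$ is well defined.

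The key step is verifying the weak-interaction hypothesis of \cref{thm:crabr} for $g_\ell$, namely $\Lxy<\frac12\sqrt{(\mux+\eta_x^{-1})(\muy+\eta_y^{-1})}$. Using $\eta_x^{-1}=9\xi\mux+\max\{\Lxy,\mux\}$ and $\eta_y^{-1}=9\xi\muy+\max\{\Lxy,\muy\}$, one checks (distinguishing whether $\Lxy$ dominates $\mux,\muy$) that $\Lxy\le(4\eta_x\eta_y)^{-1/2}\le\frac12\sqrt{(\mux+\eta_x^{-1})(\muy+\eta_y^{-1})}$; I expect this to be the most delicate part, since the learning rates $\eta_x,\eta_y$ were chosen precisely to make this hold. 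Granting it, \cref{thm:crabr} applied to $g_\ell$ and run for $\Tfive$ iterations yields $\dist^2(\bar{x}_\ell,x_\ell^\ast)+\dist^2(\bar{y}_\ell,y_\ell^\ast)\le\epsilonfive$. Its complexity $\bigotilde{\zeta[R]\zetad^2\sqrt{\bar\kappa_x+\bar\kappa_y}}$ becomes $\Tfive=\bigotilde{\zetad^3\sqrt{\Lx\eta_x+\Ly\eta_y+\zetad}}$ once one uses $\bar\kappa_x=(\Lx+\zetad\eta_x^{-1})/(\mux+\eta_x^{-1})\le\Lx\eta_x+\zetad$, likewise $\bar\kappa_y\le\Ly\eta_y+\zetad$, and $\zeta[R]=\bigo{\zetad}$; the bound $R\le2\D$ follows by estimating the Lipschitz constants of the proximal objectives built inside \RABR{}, adding and subtracting $\nabla f(\xastg,\yastg)=0$ and invoking smoothness exactly as in the proofs of \cref{lem:loop1,lem:loop2}.

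It remains to convert distance into gap. By \cref{item:full_gap_to_individual_gap} of \cref{proposition:from_one_opti_measure_to_another}, $\gap[\bar{y}_\ell][\ell]\le\gap[\bar{x}_\ell,\bar{y}_\ell][\ell]$, and by \cref{item:dist_to_gap_lip} of the same lemma, $\gap[\bar{x}_\ell,\bar{y}_\ell][\ell]\le C_\ell\,(\dist(x_\ell^\ast,\bar{x}_\ell)+\dist(y_\ell^\ast,\bar{y}_\ell))$, where $C_\ell$ collects $\Lips(g_\ell(\cdot,\bar{y}_\ell))$, $\Lips(g_\ell(\bar{x}_\ell,\cdot))$ and the factors $\Lxy/(\mux+\eta_x^{-1})$, $\Lxy/(\muy+\eta_y^{-1})$; the Lipschitz constants are again $\bigo{\D}$ times smoothness quantities by the zero-gradient-saddle estimate. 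Combining this with $\dist(x_\ell^\ast,\bar{x}_\ell)+\dist(y_\ell^\ast,\bar{y}_\ell)\le\sqrt{2\epsilonfive}$ and the choice $\epsilonfive=\epsilonthree^2(\muy\eta_y)/(32\xi C_\ell^2)$ gives $\gap[\bar{y}_\ell][\ell]\le\hatepsilonthree$, recalling $\hatepsilonthree=\epsilonthree(\eta_y\muy)^{-3/2}/(8\sqrt{\xi})$; the extra powers of $\eta_y\muy$ and $\xi$ arise from the internal rescaling performed by $\riemacon$ in Line~\ref{line:riemacon_on_psi}. Since the number of \RABR{} iterations needed to reach accuracy $\epsilonfive$ enters only through a logarithm, $\Tfive$ as stated suffices, which completes the plan.
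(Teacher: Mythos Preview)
Your proposal is correct and follows essentially the same route as the paper: apply \RABR{} (\cref{thm:crabr}) to the doubly regularized objective, bound the condition numbers and the Lipschitz radius $R$ to get $\Tfive=\bigotilde{\zetad^{3}\sqrt{\Lx\eta_x+\Ly\eta_y+\zetad}}$, and then convert the resulting distance bound into a gap bound via \cref{proposition:from_one_opti_measure_to_another} \cref{item:full_gap_to_individual_gap,item:dist_to_gap_lip} together with the choice of $\epsilonfive$. Two minor cosmetic differences: you make explicit the verification of the weak-interaction hypothesis $\Lxy<\tfrac12\sqrt{(\mux+\eta_x^{-1})(\muy+\eta_y^{-1})}$, which the paper only states in the main text; and you use the tighter strong-convexity constants $\mux+\eta_x^{-1}$, $\muy+\eta_y^{-1}$ in $C_\ell$, whereas the paper simply uses $\mux,\muy$---either choice suffices.
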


\begin{proof}
    We show the lemma by first finding a sufficient error criterion $\epsilonfive$ for obtaining $\gap[\bar{x}_{\ell}, \bar{y}_{\ell}][\ell]\le \hatepsilonthree$, and then we computing the number of iterations $\Tfive$ required to achieve it. This bound implies the result since by \cref{item:full_gap_to_individual_gap} of \cref{proposition:from_one_opti_measure_to_another}, it is  $\gap[\bar{y}_{\ell}][\ell]\leq \gap[\bar{x}_{\ell}, \bar{y}_{\ell}][\ell]$.
\paragraph{Error criterion}
Write $h(x, y)\defi f(x,y)+\frac{1}{2\eta_x}\dist^2(x,x_k)-\frac{1}{2\eta_y}\dist^2(y,y_{\ell})$.
    Then, we can bound the Lipschitz constant of $h(\cdot, \bar{y}_\ell)$ as
\begin{equation}
  \label{eq:lip_hx}
\begin{aligned}
    \Lips(h(\cdot, \bar{y}_\ell))&=\max_{x\in \X} \norm{\nabla_x h(x, \bar{y}_\ell)}\\
  &=\max_{x\in \X}\norm{\nabla_x f(x,\bar{y}_\ell)-\eta_x^{-1}\log_x(x_k) \pm \nabla_xf(x,\yastg)-\Gamma{\xastg}{x}\nabla_xf(\xastg,\yastg)}\\
  &\le D(\eta_x^{-1}+\Lx+\Lxy ).
\end{aligned}
\end{equation}
    and similarly, for any $x\in \X$, we bound the Lipschitz constant of $h(\bar{x}_\ell, \cdot)$ as follows
\begin{equation}\label{eq:lip_hy}
    \Lips(h(\bar{x}_\ell, \cdot)) \le \max_{y\in \Y} \norm{\nabla_y h(\bar{x}_\ell, y)}\le D	(\eta_y^{-1}+\Ly+\Lxy ).
\end{equation}
we have that for 
\[
    C_{\ell}=\D\max \left\{ \frac{\Lxy(\eta_x^{-1}+\Lx+\Lxy)}{\mux} +\eta_y^{-1}+\Ly+\Lxy,\frac{\Lxy(\eta_y^{-1}+\Ly+\Lxy)}{\muy}+\eta_x^{-1}+\Lx+\Lxy \right\}, \\
\] 
and defining
\[
\epsilonfive\defi\frac{\epsilonthree^2(\muy\eta_y)}{32\xi C_{\ell}^2},
\] 
the following holds, as desired:
\begin{equation}
  \label{eq:gap_ell}
\begin{aligned}
    &\gap[\bar{x}_{\ell},\bar{y}_{\ell}][\ell]\\
                             &\ \ \circled{1}[\le] \dist(\bar{x}_{\ell},x^{*}_{\ell}) \left( \frac{\Lxy}{\mux} \Lips(h(\cdot, \bar{y}_\ell)) +\Lips(h(\bar{x}_\ell, \cdot)) \right)+\dist(\bar{y}_{\ell},y^{*}_{\ell}) \left( \frac{\Lxy}{\muy}\Lips(h(\cdot, \bar{x}_\ell))+\Lips(h(\bar{y}_\ell, \cdot)) \right)\\
                             & \ \ \circled{2}[\le] C_{\ell}[\dist(\bar{x}_{\ell},x^{*}_{\ell})+\dist(\bar{y}_{\ell},y^{*}_{\ell})] \circled{3}[\leq] C_{\ell}\sqrt{2\epsilonfive} \circled{4}[\leq]  \epsilonthree(\eta_y\muy)^{-3/2}/(8 \sqrt{\xi})\circled{5}[=] \hatepsilonthree.
\end{aligned}
\end{equation}
    We used \cref{item:dist_to_gap_lip} in \cref{proposition:from_one_opti_measure_to_another} for $\circled{1}$ and the definition of $C_{\ell}$ in $\circled{2}$.
    \cref{thm:crabr} implies $\dist^2(\bar{x}_{\ell},x^{*}_{\ell})+\dist^2(\bar{y}_{\ell},y^{*}_{\ell})\le \epsilonfive$ which was used for $\circled{3}$ along with $(a+b)^2 \leq 2a^b +2b^2$. We defined $\epsilonfive$ in order to satisfy $\circled{4}$. The definition of the accuracy $\hatepsilonthree$ that we require in $\riemacon$ was used in $\circled{5}$.
\paragraph{Complexity}
By \cref{thm:crabr}, and the definition of $\epsilonfive$, computing $\bar{y}_{\ell}$ takes
\begin{equation*}
 \Tfive=\bigotilde{\zetad_{R}\zetad^{2} \sqrt{\tilde{\kappa}_x+\tilde{\kappa}_y+\zetad}},
\end{equation*}
    iterations of \RABR{}, where $\tilde{\kappa}_x$ is the condition number of $h(\cdot, \bar{y}_\ell)$ and $\tilde{\kappa}_y$ is the condition number of $h_y$.
Using \cref{eq:lip_hx} and the definition of $\eta_x$, we have
\begin{equation*}
    R\le \frac{\max_{y\in\Y}\Lips(h(\cdot, y))}{\Lx+\frac{\zetad}{\eta_x}}\le \frac{D	(\eta_x^{-1}+\Lx+\Lxy )}{\Lx+\frac{\zetad}{\eta_x}} \le 2D
\end{equation*}
and similarly, by \cref{eq:lip_hy} and the definition of $\eta_y$ it holds
\begin{equation*}
R\le  \frac{\max_{x\in\X}\Lips(h(x, \cdot))}{\Ly+\frac{\zetad}{\eta_y}}\le 2D.
\end{equation*}
    Hence, we have $\zeta[R]\le \zeta[2D] = \bigotilde{\zetad}$.
Given that $\tilde{\kappa}_x\le \eta_x\Lx+\zetad$ and $\tilde{\kappa}_y\le \Ly\eta_y+\zetad$, we conclude that
\begin{equation*}
 \Tfive=\bigotilde{\zetad^{3} \sqrt{\Lx\eta_x+\Ly\eta_y+\zetad}}.
\end{equation*}
\end{proof}

\begin{proof}\linkofproof{cor:red-scc}
    Let $(\bar{x}, \bar{y})\in \X \times \Y$ be the initial point of our algorithm, and define the following regularized function
\begin{equation*}
  f_{\epsilon}(x,y)\defi f(x,y) + \frac{\epsilon}{4\D^2} \dist^2(\bar{x},x)- \frac{\epsilon}{4\D^2}\dist^2(\bar{y},y).
\end{equation*}
Interestingly, we require the use of this function for both the \CC{} and the \SCC{} case. That is, we require regularizing both variables even when the function is strongly g-convex with respect to one.
This is done in order to show that the global saddle point of the regularized problem $(\hat{x}^{*}_{\varepsilon},\hat{y}_{\varepsilon}^{*})$ is not further away from the initial point $(\bar{x}, \bar{y})$ than the global saddle point $(\xastg,\yastg)$ of the unregularized problem , i.e. $\dist^2(\bar{x},\hat{x}^{*}_{\varepsilon})+\dist^2(\bar{y},\hat{y}^{*}_{\varepsilon})\le \dist^2(\bar{x},\xastg)+\dist^2(\bar{y},\yastg)=D^2$ which is required in order bound the geometric penalties.
Now let $(\hat{x},\hat{y})$ be an $\epsilon/2$ saddle point of $f_{\epsilon}$, i.e.
\begin{equation*}
 \max_{y\in \Y}f_{\epsilon}(\hat{x},y)-\min_{x\in \X}f(x,\hat{y})\le \frac{\epsilon}{2}.
\end{equation*}
Let $y^{*}(\hat{x})=\argmax_{y\in \Y}f(\hat{x},y)$, then
\begin{align*}
  \max_{y\in \Y}f_{\epsilon}(\hat{x},y)&\ge f_{\epsilon}(\hat{x},y^{*}(\hat{x}))=f(\hat{x},y^{*}(\hat{x})) + \frac{\epsilon}{4\D^2} \dist^2(\bar{x},\hat{x})- \frac{\epsilon}{4\D^2}\dist^2(\bar{y},\yastg) \ge f(\hat{x},y^{*}(\hat{x})) -\frac{\epsilon}{4}.
\end{align*}
Similarly, for $x^{*}(\hat{y})=\argmin_{x\in \X}f(x,\hat{y})$, we have $\min_{x\in \X} f_{\epsilon}(x,\hat{y})\le f(x^{*}(\hat{y}),\hat{y})+ \frac{\epsilon}{4}$.
Combining these inequalities, we conclude
\begin{equation*}
 \max_{y\in \Y}f(\hat{x},y) -\min_{x\in \X}f(x,\hat{y})= f(\hat{x},y^{*}(\hat{x}))-f(x^{*}(\hat{y}),\hat{y})\le \max_{y\in \Y}f_{\epsilon}(\hat{x},y)- \min_{x\in \X} f_{\epsilon}(x,\hat{y}) + \frac{\epsilon}{4} + \frac{\epsilon}{4} \leq \epsilon.
\end{equation*}
Hence if $(\hat{x},\hat{y})$ is an $\epsilon/2$-saddle point of $f_{\epsilon}$ it is an $\epsilon$-saddle point of $f$.
By \cref{lem:dist-bound} and by the definition of $\X$ and $\Y$, we have that the saddle point of $f_{\epsilon}$ satisfies $\dist^2(\bar{x},\hat{x}^{*}_{\varepsilon})+\dist^2(\bar{y},\hat{y}^{*}_{\varepsilon})\le D^2$ which is required to use \cref{alg:ramma} on $f_{\epsilon}$.
Recall that the complexity of the algorithm is
\begin{equation*}
\bigotilde{ \zetad^{9/2}\sqrt{\frac{\zetad \tilde{L}\Lxy}{\tilde{\mu}_x\tilde{\mu}_y}+ \tilde{\kappa}_x+\tilde{\kappa}_y + \zetad^2}},
\end{equation*}
    where the variables noted with a tilde are the constants of $f_{\epsilon}$. 

   We first analyze the \SCC{} case. We have $\tilde{\mu}_x = \mux + \epsilon/(2\D^2)$,  $\tilde{\mu}_y = \epsilon/(2\D^2)$, $\tilde{L}_x \leq \Lx + \zetad \epsilon/(2\D^2)$ and therefore the condition numbers are
\begin{equation*}
    \tilde{\kappa}_{x}= \frac{\Lx+\zetad \frac{\epsilon}{2\D^2}}{\mux+\frac{\epsilon}{2\D^2}}\le \frac{\Lx}{\mux} + \zetad \quad \quad \text{ and } \quad \quad \tilde{\kappa}_{y}= \frac{\Ly+\zetad \frac{\epsilon}{2\D^2}}{\muy+\frac{\epsilon}{2\D^2}}\le \frac{2\Ly\D^2}{\epsilon}+\zetad.
\end{equation*}
Note that $\Lxy$ is not influenced by regularization.
First, assume that $\Lxy\ge \tilde{L}_x$, then $\tilde{L}=\Lxy$ (recall that $\Lx=\Ly$ without loss of generality)
\begin{equation*}
  \frac{\zetad \tilde{L}\Lxy}{\tilde{\mu}_x\tilde{\mu}_y} \le \frac{2\zetad \Lxy^2\D^2}{\mux\epsilon}= \frac{2\zetad L\Lxy\D^2}{\mux\epsilon}.
\end{equation*}
Now assume that $\Lxy\le \tilde{L}_x$, then $\tilde{L}=\Lx+\zetad \frac{\epsilon}{2\D^2}$ and 
\begin{equation*}
  \frac{\zetad \tilde{L}\Lxy}{\tilde{\mu}_x\tilde{\mu}_y} =  \frac{\zetad \Lxy(\Lx+\zetad \frac{\epsilon}{2\D^2})}{\tilde{\mu}_x\tilde{\mu}_y} \le \frac{2\zetad \Lxy\L\D^2}{\mux\epsilon} +\frac{\zetad^2 \Lxy}{\tilde{\mu}_x} \leq  \frac{2\zetad \Lxy\L\D^2}{\mux\epsilon} + \frac{\zetad^2 \L}{\mux}.
\end{equation*}
All in all, we have
\begin{equation*}
    \bigotildel{\frac{\zetad \tilde{L}\Lxy}{\tilde{\mu}_x\tilde{\mu}_y}+ \tilde{\kappa}_x+\tilde{\kappa}_y + \zetad^2} = \bigotildel{\frac{\zetad^2L}{\mux}+ \frac{\D^2}{\epsilon}\left(\frac{\zetad \Lxy\L}{\mux}+ \Ly\right) } =\bigotildel{\zetad\frac{\L}{\mux}\left(\frac{\L\D^2}{\epsilon}+\zetad\right)}.
\end{equation*}
The resulting complexity is
\begin{equation*}
    \bigotildel{\zetad^{9/2}\sqrt{\frac{\zetad^2L}{\mux}+ \frac{\zetad \Lxy\L\D^2}{\mux\epsilon} }} =\bigotildel{\zetad^{11/2}\frac{LD}{\sqrt{\mux\epsilon}}}.
\end{equation*}

Now we proceed to analyze the \CC{} case. We have $\tilde{\mu}_x = \mux + \epsilon/(2\D^2)$,  $\tilde{\mu}_y = \epsilon/(2\D^2)$, $\tilde{L}_x \leq \Lx + \zetad \epsilon/(2\D^2)$ and therefore the condition numbers are
\begin{equation*}
    \tilde{\kappa}_{x}= \frac{\Lx+\zetad \frac{\epsilon}{2\D^2}}{\mux+\frac{\epsilon}{2\D^2}}\le \frac{2\Lx\D^2}{\epsilon}+\zetad \quad \quad \text{ and } \quad \quad \tilde{\kappa}_{y}= \frac{\Ly+\zetad \frac{\epsilon}{2\D^2}}{\muy+\frac{\epsilon}{2\D^2}}\le \frac{2\Ly\D^2}{\epsilon}+\zetad.
\end{equation*}
First, assume that $\Lxy\ge \tilde{L}_x$, then $\tilde{L}=\Lxy$ (recall that $\Lx=\Ly$ without loss of generality)
\begin{equation*}
  \frac{\zetad \tilde{L}\Lxy}{\tilde{\mu}_x\tilde{\mu}_y} \le  \frac{4\zetad L\Lxy\D^4}{\epsilon^2}
\end{equation*}
Now assume that $\Lxy\le \tilde{L}_x$, then $\tilde{L}=\Lx+\zetad \frac{\epsilon}{2\D^2}$ and
\begin{equation*}
  \frac{\zetad \tilde{L}\Lxy}{\tilde{\mu}_x\tilde{\mu}_y} \le \frac{4\zetad \Lxy\Lx\D^4}{\epsilon^2} + \frac{2\zetad^2\Lxy\D^2}{\epsilon}.
\end{equation*}
Together, we have that
\begin{equation*}
 \bigotildel{  \frac{\zetad \tilde{L}\Lxy}{\tilde{\mu}_x\tilde{\mu}_y}+ \tilde{\kappa}_x+\tilde{\kappa}_y + \zetad^2} =\bigotildel{\frac{\zetad L\Lxy\D^4}{\epsilon^2}+\frac{\zetad^2\Lxy\D^2}{\epsilon}+ \frac{\D^2(\Lx+\Ly)}{\epsilon}}.
 \end{equation*}
 Thus, the complexity is bounded by
 \begin{equation*}
      \bigotildel{\zetad^{9/2}\left(\sqrt{\frac{\Lx\D^2}{\epsilon} + \frac{\Ly\D^2}{\epsilon} + \frac{\zetad \Lxy\D^2}{\epsilon}\left(\frac{\L\D^2}{\epsilon}+\zetad \right)}\right) } = \bigotildel{\zetad^{11/2} \frac{\L\D^2}{\epsilon}}.
 \end{equation*}
\end{proof}

\subsection[Convergence of RAMMA-WC]{Convergence of \RAMMAWC}
\label{sec:weakly-g-convex}
When $f(x,y)$ is not g-convex with respect to $x$, finding a saddle point can be intractable. Even if $f(x,\cdot)$ is a constant function, the problem reduces to an non-$g$-convex problem.
However, we can still approximate a stationary point of $f$ through approximating a stationary point of $\phi(x)\defi \max_{y\in\Y} f(x, y)$.
We consider a notion of stationarity based on the gradient of the Moreau envelope of $\phi$ as defined in the following.
\begin{definition}[Moreau envelope]\label{def:moreau-env}
  Let $f:\M\rightarrow \R\cup \{+\infty\}$ be a g-convex, proper and lower semicontinuous function,  where $\M$ is a uniquely geodesic Riemannian manifold of sectional curvature in $\left[\kmin, \kmax \right]$.
  Then the Moreau envelope of $f$ at $x$ with parameter $\eta$ is $M_{\eta f}:\M\rightarrow \R$ defined as
  \[
    M_{\eta f}(x)\defi \inf_{z} \left\{ f(z)+ \frac{1}{2\eta}\dist^2(x,z) \right\}.
  \]
\end{definition}

We note that since $f(\cdot,y)$ is $\Lx$-smooth in $\X$, it is also $\Lx$-weakly g-convex, and thereby $\phi$ is also $\Lx$-weakly g-convex in $\X$ by essentially the same argument in \cref{lem:strong_cvxty_of_max}. The following notion of stationarity is widely used in non-convex optimization, cf. \citep{lin2020near,davis2018stochastic,lin2020on}.

\begin{definition}\label{def:min-stat}
  Consider a $\rho$-weakly g-convex, strongly g-concave function $f:\mathcal{M}\times \mathcal{N} \rightarrow \mathbb{R}$ and g-convex and compact sets $\mathcal{X}\subset \mathcal{M}$, $\mathcal{Y}\subset \mathcal{N}$.
  Further, let $\phi(x)\defi \max_{y\in \mathcal{Y}}f(x,y)$.
  Then we call $\hat{x}\in \mathcal{X}$ an $\varepsilon$-stationary point of $f$ in $\mathcal{X}$, if
  \begin{equation*}
 \norm{\nabla M_{\eta \phi}(\hat{x})}\leqslant \varepsilon,
\end{equation*}
where $\nabla M_{\eta \phi}(x)$ is the gradient of the Moreau envelope of $\phi$ with parameter $\eta$.
\end{definition}
In the following, we introduce our algorithm \newtarget{def:acronym_riemannian_inexact_proximal_point_algorithm}{\RIPPAWC}, which converges to a stationary point of a weakly g-convex function with probability $\geq 2/3$.
\begin{algorithm}
  \caption{Riemannian Inexact Proximal Point Algorithm  \RIPPAWC($f$, $x_0$, $T$ or $\varepsilon$, $\eta$, $\mathcal{X}$, \texttt{subroutine})}
    \label{alg:wc-rppa}
\begin{algorithmic}[1]
    \REQUIRE Uniquely geodesic compact set $\X\subset \M$, function $f:\M \rightarrow\mathbb{R}$ that is $\rho$-weakly g-convex in $\X$, initialization $x_0\in \X\subset \M$, $T$ (or if $\epsilon$ is provided, compute $T$ with the convergence rates), $\eta\in [0,\rho^{-1})$.
    \vspace{0.1cm}
    \hrule
    \vspace{0.1cm}
    \State $\sigma\gets \eta\varepsilon^2 /(24(1+(\mu\eta)^{-1}))$, with $\mu=-\rho+\eta^{-1}$
    \FOR {$t = 1 \text{ \textbf{to} } T$}
    \State $x_{t+1}\gets \sigma\text{-minimizer of } x\mapsto \{f(x)+\frac{1}{2\eta}\dist^2(x,x_t)\}$
    \ENDFOR
    \State Sample $\tau$ from $\{1,\ldots, T\}$ uniformly
    \ENSURE $x_{\tau}$
\end{algorithmic}
\end{algorithm}

\begin{theorem}\label{thm:wc-rppa}
  Consider a function $f: \mathcal{M} \rightarrow \mathbb{R}$, where $\mathcal{M}$ is a Riemannian manifold of sectional curvature in $\left[\kappa_{\min }, \kappa_{\max }\right]$. Let $\mathcal{X} \subset \mathcal{M}$ be a uniquely geodesic subset of $\mathcal{M}$ and let $f$ be $\rho$-weakly g-convex in $\mathcal{X}$. Then, after  
  \[
    T=\bigol{\frac{f(x_0)-\min_{x\in \mathcal{X}}f(x)}{\varepsilon^2 \eta}}
  \]
  iterations, \cref{alg:wc-rppa} outputs $\hat{x}\in \mathcal{X}$ such that an $\norm{\nabla M_{\eta f}(\hat{x})}\le \varepsilon$ with probability at least $2/3$.
\end{theorem}

\begin{proof}
We write $x_{t+1}^*$ for the exact optimizer of $h_t(x) \defi f(x)+\frac{1}{2 \eta} \dist^2\left(x, x_t\right)$, hence
\begin{equation}
  \label{eq:9}
\begin{aligned}
f(x_{t+1}^*)+\frac{1}{2 \eta} \dist^2(x_{t+1}^*, x_t) = \min _{x \in \mathcal{X}}\{f(x)+\frac{1}{2 \eta} \dist^2(x, x_t)\} & \leqslant f(x_t) \\
    \left(f(x_{t+1}^*)+\frac{1}{2 \eta} \dist^2(x_{t+1}^*, x_t) \right)-\left(f(x_{t+1})-\frac{1}{2 \eta} \dist^2(x_{t+1}, x_t) \right)& \leqslant f(x_t)-\frac{1}{2 \eta} \dist^2(x_{t+1}, x_t) -f(x_{t+1}) \\
\frac{1}{2 \eta} \dist^2(x_{t+1}, x_t) & \circled{1}[\leqslant] f(x_t)-f(x_{t+1})+\sigma
\end{aligned}
\end{equation}
where $\circled{1}$ is derived from the inequality above by using the inexactness criterion in the definition of $x_{t+1}$.
    Note that $h_t(x)$ is $\mu$-strongly convex with $\mu \defi-\rho+\eta^{-1}$, hence we have
\begin{equation}
  \label{eq:16}
\dist^2\left(x_{t+1}, x_{t+1}^*\right) \leqslant \frac{2}{\mu}\left(h_t(x_{t+1})-h_t(x_{t+1}^*)\right) \leqslant \frac{2 \sigma}{\mu} .
\end{equation}
Using the fact that $(a+b)^2\le 2a^2+2b^2$ as well as \cref{eq:16,eq:9} we conclude
\begin{align*}
  \dist^2(x_t,x_{t+1}^{*}) &\le 2 \dist^2(x_t,x_{t+1})+ 2d^2(x_{t+1},x_{t+1}^{*})\\
  &\le 4\eta [ f(x_t)-f(x_{t+1})+\sigma] + \frac{4\sigma}{\mu}.
\end{align*}
Summing the previous inequality from $t=0, \ldots, T-1$ and dividing by $T\eta^2$, we obtain
\begin{align*}
  \frac{1}{\eta^2T}\sum_{t=0}^{T-1}\dist^2(x_t,x_{t+1}^{*})&\le \frac{4 (f(x_0)-f(x_T))}{\eta T} + \frac{4\sigma}{\eta}(1+\frac{1}{\eta\mu})\\
  &\circled{1}[\le] \frac{4(f(x_0)-f(x^{*}))}{\eta T} + \frac{\varepsilon^2}{6},
\end{align*}
Here $\circled{1}$ follows by the definition of $\sigma$ and by $f(x^{*})\le f(x_T)$ with  $x^{*}\in\argmin_{x\in \mathcal{X}}f(x)$.
Choosing $x_\tau$ uniformly from $\left\{x_t\right\}_{t \in[T]}$, we can write
\begin{equation*}
\frac{1}{T} \sum_{t=0}^{T-1} \dist^2(x_t, x^{*}_{t+1})=\mathbb{E}\left[\dist^2(x_\tau, x_{\tau+1}^{*})\right] .
\end{equation*}
By Markov's inequality we have that
\[
  \mathbb{P}\left(\dist^2(x_\tau, x_{\tau+1}^{*})\leq 3 \mathbb{E}[\dist^2(x_\tau, x_{\tau+1}^{*})]\right)\ge \frac{2}{3}.
\]
Hence, for $T\ge\frac{24(f(x_0)-f(x^{*}))}{\varepsilon^2\eta}$, we have with probability at least $2/3$ that,
\begin{equation*}
  \frac{1}{\eta^2}\dist^2(x_\tau, x_{\tau+1}^{*}) \leqslant \frac{3}{\eta^2T}\sum_{t=0}^{T-1}\dist^2(x_t,x_{t+1}^{*})  \leq \frac{12(f(x_0)-f(x^{*}))}{\eta T} + \frac{\varepsilon^2}{2} \le \varepsilon^2,
\end{equation*}
and hence $ \frac{1}{\eta}\dist(x_\tau, x_{\tau+1}^{*}) \le \varepsilon$.
Note that $\nabla M_{\eta f}(x_{\tau})=-\frac{1}{\eta}\exponinv{x_{\tau}}(x^{*}_{\tau+1})$.
Hence, since $\dist(x_\tau, x_{\tau+1}^{*})= \norm{\exponinv{x_{\tau}}(x^{*}_{\tau+1})}=\eta \norm{\nabla M_{\eta f}(x_{\tau})}$ we have that $\norm{\nabla M_{\eta f}(x_{\tau})}\le \varepsilon$ with probability at least $2/3$, which concludes the proof.
\end{proof}
Using \RIPPAWC{}, we approximate a stationary point of $\phi(x) \defi \max_{y\in\Y} f(x,y)$. However, solving the resulting proximal problems can be challenging.
In the following, we introduce \RAMMAWC{}, which specifies how to implement prox subroutines similarly to \RAMMA.
\begin{definition}[RAMMA-WC]\label{def:ramma-wc}
  Consider an algorithm which consists of running RAMMA with the following modifications:
    Set $\eta_x \defi 1 /(3 \max \{\Lxy, \Lx\})$, $T_1\defi\bigotilde{\frac{\Deltazero}{\eta_x\epsilon_1^2}}$ with $\Deltazero\defi\phi(x_0)-\min_{x\in \mathcal{X}}\phi(x)$. Replace Line \ref{line:riemacon_on_phi} in RAMMA with $\text{\RIPPAWC}(\phi(x), x_0, T_1, \eta_x,\text{Lines \ref{line:first_line_intermediate_loop}-\ref{line:last_line_intermediate_loop}})$. Note we are still using the same subroutines as for \RAMMA{}.
    Remove Line \ref{line:getting_distance_to_yast_outer_loop}, we only return the $x$ point. 
  We refer to this algorithm as \newtarget{def:acronym_riemannian_accelerated_min_max_algorithm-wc}{\RAMMAWC}.
\end{definition}

\begin{proof}\linkofproof{thm:ramma-wc}
Note that any $\bar{L}$-smooth function is also $\bar{L}$-weakly g-convex, hence, we have that  $\phi$ is at most $\Lx$-weakly g-convex.
While it does not improve the final complexity, we use $\rho\le \Lx$ as the weak convexity constant of $\phi$ for the sake of generality.
  In RAMMA-WC, we run \cref{alg:wc-rppa} on $\phi(x)=\max _{y \in \mathcal{Y}} f(x, y)=f(x, y^*(x))$ for $T_1$ iterations.
    Hence, we have from \cref{thm:wc-rppa} that the output $\hat{x} \in \mathcal{X}$ of \cref{alg:wc-rppa} satisfies the following, with probability at least $2/3$:
\begin{equation*}\label{eq:exact-stat}
\norm{\nabla M_{\eta\phi}(\hat{x})}\le \varepsilon_1.
\end{equation*}
    In the following, we discuss the complexity of solving the prox subroutine of \cref{alg:wc-rppa} accounting for the changes to the inner loops of RAMMA.
    First, note that the total complexity of RAMMA-WC is $T= T_1(T_3 T_5+T_4)$.
    Here $T_3$ is the complexity of running $\riemacon$ on $\Psi(y)=\max_{x\in \mathcal{X}}-\{f(x,y)-\frac{1}{2\eta_x}\dist^2(x,x_k)\}$, $T_4$ is the complexity of running $\riemacon$ on $x\mapsto f(x,\tilde{y}_k)+\frac{1}{2\eta_x}\dist^2(x_k,x)$ and $T_5$ is the complexity of running \RABR{} on $f(x, y)+\frac{1}{2 \eta_x} \dist^2(x, x_k)-\frac{1}{2 \eta_y} \dist^2(y, y_{\ell})$.
    We have by \cref{thm:wc-rppa} that $T_1\defi\bigol{\frac{\Deltazero}{\epsilon_1^2 \eta_x}}$.
Since $\min_{y\in \mathcal{Y}}\Psi(y)$ is an optimization problem in $y$, its complexity is not affected by changes in $x$. Hence we have $T_3=\bigotilde{\sqrt{\frac{\zeta}{\mu_y \eta_y}}}$ as in the \SCSC{}  case, see \cref{lem:loop2}.
Now consider the min-max problem we want to solve using \RABR{}, i.e.,
\begin{equation*}\label{eq:NC-h}
    \min_{x\in \mathcal{X}} \max_{y\in \mathcal{Y}} \left\{ h(x,y)\defi f(x, y)+\frac{1}{2 \eta_x} \dist^2(x, \bar{x})-\frac{1}{2 \eta_y} \dist^2(y, \bar{y})\right\}.
\end{equation*}
    First note that while $f(\cdot,y)$ is no longer strongly g-convex, $h(\cdot, y)$ is, and the choice of $\eta_x$ ensures that $\bar{L}_{x y} \leqslant \frac{1}{2} \sqrt{\bar{\mu}_x \bar{\mu}_y}$ is still satisfied for $h(x,y)$.
In order to compute the value of $T_5$, we need to know the condition numbers $\kappa_x(h)$, $\kappa_y(h)$ of $h$ with respect to $x$ and $y$, respectively.
    We have $\kappa_{y}(h)=\bigol{\Ly\eta_y+\zeta}$ as in the analysis of the \SCSC{} case, see \cref{lem:loop3}.
    In the following, we show that by our choice of $\eta_x$, $\kappa_x(h)$ and also stays as in the \SCSC{} case, see \cref{lem:loop3},
\begin{equation*}
\kappa_x(h)=\frac{\Lx+\zeta \eta_x^{-1}}{-\rho+\eta_x^{-1}} \leqslant \frac{3}{2}\left(\Lx \eta_x+\zeta\right)=\bigol{\Lx \eta_x+\zeta}.
\end{equation*}
Hence we still have $T_5=\bigotilde{\zeta^3\sqrt{\Lx \eta_x+\Ly \eta_y+\zeta}}$.
In order to obtain the complexity of $T_4$, we consider the following optimization problem,
\begin{equation*}
  \min_{x\in \mathcal{X}} \left\{ F_k(x) \defi f\left(x, \tilde{y}_k\right)+\frac{1}{2 \eta_x} \dist^2\left(x_k, x\right) \right\}.
\end{equation*}
First, note that due to the regularization term, $F_k$ is $(-\rho+\eta_x^{-1})$-strongly convex and can be solved via $\riemacon$ and we have by \cref{lem:loop2} that $T_4=\bigotilde{\zeta^{3.5} \sqrt{\kappa(F_k)+\zeta}}$, where $\kappa(F_k)$ is the condition number of $F_k$.
Noting that
\begin{equation*}
\kappa(F_k)=\frac{\Lx+\eta_x^{-1} \zeta}{-\rho+\eta_x^{-1}}=\bigol{\frac{\Lx}{\max \left\{\rho, \Lxy\right\}}+\zeta},
\end{equation*}
it follows that $T_4=\bigotilde{\zeta^{3.5} \sqrt{\frac{\Lx}{\max \left\{\rho, \Lxy\right\}}+\zeta}}$.
We now go on to compute $T_1T_3T_5$:
\begin{equation*}
    T_1 T_3 T_5=\bigotildel{\frac{\zeta^{3.5} \Deltazero}{\varepsilon^2} \sqrt{\frac{\Lx \eta_x+\Ly \eta_y+\zeta}{\eta_x^2 \eta_y \mu_y}}}=\bigotildel{\frac{\zeta^{3.5} \Deltazero}{\varepsilon^2} \sqrt{\frac{\Lx}{\eta_x \eta_y \mu_y}+\frac{\Ly}{\eta_x^2 \mu_y}+\frac{\zeta}{\eta_x^2 \eta_y \mu_y}}}.
\end{equation*}
Recall that we assume wlog that $\Lx=\Ly$.
\paragraph{Case 1} $\rho \leqslant \Lxy$, and $\mu_y \leqslant \Lxy$, so we have $\eta_y=\frac{1}{\zeta \mu_y+\Lxy}$ and $\eta_x=\Lxy^{-1}$. Then
\begin{equation*}
    \frac{\Lx}{\eta_x \eta_y \mu_y} + \frac{\Ly}{\eta_x^2 \mu_y} + \frac{\zeta}{\eta_x^2 \eta_y \mu_y}  =\left(\zeta \Lx \Lxy+\frac{\Lx \Lxy^2}{\mu_y}\right)  + \left ( \frac{\Lxy^2 \Ly}{\mu_y}\right) + \left( \zeta^2 \Lxy^2+\frac{\zeta \Lxy^3}{\mu_y}\right).
\end{equation*}

\paragraph{Case 2} $\rho \leqslant \Lxy$, and $\mu_y >  \Lxy$, so we have $\eta_y=\frac{1}{\zeta \mu_y}$ and $\eta_x=\Lxy^{-1}$. Then
\begin{equation*}
    \frac{\Lx}{\eta_x \eta_y \mu_y} + \frac{\Ly}{\eta_x^2 \mu_y} + \frac{\zeta}{\eta_x^2 \eta_y \mu_y}  =\left(\zeta \Lx \Lxy\right)  + \left ( \frac{\Lxy^2 \Ly}{\mu_y}\right) + \left( \zeta^2 \Lxy^2\right).
\end{equation*}

\paragraph{Case 3} $\rho>\Lxy, \mu_y \leqslant \Lxy$, so we have $\eta_y=\frac{1}{\zeta \mu_y+\Lxy}$ and $\eta_x=\rho^{-1}$. Then
\begin{equation*}
    \frac{\Lx}{\eta_x \eta_y \mu_y} + \frac{\Ly}{\eta_x^2 \mu_y} + \frac{\zeta}{\eta_x^2 \eta_y \mu_y}  =\left(\frac{\rho \Lx \Lxy}{\mu_y}+\rho \Lx \zeta\right)  + \left (\frac{\Ly \rho^2}{\mu_y} \right) + \left( \zeta^2 \rho^2+\frac{\zeta \Lxy \rho^2}{\mu_y}\right).
\end{equation*}

\paragraph{Case 4} $\rho>\Lxy, \mu_y>\Lxy$, so we have $\eta_y=\frac{1}{\zeta \mu_y}$ and $\eta_x=\rho^{-1}$. Then
\begin{equation*}
    \frac{\Lx}{\eta_x \eta_y \mu_y} + \frac{\Ly}{\eta_x^2 \mu_y} + \frac{\zeta}{\eta_x^2 \eta_y \mu_y}  =\left(\zeta \rho \Lx\right)  + \left (\frac{\rho^2 \Ly}{\mu_y} \right) + \left(\zeta^2 \rho^2 \right).
\end{equation*}
For all cases combined, setting $L=\max \{\Lx, \Ly, \Lxy\}$ and using $\rho \leqslant L$ we have that
\begin{equation*}
\frac{\Lx}{\eta_x \eta_y \mu_y}+\frac{\Ly}{\eta_x^2 \mu_y}+\frac{\zeta}{\eta_x^2 \eta_y \mu_y}=\bigol{\zeta L^2+\frac{\zeta L^3}{\mu_y}} .
\end{equation*}
Which yields
\begin{equation*}
T_1 T_3 T_5=\bigotildel{\frac{\zeta^{4} \Deltazero L}{\varepsilon^2} \sqrt{\zeta+ \frac{L}{\muy}}} .
\end{equation*}
We have shown that the complexity is dominated by $T_1 T_3 T_5$ and the resulting complexity is $T=\bigotilde{\frac{\zeta^{4} \Deltazero L}{\varepsilon^2} \sqrt{\zeta+\frac{L}{\mu_y}}}$, which concludes the proof.
\end{proof}

\subsection{Technical Results}

\begin{lemma}\label{lem:strong_cvxty_of_max}
Let $\M, \NN$ be Riemannian manifolds and let $\X\subset\M$, $\Y \subset\NN$ be g-convex subsets that are uniquely geodesic. Let $f: \X \times \Y \to \R$ be such that $f(\cdot,y)$ is lower semicontinuous, $f(x,\cdot)$ is upper semicontinuous, and $f(x, y)$ is $(\mux, 0)$-\SCSC{} in $\X\times\Y$. Then $\phi(x) \defi \sup_{y\in\Y} f(x, y)$ is $\mux$-strongly g-convex in its domain. Also, if $f$ is sup-compact, $\phi(x)$ is well defined for all $x\in \X$ and it holds $\phi(x)= \max_{y\in\Y} f(x, y)$.
\end{lemma}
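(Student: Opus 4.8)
The plan is to verify directly that $\phi$ satisfies the defining inequality of $\mux$-strong g-convexity, using the nondifferentiable characterization of strong g-convexity quoted in the preliminaries, namely that $g$ is $\mu$-strongly g-convex in $\X$ iff for all $p,q\in\X$ and $t\in[0,1]$,
\[
g(\expon{p}(t\exponinv{p}(p)+(1-t)\exponinv{p}(q)))\le t g(p)+(1-t)g(q)-\tfrac{t(1-t)\mu}{2}\dist^2(p,q).
\]
(Here I read $t\exponinv{p}(p)+(1-t)\exponinv{p}(q)$ as the point $\gamma(1-t)$ on the unit-speed geodesic $\gamma$ from $p$ to $q$, which is the convention used throughout the paper.) So first I would fix $p,q\in\X$ in the domain of $\phi$, fix $t\in[0,1]$, and let $p_t$ denote the corresponding point on the geodesic from $p$ to $q$; note $p_t\in\X$ since $\X$ is g-convex. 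The goal is the bound $\phi(p_t)\le t\phi(p)+(1-t)\phi(q)-\tfrac{t(1-t)\mux}{2}\dist^2(p,q)$.

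Second, the main step: for an arbitrary $y\in\Y$, apply $\mux$-strong g-convexity of $f(\cdot,y)$ along the \emph{same} geodesic to get
\[
f(p_t,y)\le t f(p,y)+(1-t) f(q,y)-\tfrac{t(1-t)\mux}{2}\dist^2(p,q)
\le t\phi(p)+(1-t)\phi(q)-\tfrac{t(1-t)\mux}{2}\dist^2(p,q),
\]
where the second inequality is just the definition of $\phi$ as a supremum over $\Y$, applied at $p$ and at $q$ separately. The right-hand side does not depend on $y$, so taking the supremum over $y\in\Y$ on the left yields $\phi(p_t)\le t\phi(p)+(1-t)\phi(q)-\tfrac{t(1-t)\mux}{2}\dist^2(p,q)$, which is exactly $\mux$-strong g-convexity of $\phi$ on its domain. (Implicitly this also shows $p,q\in\operatorname{dom}\phi\Rightarrow p_t\in\operatorname{dom}\phi$, so the domain is g-convex, which is needed for the statement to make sense.)

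Third, for the last sentence I would invoke the sup-compactness hypothesis: for fixed $x\in\X$, the superlevel sets of $f(x,\cdot)$ are compact, and $f(x,\cdot)$ is upper semicontinuous, so $\sup_{y\in\Y}f(x,y)$ is attained on a (nonempty) compact superlevel set by Weierstrass; hence $\phi(x)=\max_{y\in\Y}f(x,y)$ is finite and well defined for every $x\in\X$. I do not expect any real obstacle here: the only mild subtlety is being careful that strong g-convexity is stated along geodesics and that the supremum of functions all satisfying the \emph{same} per-geodesic inequality (with the same additive quadratic defect, since $\mux$ and $\dist(p,q)$ are independent of $y$) again satisfies it — which is immediate because the defect term is pulled out of the supremum. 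The nondifferentiable characterization from \cref{sec:preliminaries} is what lets us avoid any appeal to gradients of $\phi$, which need not exist.
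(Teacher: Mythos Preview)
Your argument for the strong g-convexity of $\phi$ is exactly the paper's: fix a geodesic, apply the nondifferentiable $\mux$-strong g-convexity inequality for each fixed $y$, bound $f(p,y)$ and $f(q,y)$ by $\phi(p)$ and $\phi(q)$, then take the supremum over $y$. Nothing to add there.

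For the second part, there is a small mismatch with the paper's intended hypothesis. You assume that for \emph{every} $x\in\X$ the superlevel sets of $f(x,\cdot)$ are compact, and then Weierstrass gives attainment immediately. The paper, however, takes ``sup-compact'' to mean sup-compact at a \emph{single} point $\tilde{x}\in\X$ (consistent with its definition of inf-sup-compact at $(\tilde{x},\tilde{y})$), and then argues that attainment of the max at \emph{every} $x$ follows: for any $x$ and level $\alpha$, the superlevel set $\{y:f(x,y)\ge\alpha\}$ is closed by upper semicontinuity, and one uses the compact superlevel sets of $f(\tilde{x},\cdot)$ to trap it inside a compact set. Your Weierstrass step is correct under your reading of the hypothesis, but if the lemma is meant to feed into settings (like the Sion-type theorem in the paper) where compactness is only assumed at one anchor point, you are using more than you are given; you should either note this or supply the extra step passing from sup-compactness at one $\tilde{x}$ to attainment at all $x$.
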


\begin{proof}
     Let $x_1, x_2$ be two points in the domain of $\phi$ and let $\gamma$ be the geodesic joining $\gamma(0)=x_1$ and $\gamma(t)=x_2$ with $t\in [0,1]$. Then, we have for all $y\in \Y$ that
     \begin{align*}
       f(\gamma(t),y)& \circled{1}[\le] tf(x_1,y)+(1-t)f(x_2,y) -\frac{t(1-t)\mux}{2}\dist^2(x_1,x_2)\\
         &\circled{2}[\leq] t\phi(x_1)+(1-t)\phi(x_2)-\frac{t(1-t)\mux}{2}\dist^2(x_1,x_2).
     \end{align*}
       Here, $\circled{1}$ holds by $\mux$-strong g-convexity of $f(\cdot,y)$ and $\circled{2}$ uses the definition of $\phi$. Since the inequality holds for all $y$, it also holds for the supremum, proving that $\phi$ is $\mux$ strongly g-convex.

       Now we show that if $f$ is sup-compact for some $\tilde{x} \in \X$, then $\phi(x) = \max_{y\in\Y} f(x, y)$ for all $x\in\X$. To that aim, we show that the superlevel sets of $f(x,\cdot)$ are compact for all $x\in \X$.
     We have that $\{y\in \Y \vert f(\tilde{x},y)\ge \alpha\}$ is compact because $f(\tilde{x},\cdot)$ is sup-compact.
     We have that $\Y_{\cap}=\{y\in \Y\vert f(x,y)\ge \alpha, \forall x\in \X\}=\bigcap_{x\in \X}\{y\in \Y\vert f(x,y)\ge \alpha\}$ is closed because $f(x,\cdot)$ is upper semicontinuous.
     Further, $\Y_{\cap}\subset \{y\in \Y \vert f(\tilde{x},y)\ge \alpha\}$, hence $\Y_{\cap}$ is compact since it is the intersection of a closed and a compact set.
     By the extreme value theorem, an upper semicontinuous function reaches its maximum over a compact set, hence $\phi(x)=\max_{y\in\Y} f(x, y)$.
\end{proof}

\begin{lemma}\linktoproof{lem:dist-bound}\label{lem:dist-bound}
Consider a function $f:\M\times \NN\rightarrow \mathbb{R}$ as described in \cref{sec:setting}.
  Further, let $h(x,y) = f(x,y) + \frac{1}{2\eta}\dist^2(\tilde{x},x) - \frac{1}{2\eta}\dist^2(\tilde{y},y)$  with
  \begin{equation*}
      (\tilde{x}^{*},\tilde{y}^{*}) \defi \argmin_{x\in \M}\argmax_{y\in \NN} h(x,y).
\end{equation*}
Then, $\dist^2(\tilde{x},\tilde{x}^{*})   +\dist^2(\tilde{y},\tilde{y}^{*})\le \dist^2(\tilde{y},\yastg) +\dist^2(\tilde{x},\xastg)$, where $(\xastg,\yastg)$ is the unconstrained saddle point of $f$.
\end{lemma}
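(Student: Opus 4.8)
The plan is to prove this purely from the saddle-point inequalities for $h$ and $f$, with no gradient computation and no use of curvature. Write $h(x,y)=f(x,y)+\tfrac{1}{2\eta}\dist^2(\tilde x,x)-\tfrac{1}{2\eta}\dist^2(\tilde y,y)$ and abbreviate $\tilde z^{*}=(\tilde x^{*},\tilde y^{*})$ and $\hat z^{*}=(\xastg,\yastg)$. The first step is to record that $\tilde z^{*}$ is a genuine saddle point of $h$. By \cref{fact:hessian_of_riemannian_squared_distance}, the map $x\mapsto\tfrac{1}{2\eta}\dist^2(\tilde x,x)$ is $\tfrac{1}{\eta}$-strongly g-convex on the Hadamard manifold $\M$ (and $y\mapsto\tfrac{1}{2\eta}\dist^2(\tilde y,y)$ is $\tfrac{1}{\eta}$-strongly g-convex on $\NN$), so $h$ is $(\mux+\tfrac{1}{\eta})$-strongly g-convex in $x$ and $(\muy+\tfrac{1}{\eta})$-strongly g-concave in $y$; in particular $h$ is strictly SCSC. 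Hence by \cref{thm:sion} (and the corollary following it) we have $\min_{x}\max_{y}h=\max_{y}\min_{x}h$, the saddle point is unique, and it is exactly the point $\tilde z^{*}=\argmin_{x}\argmax_{y}h$, so that
\[
  h(\tilde x^{*},y)\ \le\ h(\tilde x^{*},\tilde y^{*})\ \le\ h(x,\tilde y^{*})\qquad\text{for all }(x,y)\in\M\times\NN.
\]

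Next I would instantiate this two-sided inequality at the corners given by $\hat z^{*}$: taking $y\gets\yastg$ in the left inequality and $x\gets\xastg$ in the right one yields $h(\tilde x^{*},\yastg)\le h(\tilde x^{*},\tilde y^{*})\le h(\xastg,\tilde y^{*})$, hence $h(\tilde x^{*},\yastg)\le h(\xastg,\tilde y^{*})$. Expanding both sides by the definition of $h$ and moving all squared-distance terms to the left and the values of $f$ to the right gives
\[
  \frac{1}{2\eta}\Big(\dist^2(\tilde x,\tilde x^{*})+\dist^2(\tilde y,\tilde y^{*})-\dist^2(\tilde x,\xastg)-\dist^2(\tilde y,\yastg)\Big)\ \le\ f(\xastg,\tilde y^{*})-f(\tilde x^{*},\yastg).
\]

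Finally I would bound the right-hand side by $0$ using that $\hat z^{*}=(\xastg,\yastg)$ is the unconstrained saddle point of $f$: indeed $f(\xastg,\tilde y^{*})\le f(\xastg,\yastg)\le f(\tilde x^{*},\yastg)$, so $f(\xastg,\tilde y^{*})-f(\tilde x^{*},\yastg)\le 0$. Multiplying through by $2\eta>0$ and rearranging gives $\dist^2(\tilde x,\tilde x^{*})+\dist^2(\tilde y,\tilde y^{*})\le\dist^2(\tilde x,\xastg)+\dist^2(\tilde y,\yastg)$, which is the claim. The only delicate point is the first step — verifying that $\argmin_x\argmax_y h$ really produces a bona fide saddle point of $h$ (equivalently, that the minimax equals the maximin), which is supplied by the generalized Sion's theorem once we observe $h$ is strictly SCSC on the Hadamard product manifold; everything after that is a one-line expansion and a one-line inequality, and, notably, the argument uses nothing about the curvature bounds of $\M$ or $\NN$.
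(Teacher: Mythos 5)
Your proof is correct and follows essentially the same route as the paper's: invoke \cref{thm:sion} to obtain a bona fide saddle point of $h$, chain $h(\tilde{x}^*,\yastg)\le h(\xastg,\tilde{y}^*)$ with $f(\xastg,\tilde{y}^*)\le f(\tilde{x}^*,\yastg)$, and cancel the $f$-terms to isolate the squared distances. The paper presents the same two saddle inequalities in the opposite order (first adding $f(\tilde{x}^*,\yastg)-f(\xastg,\tilde{y}^*)\ge 0$, then recognizing the result as $h(\tilde{x}^*,\yastg)-h(\xastg,\tilde{y}^*)\le 0$), but the content is identical; your extra remark explaining why $h$ is strictly \SCSC{} so that Sion's theorem applies is a helpful elaboration the paper leaves implicit.
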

\begin{proof}\linkofproof{lem:dist-bound}
Note that by \cref{thm:sion}, $h$ admits an unconstrained saddle point $(\tilde{x}^{*},\tilde{y}^{*})$.
We have that,
\begin{align*}
 & \frac{1}{2\eta}\dist^2(\tilde{x},\tilde{x}^{*})   -\frac{1}{2\eta}\dist^2(\tilde{y},\yastg)+\frac{1}{2\eta}\dist^2(\tilde{y},\tilde{y}^{*}) -\frac{1}{2\eta}\dist^2(\tilde{x},\xastg)  \\
  \le& f(\tilde{x}^{*},\yastg) + \frac{1}{2\eta}\dist^2(\tilde{x},\tilde{x}^{*})   -\frac{1}{2\eta}\dist^2(\tilde{y},\yastg) - f(\xastg,\tilde{y}^{*}) +\frac{1}{2\eta}\dist^2(\tilde{y},\tilde{y}^{*}) -\frac{1}{2\eta}\dist^2(\tilde{x},\xastg)\\
  =& h(\tilde{x}^{*},\yastg)- h(\xastg,\tilde{y}^{*}) \le 0.
\end{align*}
It follows that
\begin{equation*}
  \dist^2(\tilde{x},\tilde{x}^{*})   +\dist^2(\tilde{y},\tilde{y}^{*})\le \dist^2(\tilde{y},\yastg) +\dist^2(\tilde{x},\xastg).
\end{equation*}
\end{proof}
\begin{proposition}\linktoproof{prop:vi}\label{prop:vi}
  Consider a g-convex function $f: \X\rightarrow \mathbb{R}$, where $\X\subset\M$ is a compact convex subset of a Riemannian Manifold $\M$.
  Then for $x^{*}\in\argmin_{x\in \X}f(x)$, it holds that
  \begin{equation}
    \label{eq:vimin}
   \langle\nabla f(x^{*}),\exponinv{x^{*}}(y)\rangle\ge 0, \forall y\in \X.
 \end{equation}
\end{proposition}
 Note that this also directly implies that if we have a g-concave function $f$ and $x^{*}\in\argmax_{x\in \X}f(x)$, then $\langle\nabla f(x^{*}),\exponinv{x^{*}}(y)\rangle\le 0,  \forall y\in \X$.
\begin{proof}\linkofproof{prop:vi}
  Let $f$ be g-convex and $x^{*}\in\argmin_{x\in \X}f(x)$.
  Let $F(t)\defi f(\gamma (t))$, where $\gamma (t)$ is a geodesic such that $\gamma (0)=x^{*}$ and $\gamma(1)=x$.
  Then $F$ reaches its minimum at $t=0$ and we have that $0\le F'(0)=\langle \nabla f(x^{*}),\exponinv{x^{*}}(x)\rangle$.
\end{proof}

\begin{lemma}\label{lemma:strong_g_convexity_consequences}
  For a $\mu$-strongly g-convex function $f:\X\rightarrow \mathbb{R}$ where $\X\subset \M$ is a compact g-convex subset we have
  \begin{align}\label{eq:strcvx1}
  \begin{aligned}
    \frac{\mu}{2}\dist^{2}(x^{*},y)&\le f(y)-f(x^{*}) \\
      \mu \dist^{2}(x,y)&\le \langle\exponinv{x}(y),\Gamma{y}{x}\nabla f(y)-\nabla f(x)\rangle\\
      &=\langle\log_y(x),\Gamma{x}{y}\nabla f(x)-\nabla f(y)\rangle
  \end{aligned}
  \end{align}
  where $x^{*}= \argmin_{x\in \X}f(x)$.
  Equivalently if $f$ is $\mu$-strongly g-concave then
  \begin{align}\label{eq:strccv1}
  \begin{aligned}
    \frac{\mu}{2}\dist^{2}(x^{*},y)&\le f(x^{*})-f(y) \\
      \mu \dist^{2}(x,y)&\le \langle -\exponinv{x}(y),\Gamma{y}{x}\nabla f(y)-\nabla f(x)\rangle\\
      &=\langle -\log_y(x),\nabla f(x)-\Gamma{y}{x}\nabla f(y)\rangle
  \end{aligned}
  \end{align}
where $x^{*}=\argmax_{x\in \X}f(x)$.
\end{lemma}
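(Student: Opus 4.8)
The plan is to obtain all four inequalities directly from the two equivalent formulations of $\mu$-strong g-convexity recalled in \cref{sec:preliminaries} (the first-order form and the ``midpoint'' form), together with the elementary fact that parallel transport along a geodesic is a linear isometry. Since $\argmax_{x\in\X}f=\argmin_{x\in\X}(-f)$, $\nabla(-f)=-\nabla f$, and $-f$ is $\mu$-strongly g-convex whenever $f$ is $\mu$-strongly g-concave, the concave statement \eqref{eq:strccv1} follows from the convex one \eqref{eq:strcvx1} by a mechanical substitution and sign change; so I would only prove \eqref{eq:strcvx1}.

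For the first line of \eqref{eq:strcvx1}, let $x^\ast=\argmin_{x\in\X}f$, which exists since $\X$ is compact and $f$ is continuous. Fixing $y\in\X$ and $t\in[0,1)$, g-convexity of $\X$ places $z_t\defi\expon{x^\ast}((1-t)\exponinv{x^\ast}(y))$ on the geodesic from $x^\ast$ to $y$, hence in $\X$, so $f(x^\ast)\le f(z_t)$. Combining this with the midpoint form of strong g-convexity based at $x^\ast$ (using $\exponinv{x^\ast}(x^\ast)=0$) gives $f(x^\ast)\le t f(x^\ast)+(1-t)f(y)-\tfrac{t(1-t)\mu}{2}\dist^2(x^\ast,y)$; subtracting $t f(x^\ast)$, dividing by $1-t>0$, and letting $t\to1$ yields $\tfrac{\mu}{2}\dist^2(x^\ast,y)\le f(y)-f(x^\ast)$. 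Alternatively, one could use the first-order form of strong g-convexity at $x^\ast$ and cancel the gradient term using the variational inequality $\langle\nabla f(x^\ast),\exponinv{x^\ast}(y)\rangle\ge 0$ from \cref{prop:vi}.

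For the second line, I would write the first-order strong-g-convexity inequality twice---once with base point $x$ evaluated at $y$, once with base point $y$ evaluated at $x$---add them, and cancel $f(x)+f(y)$, obtaining $0\ge\langle\nabla f(x),\exponinv{x}(y)\rangle+\langle\nabla f(y),\exponinv{y}(x)\rangle+\mu\dist^2(x,y)$. Since $\X$ is uniquely geodesic, the geodesic $\gamma$ with $\gamma(0)=x$, $\gamma(1)=y$ satisfies $\exponinv{x}(y)=\gamma'(0)$, $\exponinv{y}(x)=-\gamma'(1)$, and $\Gamma{y}{x}\gamma'(1)=\gamma'(0)$, so $\Gamma{y}{x}\exponinv{y}(x)=-\exponinv{x}(y)$; using that $\Gamma{y}{x}$ preserves the metric, $\langle\nabla f(y),\exponinv{y}(x)\rangle=-\langle\Gamma{y}{x}\nabla f(y),\exponinv{x}(y)\rangle$. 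Substituting and rearranging gives $\mu\dist^2(x,y)\le\langle\exponinv{x}(y),\Gamma{y}{x}\nabla f(y)-\nabla f(x)\rangle$, and applying $\Gamma{x}{y}$ inside the inner product (again an isometry, with $\Gamma{x}{y}\Gamma{y}{x}=\Id$ and $\Gamma{x}{y}\exponinv{x}(y)=-\exponinv{y}(x)$) produces the equivalent expression $\langle\exponinv{y}(x),\Gamma{x}{y}\nabla f(x)-\nabla f(y)\rangle$.

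This lemma is essentially bookkeeping, so I do not expect a genuinely hard step; the only place requiring care is the parallel-transport sign conventions---precisely the identity $\Gamma{y}{x}\exponinv{y}(x)=-\exponinv{x}(y)$ and the isometry property used to move gradients between $\Tansp{x}\M$ and $\Tansp{y}\M$. Everything else is direct substitution into the definitions, and the g-concave case is then obtained by replacing $f$ with $-f$ and tracking signs.
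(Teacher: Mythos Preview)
Your proof is correct and matches the paper's approach for the monotonicity inequality: write the first-order strong g-convexity inequality at $x$ and at $y$, add, and use that parallel transport is an isometry with $\Gamma{y}{x}\exponinv{y}(x)=-\exponinv{x}(y)$. For the optimality-gap inequality, the paper takes the route you mention as an alternative---applying the first-order inequality at $x^\ast$ and killing the gradient term via the variational inequality of \cref{prop:vi}---whereas your primary argument uses the midpoint characterization and a limit $t\to 1$; both are valid, your version avoids invoking \cref{prop:vi}, and the paper's version avoids the limiting step.
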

\begin{proof}
 By strong convexity, we have
\begin{equation*}
   f(x^{*})\le f(y) + \langle\nabla f(x^{*}),-\exponinv{x^{*}}(y)\rangle -\frac{\mu}{2} \dist^{2}(x^{*},y).
\end{equation*}
Using \eqref{eq:vimin}, we conclude that
\begin{equation*}
 \frac{\mu}{2}\dist^2(x^{*},y) \le f(y)-f(x^{*}).
\end{equation*}
Further, by strong convexity, we can write
\begin{align*}
   f(x)&\le f(y) + \langle\nabla f(x),-\exponinv{x}(y)\rangle -\frac{\mu}{2} \dist^{2}(x,y)\\
   f(y)&\le f(x) + \langle\nabla f(y),-\exponinv{y}(x)\rangle -\frac{\mu}{2} \dist^{2}(x,y).
\end{align*}
Adding both inequalities, we get
\begin{align*}
 \mu \dist^{2}(x,y)  &\le  \langle\nabla f(y),-\exponinv{y}(x)\rangle+ \langle\nabla f(x),-\exponinv{x}(y)\rangle\\
  &= \langle \Gamma{y}{x}\nabla f(y)-\nabla f(x),\log_x(y)\rangle\\
  &= \langle\Gamma{x}{y}\nabla f(x)- \nabla f(y),\log_y(x)\rangle
\end{align*}
where the last two equalities follow from transporting the scalar products to $\mathcal{T}_x$ and $\mathcal{T}_y$ respectively.
The proof for the strongly g-concave follows by the same argument.
\end{proof}

\begin{lemma} \label{lem:lip}
  Assume $f$ satisfies \cref{ass:fct}, define $y^{*}(x)\defi\argmax_{y\in \Y}f(x,y)$, $x^{*}(y)\defi\argmax_{x\in \X}f(x,y)$, $\phi(x)\defi\max_{y\in \Y}f(x,y)$ and $\Psi(y)\defi \min_{x\in \X}f(x,y)$  then
  \begin{enumerate}
    \item $y^{*}(\cdot)$ is $\frac{\Lxy}{\muy}$-Lipschitz. \label{item:y_opt_lip}
    \item $x^{*}(\cdot)$ is $\frac{\Lxy}{\mux}$-Lipschitz. \label{item:x_opt_lip}
    \item $\phi(x)$ is $\mux$-strongly g-convex. \label{item:phi_sc}
    \item $\Psi(y)$ is $\muy$-strongly g-concave. \label{item:Psi_sc}
  \end{enumerate}
\end{lemma}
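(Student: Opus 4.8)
The plan is to separate the four claims into two groups: the convexity statements \cref{item:phi_sc,item:Psi_sc}, which follow essentially for free from \cref{lem:strong_cvxty_of_max}, and the Lipschitz statements \cref{item:y_opt_lip,item:x_opt_lip}, for which I would run the standard ``stability of the $\argmax$'' argument, carefully adapted to the Riemannian setting.

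For \cref{item:phi_sc,item:Psi_sc} I would argue as follows: under \cref{ass:fct} the function $f$ is differentiable (hence continuous) and $(\mux,\muy)$-\SCSC{}, so in particular $(\mux,0)$-\SCSC{}; since $\Y$ is compact and $f(x,\cdot)$ is upper semicontinuous, $\phi(x)=\max_{y\in\Y}f(x,y)$ is attained for every $x$, and \cref{lem:strong_cvxty_of_max} gives directly that $\phi$ is $\mux$-strongly g-convex on $\X$. Applying the same lemma to $(y,x)\mapsto -f(x,y)$, which is $(\muy,0)$-\SCSC{} in $\Y\times\X$, shows that $-\Psi(y)=\max_{x\in\X}\bigl(-f(x,y)\bigr)$ is $\muy$-strongly g-convex, i.e. $\Psi$ is $\muy$-strongly g-concave.

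For \cref{item:y_opt_lip} the plan is: fix $x_1,x_2\in\X$ and set $y_i\defi y^\ast(x_i)$, which exist by compactness of $\Y$ and are unique by strong g-concavity of $f(x_i,\cdot)$. Since $y_i$ maximizes the g-concave function $f(x_i,\cdot)$ over the g-convex set $\Y$, \cref{prop:vi} yields the two variational inequalities $\langle\nabla_y f(x_1,y_1),\log_{y_1}(y_2)\rangle\le 0$ and $\langle\nabla_y f(x_2,y_2),\log_{y_2}(y_1)\rangle\le 0$. Next I would apply the strongly g-concave case of \cref{lemma:strong_g_convexity_consequences} to $f(x_1,\cdot)$ at the pair $(y_1,y_2)$, obtaining
\[
\muy\dist^2(y_1,y_2)\le\bigl\langle-\log_{y_1}(y_2),\ \Gamma{y_2}{y_1}\nabla_y f(x_1,y_2)-\nabla_y f(x_1,y_1)\bigr\rangle.
\]
Here the $-\nabla_y f(x_1,y_1)$ contribution is nonpositive by the first variational inequality; transporting the second variational inequality from $\Tansp{y_2}\NN$ to $\Tansp{y_1}\NN$ via $\Gamma{y_2}{y_1}$ and using $\Gamma{y_2}{y_1}\log_{y_2}(y_1)=-\log_{y_1}(y_2)$ together with the isometry property of parallel transport shows $\langle-\log_{y_1}(y_2),\Gamma{y_2}{y_1}\nabla_y f(x_2,y_2)\rangle\le 0$. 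Adding this nonnegative quantity and then applying Cauchy--Schwarz, $\norm{\log_{y_1}(y_2)}=\dist(y_1,y_2)$, the isometry of $\Gamma{y_2}{y_1}$, and the $\Lxy$-Lipschitzness of $\nabla_y f(\cdot,y_2)$ from \cref{item:Lxy} of \cref{ass:fct} would give $\muy\dist^2(y_1,y_2)\le\Lxy\dist(x_1,x_2)\dist(y_1,y_2)$; dividing by $\dist(y_1,y_2)$ (the claim being trivial when $y_1=y_2$) yields $\dist(y_1,y_2)\le\frac{\Lxy}{\muy}\dist(x_1,x_2)$. Statement \cref{item:x_opt_lip} is the mirror argument with $x^\ast(y)=\argmin_{x\in\X}f(\cdot,y)$, the $\mux$-strong g-convexity of $f(\cdot,y)$, and the minimization forms of \cref{prop:vi,lemma:strong_g_convexity_consequences}.

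The step I expect to be most delicate is the bookkeeping of parallel transports in the Lipschitz estimate, since the two variational inequalities naturally live in the different tangent spaces $\Tansp{y_1}\NN$ and $\Tansp{y_2}\NN$: one must verify the identity $\Gamma{y_2}{y_1}\log_{y_2}(y_1)=-\log_{y_1}(y_2)$ and check that, because for a fixed second argument the mixed gradient $\nabla_y f(\cdot,y_2)$ takes values in the single space $\Tansp{y_2}\NN$, the Lipschitz bound of \cref{ass:fct} enters the transported inner product without introducing any extra curvature-dependent constant. Everything else is a direct application of results already established in the excerpt.
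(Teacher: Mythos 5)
Your proposal is correct and follows essentially the same route as the paper: items~\ref{item:phi_sc} and \ref{item:Psi_sc} are immediate from \cref{lem:strong_cvxty_of_max}, and the Lipschitz estimates combine the two variational inequalities from \cref{prop:vi} with the strong-concavity (resp.\ strong-convexity) displacement bound of \cref{lemma:strong_g_convexity_consequences}, followed by Cauchy--Schwarz and the $\Lxy$-Lipschitzness of the mixed gradient, exactly as in the paper (you merely reorder the application of the two VIs relative to the displacement inequality). One small wording slip: having shown $\langle -\log_{y_1}(y_2),\Gamma{y_2}{y_1}\nabla_y f(x_2,y_2)\rangle\le 0$, what you add to the right-hand side is its \emph{negation} (which is nonnegative), not ``this nonnegative quantity''; the arithmetic is nonetheless sound.
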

\begin{proof}
  By \cref{prop:vi}, we have
 \begin{align}
   \langle\exponinv{y^{*}(x)}(y),\nabla_yf(x,y^{*}(x))\rangle\le 0,\quad \forall y\in \Y \label{eq:opt1}\\
   \langle \exponinv{y^{*}(z)}(y),\nabla_yf(z,y^{*}(z))\rangle\le 0,\quad  \forall y\in \Y\label{eq:opt2}
 \end{align}
Sum up \cref{eq:opt1} with $y=y^{*}(z)$ transporting the scalar product to $\mathcal{T}_{y^{*}(z)}$ and \cref{eq:opt2} with $y=y^{*}(x)$,
\begin{align*}
    \langle \exponinv{y^{*}(z)}(y^{*}(x)),\nabla_yf(z,y^{*}(z))-\Gamma{}{y^{*}(z)}\nabla_yf(x,y^{*}(x))\rangle\le 0.
\end{align*}
    Then since $f(x,\cdot)$ is $\muy$-strongly g-concave, we have by \cref{lemma:strong_g_convexity_consequences}
\begin{equation*}
    \muy \dist^2(y^{*}(x),y^{*}(z)) + \langle\exponinv{y^{*}(z)}(y^{*}(x)), \Gamma{}{y^{*}(z)}\nabla_yf(x,y^{*}(x)) - \nabla_{y}f(x,y^{*}(z)\rangle\le 0.
\end{equation*}
Summing these two equations we get
\begin{equation*}
  \muy \dist^2(y^{*}(x),y^{*}(z))   + \langle\exponinv{y^{*}(z)}(y^{*}(x)),\nabla_y f(z,y^{*}(z)) - \nabla_{y}f(x,y^{*}(z)\rangle\le 0.
\end{equation*}
Further,
 \begin{align*}
\begin{aligned}
     \dist(y^{*}(x),y^{*}(z))&\le \dist^{-1}(y^{*}(x),y^{*}(z)) \muy^{-1} \langle\exponinv{y^{*}(z)}(y^{*}(x)), \nabla_{y}f(x,y^{*}(z)-\nabla_yf(z,y^{*}(z)) \rangle\\
     &\leq \muy^{-1} \norm{\nabla_{y}f(x,y^{*}(z)-\nabla_y f(z,y^{*}(z))}\\
     &\le \frac{\Lxy}{\muy} \dist(x,z),
\end{aligned}
\end{align*}   
where we used gradient Lipschitzness. This concludes the proof for \cref{item:y_opt_lip}.
The proof of \cref{item:x_opt_lip} works in the same way using strong g-convexity instead of strong g-concavity.
The proofs of \cref{item:phi_sc,item:Psi_sc} follow directly by \cref{lem:strong_cvxty_of_max}.
\end{proof}

\end{document}